\newtheorem{thm}{Theorem}[section]
\newtheorem{prop}[thm]{Proposition}
\newtheorem{lemma}[thm]{Lemma}
\newtheorem{cor}[thm]{Corollary}
\newtheorem{conj}[thm]{Conjecture}
\theoremstyle{definition}
\newtheorem{rmk}[thm]{Remark}
\newcommand{\A}{\mathbb{A}}
\newcommand{\C}{\mathbb{C}}
\newcommand{\R}{\mathbb{R}}
\newcommand{\ra}{\rightarrow}
\newcommand{\taut}{\tilde{\tau}}
\newcommand{\Sw}{\mathcal{S}}
\newcommand{\Aut}{\mathcal{A}}
\newcommand{\E}{\mathcal{E}}
\newcommand{\V}{\mathcal{V}}
\newcommand{\Ov}{\mathcal{O}_v}
\newcommand{\Pv}{\mathcal{P}_v}
\newcommand{\sR}{\mathcal{R}}
\newcommand{\Gm}{\mathbb{G}_m}
\newcommand{\Ik}{\operatorname{Ik}}
\newcommand{\Ind}{\operatorname{Ind}\,}
\newcommand{\Res}{\operatorname{Res}\,}
\newcommand{\im}{\operatorname{Im\,}}
\newcommand{\Val}{\operatorname{Val}}
\newcommand{\Span}{\operatorname{span}}
\newcommand{\GSp}{\operatorname{GSp}}
\newcommand{\GSO}{\operatorname{GSO}}
\newcommand{\GO}{\operatorname{GO}}
\newcommand{\Sp}{\operatorname{Sp}}
\newcommand{\SO}{\operatorname{SO}}
\newcommand{\OO}{\operatorname{O}}
\newcommand{\GL}{\operatorname{GL}}
\newcommand{\Spt}{\widetilde{\operatorname{Sp}}}
\newcommand{\Mp}{\operatorname{Mp}}
\newcommand{\U}{\operatorname{U}}
\newcommand{\sign}{\operatorname{sign}}
\newcommand{\LO}{\operatorname{LO}_{\psi}}
\newcommand{\FO}{\operatorname{FO}_{\psi}}
\newcommand{\Pole}{\operatorname{Pole}}
\newcommand{\Zero}{\operatorname{Zero}}
\newcommand{\ie}{{\em i.e. }}
\begin{document}
%%%%%%%%%%%%%%%%%%%%%%%%%%%%%%%%%%%%%%%%%%%%%%%%%%%%%%%%

\title[Siegel-Weil: second term identity]
{On the regularized Siegel-Weil formula\\ (the second term identity) \\and non-vanishing of theta lifts from orthogonal groups}

%%%%%%%%%%%%%%%%%%%%%%%%%%%%%%%%%%%%%%%%%%%%%%%%%%%%%%%%

\author{Wee Teck Gan and Shuichiro Takeda}
\address{Mathematics Department, University of California, San Diego, 9500 Gilman Drive, La Jolla,
92093, U.S.A.}
\address{Department of Mathematics
Purdue University
150 N. University Street
West Lafayette, IN 47907-2067}

\email{wgan@math.ucsd.edu} \email{stakeda@math.purdue.edu}

\keywords{Automorphic representation, Siegel-Weil formula, theta
correspondence}

\date{\today}

\begin{abstract}
We derive a (weak) second term identity for the regularized Siegel-Weil formula for
the even orthogonal group, which is used to obtain a Rallis inner product formula in the ``second term range". As an application, we show the following non-vanishing result of global theta lifts from orthogonal groups. Let $\pi$ be a cuspidal automorphic representation of an orthogonal group $\OO(V)$ with $\dim V=m$ even and $r+1\leq m\leq 2r$. Assume further that there is a place $v$ such that $\pi_v\cong\pi_v\otimes\det$. Then the global theta lift of $\pi$ to $\Sp_{2r}$ does not vanish up to twisting by automorphic determinant characters if the (incomplete) standard $L$-function $L^S(s,\pi)$ does not vanish at $s=1+\frac{2r-m}{2}$. Note that we impose no further condition on $V$ or $\pi$.
We also show analogous non-vanishing results when $m > 2r$ (the ``first term range") in terms of poles of $L^S(s,\pi)$ and consider the ``lowest occurrence" conjecture of the theta lift from the orthogonal group.
\end{abstract}

\maketitle

%%%%%%%%%%%%%%%%%%%%%%%%%%%%%%%%%%%%%%%%%%%%%%%%%%%%%%%%%%%%%

\section{\bf Introduction}\label{S:intro}

%%%%%%%%%%%%%%%%%%%%%%%%%%%%%%%%%%%%%%%%%%%%%%%%%%%%%%%%%%%%%

Let $F$ be a number field with ring of adeles $\A$.
Let $\OO(\V) = \OO_{m,m}$ be the split orthogonal group of rank $m$ over $F$ and let $\Sp(W) = \Sp_{2r}$ be the symplectic group of rank $r$. The group $\OO(\V) \times \Sp(W)$ is a dual reductive pair in the symplectic group $\Sp(\V \otimes W)$ and possesses a distinguished representation known as the Weil representation (modulo the choice of a non-trivial character of $F \backslash \A$). If one chooses a maximal isotropic subspace $\V^+$ of $\V$, then this Weil representation can be realized on the space $\Sw((\V^+ \otimes W)(\A))$ of Schwarz functions on $(\V^+ \otimes W)(\A)$. Moreover, it has a natural ``automorphic" realization

\[  \theta: S((\V^+ \otimes W)(\A) \longrightarrow  \{ \text{Functions on $\OO(\V)(F) \backslash \OO(\V)(\A) \times \Sp(W)(F) \backslash \Sp(W)(\A)$} \}\]
which is an $\OO(\V) \times \Sp(W)$-equivariant map.
\vskip 10pt

In this context, the classical Siegel-Weil formula is an identity of automorphic forms on $\OO(\V)$ which relates the theta integral
\[  I(g, \varphi) = \int_{\Sp(W)(F) \backslash \Sp(W)(\A)} \theta(\varphi)(gh) \, dh \]
to a special value of a Siegel Eisenstein series $E (g,s_0; \varphi)$ for each $\varphi \in \Sw((\V^+ \otimes W)(\A)$. When both the theta integral and the sum defining the Eisenstein series are absolutely convergent, such an identity was proved by Weil ([We]). Subsequent pioneering works of Kudla-Rallis (with significant refinements by Ikeda \cite{Ikeda} and Ichino \cite{Ichino}) extend this identity
to situations where these convergent conditions are not satisfied, culminating in a regularized Siegel-Weil formula ([Kd-R2]) where one requires a regularization of the divergent theta integral $I(-,\varphi)$.
Strictly speaking, the work of Kudla-Rallis deals with the case where one considers a theta integral over $\OO(\V)$, so that the Siegel-Weil formula is an identity of automorphic forms on $\Sp(W)$. In the particular context of this paper, the regularized Siegel-Weil formula was established by Moeglin ([Mo]) and later Jiang-Soudry ([JS]). As we shall explain below, the identity established in all these papers is the so-called ``first term identity".
\vskip 10pt

To explain this terminology, let us briefly recall the setup of the regularized Siegel-Weil formula in [Mo] and [JS]. The regularized theta integral $I(g,s,\varphi)$ (whose definition we recall in \S \ref{S:regularize}) turns out to be equal to
a non-Siegel Eisenstein series $\E^{(m,r)}(g,s;\varphi)$ on $\OO(\V)$.
Suppose that
\[
    m > 2r,
\]
so that we are in what we call ``first term range". Then the regularized theta integral has at most a simple pole at $s=\rho_r'=\frac{r+1}{2}$ and one may consider its Laurent expansion there:
\[
    \E^{(m,r)}(s,g;\varphi)=\frac{B_{-1}^{(m,r)}(\varphi)(g)}{s-\rho_r'}
    +B_0^{(m,r)}(\varphi)(g)+\cdots.
\]
On the other hand, one has the Siegel Eisenstein series $E^{(m,m)}(g, s; \varphi)$ and one may consider its Laurent expansion at $s=\rho_{m,r}=\frac{2r-m+1}{2}$:
\[
     E^{(m,m)}(g,s;\Phi_{\varphi})=\frac{A_{-1}^{(m,r)}(\varphi)(g)}{s-\rho_{m,r}}
    +A_{0}^{(m,r)}(\varphi)+A_1^{(m,r)}(\varphi)(g)(s-\rho_{m,r})+\cdots.
\]
The regularized Siegel-Weil formula proved in the papers [Mo] and [JS] is
an identity between the first (or leading) terms in these Laurent expansions. Hence such a formula is referred to as a ``first term identity".

\vskip 10pt
However, the main concern of this paper is what we call ``second term range":
\[  r+1 \leq m \leq 2r. \]
In this case, the regularized theta integral has a double pole at the point of interest
$s=\rho_r'=\frac{r+1}{2}$, so that its Laurent expansion there looks like:
\[
    \E^{(m,r)}(s,g;\varphi)= \frac{B_{-2}^{(m,r)}(\varphi)(g)}{(s-\rho_r')^2} + \frac{B_{-1}^{(m,r)}(\varphi)(g)}{s-\rho_r'}    +B_0^{(m,r)}(\varphi)(g)+\cdots.
\]
The Laurent expansion of the Siegel Eisenstein series at $s=\rho_{m,r}=\frac{2r-m+1}{2}$ is as before:
\[
     E^{(m,m)}(g,s;\Phi_{\varphi})=\frac{A_{-1}^{(m,r)}(\varphi)(g)}{s-\rho_{m,r}}
    +A_{0}^{(m,r)}(\varphi)+A_1^{(m,r)}(\varphi)(g)(s-\rho_{m,r})+\cdots.
\]
In this case, we shall show that there is still a first term identity relating $B_{-2}$ with $A_{-1}$.
But the chief contribution of this paper is to show that there is also a relationship between the second terms of the two Laurent series above (for a certain class of $\varphi$). Hence we shall establish a (weak) second term identity relating $B_{-1}$ with $A_0$.
More precsiely, we will prove the following. (Again see \S \ref{S:regularize} for the notations.)
\vskip 10pt

\begin{thm}[Weak Second Term Identity]
Let $\dim \V^{+}=m$ with $r+1\leq m\leq 2r$. Then for all $\varphi\in \Sw((\V^{+}\otimes W)(\A))$ that are in the $\OO(\V,\A)$-span of a particular spherical vector $\varphi^0$, the following identity holds:
\begin{align*}
    &B_{-1}^{(m,r)}(\varphi)
    +|D|^{-m\rho_{m,r}}\prod_{i=0}^{2r-m}\frac{\xi(i)}{\xi(2r-2i)}B_0^{(m,m-r-1)}(\Ik(\varphi))\\
    &\qquad\qquad\equiv A_0^{(m,r)}(\varphi)
     \mod \im A^{(m,r)}_{-1},
\end{align*}
where $\im A^{(m,r)}_{-1}$ is the image of the $A^{(m,r)}_{-1}$ map.
\end{thm}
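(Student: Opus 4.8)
The plan is to mimic the strategy by which Kudla--Rallis and Ichino established the second term identity for the symplectic tower, adapting it to the orthogonal tower in the present ``second term range''. The basic principle is that the Siegel Eisenstein series $E^{(m,m)}(g,s;\Phi_\varphi)$ at the near-central point $s=\rho_{m,r}$ contains, through its functional equation, information about \emph{two} members of the orthogonal Witt tower: the space $\V$ of dimension $m$ (via the ``first term'' of the small Eisenstein series $\E^{(m,r)}$) and its complement of dimension $2m - (m) = $ the dual space corresponding to $\Sp_{2(m-r-1)}$. Concretely, the first ingredient will be the functional equation of the Siegel Eisenstein series, $E^{(m,m)}(g,s;\Phi) = E^{(m,m)}(g,-s;M(s)\Phi)$, together with a careful analysis of the intertwining operator $M(s)$ and its normalization by the product of completed zeta functions $\prod \xi(i)/\xi(2r-2i)$ appearing in the statement; this produces the factor $|D|^{-m\rho_{m,r}}\prod_{i=0}^{2r-m}\xi(i)/\xi(2r-2i)$ in front of $B_0^{(m,m-r-1)}(\Ik(\varphi))$. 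Here the map $\Ik$ (the Ikeda map, sending a Schwartz function on $(\V^+\otimes W)(\A)$ for the $m$-dimensional space to one adapted to the complementary member of the tower) is what allows one to relate the two towers at the level of test functions.

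The second ingredient is the first term identities already available in both ``first term'' and ``second term'' ranges, and this is where the argument becomes a genuine bootstrapping. In the second term range one has (as the excerpt promises, proved earlier) the identity $B_{-2}^{(m,r)}(\varphi) = c\cdot A_{-1}^{(m,r)}(\varphi)$ relating the leading (double-pole) term of the regularized theta integral to the leading term of the Siegel Eisenstein series. For the complementary member $\V$ viewed against $\Sp_{2(m-r-1)}$, one is in the first term range (since $m > 2(m-r-1)$ is equivalent to $m < 2r+2$, i.e. $m \le 2r+1$, which holds), so one has the classical first term identity expressing $B_{-1}^{(m,m-r-1)}$ (actually its leading term $B_{-1}$ there, since only a simple pole occurs) in terms of $A_{-1}^{(m,m-r-1)}$. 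The strategy is then to write down the Laurent expansion of the \emph{same} Siegel Eisenstein series $E^{(m,m)}(g,s;\Phi_\varphi)$ around $s=\rho_{m,r}$, feed in the functional equation to re-expand it around the reflected point $s=-\rho_{m,r} = \rho_{m',r'}$ for the complementary $(m',r')$, match coefficients order by order, and use the known first term identities to convert the Eisenstein coefficients $A_j$ into theta-integral coefficients $B_j$. The $\OO(\V,\A)$-span hypothesis on $\varphi$ (that it lies in the span of the orbit of a spherical vector $\varphi^0$) is what makes the normalization factors explicit and uniform: on the spherical vector, all the intertwining operators act by the ratios of zeta values written in the statement, and equivariance then propagates the identity across the whole span. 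The congruence ``$\mod \im A^{(m,r)}_{-1}$'' is forced because matching the $(s-\rho_{m,r})^0$-coefficient inevitably picks up a contribution from the derivative of the polar part, which lands precisely in the image of the residue map $A^{(m,r)}_{-1}$.

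Carrying this out, the key steps in order are: (1) establish the (weak) \emph{first} term identity $B_{-2}^{(m,r)} = c \cdot A_{-1}^{(m,r)}$ in the second term range, including pinning down the constant $c$ on the spherical vector via a local computation of the Siegel--Weil constant and the intertwining operator; (2) analyze the functional equation of $E^{(m,m)}(\cdot,s;\Phi_\varphi)$ at $s=\rho_{m,r}$, computing the normalized intertwining operator on $\varphi^0$ and thereby the relation between the Laurent data at $\pm\rho_{m,r}$, extracting the constant $|D|^{-m\rho_{m,r}}\prod_{i=0}^{2r-m}\xi(i)/\xi(2r-2i)$; (3) invoke the first term identity for the complementary member $(m, m-r-1)$ in the first term range to express the relevant Eisenstein coefficient as $B_0^{(m,m-r-1)}(\Ik(\varphi))$ --- this requires identifying $\Ik$ as exactly the test-function map intertwining the two Schwartz models and checking it commutes with the $\OO(\V,\A)$-action on the spherical vectors; (4) assemble the Laurent expansions, match the constant terms, and read off the asserted identity modulo $\im A^{(m,r)}_{-1}$; (5) finally extend from $\varphi^0$ to the whole $\OO(\V,\A)$-span by equivariance of all the maps involved.

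The main obstacle I expect is step (3) together with the bookkeeping that links it to step (2): one must show that the ``second term'' of the Siegel Eisenstein series, which a priori is just an automorphic form with no obvious interpretation, is genuinely captured by $B_0$ of a theta integral for the \emph{complementary} Witt tower member, and that the test function appearing there is exactly $\Ik(\varphi)$ rather than some other transform. This is delicate because in the second term range the Eisenstein series has only a simple pole but its \emph{second} (i.e. constant) term is not itself an Eisenstein value in any naive sense; controlling it requires either an inductive argument on $r$ (relating $(m,r)$ to $(m, r-1)$ via a doubling/restriction of Eisenstein series and the functional equation), or a direct global computation using the Langlands--Shahidi constant-term formula for the non-Siegel Eisenstein series $\E^{(m,r)}$. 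A secondary difficulty is that the congruence is only \emph{modulo} $\im A^{(m,r)}_{-1}$, and one must be careful that no information is lost --- i.e. that the identity is still strong enough to yield the Rallis inner product formula --- but that is a concern for the applications rather than the proof of the theorem itself.
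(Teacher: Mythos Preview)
Your plan has a genuine gap: the functional equation of the Siegel Eisenstein series and the first term identities, by themselves, cannot produce the second terms $B_{-1}^{(m,r)}$ and $B_0^{(m,m-r-1)}$ of the \emph{non-Siegel} Eisenstein series. The functional equation $E^{(m,m)}(s;\Phi)=E^{(m,m)}(-s;M(s)\Phi)$ only relates Siegel coefficients $A_d$ at $\rho_{m,r}$ to Siegel coefficients at $-\rho_{m,r}$; it never leaves the Siegel world. Your step (3) asserts that the first term identity for the complementary pair $(m,m-r-1)$ ``expresses the relevant Eisenstein coefficient as $B_0^{(m,m-r-1)}(\Ik(\varphi))$'', but a first term identity only matches \emph{leading} Laurent coefficients (here $A_{-1}$ with $B_{-1}$), never the next term $B_0$. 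The same problem occurs on the other side: the first term identity in the second term range gives $B_{-2}^{(m,r)}\propto A_{-1}^{(m,r)}$, and you offer no mechanism to reach $B_{-1}^{(m,r)}$. So steps (2)--(4) as written do not close.

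The paper's actual mechanism is quite different and is worth internalizing. One first proves a purely \emph{spherical} second term identity by induction on $j=2r-m$, and the inductive engine is not the functional equation but rather the \emph{constant term along the maximal parabolic} $Q$ with Levi $\GL_1\times\OO_{m-1,m-1}$. Taking the $Q$-constant term of a known identity on $\OO_{m+1,m+1}$ and computing it via the explicit formula for $E_Q^{(m,r)}$ (three terms, each a product of a power of $|a|$ and an Eisenstein series on $\OO_{m-1,m-1}$) yields an identity among Eisenstein series on the lower-rank group; one then extracts the desired relation by isolating a specific power of $|a|$. The base case is the boundary $m=2r+1$: there the \emph{first} term identity already equates $E_{-1}^{(2r+1,r)}(\tfrac{r+1}{2})$ with $c_rE_0^{(2r+1,2r+1)}(0)$, and it is the act of taking its $Q$-constant term that manufactures \emph{second} terms on $\OO_{2r,2r}$ (the functional equation appears only as a side computation to rewrite $E_0^{(2r,2r)}(-\tfrac12)$). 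Once the spherical identity is in hand, the extension to the $\OO(\V,\A)$-span of $\varphi^0$ is exactly your step (5), using that $B_{-1}^{(m,r)}$, $B_0^{(m,m-r-1)}\circ\Ik$, and $A_0^{(m,r)}$ are all $G(\A)$-equivariant modulo $\im A_{-1}^{(m,r)}$.
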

\vskip 10pt

\noindent  Here we call this ``the weak second term identity" because of the assumption we have to impose on $\varphi$. It is expected that this assumption can be removed, but it seems that our method is unable to do this. We note that this weak second term identity was proven by A. Ichino and the first author in their recent preprint \cite{GI} for the case $r=2$ and $m=4$. Strictly speaking, in \cite{GI} they consider the similitude group $\GO(\V)$, but exactly the same computation yields the same formula for the isometry case. We should also mention that the full second term identity was obtained in certain other low rank cases by Kudla-Rallis-Soudry  [KRS] (on the symplectic group $\Sp_4$ for the dual pair $\OO_4\times  \Sp_4$) and V. Tan [T]  (on the unitary group $\U(2,2)$ for the dual pair $\U(3) \times \U(2,2)$). Also the spherical second term identity on the symplectic group has been known to Kudla already since 1989 , whose precise statement appeared in \cite[Thm. 3.13]{Kudla}. This is essentially the symplectic analogue of our Theorem \ref{T:Spherical_Second_Term}. However his proof has never appeared anywhere to our best knowledge.

\vskip 10pt

The main application of the (regularized) Siegel-Weil formula is in the derivation of the Rallis inner product formula for the theta lift from $\OO_m$ to $\Sp_{2r}$. In the first term range,
using the first term identity, we obtain in \S  \ref{S:inner_product} such a Rallis inner product formula and use it to obtain a sufficient condition for the non-vanishing of this theta lift in terms of the location of poles of standard L-functions of $\OO(m)$ (see below). In this way, we recover certain results of Moeglin ([Mo]) and Ginzburg-Jiang-Soudry ([GJS]) in a somewhat more direct manner.
\vskip 10pt

As we explain in \S \ref{S:inner_product}, it is necessary to establish a ``second term identity" in the ``second term range" in order to obtain a Rallis inner product formula for the theta lift from
$\OO_m$ to $\Sp_{2r}$; the first term identity in this range would give one nothing! This Rallis inner product formula relates the inner product of the theta lifts to the values of the standard L-function at certain points, rather than the residues of the standard L-function at these points.
Thus,  as an application of our second term identity, we prove the following non-vanishing result for the global theta lift.
\vskip 5pt

\begin{thm}
Let $\pi$ be a cuspidal automorphic representation of $\OO(V,\A)$
with $\dim V=m$ even and $r+1 \leq m \leq 2r$. Assume that
\vskip 5pt

\begin{enumerate}[(i)]
\item  there is a place $v$ where $\pi_v\cong\pi_v\otimes\det$;
\vskip 5pt

\item  the (incomplete) standard $L$-function $L^S(s,\pi)$ does not vanish at $s=1+\frac{2r-m}{2}$ (a pole is allowed).
\end{enumerate}
Then there exists an automorphic character $\mu$ on
$\mu_2(F)\backslash\mu_2(\A)$ such that the global theta lift
$\theta_{2r}(\pi\otimes(\mu\circ\det))$ of $\pi\otimes(\mu\circ\det)$ to the symplectic
group $\Sp_{2r}(\A)$ of rank $r$ does not vanish.
\end{thm}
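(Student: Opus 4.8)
The plan is to run the standard Rallis inner-product machine, but fed with the weak second term identity rather than the first term identity. First I would fix, at the distinguished place $v$ where $\pi_v \cong \pi_v \otimes \det$, a Schwarz function whose theta kernel behaves well; the hypothesis (i) is precisely what is needed to produce, at that place, a $\varphi_v$ lying in the local analogue of the $\OO(\V)$-span of the spherical vector, so that globally one may assume $\varphi$ lies in the $\OO(\V,\A)$-span of $\varphi^0$ and hence the weak second term identity of the first Theorem applies. One then computes the Petersson inner product $\langle \theta_{2r}(f,\varphi), \theta_{2r}(f',\varphi')\rangle$ over $\Sp_{2r}$ by the usual see-saw unfolding: it equals an integral over $[\OO(V)] \times [\OO(V)]$ of $f(g_1)\overline{f'(g_2)}$ against the pullback of a theta integral for the pair $(\OO(V\oplus -V), \Sp_{2r})$, which after regularization becomes $\langle \pi, \E^{(m,r)}(\,\cdot\,, s;\varphi)\rangle$ evaluated near $s = \rho'_r$.

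Next I would insert the weak second term identity. Because $\pi$ is cuspidal and (in the second term range) the theta lift to smaller symplectic groups vanishes at the relevant point for $L$-function reasons — or can be arranged to vanish after the determinant twist — the contribution of the $B_0^{(m,m-r-1)}(\Ik(\varphi))$ term pairs to zero against $f \otimes \overline{f'}$, and likewise the ``error'' term $\im A^{(m,r)}_{-1}$ pairs trivially against the cusp form (it lands in a space of non-cuspidal Eisenstein-type forms, orthogonal to $\pi$). What survives is $\langle f\otimes\overline{f'}, A_0^{(m,r)}(\varphi)\rangle$, i.e.\ the value (not the residue) of the Siegel Eisenstein series $E^{(m,m)}(\,\cdot\,,s;\Phi_\varphi)$ at $s = \rho_{m,r} = \frac{2r-m+1}{2}$, paired against $\pi\otimes\overline\pi$ on $\OO(V)\times\OO(V)$. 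By the basic identity of Kudla--Rallis (the doubling integral for the standard $L$-function of $\OO(V)$), this pairing unfolds to a product of local zeta integrals times the incomplete standard $L$-function $L^S(s,\pi)$ evaluated at the shifted point, which is exactly $s = 1 + \frac{2r-m}{2}$. Hypothesis (ii) guarantees this central value is nonzero (a pole only helps, since it is multiplied by the normalizing factor of the Eisenstein series which has a compensating zero, or one absorbs it into the leading-term analysis), and the local zeta integrals at the remaining places can be made nonzero by choosing the data $f_w, f'_w, \varphi_w$ appropriately — this is a purely local statement about non-vanishing of doubling integrals, standard away from $v$ and handled at $v$ using condition (i).

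Assembling these, one concludes $\langle \theta_{2r}(f,\varphi), \theta_{2r}(f',\varphi')\rangle \neq 0$ for suitable choices, hence $\theta_{2r}(\pi\otimes(\mu\circ\det)) \neq 0$, where the determinant character $\mu$ on $\mu_2(F)\backslash\mu_2(\A)$ enters because the Weil representation and the theta lift depend on the choice of $\psi$ and on a splitting that is only canonical up to such a character; tracking this through the see-saw shows the nonvanishing holds after twisting $\pi$ by some $\mu\circ\det$, which is harmless as $\mu\circ\det$ is automorphic on $\OO(V)$. The main obstacle, I expect, is bookkeeping around the ``mod $\im A^{(m,r)}_{-1}$'' ambiguity in the weak second term identity: one must check that this ambiguity genuinely pairs to zero against the cuspidal $\pi$ (and not merely against spherical vectors), which requires identifying $\im A^{(m,r)}_{-1}$ as sitting inside an isobaric/residual spectrum that is orthogonal to cusp forms — together with the parallel check that the smaller-group theta term $B_0^{(m,m-r-1)}(\Ik(\varphi))$ contributes nothing, which is where one uses the lowest-occurrence/first-term-range input and the hypothesis on $L^S$ at the one critical point. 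A secondary technical point is verifying that condition (i) really does force the global $\varphi$ into the span where the weak second term identity is valid; this is the only place hypothesis (i) is used, and it is the reason the theorem, unlike the fully general expectation, needs that assumption.
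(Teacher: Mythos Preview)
Your overall architecture is right: Rallis inner product via the see-saw, insert the weak second term identity, and evaluate the surviving $A_0^{(m,r)}$ term by the doubling method to get $L^S(s,\pi)$ at $s=1+\frac{2r-m}{2}$. But two of the key mechanisms are misidentified, and as stated the argument would not close.

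\emph{The role of condition (i) and the twist $\mu$.} You say (i) is used locally at the one place to produce a $\varphi_v$ in the span of the spherical vector, and that $\mu$ arises from the $\psi$-dependence of the Weil representation. Neither is how the paper proceeds. The actual issue is this: one first chooses, at every bad place $v$, a section $\Phi_v\in\Ind_{P_v}^{G_v}|\cdot|^s$ and vectors $f_{1,v},f_{2,v}$ making the local zeta integral nonzero at $s=\rho_{m,r}$. A priori $\Phi_v(-,\rho_{m,r})$ lands in either $\sigma_v^+$ or $\sigma_v^-$, and only sections with $\Phi_v\in\sigma_v^+$ at every place come from $\Sw((\V^+\otimes W)(\A))^\circ$. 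Let $T=\{v:\Phi_v\notin\sigma_v^+\}$. If $|T|$ is even, $\mu=\mathrm{sign}(T)$ is automorphic and twisting by $\mu$ flips each $\Phi_v$ with $v\in T$ into $\sigma_v^+$ while leaving the local zeta integrals unchanged (Lemma~\ref{L:twist}(b)). If $|T|$ is odd, one needs one extra flip to restore automorphy; condition (i) supplies a place $v_0$ where $\pi_{v_0}\cong\pi_{v_0}\otimes\det$, at which flipping costs nothing to the zeta integral (Lemma~\ref{L:twist}(a)), and one takes $\mu=\mathrm{sign}(T\cup\{v_0\})$. So $\mu$ is not a splitting ambiguity---it is deliberately engineered to move the global section into the span where the weak second term identity is valid, and (i) is a parity-correction device. (In the first term range one avoids this because Proposition~\ref{P:incoherent} forces $|T|$ even automatically; no such result is available here.)

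\emph{Vanishing of the complementary and error terms.} You argue that $\im A_{-1}^{(m,r)}$ pairs to zero against the cusp form because it consists of ``Eisenstein-type forms orthogonal to $\pi$.'' That is not the mechanism: $A_{-1}^{(m,r)}(\varphi')$ is an automorphic form on $\OO(\V)=\OO_{m,m}$, and the pairing is through the doubling embedding $i:\OO(V)\times\OO(-V)\hookrightarrow\OO(\V)$, so cuspidality of $\pi$ on $\OO(V)$ gives no direct orthogonality. The paper instead applies the first term identity (Proposition~\ref{P:first_term2}) to rewrite $A_{-1}^{(m,r)}(\varphi')$ as $a_{m,m-r-1}\,B_{-1}^{(m,m-r-1)}(\Ik(\varphi'))$, and then Corollary~\ref{C:inner_product} converts that pairing into a sum of inner products of theta lifts to $\Sp_{2(m-r-1)}$. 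These vanish by the standing assumption $\theta_{2r-2}(\pi\otimes\mu)=0$ (since $m-r-1\le r-1$). The $B_0^{(m,m-r-1)}(\Ik(\varphi))$ term is killed the same way. You gesture at ``lowest-occurrence/first-term-range input,'' which is morally correct, but the actual step is: reduce at the outset to the case $\theta_{2r-2}(\pi\otimes\mu)=0$ for all $\mu$ (else done by the tower property), and then use the first term identity plus the inner product formula on the smaller group.

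With those two corrections your outline becomes the paper's proof.
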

\vskip 5pt

Another way of stating the above theorem is that if $\sigma$ is a cuspidal automorphic representation of $\SO(V,\A)$ which satisfies the analogous conditions in the theorem, then $\sigma$
has a nonzero theta lift to $\Sp_{2r}$.
\vskip 10pt

Let us mention that the condition (i) in the theorem is merely technical, though we do not know how to suppress it. As a result, we were not able to obtain the analogous theorem when $\dim V$ is odd.
Indeed, if $\dim V=m$ is odd, then $\pi_v\ncong\pi_v\otimes\det$ for all $v$, and hence the assumption (i) of the theorem is never satisfied.
However, we would like to emphasize that our non-vanishing result applies to an
orthogonal group $\OO(V)$ for any quadratic space $V$, and moreover we do not impose
any further assumption on $\pi$ such as genericity or temperedness.
\vskip 10pt

In the first term range $m > 2r$, the analogous results on non-vanishing of theta lifts were first shown by Moeglin (\cite{Moeglin}) and Ginzburg-Jiang-Soudry (\cite{GJS}), using
Moeglin's idea of ``generalized doubling method" (\cite{Moeglin}).
As we mentioned above, these results can also be shown more directly using the Rallis inner product formula:
 \vskip 5pt

 \begin{thm}
Let $\pi$ be a cuspidal automorphic representation of $\OO(V,\A)$
for $\dim V=m$ where $m\geq 2r+1$.
\begin{enumerate}[(a)]
\item Assume $m\geq 2r+2$. Further assume that the (incomplete) standard $L$-function $L^S(s,\pi)$ has a pole at $s=\frac{m-2r}{2}$. Then there is a character $\mu$ on $\mu_2(F)\backslash\mu_2(\A)$ such
that $\theta_{2r}(\pi\otimes(\mu\circ\det))\neq0$.
\item Assume $m=2r+1$. Further assume that the (incomplete) standard $L$-function $L^S(s,\pi)$ does not vanish at $s=\frac{1}{2}$. Then there is a character $\mu$ on $\mu_2(F)\backslash\mu_2(\A)$ such
that $\theta_{2r}(\pi\otimes(\mu\circ\det))\neq0$.
\end{enumerate}
\end{thm}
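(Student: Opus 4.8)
\medskip
\noindent\emph{Proof strategy.} The plan is to derive a Rallis inner product formula in the first term range and then read off the non-vanishing of $\theta_{2r}$ from the behaviour of $L^{S}(s,\pi)$ at the indicated point, thereby recovering more directly the results of Moeglin \cite{Moeglin} and Ginzburg--Jiang--Soudry \cite{GJS}. First set $V^{\square}=V\oplus(-V)$, so that $\OO(V^{\square})\cong\OO_{m,m}$ is split, $\OO(V)\times\OO(V)$ sits inside it as the stabilizer of the decomposition, and $\Sp(W)$ sits diagonally in $\Sp(W)\times\Sp(W)$; this produces the seesaw relating the dual pairs $(\OO(V^{\square}),\Sp(W))$ and $(\OO(V),\Sp(W))^{2}$. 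For cusp forms $f_{1},f_{2}$ in the space of $\pi$ and Schwartz functions $\varphi_{1},\varphi_{2}$, unwinding the definition of $\theta_{2r}$ and applying the seesaw identity gives
\[
\big\langle\theta_{2r}(f_{1},\varphi_{1}),\,\theta_{2r}(f_{2},\varphi_{2})\big\rangle_{\Sp(W)}
=\int_{[\OO(V)]\times[\OO(V)]} I\big((g_{1},g_{2}),\varphi\big)\,\overline{f_{1}(g_{1})}\,f_{2}(g_{2})\,dg_{1}\,dg_{2},
\]
where $\varphi=\varphi_{1}\otimes\bar{\varphi}_{2}$ and $I\big((g_{1},g_{2}),\varphi\big)=\int_{[\Sp(W)]}\theta(\varphi)\big((g_{1},g_{2})\,h\big)\,dh$ is exactly the (divergent) theta integral on $\OO_{m,m}$ studied here. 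Replacing it by its regularization, which by \S\ref{S:regularize} equals the non-Siegel Eisenstein series $\E^{(m,r)}\big((g_{1},g_{2}),s;\varphi\big)$ near $s=\rho_{r}'$, and carrying the regularization through gives the corresponding regularized inner product identity, as in Kudla--Rallis.

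\smallskip
\noindent Next I would invoke the first term identity of \S\ref{S:regularize} (due to [Mo] and [JS] in the first term range), identifying the leading Laurent coefficient of $\E^{(m,r)}$ at $s=\rho_{r}'$ with a nonzero explicit multiple of the leading Laurent coefficient of the Siegel Eisenstein series $E^{(m,m)}\big((g_{1},g_{2}),s;\Phi_{\varphi}\big)$ at $s=\rho_{m,r}=\tfrac{2r-m+1}{2}$. Substituting this in and recognizing the resulting integral of $E^{(m,m)}$ against $\overline{f_{1}}\otimes f_{2}$ as the Piatetski-Shapiro--Rallis doubling integral for $\pi$, one obtains, for suitable data,
\[
\big\langle\theta_{2r}(f_{1},\varphi_{1}),\,\theta_{2r}(f_{2},\varphi_{2})\big\rangle
= c_{m,r}\cdot\Big(\prod_{v\in S} Z_{v}^{*}(s_{0})\Big)\cdot
\begin{cases}
\operatorname{Res}_{s=s_{0}}L^{S}(s,\pi), & m\geq 2r+2,\\[3pt]
L^{S}(s_{0},\pi), & m=2r+1,
\end{cases}
\]
where $s_{0}=\tfrac{m-2r}{2}$, $S$ is a finite set of places outside which everything is unramified, $c_{m,r}\neq0$ is explicit, and $Z_{v}^{*}$ is the normalized local doubling zeta integral; the unramified places build up the incomplete standard $L$-function by the usual unramified computation, and the point $s_{0}$ is tied to $\rho_{m,r}$ through the functional equation (up to the shift in the normalization). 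When $m=2r+1$ we are at the boundary, $\rho_{m,r}=0$, the first term identity is an identity of values, and $L^{S}(s_{0},\pi)=L^{S}(\tfrac12,\pi)$ appears.

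\smallskip
\noindent It remains to pick the local data at the places $v\in S$ so that $Z_{v}^{*}(s_{0})\neq0$. Where the local theta lift of $\pi_{v}$ to $\Sp_{2r}(F_{v})$ is nonzero this is possible directly; at the other places I would replace $\pi_{v}$ by $\pi_{v}\otimes\det$, and the conservation relation for the local theta correspondence, together with the hypothesis on $L^{S}(s,\pi)$ (through the known link between poles of the standard $L$-function and theta lifts from small groups, applied place by place), ensures that after such a twist the local lift---hence $Z_{v}^{*}(s_{0})$---is nonzero. The resulting family of local $\det$-twists is realized globally by $\pi\otimes(\mu\circ\det)$ for a suitable character $\mu$ of $\mu_{2}(F)\backslash\mu_{2}(\A)$ (enlarging $S$ by an auxiliary place if a parity adjustment is needed), and since $L^{S}\big(s,\pi\otimes(\mu\circ\det)\big)=L^{S}(s,\pi)$ the $L$-value and residue are unchanged. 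Hypothesis (a) then makes the residue nonzero---the pole being necessarily simple---and hypothesis (b) makes $L^{S}(\tfrac12,\pi)$ nonzero; together with $c_{m,r}\neq0$ and $\prod_{v\in S}Z_{v}^{*}(s_{0})\neq0$ this forces $\big\langle\theta_{2r}(f_{1},\varphi_{1}),\theta_{2r}(f_{2},\varphi_{2})\big\rangle\neq0$ for some choice of data, whence $\theta_{2r}(\pi\otimes(\mu\circ\det))\neq0$.

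\smallskip
\noindent I expect the main obstacle to be this last step: isolating the local doubling zeta integral at the precise point $s_{0}$, showing that its non-vanishing there is governed by the local theta lift (this also requires controlling the normalizing factors so that the pole or value of $L^{S}$ at $s_{0}$ is not destroyed), and arranging the $\det$-twists coherently over all places via the conservation relation. The analytic heart---the passage through the regularized theta integral---is, in the first term range, exactly the setting already covered by the available first term identity, so it demands no new Siegel--Weil input here.
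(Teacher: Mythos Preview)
Your overall architecture---seesaw, regularized theta integral expressed as a non-Siegel Eisenstein series, first term identity to pass to the Siegel Eisenstein series, doubling to produce $L^{S}(s,\pi)$ times local zeta integrals---matches the paper's approach and is correct. The divergence, and the real issue, is exactly where you anticipate it: the local step at the bad places and the parity of the required $\det$-twists.

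Your proposed mechanism for this step does not work as written. You want to twist $\pi_{v}$ by $\det$ at places where the local theta lift vanishes, appealing to the conservation relation, and then fix the parity of the resulting set of twists by ``enlarging $S$ by an auxiliary place if a parity adjustment is needed.'' But such an auxiliary place $v_{0}$ only helps if twisting by $\det$ at $v_{0}$ does not destroy the nonvanishing of $Z_{v_{0}}$, and this is guaranteed precisely when $\pi_{v_{0}}\cong\pi_{v_{0}}\otimes\det$. When $\dim V$ is odd---which the theorem allows---no such place exists, so your parity adjustment is unavailable. (This is exactly why the analogous second-term-range theorem in the paper carries the hypothesis that some $\pi_{v}\cong\pi_{v}\otimes\det$.) Moreover, invoking the global hypothesis on $L^{S}$ ``place by place'' to force local theta lifts to behave is not a valid move: $L^{S}$ is global and gives no information at an individual bad place.

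The paper circumvents this entirely by a different device: it proves (Proposition~\ref{P:incoherent}) that the leading Laurent coefficient $A_{-1}^{(m,m-r-1)}$ of the Siegel Eisenstein series (and $A_{0}^{(2r+1,r)}$ at the boundary) \emph{vanishes on every incoherent section}. One first chooses, at each bad place, a section $\Phi_{v}$ and vectors $f_{i,v}$ making $Z_{v}$ nonzero (this is always possible by Kudla--Rallis, with no reference to local theta lifts), arranging $\Phi_{v}\in\sigma_{v}^{+}$ whenever $\pi_{v}\cong\pi_{v}\otimes\det$. The pole hypothesis on $L^{S}$ then forces $A_{-1}^{(m,m-r-1)}(\Phi)\neq 0$, so by the proposition $\Phi$ is coherent: the set $T=\{v:\Phi_{v}\notin\sigma_{v}^{+}\}$ automatically has even cardinality, and $\mu=\operatorname{sign}(T)$ is genuinely automorphic. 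Twisting by $\mu$ places every local section in $\sigma_{v}^{+}$, whence the inner product formula applies and gives the conclusion. The point is that the global vanishing statement for incoherent sections replaces any need for a local conservation-relation argument or an ad hoc parity fix, and it is what makes the theorem go through for odd $m$.
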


For this theorem, we do not need the assumption $\pi_v\cong\pi_v\otimes\det$ for some place $v$, and hence $\dim V$ can be odd. This is because of the crucial Proposition \ref{P:incoherent}.

\vskip 10pt

Now once these theorems have been proven, the natural question to ask is whether the converses are true. We are not able to answer this question. In \cite{GJS}, a conjecture related to this issue was made by using the notion of the ``lowest occurrence" for the range $m\geq 2r+1$. In the very last section, we consider this conjecture in some detail not only for the range $m\geq 2r+1$, but for any range.

\quad\\

\begin{center}
Acknowledgments
\end{center}
We thank Atsushi Ichino for his help on many occasions.
W. T. Gan is partially supported by NSF grant 0801071.
Also part of this paper was written when S. Takeda was at Ben Gurion University of the Negev in Israel. During his stay there, he was supported by the Skirball postdoctoral fellowship of the Center of Advanced Studies in Mathematics at the Mathematics Department of Ben Gurion University.

\quad\\

%%%%%%%%%%%%%%%%%%%%%%%%%%%%%%%%%%%%%%%%%%%%%%%%%%%%%%%%%%%%%%%%%%%%%%%%%%%%%%%%%%%%%%%%%%%%%%%%%%%%%%%%%

%%%%%%%%%%%%%%%%%%%%%%%%%%%%%%%%%%%%%%%%%%%%%%%%%%%%%%%%%%%%%

\section{\bf Notations and Preliminaries}\label{S:notation}

%%%%%%%%%%%%%%%%%%%%%%%%%%%%%%%%%%%%%%%%%%%%%%%%%%%%%%%%%%%%%

In this paper,  $F$ is a number field with ring of integers $\mathcal{O}$ and ring of adeles $\A$. Fix a non-trivial additive character $\psi=\otimes_v'\psi_v$ on $F\backslash \A$. For each finite $v$, let $c_v$ be the conductor of the additive character $\psi_v$, so that $\psi_v$ is trivial on $\Pv^{-c_v}$. We fix the Haar measure $dx_v$ on $F_v$ (for all $v$) to be self-dual with respect to $\psi_v$, and hence the volume of $\Ov$ is $q_v^{-c_v/2}$.
\vskip 5pt

Let
\[
    \xi(s)=|D|^{\frac{s}{2}}\prod_{v\leq\infty}\zeta_v(s)
\]
be the complete normalized zeta function of $F$, where $D$ is the discriminant of $F$. It satisfies the functional equation
\[
    \xi(1-s)=\xi(s).
\]
Note that $\xi$ has residues at $s=0, 1$. In this paper, we write
\[
    \xi(0):=\underset{s=0}{\Res}\xi(s) \quad\text{ and }\quad \xi(1):=\underset{s=1}{\Res}\xi(s).
\]
Here ${\Res}_{s=s_0}\xi(s)$ of course means the residue of $\xi(s)$ at $s=s_0$. Similarly for any meromorphic function $F(s)$, ${\Res}_{s=s_0}F(s)$ refers to the residue of $F(s)$ at $s=s_0$. We also denote by ${\Val}_{s=s_0}F(s)$ the ``value" of $F(s)$, \ie the constant term of the Laurent series of $F(s)$ at $s=s_0$. Note that ${\Val}_{s=s_0}F(s)$ makes sense even though $F(s)$ is not holomorphic at $s=s_0$.
\vskip 5pt

For an algebraic group $G$ over $F$, we occasionally write
\[
    [G]:=G(F)\backslash G(\A)
\]
for the sake of saving space. Also we denote by $\Aut(G)$ the space of automorphic forms on $G$. We do not impose the $K$-finiteness condition on the elements of $\Aut(G)$ so that the full group $G(\A)$ acts on $\Aut(G)$.
\vskip 5pt

Let $\V=\V^+\oplus \V^-$ be a split quadratic space with $\V^{\pm}$ maximal isotropic subspaces of dimension $m$, and let $W=W^+\oplus W^-$ be the symplectic space with $\dim W^{\pm}=r$. Because the spaces $\V$ and $W$ are split, one can fix self-dual lattices of $\V$ and $W$ which are compatible with the decompositions $\V^+  \oplus \V^-$  and $W^+ \oplus W^-$, thereby endowing the spaces $\V^{\pm}$ and $W^{\pm}$ with $\mathcal{O}$-integral structures. Choosing bases for these
lattices also gives us identifications $\V \cong F^{2m}$ and $W \cong F^{2r}$, which are well-defined up to the natural action of $\GL_{2m}(\mathcal{O})$ and $\GL_{2r}(\mathcal{O})$ respectively. Via these identifications and using our fixed Haar measure $dx_v$ on $F_v$,
we obtain additive Haar measures on the spaces $\V^{\pm}$ and $W^{\pm}$.
\vskip 10pt

 Set
\[  H=\Sp_{2r}=\Sp(W) \quad \text{and} \quad  G=\OO_{m,m}=\OO(\V).\]
The $\mathcal{O}$-integral structures on $\V$ and $W$ also endow the groups $G$ and $H$
with $\mathcal{O}$-integral structures.
This determines maximal compact subgroups $G(\widehat{\mathcal{O}})$ and
$H(\widehat{\mathcal{O}})$ of
$G(\A_f)$ and $H(\A_f)$ respectively. Picking maximal compact subgroups arbitrarily for the archimedean places, we thus obtain maximal compact subgroups
$K_G $ and $K_H$ of $G(\A)$ and $H(\A)$ respectively.

\vskip 10pt

Now let $\omega=\omega_\psi$ be the Weil representation of $G(\A)\times H(\A)$, which can be realized on the space $\Sw((\V^+\otimes W)(\A))$ of Schwartz functions on the adelic vector space $(\V^+\otimes W)(\A)$. Recall that one has the partial Fourier transform
\[
    \Sw((\V^+\otimes W)(\A))\rightarrow\Sw((\V\otimes W^+)(\A))
\]
defined by
\[
    \hat{\varphi}(u\oplus v)=\int_{(\V^+\otimes W^-)(\A)}\varphi(x\oplus u)\cdot\psi(\langle x,v\rangle)\,dx
\]
where $u\in(\V^+\otimes W^+)(\A)$ and $v\in(\V^-\otimes W^+)(\A)$ and $dx$ is the fixed additive Haar measure on $\V^+ \otimes W^-$.
\vskip 10pt

Following \cite[Sec. 7]{GI}, we define \textbf{the spherical Schwartz function}
\[
    \varphi^0=\otimes_v\varphi^0_v\in(\V^+\otimes W)(\A)
\]
to be such that the partial Fourier transform
$\hat{\varphi}^0_v$ is
\begin{enumerate}[$\bullet$]
\item the characteristic function of $(\V\otimes W^+)(\Ov)$ if $v$ is non-archimedean

\item the Gaussian if $v$ is archimedean, namely for $x\in (\V\otimes W^+)(F_v)$
\[
    \hat{\varphi}^0_v(x)=
    \begin{cases}
    \exp(-\pi\langle x, x^\ast\rangle)\quad\text{if $v$ is real}\\
    \exp(-2\pi\langle \bar{x}, x^\ast\rangle)\quad\text{if $v$ is complex},\\
    \end{cases}
\]
where $x^\ast$ is the image of $x$ under the map $\V\otimes W^+\rightarrow \V\otimes W^-$ given by $v\otimes w\mapsto v\otimes w^\ast$ with $w^\ast$ defined as follows: if $\{e_1,\dots,e_r,f_1,\dots,f_r\}$ is the symplectic basis giving the integral $\mathcal{O}$-structure on $W=W^+\oplus W-$ such that
$\langle e_i, f_i\rangle=1$, then $e_i^\ast=f_i$.
\end{enumerate}
 \vskip 5pt

It is not difficult to show that
\[
    \varphi^0_v=
    \begin{cases}
    q_v^{-c_vrm/2}\times\text{the characteristic function of }\\
    \qquad\qquad(\V^+\otimes W^-)(\Pv^{-c_v})\oplus(\V^+\otimes W^+)(\Ov)
    \text{ if $v$ is non-archimedean},\\
    \text{the Gaussian if $v$ is archimedean}.
    \end{cases}
\]
Then because
\[
    \prod_vq_v^{c_v}=|D|,
\]
we see that
\[
    \varphi^0(0)=|D|^{-rm/2}.
\]

Now we define
\[
	\Sw((\V^+\otimes W)(\A))^\circ=\text{ the $\OO(\V)(\A)$-span of the spherical $\varphi^0$}.
\]
Also for each $\varphi\in\Sw((\V^+\otimes W)(\A))$, we define
\[
    \varphi_{K_H}(v\otimes w)=\int_{K_H}\varphi(v\otimes kw)\,dk\quad\text{for $v\otimes w\in (\V^+\otimes W)(\A)$}.
\]

%%%%%%%%%%%%%%%%%%%%%%%%%%%%%%%%%%%%%%%%%%%%%%%%%%%%%%%%%%%%%%%%%%%%%%%%%%%%%%%%%%%%%%%%%%%%%%%%%%%%%%%%%%%%%%

%%%%%%%%%%%%%%%%%%%%%%%%%%%%%%%%%%%%%%%%%%%%%%%%%%%%%%%%%%%%%

\section{\bf The Regularized Siegel-Weil formula}\label{S:regularize}

%%%%%%%%%%%%%%%%%%%%%%%%%%%%%%%%%%%%%%%%%%%%%%%%%%%%%%%%%%%%%

The first term identity of the regularized Siegel-Weil formula identifies the leading term of
the Siegel Eisenstein series with the
regularized theta integral (\cite{Kudla-Rallis94, Ikeda, Ichino, Moeglin}) . In this section, we will review this theory to the extent we need it. Essentially everything in this
section is already known.

%%%%%%%%%%%%%%%%%%%%%%%%%%%%%%%%%%%%%%%%%%%%%%%%%%%%%%%%%%%%%

\subsection{Eisenstein series associated with degenerated principal series}

%%%%%%%%%%%%%%%%%%%%%%%%%%%%%%%%%%%%%%%%%%%%%%%%%%%%%%%%%%%%%
First, let us fix some of the notations and review the basics of the Eisenstein series on the split
orthogonal group $\OO_{m,m}$ of rank $m$ associated with a family of degenerate principal series.
\vskip 10pt

For each integer $r$ with $1\leq r\leq m$, we let $P_{r}$ be the
standard maximal parabolic subgroup of $\OO_{m,m}$ stabilizing an isotropic subspace of rank $r$, so that the Levi factor
is isomorphic to $\GL_r\times\OO_{m-r,m-r}$. As usual, we denote the degenerate principal series by
\[
    I^{m,r}(s)=\Ind_{P_{r}(\A)}^{\OO_{m,m}(\A)}|\,|^s\quad
    (\text{normalized induction}).
\]
Here $|\,|^s$ is the character $|\det|^s$ with $\det$ is on $\GL_r$ and extended to $P_r$ trivially on the rest. Recall that a section $f(-,s)\in I^{m,r}(s)$ is said to be standard (or flat) if its restriction to the maximal compact $K$ of $\OO_{m,m}(\A)$ is independent of $s$. For each standard section $f(-,s)\in I^{m,r}(s)$, we form
the Eisenstein series $E^{(m,r)}(g, s; f)$ by
\[
    E^{(m,r)}(g, s; f)=\sum_{\gamma\in
    P_{r}(F)\backslash\OO_{m,m}(F)}f(\gamma g, s).
\]
It is well-known that this sum converges absolutely for $\Re(s)\gg 0$
and admits a meromorphic continuation.
\vskip 5pt

When $r=m$, we call $E^{(m,r)}(g, s; f)$ the Siegel Eisenstein series, and otherwise a non-Siegel Eisenstein series. Also when $f$ is a spherical standard section with $f(1)=1$, we call the corresponding Eisenstein series the spherical Eisenstein series.

%%%%%%%%%%%%%%%%%%%%%%%%%%%%%%%%%%%%%%%%%%%%%%%%%%%%%%%%%%%%%

\subsection{The Siegel Eisenstein series}

%%%%%%%%%%%%%%%%%%%%%%%%%%%%%%%%%%%%%%%%%%%%%%%%%%%%%%%%%%%%%
Next we will review the theory of the Siegel Eisenstein series of
the split orthogonal group and its relations to the Weil
representation.
\vskip 10pt

Recall that $G(\A)$ has an Iwasawa decomposition
\[
    G(\A)=P(\A)K_G,
\]
where $P$ is the Siegel parabolic subgroup that fixes $\V^+$. Then
for each $g\in G(\A)$, we write $g=nm(a)k$ and
$|a(g)|=|\det(a)|_{\A}$ following the convention of Kudla and
Rallis (\cite[p.9-10]{Kudla-Rallis94}). Also we denote
\[
    \rho_{m}=\frac{m-1}{2}
\]
and
\[
    \rho_{m,r}=r-\rho_m=\frac{2r-m+1}{2}
\]

Now consider the degenerate principal series
$\Ind_{P(\A)}^{G(\A)}|\,|^s$ (normalized induction). For
each standard section $\Phi$, we form the Siegel Eisenstein series
\[
    E^{(m,m)}(g,s;\Phi)=\sum_{\gamma\in P(F)\backslash
    G(F)}\Phi(\gamma g,s).
\]
It is known that this sum is absolutely convergent for
$\Re(s)>\rho_m=\frac{m-1}{2}$, and it has meromorphic continuation
together with the functional equation given by
\[
    E^{(m,m)}(g,s;\Phi)=E^{(m,m)}(g,-s;M_m(s)\Phi),
\]
where $M_m(s)$ is the intertwining operator defined by
\[
    M_m(s)\Phi(g,s)=\int_{N(\A)}\Phi(wng,s)\, dn,
\]
for $\Re(s)$ sufficiently large and by meromorphic
continuation in general.
\vskip 10pt

The following is due to Kudla and Rallis (Theorem 1.0.1 of \cite{Kudla-Rallis90}).
\begin{prop}
Let $\Phi$ be a standard section. Then the Siegel Eisenstein series $E^{(m,m)}(g,s;\Phi)$ has at most simple poles, and for $\Re(s)>0$ those occur only in the set
\[
	\{\hat{0}, \dots,\rho_m-1,\rho_m\}
    =\begin{cases}
    \{\frac{1}{2},\frac{3}{2},\dots,\rho_m\}\quad\text{if $m$ is even};\\
    \{1,\cdots,\rho_m\}\quad\text{if $m$ is odd},
    \end{cases}
\]
where $\hat{0}$ means $0$ is omitted.
\end{prop}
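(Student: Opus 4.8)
The plan is to follow the argument of Kudla--Rallis \cite{Kudla-Rallis90}. First I would reduce the question to the behaviour of the constant term of $E^{(m,m)}(-,s;\Phi)$ along the Siegel parabolic $P$. By the general theory of Eisenstein series, together with the (standard) fact that $E^{(m,m)}(g,s;\Phi)$ is holomorphic on the unitary axis $\Re(s)=0$, the poles of $E^{(m,m)}(g,s;\Phi)$ in $\Re(s)>0$ are among the poles of its $P$-constant term. Since $P$ is conjugate in $\OO_{m,m}$ to its opposite (via the element interchanging $\V^{+}$ and $\V^{-}$), this constant term equals $\Phi(g,s)+M_m(s)\Phi(g,s)$, and as $\Phi$ is standard the first summand is entire; so it suffices to locate the poles of $M_m(s)\Phi$ for $\Re(s)>0$.

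Next I would factor $M_m(s)=\otimes_v M_{m,v}(s)$ into local intertwining operators and normalize them. Writing $c_{m,v}(s)$ for the scalar by which $M_{m,v}(s)$ acts on the normalized spherical section, and $M^{\ast}_{m,v}(s)=c_{m,v}(s)^{-1}M_{m,v}(s)$ for the normalized operator, one has $M_m(s)=c_m(s)\cdot\bigl(\otimes_v M^{\ast}_{m,v}(s)\bigr)$ with $c_m(s)=\prod_v c_{m,v}(s)$. The scalar $c_{m,v}(s)$ is computed place by place by the Gindikin--Karpelevich formula; because the inducing character $|\det|^{s}$ is so degenerate, the resulting product of local zeta factors telescopes, so that the global product $c_m(s)$ is an explicit finite quotient of products of completed zeta values $\xi(2s-j)$, $j\in\Z$, with the $j$'s determined by $m$ (the arguments are linear in $2s$ since $\OO_{m,m}$ is simply laced).

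The heart of the matter, and the step I expect to be the main obstacle, is to show that each normalized local operator $M^{\ast}_{m,v}(s)$ is holomorphic on $\Re(s)>0$ as an operator on the whole degenerate principal series $I^{m,m}_v(s)$, i.e. that away from the spherical vector the non-spherical $K_v$-types contribute no new poles. At almost all finite $v$ this is the Gindikin--Karpelevich normalization and is automatic; at the remaining finite places it rests on Kudla--Rallis's analysis of the (possibly ramified) degenerate principal series of $\OO_{m,m}(F_v)$, and at the archimedean places on the corresponding explicit study of intertwining operators for $\OO_{m,m}(\R)$ and $\OO_{m,m}(\C)$. Granting this, for a factorizable standard section $\Phi=\otimes_v\Phi_v$ one has $M_m(s)\Phi=c_m(s)\cdot\otimes_v M^{\ast}_{m,v}(s)\Phi_v$, whose poles in $\Re(s)>0$ lie among those of $c_m(s)$; a general standard section is a finite sum of such, so the conclusion (and in particular that the poles are at most simple) persists.

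Finally I would read off the poles of $c_m(s)$ using the properties of $\xi$ recalled in \S\ref{S:notation}: $\xi$ has simple poles only at $0$ and $1$ and, by $\xi(1-s)=\xi(s)$, is holomorphic and nonvanishing for $\Re(s)\ge 1$ and for $\Re(s)\le 0$. An elementary bookkeeping with the factors $\xi(2s-j)/\xi(2s-j+1)$ — the numerator's pole at $2s-j=0$ is cancelled by the denominator, the genuine singularity sitting at $2s-j=1$, while the denominator's zeros lie harmlessly in the critical strip — shows that on $\Re(s)>0$ the quotient $c_m(s)$ has at most simple poles, located at points $s=(j+1)/2$; tracking which $j$ survive gives exactly $s=\rho_m,\rho_m-1,\dots$, i.e. the set $\{\hat 0,\dots,\rho_m-1,\rho_m\}$, the endpoint $0$ being relevant only when $m$ is odd and excluded anyway since we are in $\Re(s)>0$ (equivalently by the holomorphy on $\Re(s)=0$ used above). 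Combined with the previous steps this yields the proposition.
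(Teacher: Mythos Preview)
The paper does not give its own proof of this proposition: it simply attributes the result to Kudla--Rallis, citing it as Theorem~1.0.1 of \cite{Kudla-Rallis90}, and moves on. Your proposal is precisely a sketch of the Kudla--Rallis argument (reduction to the constant term along $P$, factorization and normalization of the global intertwining operator, holomorphy of the normalized local operators in the right half-plane, and reading off the poles from the global normalizing factor), so you are in complete agreement with what the paper invokes.
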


We are interested in the residues and values of $E^{(m,m)}(g,s;\Phi)$ at $s=\rho_{m,r}=\frac{2r-m+1}{2}$ for various $r$. We write the Laurent expansion of $E^{(m,m)}(g,s;\Phi)$ at $\rho_{m,r}=\frac{2r-m+1}{2}$ as
\begin{align*}
     E^{(m,m)}(g,s;\Phi)&=\sum_{d\geq-1}^{\infty}A_d^{(m,r)}(\Phi)(g)(s-\rho_{m,r})^d\\
     &=\frac{A_{-1}^{(m,r)}(\Phi)(g)}{s-\rho_{m,r}}
    +A_{0}^{(m,r)}(\Phi)+A_1^{(m,r)}(\Phi)(g)(s-\rho_{m,r})+\cdots.
\end{align*}
Note that each $A_d^{(m,r)}(\Phi)$ is an automorphic form on $G(\A)$.
\vskip 5pt

The Siegel Eisenstein series is related to the Weil representation in the following way. Let $\Sw((\V^+\otimes W)(\A))$ be the space of
Schwartz functions giving rise to the Weil representation, where $\dim W=2r$ and $\dim \V=\dim (\V^+\oplus \V^-)=2m$. Then for each
$\varphi\in\Sw((\V^+\otimes W)(\A))$, we have a map
\begin{align*}
    \Phi^{(m,r)}: \Sw((\V^+\otimes W)(\A))&\longrightarrow
    \Ind_{P(\A)}^{G(\A)}|\,|^s\\
     \varphi&\mapsto \Phi^{(m,r)}_{\varphi}
\end{align*}
given by
\[
    \Phi^{(m,r)}_{\varphi}(g, s)=\omega(g)\varphi(0)\cdot|a(g)|^{s-\rho_{m,r}}.
\]
Here we should emphasize that this map is \emph{not} $G(\A)$-intertwining unless $s=\rho_{m,r}$. Also notice that
$\Phi^{(m,r)}_{\varphi}$ is clearly a standard section, and so the poles of the Eisenstein series
$E^{(m,m)}(g,s;\Phi^{(m,r)}_{\varphi})$ are at most simple. By abuse of notation, we sometimes write
\[
    A_d^{(m,r)}(\Phi^{(m,r)}_{\varphi})=A_d^{(m,r)}(\varphi),
\]
whenever there is no danger of confusion. Then we have the map
\[
    A_{d}^{(m,r)}:\Sw((\V^+\otimes W)(\A))\longrightarrow
    \Ind_{P(\A)}^{G(\A)}|\,|^{\rho_{m,r}}\longrightarrow\Aut(G).
\]
Now $A_{d}^{(m,r)}$ is $G(\A)$-intertwining if $A_{d}^{(m,r)}$ is the leading term of the Laurent expansion i.e.  $A_{0}^{(m,r)}$ if $m=2r+1$ and $A_{-1}^{(m,r)}$ otherwise. (Let us also note that  $A_d^{(m,r)}$ is $H(\A)$-invariant for all $d$. But we do not use this fact in this paper, even though it plays a pivotal role in the original work on the regularized Siegel-Weil formula by Kudla-Rallis (\cite{Kudla-Rallis94}).)
\vskip 10pt

Now consider the spherical Schwartz function $\varphi^0\in\Sw((\V^+\otimes W)(\A))$ defined in \S \ref{S:notation}. One sees that
\[
    \Phi_{\varphi^0}^{(m,r)}(1,s)=\varphi^0(0)=|D|^{-rm/2}.
\]
Hence at $s=\rho_{m,r}$ we have
\begin{equation}\label{E:E^(m,m)}
    E^{(m,m)}(g, s; \Phi^{(m,r)}_{\varphi^0})=|D|^{-rm/2}E^{(m,m)}(g, s; \Phi^0),
\end{equation}
where $\Phi^0(-, s)$ is the spherical section in
$\Ind_{P(\A)}^{G(\A)}|\,|^s$ normalized as $\Phi^0(1,s)=1$.

%%%%%%%%%%%%%%%%%%%%%%%%%%%%%%%%%%%%%%%%%%%%%%%%%%%%%%%%%%%%%

\subsection{The regularized theta integral}

%%%%%%%%%%%%%%%%%%%%%%%%%%%%%%%%%%%%%%%%%%%%%%%%%%%%%%%%%%%%%
Next we will review the theory of the regularized theta integral and how it is related to the non-Siegel Eisenstein series. In this section (and indeed for this entire paper), we assume that we are outside the convergent range of Weil, namely
\[
    m>r.
\]
(See \cite{Kudla-Rallis94} regarding the convergent range.) First for each $\varphi\in\Sw(\V\otimes W^+)$ let us define the theta integral
\[
    I^{(m,r)}(g,s;\varphi)=\int_{H(F)\backslash
    H(\A)}\theta(g,h;\varphi)E(h,s)\, dh,
\]
where
\[
    \theta(g,h;\varphi)=\sum_{v\in (\V^+\otimes W)(F)}\omega(g,h)\varphi(v),
\]
and $E(h,s)$ is the spherical Siegel Eisenstein series on $H(\A)=\Sp_{2r}(\A)$ defined by
\[
    E(h,s)=\sum_{\gamma\in Q_r(F)\backslash H(F)}\Psi(\gamma h,s),
\]
where
\[
    \Psi(h,s)=|a(h)|^{s+\rho_r'},\quad
    \rho_r'=\frac{r+1}{2},
\]
and $Q_r$ is the Siegel parabolic subgroup of $\Sp_{2r}(\A)$. The theta integral might not converge for all $\varphi$. However, there is an element $z$ (resp. $z'$) called a regularizing element in the center of universal enveloping algebra of $\mathfrak{g}_v$ (resp. $\mathfrak{h}_v$) for $v$ real (\cite{Kudla-Rallis94}) or in the spherical Hecke algebra of $G(F_v)$ (resp. $H(F_v)$) for $v$ non-archimedean (\cite{Ichino}) so that
\begin{align*}
    \omega(z)&=\omega(z'),\\
    \omega(g,h)\omega(z)&=\omega(z)\omega(g,h)
\end{align*}
(\ie the action of $z$ (and hence $z'$) commutes with the action of $G(\A)\times H(\A)$),
and such that the function $\theta(g,-,\omega(z)\varphi)$ is rapidly decreasing on any Siegel domain
of $\Sp_{2r}(\A)$. Thus the theta integral $I^{(m,r)}(g,s;\omega(z)\varphi)$ converges for $\Re(s)\gg0$ for all $\varphi$. We call it the regularized theta integral.
\vskip 10pt

By unfolding the Eisenstein series inside the regularized theta integral, we have
\begin{align*}
    &I^{(m,r)}(g,s;\omega(z)\varphi)\\
    &=\int_{\GL_r(F)\backslash \GL_r(\A)}
        \sum_{\gamma\in P_r(F)\backslash G(F)}\sum_{\alpha\in
        \GL_r(F)}\omega(\gamma g,m(\alpha
        a))\omega(z)\hat{\varphi}_{K_H}(w_0)|a|^{s-\rho_r'}\, da\\
        &=\sum_{\gamma\in P_r(F)\backslash G(F)}\int_{\GL_r(\A)}
        \omega(\gamma g,
        m(a))\omega(z)\hat{\varphi}_{K_H}(w_0)|a|^{s-\rho_r'}\,da\\
        &=\sum_{\gamma\in P_r(F)\backslash G(F)}\int_{\GL_r(\A)}
        \omega(\gamma g,
        1)\omega(z)\hat{\varphi}_{K_H}(^taw_0)|a|^{s+m-\rho_r'}\,da.
\end{align*}
Here $w_0$ is the element in $\Sw(\V\otimes W^+)$ corresponding to
the element of the form $v_1\otimes e_1+v_2\otimes e_2+\cdots
+v_r\otimes e_r$, where $v_i$'s and $e_i$'s are the obvious basis
elements of $\V$ and $W$, and $P_r$ is the parabolic subgroup which
stabilizes the isotropic $r$-plane in $\V$ spanned by
$v_1,\dots,v_r$. (See \cite[p.48]{Kudla-Rallis94}.)

Now define
\[
    f^{(m,r)}(g,s;\varphi)=\int_{\GL_r(\A)}\omega(g, 1)
    \hat{\varphi}(^taw_0)|a|^{s+m-\rho_r'}\,da
\]
and so
\[
    I^{(m,r)}(g,s;\omega(z)\varphi)
    =\sum_{\gamma\in P_r(F)\backslash G(F)}f^{(m,r)}(\gamma
    g,s;\omega(z)\varphi_{K_H}).
\]
Here note that
\[
    f^{(m,r)}(-,s;\varphi)\in \Ind_{P_{r}(\A)}^{G(\A)}|\; |^s \quad\quad\text{(normalized
    induction)},
\]
and the map
\begin{align*}
    \Sw((\V^+\otimes W)(\A)) &\ra \Ind_{P_{r}(\A)}^{G(\A)}|\; |^s\\
    \varphi&\mapsto f^{(m,r)}(-,s;\varphi)
\end{align*}
is a $G(\A)$-intertwining operator for $\Re(s)$ sufficiently large.
Indeed, the integral for $f^{(m,r)}(g,s;\varphi)$ converges for
$\Re(s)>\frac{2r-m+1}{2}$. Also
\[
    f^{(m,r)}(g,s;\omega(z)\varphi)=P_z(s)f^{(m,r)}(g,s;\varphi),
\]
where $P_z(s)$ is a holomorphic function in $s$ depending on $z$. In fact $P_z(s)$ is a polynomial in $s$ if we choose our regularizing element $z$ from the center of universal enveloping algebra at an archimedean place, and $P_z(s)\in\C[q_v^{-s},q_v^s]$ if $z$ is from the spherical Hecke algebra at a non-archimedean $v$. For example, if we choose $z$ from the center of universal enveloping algebra at a real place, it is explicitly given by
\[
        P_z(s)=\prod_{i=0}^{r-1}\left((s-\frac{r-1}{2}+i)^2-(m-r)^2\right).
\]
(See \cite[p.51]{Kudla-Rallis94}.) In any case, we have
\[
    I^{(m,r)}(g,s;\omega(z)\varphi)=P_z(s)\E^{(m,r)}(g,s;\varphi),
\]
where
\[
    \E^{(m,r)}(g,s;\varphi)=\sum_{\gamma\in P_{r}(F)\backslash G(F)} f^{(m,r)}(\gamma
    g,s;\varphi_{K_H}),
\]
for the region $\Re(s)>\max\{r-\rho_r', \rho_r'\}$. (See the second paragraph of \cite[p.53]{Kudla-Rallis94}.)
\vskip 5pt

Also, $z'$ as an operator on the space of automorphic forms on $H$ is self-adjoint for the Peterson inner product with the property
\[
    z'\ast E(h,s)=P_z(s)E(h,s).
\]
\vskip 5pt

It is known that at $s=\rho_r'=\frac{r+1}{2}$ the non-Siegel Eisenstein series $\E^{(m,r)}(g, s;\varphi)$ has at most a double pole, and if $m\geq 2r+1$, then it only has a simple pole. (See \cite[bottom of p.53]{Kudla-Rallis94}.) Accordingly, we call
\begin{align*}
 	m\geq 2r+1\quad\text{\ie}\quad r\leq \frac{m}{2}-1&:\text{\bf 1st term range}\\
	r+1\leq m\leq 2r\quad\text{\ie}\quad\frac{m}{2}\leq r\leq m-1&: \text{\bf 2nd term range}.
\end{align*}
Also we sometimes call
\begin{align*}
m=2r+1&:\text{\bf boundary case}.
\end{align*}
The rationale for this terminology is that for the 1st term range, we will need only the first term identity, and for the 2nd term range we will need the 2nd term identity. Also, for the boundary case, the first term identity differs from the non-boundary case. This will be clearer in due course.
\vskip 10pt

We write the Laurent expansion of $\E^{(m, r)}(g, s;\varphi)$ at
$s=\rho_r'=\frac{r+1}{2}$ as
\begin{align*}
    \E^{(m,r)}(g,s;\varphi)&=\sum_{d\geq -2}^{\infty}B_d^{(m,r)}(\varphi)(g){(s-\rho_r')^d}\\
    &=\frac{B_{-2}^{(m,r)}(\varphi)(g)}{(s-\rho_r')^2}
    +\frac{B_{-1}^{(m,r)}(\varphi)(g)}{(s-\rho_r')}+B_0^{(m,r)}(\varphi)(g)+\cdots.
\end{align*}

Note that each $B_d^{(m,r)}$ is a map from $\Sw((V^{+}\otimes W)(\A))$ to $\Aut(G)$. Unlike the case of the Siegel Eisenstein series, however, all the $B_d^{(m,r)}$'s are $G(\A)$-intertwining. This follows from the fact that $\omega(z)$ commutes with the action of $G(\A)$ on $\Sw((V^{+}\otimes W)(\A))$. To be more specific, note that
\begin{align*}
    \E^{(m,r)}(g,s;\varphi)&=\frac{1}{P_z(s)}I^{(m,r)}(g,s;\omega(z)\varphi)\\
    &=\frac{1}{P_z(s)}\int_{H(F)\backslash
    H(\A)}\theta(g,h;\omega(z)\varphi)E(h,s)\, dh\\
    &=\int_{H(F)\backslash
    H(\A)}\theta(g,h;\omega(z)\varphi)\frac{E(h,s)}{P_z(s)}\, dh.
\end{align*}
If we write the Laurent series of $\frac{E(h,s)}{P_z(s)}$ at $s=\rho_r'$ as
\[
    \frac{E(h,s)}{P_z(s)}=\sum_{d=-2}^{\infty}C_d(h)(s-\rho_r')^d
    =\frac{C_{-2}(h)}{(s-\rho'_r)^2}+\frac{C_{-1}(h)}{s-\rho'_r}+C_0(h)+\cdots,
\]
then one sees that
\[
    B_d^{(m,r)}(\varphi)(g)=\int_{H(F)\backslash
    H(\A)}\theta(g,h;\omega(z)\varphi)C_{d}(h)\, dh.
\]
Then the assertion follows from
\[  \theta(gg',h;\omega(z)\varphi)=\theta(g,h;\omega(z)\omega(g',1)\varphi) \]
for all $g'\in G(\A)$.
\vskip 5pt

Let us also note also that the first non-zero $B_d^{(m,r)}$, which turns out to be either $B_{-1}^{(m,r)}$ or $B_{-2}^{(m,r)}$ depending on the range, is $H(\A)$-invariant, but we do not use this fact, even though it is pivotal in the theory of the regularized Siegel-Weil formula.
\vskip 10pt

Finally, let us assume $\varphi^0\in\Sw((V^{+}\otimes W)(\A))$ is our spherical Schwartz function as defined in \S \ref{S:notation}. Then $f^{(m,r)}(-,s;\varphi^0)\in\Ind_{P_{r}(\A)}^{G(\A)}|\; |^s$ is a spherical section if $\Re(s)$ is sufficiently large. Let $f^0\in \Ind_{P_{r}(\A)}^{G(\A)}|\; |^s$ be the spherical section with the property $f^0(1,s)=1$. Then it was shown in \cite{GI} that
\begin{lemma}\label{L:E^(m,r)}
\[
    \E^{(m,r)}(g, s;\varphi^0)
    =|D|^{-r(s+m-\frac{r+1}{2})/2}\prod_{i=1}^{r}\frac{\xi(s+m-\frac{r+1}{2}-(i-1))}{\xi(i)}E^{(m,r)}(g,s;f^0).
\]
(Here recall our convention $\xi(1)=\underset{s=1}{\Res}\xi(s)$.)
\end{lemma}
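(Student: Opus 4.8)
The plan is to reduce the identity to a computation of a single scalar function of $s$, and then to compute that scalar place by place. For $\Re(s)$ large the map $\varphi\mapsto f^{(m,r)}(-,s;\varphi)$ is $G(\A)$-intertwining into $\Ind_{P_r(\A)}^{G(\A)}|\ |^s$, and $\varphi^0$ is $K_G\times K_H$-fixed, so $f^{(m,r)}(-,s;\varphi^0)$ lies in the space of $K_G$-fixed vectors of $\Ind_{P_r(\A)}^{G(\A)}|\ |^s$. By the Iwasawa decomposition $G(\A)=P_r(\A)K_G$ this space is one-dimensional, so $f^{(m,r)}(-,s;\varphi^0)=c(s)\cdot f^0(-,s)$ with $c(s)=f^{(m,r)}(1,s;\varphi^0)$. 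Since $\varphi^0$ is $K_H$-invariant we have $\varphi^0_{K_H}=\varphi^0$, so summing over $P_r(F)\backslash G(F)$ gives $\E^{(m,r)}(g,s;\varphi^0)=c(s)\,E^{(m,r)}(g,s;f^0)$ for $\Re(s)$ large, hence for all $s$ by meromorphic continuation. The whole content of the lemma is thus the evaluation of $c(s)$.

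Next I would factor $c(s)$ over the places of $F$. Because $\varphi^0=\otimes_v\varphi^0_v$, the partial Fourier transform and the $\GL_r(\A)$-integral factor, giving $c(s)=\prod_v c_v(s)$ with $c_v(s)=\int_{\GL_r(F_v)}\widehat{\varphi^0_v}({}^ta\,w_0)\,|a|_v^{\,s+m-\rho_r'}\,da_v$ (the operator $\omega(1,1)$ being trivial). At a finite place, $\widehat{\varphi^0_v}$ is the characteristic function of $(\V\otimes W^+)(\Ov)$, so the condition ${}^ta\,w_0\in(\V\otimes W^+)(\Ov)$ cuts the integral down to the integral matrices in $\GL_r(F_v)$; the standard inductive evaluation of such a $\GL_r$-integral (peeling off one column at a time) yields a product of $r$ consecutive local zeta values, namely $\prod_{i=1}^r\zeta_v(s+m-\tfrac{r+1}{2}-(i-1))$, together with the factor $\prod_{i=1}^r\zeta_v(i)^{-1}$ coming from the volume of $\GL_r(\Ov)$ and a power of $q_v^{-c_v}$ coming from our self-dual normalization of measures. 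At the archimedean places one performs the corresponding integral against the Gaussian $\widehat{\varphi^0_v}$ directly; the classical evaluation produces the archimedean factors in exactly the same shape, i.e. $\prod_{i=1}^r\zeta_v(s+m-\tfrac{r+1}{2}-(i-1))\big/\prod_{i=1}^r\zeta_v(i)$. Multiplying over all $v$ and recalling $\xi(s)=|D|^{s/2}\prod_{v\le\infty}\zeta_v(s)$ assembles the quotient $\prod_{i=1}^r\xi(s+m-\tfrac{r+1}{2}-(i-1))\big/\prod_{i=1}^r\xi(i)$.

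Finally I would carry out the $|D|$-bookkeeping: the factor $|D|^{s/2}$ built into $\xi$ contributes $|D|^{\frac12\sum_{i=1}^r(s+m-\frac{r+1}{2}-(i-1))}$ from the numerator and $|D|^{\frac12\sum_{i=1}^r i}$ from the denominator, while the product over $v$ of the local factors $q_v^{-c_v(\,\cdot\,)}$ recombines via $\prod_v q_v^{c_v}=|D|$; collecting these three contributions produces precisely the prefactor $|D|^{-r(s+m-\frac{r+1}{2})/2}$, and this yields the stated formula. Alternatively — and this is what the paper in fact does — one simply invokes \cite{GI}, where exactly this computation is performed for $\GO(\V)$; the isometry case is word for word the same.

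The main obstacle is the explicit local integral $c_v(s)$ at the finite places: one must keep careful track of the lattice $(\V^+\otimes W^-)(\Pv^{-c_v})$ occurring in $\varphi^0_v$ — equivalently of the conductor shift already absorbed into the clean description of $\widehat{\varphi^0_v}$ — and of every power of $q_v$ produced by the measures, so that they globalize to the single power $|D|^{-r(s+m-\frac{r+1}{2})/2}$ and the $\zeta_v$-factors come out with arguments shifted exactly as stated. The reduction in the first paragraph and the reassembly in the last are purely formal; the slightly delicate point is this bookkeeping in the $p$-adic integral.
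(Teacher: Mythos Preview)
Your proposal is correct and follows precisely the paper's approach: reduce to the scalar $c(s)=f^{(m,r)}(1,s;\varphi^0)$ by sphericality, then compute that scalar; the paper simply cites \cite[Lemma 7.4]{GI} for the latter computation, whereas you sketch it out explicitly. Your outline of the local zeta-integral computation and the $|D|$-bookkeeping is the content of that cited lemma, so nothing is missing or different in substance.
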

\begin{proof}
This follows from the fact that
\[
    f^{(m,r)}(1,s;\varphi^0)=|D|^{-r(s+m-\rho_r')/2}\prod_{i=1}^{r}\frac{
    \xi(s+m-\frac{r+1}{2}-(i-1))}{\xi(i)}.
\]
(See \cite[Lemma 7.4]{GI} for the detail computation.)
\end{proof}

%%%%%%%%%%%%%%%%%%%%%%%%%%%%%%%%%%%%%%%%%%%%%%%%%%%%%%%%%%%%%

\subsection{The regularized Siegel-Weil formula (first term identity)}

%%%%%%%%%%%%%%%%%%%%%%%%%%%%%%%%%%%%%%%%%%%%%%%%%%%%%%%%%%%%%

Finally, we can state the first term identity of the regularized Siegel-Weil formula in the first term range.
 To be precise, we have the following first term identity, which is essentially due to Moeglin (\cite{Moeglin}) and completed by Jiang and Soudry (\cite{Jiang_and_Soudry}), though their proof heavily depends on Kudla and Rallis (\cite{Kudla-Rallis94}).

\begin{prop}\label{P:first_term}
Assume $r<\frac{m-1}{2}$, \ie $m>2r+1$. Then for all
$\varphi\in\Sw((\V^+\otimes W)(\A))$, there exists a standard section $\Phi(-,s)\in\Ind_{P(\A)}^{G(\A)}|\,|^s$ such that
\[
    B_{-1}^{(m,r)}(\varphi)(g)=A_{-1}^{(m,m-r-1)}(\Phi)(g)
\]
for all $g\in G(\A)$. Moreover if $\varphi$ is spherical, then $\Phi$ can be chosen to be a spherical section.
\end{prop}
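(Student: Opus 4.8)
The plan is to identify both sides as the leading (residue) term at the relevant point of intertwined Eisenstein series, and then to match them by computing the intertwining operators explicitly at the spherical vector and invoking multiplicity-one-type rigidity. First I would recall, as in \S\ref{S:regularize}, that $\E^{(m,r)}(g,s;\varphi)$ is built from the section $f^{(m,r)}(-,s;\varphi_{K_H})\in\Ind_{P_r(\A)}^{G(\A)}|\cdot|^s$, and that under the hypothesis $m>2r+1$ this Eisenstein series has (at worst) a simple pole at $s=\rho_r'=\tfrac{r+1}{2}$, so that $B_{-1}^{(m,r)}(\varphi)$ is exactly its residue there. The residue of a non-Siegel Eisenstein series $E^{(m,r)}(g,s;f)$ at such a point is, by the general theory of Eisenstein series (Langlands' constant-term/inner-product computation, or Kudla--Rallis \cite{Kudla-Rallis94}), itself a \emph{square-integrable} automorphic form which can be realized as the value at an appropriate point of a Siegel Eisenstein series $E^{(m,m)}(g,s;\Phi)$ --- concretely, via the standard ``tower'' argument one applies the intertwining operator $M(s)\colon I^{m,r}(s)\to I^{m,m-r}(-s)$ (or its analogue landing in $I^{m,m}$) and tracks how the residue of $E^{(m,r)}$ at $\rho_r'$ matches the residue of $E^{(m,m)}$ at $\rho_{m,m-r-1}=\tfrac{2(m-r-1)-m+1}{2}=\tfrac{m-2r-1}{2}$. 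So the statement to prove is: the residue of $\E^{(m,r)}(-,s;\varphi)$ at $\rho_r'$ coincides with $A_{-1}^{(m,m-r-1)}(\Phi)$ for a suitable standard $\Phi$ depending linearly and $G(\A)$-equivariantly on $\varphi$.

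The key steps, in order, would be: \textbf{(1)} Verify that $B_{-1}^{(m,r)}$, as a map $\Sw((\V^+\otimes W)(\A))\to\Aut(G)$, is $G(\A)$-intertwining with image inside $\Ind_{P(\A)}^{G(\A)}|\cdot|^{\rho_{m,m-r-1}}$ composed with the automorphic realization --- this uses the factorization $\E^{(m,r)}=\tfrac{1}{P_z(s)}\int\theta(g,h;\omega(z)\varphi)E(h,s)\,dh$ from the excerpt, which makes the $G(\A)$-equivariance transparent. \textbf{(2)} Compute $B_{-1}^{(m,r)}(\varphi^0)$ on the spherical vector using Lemma \ref{L:E^(m,r)}: there $\E^{(m,r)}(g,s;\varphi^0)$ is an explicit product of completed zeta functions times $E^{(m,r)}(g,s;f^0)$, so its residue at $s=\rho_r'$ is an explicit scalar times the spherical residual form; on the other side, compute $A_{-1}^{(m,m-r-1)}(\Phi^0)$ similarly via the constant term / Gindikin--Karpelevich product of local intertwining operators for the spherical section. \textbf{(3)} Match the two scalars --- this is the heart of the computation and is exactly the kind of zeta-function bookkeeping carried out in \cite{Kudla-Rallis94} and refined by Ikeda \cite{Ikeda} and Ichino \cite{Ichino}; it pins down $\Phi$ on the spherical line, and the required $\Phi$ for general spherical $\varphi$ is then forced by $G(\A)$-equivariance together with the irreducibility (or at least the relevant multiplicity-one property) of the residual representation generated by $f^0$ at $s=\rho_r'$. \textbf{(4)} For general (non-spherical) $\varphi\in\Sw$, extend by $G(\A)$-linearity: since the $G(\A)$-span of $\varphi^0$ is dense and both $B_{-1}^{(m,r)}$ and $\varphi\mapsto A_{-1}^{(m,m-r-1)}(\Phi_\varphi)$ are continuous $G(\A)$-maps, the identity propagates; one then checks $\Phi_\varphi$ can be taken standard, which follows because the construction only involves applying fixed intertwining operators to the flat section $\Phi_\varphi^{(m,m-r-1)}$-type data.

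The main obstacle I expect is \textbf{Step (3)}: ensuring the explicit constant in front of the residue on the $\E^{(m,r)}$-side (coming from $\prod_{i=1}^r \xi(s+m-\tfrac{r+1}{2}-(i-1))/\xi(i)$ and the $|D|$-power in Lemma \ref{L:E^(m,r)}) matches, \emph{on the nose}, the constant from the Gindikin--Karpelevich evaluation of $M_m(s)$ at the corresponding point on the Siegel side, including getting all the $\xi(1)=\Res_{s=1}\xi(s)$ normalizations and the residual-pole cancellations right. A secondary subtlety is that the identification of the residue of $\E^{(m,r)}$ with a \emph{Siegel} Eisenstein series value (rather than merely with some square-integrable form) requires knowing that the residual representation at $s=\rho_r'$ occurs in the Siegel degenerate principal series $\Ind_{P(\A)}^{G(\A)}|\cdot|^{\rho_{m,m-r-1}}$ --- this is precisely the content of the Kudla--Rallis analysis of the poles of Siegel Eisenstein series (the Proposition quoted above) together with the ``doubling'' identification, and one simply cites it. Note that this first term identity is not original to us; it is due to Moeglin \cite{Moeglin} and Jiang--Soudry \cite{Jiang_and_Soudry} (building on \cite{Kudla-Rallis94}), so the role of this section is to state it in the normalization we need and reduce its spherical case to the explicit computation of \cite{GI}.
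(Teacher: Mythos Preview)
Your proposal takes a substantially different route from the paper and contains a genuine gap.

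The paper's proof is extremely short: the first assertion (existence of $\Phi$ for \emph{all} $\varphi\in\Sw((\V^+\otimes W)(\A))$) is simply a citation to \cite[Theorem 2.4]{Jiang_and_Soudry}. The spherical refinement is then obtained by a one-line averaging trick: given any $\Phi$ that works for a spherical $\varphi$, set $\Phi'(g)=\int_{K_G}\Phi(gk)\,dk$; since $A_{-1}^{(m,m-r-1)}$ is $G(\A)$-intertwining and $B_{-1}^{(m,r)}(\varphi)$ is $K_G$-invariant, one has
\[
A_{-1}^{(m,m-r-1)}(\Phi')(g)=\int_{K_G}A_{-1}^{(m,m-r-1)}(\Phi)(gk)\,dk=\int_{K_G}B_{-1}^{(m,r)}(\varphi)(gk)\,dk=B_{-1}^{(m,r)}(\varphi)(g).
\]
So the paper proves the spherical case as a \emph{consequence} of the general case, not the other way around.

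Your Step (4) is where the argument breaks. You write ``since the $G(\A)$-span of $\varphi^0$ is dense'' --- but this is false, and in fact the entire architecture of the paper is built around the distinction between $\Sw((\V^+\otimes W)(\A))^\circ$ (the $G(\A)$-span of $\varphi^0$) and the full Schwartz space. The ``weak'' second term identity (Theorem \ref{T:Second_Term_Identity}) and the weak first term identities (Propositions \ref{P:first_term2}, \ref{P:weak_first_term}, \ref{P:weak_first_term2}) are so named precisely because your density-and-equivariance strategy only reaches $\Sw^\circ$; extending beyond that requires genuinely different input (Ikeda--Ichino type arguments, which the paper explicitly declines to carry out). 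Your Steps (1)--(3) would at best reprove Proposition \ref{P:first_term2} (the weak first term identity with explicit constant), not Proposition \ref{P:first_term}. To get the proposition as stated --- for \emph{all} $\varphi$ --- you must cite Moeglin/Jiang--Soudry, as the paper does.
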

\begin{proof}
The first part is \cite[Theorem 2.4]{Jiang_and_Soudry}. The second part follows from the fact that $A_{-1}^{(m,m-r-1)}$ is a $G(\A)$-intertwining map from $\Ind_{P(\A)}^{G(\A)}|\,|^s$ to $\Aut(G)$. Namely, for any $\Phi$ corresponding to a spherical $\varphi$ in this formula, if we take $\Phi'$ to be such that $\Phi'(g)=\int_{K_G}\Phi(gk)\,dk$ then (assuming $dk$ is chosen so that the measure of $K_G$ is $1$) we have
\[
	 A_{-1}^{(m,m-r-1)}(\Phi')(g)=\int_{K_G}A_{-1}^{(m,m-r-1)}(\Phi)(gk)
	=\int_{K_G}B_{-1}^{(m,r)}(\varphi)(gk)=B_{-1}^{(m,r)}(\varphi)(g).
\]
\end{proof}

%%%%%%%%%%%%%%%%%%%%%%%%%%%%%%%%%%%%%%%%%%%%%%%%%%%%%%%%%%%%%%%%%%%%%%%%%%%%%%%%%%%%%%%%%%%%%%%%%%%%%%%%%%%%%%%%

%%%%%%%%%%%%%%%%%%%%%%%%%%%%%%%%%%%%%%%%%%%%%%%%%%%%%%%%%%%%%

\section{\bf Spherical Second Term Identity}\label{S:identity}

%%%%%%%%%%%%%%%%%%%%%%%%%%%%%%%%%%%%%%%%%%%%%%%%%%%%%%%%%%%%%

In this section and the next, we shall derive a certain form of the second term identity
of the Siegel-Weil formula (the weak second term identity) in the second term range; it is an identity between the second terms of the Laurent series expansions of the Siegel Eisenstein
series and the non-Siegel Eisenstein series resulting from the
regularized theta integral on the orthogonal group.
We first derive the spherical second term identity in this section and in the following section, we will extend it to the weak second term identity.

%%%%%%%%%%%%%%%%%%%%%%%%%%%%%%%%%%%%%%%%%%%%%%%%%%%%%%%%%%%%%

\subsection{Spherical Eisenstein Series}

%%%%%%%%%%%%%%%%%%%%%%%%%%%%%%%%%%%%%%%%%%%%%%%%%%%%%%%%%%%%%
Recall that at the beginning of the previous section, we defined the Eisenstein series $E^{(m,r)}(g, s; f)$ for $f(-,s)\in I^{m,r}(s)$. In this section, by studying this Eisenstein series when $f$ is spherical, we will derive spherical Siegel-Weil formulas both for the first terms and the second terms. For this purpose, for each $s_0\in\C$ we write the Laurent series expansion as
\[
     E^{(m,r)}(g, s; f)=\sum_{d\gg-\infty}(s-s_0)^dE^{(m,r)}_d(g, s_0; f).
\]
Also for the spherical section $f^0$ with the property that $f^0(1,s)=1$, we simply write
\[
    E^{(m,r)}(g,s):=E^{(m,r)}(g,s;f^0).
\]

Now let $Q=P_{1}$, so that its Levi factor is $\Gm\times\OO_{m-1,m-1}$. We consider the constant term $E_Q^{(m,r)}$ of
the above Eisenstein series $E^{(m,r)}(g,s;f^0)$ along the parabolic $Q$ for the spherical section $f^0$ with the property that $f^0(1,s)=1$. The constant term $E_Q^{(m,r)}$ is an automorphic form on $\Gm(\A)\times\OO_{m-1,m-1}(\A)$ which can be computed as follows. (See also \cite[Appendix. B]{GI}.)

\begin{prop}\label{P:constant_term}
The constant term $E_Q^{(m,r)}$ of the Eisenstein series $E^{(m,r)}$ for the spherical section $f^0$, as an automorphic form on $\Gm(\A)\times\OO_{m-1,m-1}(\A)$, can be expressed as follows.
\begin{enumerate}[$\bullet$]
\item If $1<r<m$, then
\begin{align*}
    E_Q^{(m,r)}((a,g), s)&=|a|^{s+m-\frac{r+1}{2}}E^{(m-1,r-1)}(g,s+\frac{1}{2})\\
    &+|a|^rE^{(m-1,r)}(g,s)\frac{\xi(s+m-\frac{3r+1}{2})}{\xi(s+m-\frac{r+1}{2})}\\
    &+|a|^{-s+m-\frac{r+1}{2}}E^{(m-1,r-1)}(g,s-\frac{1}{2})\frac{\xi(2s)}{\xi(2s+r-1)}\\
    &\qquad\qquad\quad\cdot\frac{\xi(s+\frac{r-1}{2})}{\xi(s+m-\frac{r+1}{2})}
    \frac{\xi(s-m+\frac{3r+1}{2})}{\xi(s+\frac{r+1}{2})}.
\end{align*}

\item If $r=m$, then
\begin{align*}
    E_Q^{(m,m)}((a,g), s)&=|a|^{s+\frac{m-1}{2}}E^{(m-1,m-1)}(g,s+\frac{1}{2})\\
    &+|a|^{-s+\frac{m-1}{2}}E^{(m-1,m-1)}(g,s-\frac{1}{2})\frac{\xi(2s)}{\xi(2s+m-1)}.\\
\end{align*}

\item If $r=1$, then
\begin{align*}
    E_Q^{(m,1)}((a,g), s)&=|a|^{s+m-1}\\
    &+|a|E^{(m-1,1)}(g,s)\frac{\xi(s+m-2)}{\xi(s+m-1)}\\
    &+|a|^{-s+m-1}\frac{\xi(s)}{\xi(s+m-1)}\cdot\frac{\xi(s-m+2)}{\xi(s+1)}.
\end{align*}
\end{enumerate}
Here $\xi(s)$ is the complete normalized zeta function of the number field $F$ with the functional equation $\xi(s)=\xi(1-s)$, and $(a,g)\in\Gm(\A)\times\OO_{m-1,m-1}(\A)$.
\end{prop}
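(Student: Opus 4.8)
The plan is to compute $E_Q^{(m,r)}$ by the classical unfolding of the constant term of an Eisenstein series. Writing $E_Q^{(m,r)}((a,g),s) = \int_{[N_Q]} E^{(m,r)}(n\,m(a,g),s;f^0)\,dn$ and substituting the defining sum over $P_r(F)\backslash G(F)$, one partitions the summands according to the orbit of the right $Q(F)$-action; after integrating over $[N_Q]$, the subsum attached to each orbit recombines (via a full unipotent integration) into a power of $|a|$ times an Eisenstein series on the Levi $\Gm\times\OO_{m-1,m-1}$ of $Q$ built from the image of $f^0$ under a global spherical intertwining integral, times the intertwining normalizing factor. Thus $E_Q^{(m,r)}$ is a sum indexed by the double cosets $P_r(F)\backslash G(F)/Q(F)$.

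Next I would classify those double cosets, which parametrize the $P_r(F)$-orbits on isotropic lines $\ell\subset\V$, equivalently the $\OO(\V)(F)$-orbits on pairs consisting of the standard isotropic $r$-plane $X_r$ and an isotropic line $\ell$. The orbit of $\ell$ is determined by the pair $(\dim(\ell\cap X_r),\dim(\ell\cap X_r^{\perp}))$, and one checks there are exactly three possibilities when $1<r<m$: (i) $\ell\subseteq X_r$; (ii) $\ell\subseteq X_r^{\perp}$ but $\ell\not\subseteq X_r$; (iii) $\ell\not\subseteq X_r^{\perp}$. When $r=m$ we have $X_r^{\perp}=X_r$, so orbit (ii) disappears and only (i) and (iii) remain, giving the two-term case; when $r=1$, orbit (i) is the single line $X_1$, whose contribution degenerates to the bare power $|a|^{s+m-1}$ (the rank-zero Eisenstein series being $\equiv 1$).

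For each orbit I would then pin down the three pieces of data. In orbit (i) the plane $X_r/\ell$ is an isotropic $(r-1)$-plane in $\ell^{\perp}/\ell$, so the Levi Eisenstein series is $E^{(m-1,r-1)}(g,s+\tfrac12)$; in orbit (ii) the plane $X_r$ survives as an isotropic $r$-plane in $\ell^{\perp}/\ell$, giving $E^{(m-1,r)}(g,s)$; in orbit (iii) the hyperplane $X_r\cap\ell^{\perp}$ descends to an isotropic $(r-1)$-plane, giving $E^{(m-1,r-1)}(g,s-\tfrac12)$. The $|a|$-exponents and half-integer shifts are read off from how the attached Weyl representative conjugates the split torus (equivalently, from $\delta_Q$ and the $\GL_r$-modulus), and the ratios of completed zeta functions are the normalizing factors of the corresponding global spherical intertwining integrals, evaluated by Gindikin--Karpelevich, each crossed root contributing one telescoping factor $\xi(\cdot)/\xi(\cdot+1)$. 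Alternatively, orbits (i) and (ii) being ``short'' may be computed directly, and the ``long'' orbit (iii) contribution deduced from the functional equation $E^{(m,r)}(g,s;f^0)=c(s)\,E^{(m,r)}(g,-s;f^0)$ together with the functional equation of $E^{(m-1,r-1)}$, since $s\mapsto -s$ interchanges the $|a|^{\pm s+m-\frac{r+1}{2}}$ terms. Assembling the three contributions and normalizing via $\xi(s)=\xi(1-s)$ yields the stated formulas, with the $r=m$ and $r=1$ cases following from the degenerations above.

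The main obstacle is the bookkeeping for orbit (iii): identifying precisely which roots its Weyl representative crosses, hence which telescoping string of rank-one factors appears, and fixing the shift $s\mapsto s-\tfrac12$ and the exponent $-s+m-\frac{r+1}{2}$ without sign errors. This is exactly the source of the long string of rank-one factors that collapses to the four $\xi$-ratios in the $1<r<m$ case, and it is where the computation is most error-prone; everything else is routine once the orbit decomposition and the $r=1$ base case are in hand. The identity can also be cross-checked against \cite[Appendix B]{GI}.
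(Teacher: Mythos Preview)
Your approach is correct and is the standard one: decompose $P_r(F)\backslash G(F)/Q(F)$ by the relative position of the fixed isotropic $r$-plane and an isotropic line, identify the three (resp.\ two) orbits for $1<r<m$ (resp.\ $r=m$), and compute each contribution as a power of $|a|$ times a lower-rank spherical Eisenstein series, with the $\xi$-ratios arising from the Gindikin--Karpelevich evaluation of the relevant spherical intertwining integrals. The paper does not actually supply its own proof of this proposition; it simply states the result and refers the reader to \cite[Appendix~B]{GI}, which carries out exactly the computation you describe. So there is nothing to compare against beyond noting that your outline matches the cited reference, and your remark that the functional equation can be used as a consistency check on the ``long'' orbit (iii) is a sensible way to catch sign or shift errors in the bookkeeping.
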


Due to frequent use of those formulas, we simplify the notations by setting
\begin{align*}
    F^{(m,r)}(s)&=\frac{\xi(s+m-\frac{3r+1}{2})}{\xi(s+m-\frac{r+1}{2})}\\
    G^{(m,r)}(s)&=\frac{\xi(2s)}{\xi(2s+r-1)}\frac{\xi(s+\frac{r-1}{2})}{\xi(s+m-\frac{r+1}{2})}
    \frac{\xi(s-m+\frac{3r+1}{2})}{\xi(s+\frac{r+1}{2})}\\
    H^{(m)}(s)&=\frac{\xi(2s)}{\xi(2s+m-1)},
\end{align*}
for the factors containing the zeta functions in the first and second formulas above. Also we write the Laurent series expansion for $F^{(m,r)}(s)$ at $s=s_0$ as
\[
    F^{(m,r)}(s)=\sum_{d\gg-\infty}(s-s_0)^dF_d^{(m,r)}(s_0),
\]
and similarly for $G^{(m,r)}(s)$ and $H^{(m)}(s)$.
Also, throughout this section, we suppress $g$ and $a$ from the notation of the spherical Eisenstein series whenever there is no danger of confusion. Namely for $E^{(m,r)}(g, s)$ and $|a|$ we simply write $E^{(m,r)}(s)$ and $|\,|$ respectively, and for
the Laurent series of $E^{(m,r)}(g, s)$, we write
\[
     E^{(m,r)}(s)=\sum_{d\gg-\infty}(s-s_0)^dE^{(m,r)}_d(s_0).
\]

%%%%%%%%%%%%%%%%%%%%%%%%%%%%%%%%%%%%%%%%%%%%%%%%%%%%%%%%%%%%%

\subsection{A lemma}

%%%%%%%%%%%%%%%%%%%%%%%%%%%%%%%%%%%%%%%%%%%%%%%%%%%%%%%%%%%%%
The following elementary lemma will be crucial for our computation and repeatedly used.
\begin{lemma}\label{L:key_lemma}
Let $E_1,\dots, E_k$ and $F_1,\dots, F_l$ be automorphic forms on $\OO_{m,m}(\A)$. Also let $r_1,\cdots r_k$ and $s_1,\dots,s_l$ be real numbers such that all the $r_i$'s are distinct and all the $s_j$'s are distinct but some of the $r_i$'s might be the same as some of the $s_j$'s. Then if
\[
	\sum_{i=1}^{k}|\,|^{r_i}E_i+\sum_{j=1}^{l}|\,|^{s_j}\ln|\,|F_j=0
\]
as an automorphic form on $\Gm(\A)\times\OO_{m,m}(\A)$, then all the $E_i$'s and $F_j$'s are zero.
\end{lemma}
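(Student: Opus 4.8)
The plan is to reduce the statement to the linear independence of the functions $a \mapsto |a|^r$ and $a \mapsto |a|^r \ln|a|$ on $\mathbb{G}_m(\A)$, with coefficients in the space of automorphic forms on $\OO_{m,m}(\A)$. The point is that an automorphic form on $\mathbb{G}_m(\A) \times \OO_{m,m}(\A)$ is in particular a function, and vanishing as an automorphic form is the same as vanishing as a function; so it suffices to show that if $\sum_i |a|^{r_i} E_i(g) + \sum_j |a|^{s_j} \ln|a| \, F_j(g) = 0$ for all $(a,g)$, then each $E_i = 0$ and each $F_j = 0$.

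First I would fix $g \in \OO_{m,m}(\A)$ and regard the identity as an equation in the single variable $a \in \mathbb{G}_m(\A)$. Restricting $a$ to lie in the image of the idele norm at a single archimedean place (or simply letting $t = |a|$ range over $\R_{>0}$, which is achieved by such $a$), the identity becomes
\[
    \sum_{i=1}^k t^{r_i} E_i(g) + \sum_{j=1}^l t^{s_j} (\ln t)\, F_j(g) = 0 \qquad \text{for all } t \in \R_{>0}.
\]
Now I would invoke the classical fact that the functions $\{ t^{r_i} \} \cup \{ t^{s_j} \ln t \}$ are linearly independent over $\C$ as functions on $\R_{>0}$, given that the $r_i$ are distinct among themselves, the $s_j$ are distinct among themselves (even though an $r_i$ may coincide with an $s_j$, the pair $t^{r}$ and $t^{r}\ln t$ are still independent). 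This can be seen, e.g., by substituting $t = e^u$ to get a sum $\sum c_i e^{r_i u} + \sum d_j u\, e^{s_j u}$, which is a solution of a constant-coefficient linear ODE and vanishes identically only if all coefficients vanish; alternatively by repeatedly applying the operator $t\frac{d}{dt}$ and examining growth as $t \to \infty$ or $t \to 0$. Hence $E_i(g) = 0$ and $F_j(g) = 0$ for this fixed $g$.

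Since $g$ was arbitrary, each $E_i$ and each $F_j$ vanishes identically as a function on $\OO_{m,m}(\A)$, hence as an automorphic form. The only mild subtlety — and the step I expect to require the most care — is the reduction from the adelic variable $a$ to the real variable $t = |a|$: one must check that the absolute value $|\cdot| \co \mathbb{G}_m(\A) \to \R_{>0}$ is surjective (it is, since already the archimedean components surject), so that the vanishing of $\sum t^{r_i} E_i(g) + \sum t^{s_j}(\ln t) F_j(g)$ for $t$ in the image forces vanishing for all $t \in \R_{>0}$. Everything else is the elementary linear-independence statement quoted above. I would present this as a short remark rather than belabor the ODE argument.
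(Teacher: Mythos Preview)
Your proof is correct and is essentially the same as the paper's: both fix $g$, restrict the idele variable to a copy of $\R_{>0}$ embedded via an archimedean place, and invoke the linear independence of the functions $t^{r_i}$ and $t^{s_j}\ln t$ over $\C$. The paper's proof is more terse (it simply asserts this linear independence as ``known''), but the argument is identical in substance.
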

\begin{proof}
Let us fix an embedding $\R^+\subset\A^\times$ by using one of the archimedean places, and view the functions $|\,|^{r_i}$ and $|\,|^{s_j}\ln|\,|$ as functions on $\R^+$. Then those functions are known to be linearly independent over $\C$. Thus the lemma follows.
\end{proof}

%%%%%%%%%%%%%%%%%%%%%%%%%%%%%%%%%%%%%%%%%%%%%%%%%%%%%%%%%%%%%

\subsection{Spherical first term identity for 1st term range $m\geq 2r+1$}

%%%%%%%%%%%%%%%%%%%%%%%%%%%%%%%%%%%%%%%%%%%%%%%%%%%%%%%%%%%%%
Using Proposition \ref{P:constant_term}, we will compute the spherical first term identity between the spherical non-Siegel Eisenstein series $E^{(m,r)}(s)$ at $s=\frac{r+1}{2}$ and the spherical Siegel Eisenstein series $E^{(m,m)}(s)$ at $s=\frac{m-2r-1}{2}=-\rho_{m,r}$ in the 1st term range, \ie when $m\geq 2r+1$. It is basically a refinement of  the spherical case of
Proposition \ref{P:first_term}.
\vskip 5pt

\begin{lemma}[Spherical First Term Identity for 1st term range]\label{L:first_B-1}
Assume $r<\frac{m-1}{2}$, \ie $m>2r+1$. Then there exists a non-zero
constant $c_{m,r}$ independent of $g$ and $f$ such that
\[
    E_{-1}^{(m,r)}(\frac{r+1}{2})=c_{m,r}E_{-1}^{(m,m)}(\frac{m-2r-1}{2}),
\]
where $c_{m,r}$ can be explicitly computed as
\[
    c_{m,r}=\prod_{i=0}^{r-1}\frac{\xi(m-2r+i)}{\xi(m-i)}
    \cdot\prod_{i=r+1}^{m-(r+1)}\frac{\xi(2i)}{\xi(i)}.
\]
\end{lemma}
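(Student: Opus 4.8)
The plan is to prove the identity by computing the constant term of both sides along the parabolic $Q = P_1$ and invoking the linear independence Lemma \ref{L:key_lemma}. Concretely, I would first observe that since $m > 2r+1$, the non-Siegel Eisenstein series $E^{(m,r)}(s)$ has at most a simple pole at $s = \frac{r+1}{2}$, so its Laurent expansion there begins with $E_{-1}^{(m,r)}(\frac{r+1}{2})$. Both $E_{-1}^{(m,r)}(\frac{r+1}{2})$ and $E_{-1}^{(m,m)}(\frac{m-2r-1}{2})$ are $G(\A)$-intertwining images of the spherical vectors in the respective degenerate principal series at the point of evaluation; since the relevant induced representation has (at most) a one-dimensional space of spherical vectors, it suffices to check that these two automorphic forms lie in the same irreducible subquotient and to pin down the scalar $c_{m,r}$. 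The independence from $g$ is then automatic once one knows proportionality; the content is the computation of $c_{m,r}$.

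The mechanism for computing $c_{m,r}$ is to take constant terms along $Q$. Using Proposition \ref{P:constant_term} (the case $1 < r < m$), the constant term $E_Q^{(m,r)}((a,g),s)$ is a sum of three terms, each of the form $|a|^{\ast}$ times a spherical Eisenstein series on $\OO_{m-1,m-1}(\A)$ (and possibly a ratio of $\xi$-factors, namely $F^{(m,r)}(s)$ or $G^{(m,r)}(s)$). One evaluates the Laurent coefficient at $s = \frac{r+1}{2}$ of this expression: the exponents of $|a|$ in the three terms are $s + m - \frac{r+1}{2}$, $r$, and $-s + m - \frac{r+1}{2}$, which at $s = \frac{r+1}{2}$ become $m - \frac{r+1}{2} + \frac{r+1}{2} = m$ (wait — more precisely the distinct numerical exponents), and one tracks which terms contribute a pole. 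Similarly, one computes $E_Q^{(m,m)}((a,g),s)$ at $s = \frac{m-2r-1}{2}$ using the $r = m$ case of Proposition \ref{P:constant_term}, whose two terms have $|a|$-exponents $s + \frac{m-1}{2}$ and $-s + \frac{m-1}{2}$. Matching the coefficients of the corresponding powers of $|a|$ on the two sides via Lemma \ref{L:key_lemma}, and recursing on $m$ (the constant term involves Eisenstein series on the rank-$(m-1)$ group, so the proportionality constant will satisfy a recursion $c_{m,r} = (\text{explicit }\xi\text{-ratio}) \cdot c_{m-1,r}$ or similar), one assembles the closed-form product $c_{m,r} = \prod_{i=0}^{r-1}\frac{\xi(m-2r+i)}{\xi(m-i)} \cdot \prod_{i=r+1}^{m-(r+1)}\frac{\xi(2i)}{\xi(i)}$. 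The base case of the recursion is when the rank drops to make one of the Eisenstein series trivial (e.g. $r=1$, handled by the third bullet of Proposition \ref{P:constant_term}, or when the orthogonal factor degenerates).

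The main obstacle I anticipate is the bookkeeping in the recursion: one must carefully track, at each stage, which of the three summands in $E_Q^{(m,r)}$ carries the pole at $s = \frac{r+1}{2}$ (this depends on comparing $\frac{r+1}{2}$ against the location of poles of $E^{(m-1,r-1)}$ and $E^{(m-1,r)}$ at the shifted arguments $s \pm \frac12$ and $s$, governed by the Kudla--Rallis pole set), and which $\xi$-ratios contribute residues versus values at those points. A subtlety is that the residue $\xi(1) = \Res_{s=1}\xi(s)$ and the value $\xi(0) = \Res_{s=0}\xi(s)$ appear, and the functional equation $\xi(s) = \xi(1-s)$ must be used to symmetrize ratios like $F^{(m,r)}$ and $G^{(m,r)}$ so the final product simplifies to the stated form. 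Once the correct term is identified at each recursion level, the remaining computation is a routine (if lengthy) telescoping of $\xi$-factors, and the non-vanishing of $c_{m,r}$ follows from the fact that $\xi(s)$ has neither zeros nor poles at the relevant integer and half-integer arguments in the range $m > 2r+1$ — except at $s = 0, 1$, where the residues $\xi(0), \xi(1)$ are finite and non-zero, so every factor in the product is a well-defined non-zero complex number.
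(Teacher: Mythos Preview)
Your overall strategy---compute constant terms along $Q$ via Proposition~\ref{P:constant_term}, invoke Lemma~\ref{L:key_lemma} to separate the $|a|$-exponents, and deduce a recursion in the rank---is exactly the mechanism the paper uses to pin down $c_{m,r}$. However, there are two points where your proposal diverges from (or is weaker than) the paper's argument.

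First, your justification for the \emph{existence} of the proportionality constant is incomplete. You write that both residues are ``$G(\A)$-intertwining images of the spherical vectors in the respective degenerate principal series'' and that it suffices to check they ``lie in the same irreducible subquotient,'' but you do not carry this out, and the two spherical vectors sit in \emph{different} induced representations $I^{m,r}(\frac{r+1}{2})$ and $I^{m,m}(\frac{m-2r-1}{2})$. The paper does not argue this way at all: it simply invokes Proposition~\ref{P:first_term} (the Moeglin/Jiang--Soudry first term identity), which already guarantees that $B_{-1}^{(m,r)}(\varphi^0) = A_{-1}^{(m,m-r-1)}(\Phi)$ for some spherical $\Phi$, whence proportionality of the two spherical residues is immediate. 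Without that input, your constant-term matching only produces relations among lower-rank Eisenstein series, not the identity you want on $\OO_{m,m}$ itself.

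Second, the recursion the paper runs is $(m,r)\to(m-1,r-1)$, obtained by extracting the $|a|^{m-r-1}$ term, with base case $r=1$ handled separately (there one extracts $|a|^{m-2}$, discovers that $E_{-1}^{(m-1,m-1)}(\frac{m-2}{2})$ is a constant $\lambda_{m-1}$, and computes $\lambda_m$ by its own recursion down to $\lambda_2 = \xi(1)/\xi(2)$). Your suggested recursion $c_{m,r}\to c_{m-1,r}$ (fixing $r$, extracting the $|a|^r$ term) is also viable in principle, but its base case would be $m=2r+2$, the edge of the first term range, which you would still need to compute by some other means---and the paper's $(m,r)\to(m-1,r-1)$ route together with the $r=1$ computation is what actually produces the closed product formula.
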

\begin{proof}
The first part of the lemma is immediate from Proposition
\ref{P:first_term}. So let us explicitly compute $c_{m,r}$. First we need to compute $c_{m,1}$. (Let us note that $m\geq 4$ since $m>2r+1$.) Note that the first term identity gives
\[
    E_{-1,Q}^{(m,1)}(1)=c_{m,1}E_{-1,Q}^{(m,m)}(\frac{m-3}{2}),
\]
which together with Proposition \ref{P:constant_term} gives
\begin{align*}
&|\,|E_{-1}^{(m-1,1)}(1)\frac{\xi(m-1)}{\xi(m)}+|\,|^{m-2}\frac{\xi(1)}{\xi(m)}\frac{\xi(3-m)}{\xi(2)}\\
&\quad=c_{m,1}\left(|\,|^{m-2}E_{-1}^{(m-1,m-1)}(\frac{m-2}{2})
+|\,|E_{-1}^{(m-1,m-1)}(\frac{m-4}{2})\frac{\xi(m-3)}{\xi(2m-4)}\right).
\end{align*}
(Here recall our convention that $\xi(1)=\underset{s=1}{\Res}\xi(s)$.)

Now Lemma \ref{L:key_lemma} allows us to ``extract" all the terms containing  $|\,|^{m-2}$. Namely we can equate the second term of the left hand side with the first term of the right hand side. Then we obtain
\[
    \frac{\xi(1)}{\xi(m)}\frac{\xi(3-m)}{\xi(2)}=c_{m,1}E_{-1}^{(m-1,m-1)}(\frac{m-2}{2}).
\]
Notice that this implies that, as an automorphic form, $E_{-1}^{(m-1,m-1)}(\frac{m-2}{2})$ is a constant function. Let
\[
    E_{-1}^{(m,m)}(\frac{m-1}{2})=\lambda_m
\]
so that
\[
    c_{m,1}=\frac{\xi(1)\xi(m-2)}{\xi(2)\xi(m)}\frac{1}{\lambda_{m-1}}.
\]
(Here we used the functional equation $\xi(3-m)=\xi(m-2)$.)
But $\lambda_{m-1}$ can be computed as follows:
\[
    \lambda_m=E_{-1,Q}^{(m,m)}(\frac{m-1}{2})=E_{-1}^{(m-1,m-1)}(\frac{m-2}{2})\frac{\xi(m-1)}{\xi(2m-2)}
    =\lambda_{m-1}\frac{\xi(m-1)}{\xi(2m-2)}.
\]
(To obtain this, we computed $E_{-1,Q}^{(m,m)}(\frac{m-1}{2})$ by Proposition \ref{P:constant_term} and used $E_{-1}^{(m-1,m-1)}(\frac{m}{2})=0$ because $E^{(m-1,m-1)}(s)$ converges at $\frac{m}{2}$.)
This gives
\[
    \lambda_m=\frac{\xi(m-1)\xi(m-2)\cdots\xi(2)}{\xi(2m-2)\xi(2m-4)\cdots\xi(4)}\cdot\lambda_2.
\]
But it is easy to compute $\lambda_2=\frac{\xi(1)}{\xi(2)}$. Hence we get the formula for $c_{m,1}$.

Now let us treat the case $r\geq 2$. First, since $E_{-1}^{(m,r)}(\frac{r+1}{2})=c_{m,r}E_{-1}^{(m,m)}(\frac{m-2r-1}{2})$, we have
$E_{-1, Q}^{(m,r)}(\frac{r+1}{2})=c_{m,r}E_{-1, Q}^{(m,m)}(\frac{m-2r-1}{2})$. Then by Proposition \ref{P:constant_term},
\begin{align*}
    &|\,|^mE_{-1}^{(m-1,r-1)}(\frac{r+2}{2})+|\,|^rE_{-1}^{(m-1,r)}(\frac{r+1}{2})\frac{\xi(m-r)}{\xi(m)}\\
    &\qquad\qquad\qquad+|\,|^{m-r-1}E_{-1}^{(m-1,r-1)}(\frac{r}{2})\frac{\xi(r+1)}{\xi(2r)}\frac{\xi(r)}{\xi(m)}\frac{\xi(2r+1-m)}{\xi(r+1)}\\
    &\quad=c_{m,r}\left(|\,|^{m-r-1}E^{(m-1,m-1)}_{-1}(\frac{m-2r}{2})+|\,|^rE_{-1}^{(m-1,m-1)}(\frac{m-2r-2}{2})\frac{\xi(m-2r-1)}{\xi(2m-2r-2)}\right).
\end{align*}
Note that $m-r-1\neq r$ and $m-r-1\neq m$. Then by lemma \ref{L:key_lemma} we can extract the terms containing $|\,|^{m-r-1}$, and obtain
\[
    E_{-1}^{(m-1,r-1)}(\frac{r}{2})\frac{\xi(m-2r)}{\xi(m)}\frac{\xi(r)}{\xi(2r)}
    =c_{m,r}E^{(m-1,m-1)}_{-1}(\frac{m-2r}{2}).
\]
(Here note that we used the functional equation $\xi(2r+1-m)=\xi(m-2r)$.) Also by definition of $c_{m,r}$ we have
\[
    E_{-1}^{(m-1,r-1)}(\frac{r}{2})=c_{m-1,r-1}E_{-1}^{(m-1,m-1)}(\frac{m-2r}{2}).
\]
Those two together give the recursive relation
\[
    c_{m,r}=c_{m-1,r-1}\cdot\frac{\xi(m-2r)}{\xi(m)}\frac{\xi(r)}{\xi(2r)},
\]
which gives
\[
    c_{m,r}=\prod_{i=0}^{r-2}\frac{\xi(m-2r+i)}{\xi(m-i)}\cdot\prod_{i=2}^{r}\frac{\xi(i)}{\xi(2i)}\cdot c_{m-(r-1),1}.
\]
Hence the formula for $c_{m,1}$ immediately gives the formula for $c_{m,r}$.
\end{proof}

Next we will derive the first term identity on the boundary. First, we need the following lemma, which we will frequently use later.

\begin{lemma}\label{L:no_pole}
\quad
\begin{enumerate}[(i)]
\item $E^{(m,r)}_{-2}(s_0)=0$ for $s_0\geq\frac{r+2}{2}$.
\item $E^{(2r+1,r)}_{-2}(\frac{r+1}{2})=0$.
\item $E^{(m,r)}_{-1}(\frac{r+3}{2})=0$.
\end{enumerate}
\end{lemma}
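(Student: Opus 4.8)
The plan is to extract all three vanishing statements from the constant term formula in Proposition \ref{P:constant_term} together with the combinatorial Lemma \ref{L:key_lemma}, using the fact that the spherical non-Siegel Eisenstein series $E^{(m,r)}(s)$ has a pole of a given order at $s_0$ if and only if its constant term $E^{(m,r)}_Q(s)$ does. The key structural input is that the order of the pole of $E^{(m,r)}_Q((a,g),s)$ at $s=s_0$ is the maximum of the orders of the poles of the three summands in Proposition \ref{P:constant_term}; and since $m-r-1$, $r$, and $m$ (resp. $s+\frac{m-1}{2}$, $-s+\frac{m-1}{2}$ in the Siegel case) are distinct exponents of $|\,|$, the summands cannot cancel each other's poles by Lemma \ref{L:key_lemma}. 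So it suffices to track the poles of $E^{(m-1,r\pm1)}$ at the shifted points $s_0\pm\frac12$ and the poles of the $\xi$-ratios $F^{(m,r)}, G^{(m,r)}, H^{(m)}$.

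First I would set up an induction on $m$ (with $r$ varying). For (i), I want $E^{(m,r)}_{-2}(s_0)=0$ for $s_0\ge\frac{r+2}{2}$; equivalently $E^{(m,r)}(s)$ has at most a simple pole for $\Re(s)\ge\frac{r+2}{2}$. Looking at the three terms in Proposition \ref{P:constant_term} for $1<r<m$: the first term involves $E^{(m-1,r-1)}(s+\frac12)$ evaluated at $s_0+\frac12\ge\frac{(r-1)+2}{2}+\frac12$, to which the inductive hypothesis applies (at most simple pole); the second involves $E^{(m-1,r)}(s)$ at $s_0\ge\frac{r+2}{2}\ge\frac{r+1}{2}+\frac12$ — here I need the known fact (cited in the excerpt, ``bottom of p.53 of Kudla-Rallis94'') that the worst pole of $E^{(m,r)}$ is a double pole at $s=\frac{r+1}{2}$ and only a simple pole in the first-term range, so $E^{(m-1,r)}$ has at most a simple pole for $s_0$ in this range, and the factor $F^{(m,r)}(s)$ is holomorphic there (one checks the numerator/denominator $\xi$-arguments avoid $0,1$); the third term is handled similarly, with $G^{(m,r)}(s)$ holomorphic and $E^{(m-1,r-1)}(s-\frac12)$ at $s_0-\frac12\ge\frac{(r-1)+2}{2}$, again at most simple by induction. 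The base cases $r=1$ and $r=m$ are checked directly from the corresponding formulas in Proposition \ref{P:constant_term}, since there the ``$E^{(m-1,\cdot)}$'' factors are either absent or one dimension lower and the $\xi$-factors are explicit.

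For (ii), $E^{(2r+1,r)}_{-2}(\frac{r+1}{2})=0$: this is precisely the assertion that in the boundary case $m=2r+1$ the non-Siegel Eisenstein series has at most a simple pole at $s=\frac{r+1}{2}$, which is again the cited Kudla-Rallis fact that the first-term range $m\ge 2r+1$ gives only a simple pole; alternatively one re-derives it from Proposition \ref{P:constant_term} with $m=2r+1$, where the point $s_0=\frac{r+1}{2}$ makes $F^{(2r+1,r)}$ and $G^{(2r+1,r)}$ holomorphic and the shifted lower-rank series $E^{(2r,r-1)}(\frac{r+2}{2})$, $E^{(2r,r)}(\frac{r+1}{2})$, $E^{(2r,r-1)}(\frac{r}{2})$ each contribute at most a simple pole by (i) and by the inductive boundary analysis. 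For (iii), $E^{(m,r)}_{-1}(\frac{r+3}{2})=0$, i.e. $E^{(m,r)}$ is \emph{holomorphic} at $s=\frac{r+3}{2}$: apply (i) to get that the pole there is at most simple, then run the same constant-term bookkeeping one step finer — the three summands involve $E^{(m-1,r-1)}(\frac{r+4}{2})$, $E^{(m-1,r)}(\frac{r+3}{2})$, $E^{(m-1,r-1)}(\frac{r+2}{2})$; the middle one is holomorphic by induction on (iii), the outer two are holomorphic because $\frac{r+4}{2}$ and $\frac{r+2}{2}=\frac{(r-1)+3}{2}$ are in the holomorphy range for $E^{(m-1,r-1)}$ (again induction), and the accompanying $\xi$-ratios are holomorphic at $s_0=\frac{r+3}{2}$; by Lemma \ref{L:key_lemma} no cancellation is needed, so $E^{(m,r)}_Q$ and hence $E^{(m,r)}$ is holomorphic there.

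The main obstacle I anticipate is purely bookkeeping: verifying in each case that the $\xi$-ratio factors $F^{(m,r)}(s)$, $G^{(m,r)}(s)$, $H^{(m)}(s)$ are holomorphic and nonzero at the relevant evaluation points — this requires checking that none of the finitely many $\xi$-arguments appearing in numerators or denominators hit the poles $s=0,1$ of $\xi$ at those points — and keeping the induction on $m$ and the three shifted arguments $s_0, s_0\pm\frac12$ consistent across the ranges $1<r<m$, $r=1$, $r=m$. There is no conceptual difficulty beyond the observation that the exponents of $|\,|$ in Proposition \ref{P:constant_term} are distinct, which via Lemma \ref{L:key_lemma} forbids the summands from cancelling one another's poles, so pole orders simply add termwise.
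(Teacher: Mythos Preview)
Your overall strategy --- induction via the constant-term formula of Proposition~\ref{P:constant_term} --- is the right one and is essentially what the paper does. Part (i) is fine; your induction on $m$ is a legitimate variant of the paper's induction on $r$ followed by a cascade on $m$. (One small remark: invoking Lemma~\ref{L:key_lemma} to forbid cancellation is unnecessary here, since you only need an \emph{upper} bound on the pole order, and the order of a pole of a sum is trivially at most the maximum of the orders of the summands.)

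The genuine gap is in your alternative argument for (ii). When you pass from $m=2r+1$ down to $m=2r$ via the constant term, the middle summand involves $E^{(2r,r)}(\frac{r+1}{2})$. But $(2r,r)$ is in the \emph{second} term range, and $E^{(2r,r)}$ does have a double pole at $\frac{r+1}{2}$ (indeed this is the content of Proposition~\ref{P:first_B-2}); moreover $G^{(2r+1,r)}(s)$ itself has a simple pole at $s=\frac{r+1}{2}$ coming from $\xi(s-m+\tfrac{3r+1}{2})=\xi(0)$, so the third summand also contributes a double pole. Both of these land at the same exponent $|\,|^r$ (note the second and third exponents coincide precisely at $m=2r+1$, $s_0=\frac{r+1}{2}$), so your claim that ``each contribute at most a simple pole'' is false, and no termwise bound can give (ii) this way. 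The paper goes in the opposite direction: it takes the constant term of $E^{(2r+2,r)}$ at $\frac{r+1}{2}$, which lies in the first term range and hence has no double pole; one of its summands is $|\,|^r E^{(2r+1,r)}_{-2}(\frac{r+1}{2})F$, and since every other $-2$ contribution vanishes (first term range again), this one must too. So the correct move for (ii) is to \emph{ascend}, not descend.

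For (iii) there is a smaller issue: your induction needs $E^{(m-1,r-1)}$ holomorphic at $\frac{r+4}{2}$, but statement (iii) as written only gives it at $\frac{(r-1)+3}{2}=\frac{r+2}{2}$. This is easily repaired by proving the natural strengthening $E^{(m,r)}_{-1}(s_0)=0$ for all $s_0\ge\frac{r+3}{2}$, which the same inductive scheme (paralleling (i), terminating at the Siegel series $E^{(r+1,r+1)}$ which has no pole past $\rho_{r+1}=\frac{r}{2}$) establishes.
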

\begin{proof}
\quad
\begin{enumerate}[(i)]
\item We show this by induction on $r$. So first let us assume that $r=1$.
This can be shown by induction on $m$ by using the third equation of
Proposition \ref{P:constant_term}. So assume the lemma holds for
some $r$. Then by the first equation of Proposition
\ref{P:constant_term} together with the induction hypothesis, we get
\begin{align*}
    E^{(m,r+1)}_{-2,Q}(s_0)&=|\,|^{r+1}E^{(m-1,r+1)}_{-2}(s_0)
                    \frac{\xi(s_0+m-\frac{3r+4}{2})}{\zeta(s_0+m-\frac{r+1}{2})}\\
    E^{(m-1,r+1)}_{-2,Q}(s_0)&=|\,|^{r+1}E^{(m-2,r+1)}_{-2}(s_0)
                    \frac{\xi(s_0+m-1-\frac{3r+4}{2})}{\zeta(s_0+m-1-\frac{r+1}{2})}\\
    E^{(m-2,r+1)}_{-2,Q}(s_0)&=|\,|^{r+1}E^{(m-3,r+1)}_{-2}(s_0)
                    \frac{\xi(s_0+m-2-\frac{3r+4}{2})}{\zeta(s_0+m-2-\frac{r+1}{2})}\\
                            &\cdot\\
                            &\cdot\\
                            &\cdot\\
    E^{(r+2,r+1)}_{-2,Q}(s_0)&=|\,|^{r+1}E^{(r+1,r+1)}_{-2}(s_0)
                    \frac{\xi(s_0+r+2-\frac{3r+4}{2})}{\zeta(s_0+r+2-\frac{r+1}{2})}.
\end{align*}
Notice that in the last equation, $E^{(r+1,r+1)}_{-2}(s_0)=0$
because the spherical Siegel Eisenstein series never has a double
pole. (Also note that $\xi(s+r+2-\frac{3r+4}{2})$ is holomorphic at
$s_0\geq\frac{r+3}{2}$.) So $E^{(r+2,r+1)}_{-2,Q}(s_0)=0$, \ie
$E^{(r+2,r+1)}_{-2}(s_0)=0$. Hence $E^{(r+3,r+1)}_{-2,Q}(s_0)=0$,
\ie $E^{(r+3,r+1)}_{-2}(s_0)=0$ etc, and we get $E^{(m,r+1)}_{-2}(s_0)=0$. Thus the induction is
complete and (i) has been proven.\\

\item By proposition \ref{P:constant_term}, we have
\begin{align*}
    E_{-2, Q}^{(2r+2,r)}(\frac{r+1}{2})&=|\,|^{2r+2}E_{-2}^{(2r+1,r-1)}(\frac{r+2}{2})
    +|\,|^rE_{-2}^{(2r+1,r)}(\frac{r+1}{2})F^{(2r+1,r)}(\frac{r+1}{2})\\
    &\qquad\qquad+|\,|^rE_{-2}^{(2r+1,r-1)}(\frac{r}{2})G^{(2r+2,r)}(\frac{r+1}{2}).
\end{align*}
But $E_{-2, Q}^{(2r+2,r)}(\frac{r+1}{2}), E_{-2}^{(2r+1,r-1)}(\frac{r+2}{2})$ and $E_{-2}^{(2r+1,r-1)}(\frac{r}{2})$ are all zero because, in general, the non-Siegel Eisenstein series $E^{(m,r)}(s)$ does not have a double pole when $m>2r+1$. Hence $E_{-2}^{(2r+1,r)}(\frac{r+1}{2})=0$.\\

\item This can be proven in a similar way as (i) by
induction. But this time, we use the fact that the Siegel Eisenstein
series $E^{(r+1,r+1)}(s)$ does not have a pole at $s=\frac{r+4}{2}$. The detail is left to the reader.
\end{enumerate}
\end{proof}

Then we have

\begin{prop}[Spherical First Term Identity on the boundary]\label{P:first_A0}
Assume $r=\frac{m-1}{2}$, \ie $m=2r+1$. Then
\[
    E_{-1}^{(2r+1,r)}(\frac{r+1}{2})=c_{r}E_{0}^{(2r+1,2r+1)}(0),
\]
where
\[
    c_{r}=c_{2r+2,r}\frac{\xi(1)}{2\xi(r+2)}
    =\frac{1}{2}\cdot\prod_{i=1}^{r+1}\frac{\xi(i)}{\xi(r+i)}.
\]
(Recall $\xi(1)=\underset{s=1}{\Res}\xi(s)$.)
\end{prop}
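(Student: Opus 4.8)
The plan is to deduce this boundary identity from the first-term-range identity of Lemma~\ref{L:first_B-1} by going up one step in $m$ and passing to constant terms. Since $2r+2>2r+1$, Lemma~\ref{L:first_B-1} applies with $m$ replaced by $2r+2$, and there $\tfrac{m-2r-1}{2}=\tfrac12$, so
\[
  E^{(2r+2,r)}_{-1}(\tfrac{r+1}{2})=c_{2r+2,r}\,E^{(2r+2,2r+2)}_{-1}(\tfrac12).
\]
I will apply the constant-term map $(\,\cdot\,)_Q$ along $Q=P_1$ to both sides. Since forming the constant term commutes with extracting Laurent coefficients, this turns the desired statement into an identity of automorphic forms on $\Gm(\A)\times\OO_{2r+1,2r+1}(\A)$, which I then read off by plugging into Proposition~\ref{P:constant_term} and comparing the coefficients of a single power of $|a|$ via Lemma~\ref{L:key_lemma}.

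On the left, I expand $E^{(2r+2,r)}_Q((a,g),s)$ near $s=\tfrac{r+1}{2}$ using the first bullet of Proposition~\ref{P:constant_term} (the third bullet when $r=1$). At $s=\tfrac{r+1}{2}$ the three summands carry the $|a|$-powers $|a|^{2r+2}$, $|a|^r$ and $|a|^{r+1}$, which are pairwise distinct; hence the $|a|^r$-part of the residue comes only from the middle summand $|a|^r E^{(2r+1,r)}(g,s)F^{(2r+2,r)}(s)$. Here $E^{(2r+1,r)}$ has at most a simple pole at $\tfrac{r+1}{2}$ by Lemma~\ref{L:no_pole}(ii), and $F^{(2r+2,r)}(s)$ is holomorphic at $\tfrac{r+1}{2}$ with $F^{(2r+2,r)}(\tfrac{r+1}{2})=\xi(r+2)/\xi(2r+2)$, so the coefficient of $|a|^r$ in $E^{(2r+2,r)}_{-1,Q}(\tfrac{r+1}{2})$ is $\tfrac{\xi(r+2)}{\xi(2r+2)}\,E^{(2r+1,r)}_{-1}(\tfrac{r+1}{2})$. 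On the right, I expand $E^{(2r+2,2r+2)}_Q((a,g),s)$ near $s=\tfrac12$ using the second bullet of Proposition~\ref{P:constant_term}; its two summands carry $|a|^{r+1}$ and $|a|^r$. The $|a|^r$-summand is $|a|^{-s+\frac{2r+1}{2}}E^{(2r+1,2r+1)}(g,s-\tfrac12)H^{(2r+2)}(s)$, where $E^{(2r+1,2r+1)}$ is holomorphic at $s=0$ (the standard fact that the spherical Siegel Eisenstein series on $\OO_{m,m}$ has no pole at $s=0$ when $m$ is odd) and $H^{(2r+2)}(s)=\xi(2s)/\xi(2s+2r+1)$ has a simple pole at $s=\tfrac12$ with residue $\tfrac{\xi(1)}{2\xi(2r+2)}$; so the coefficient of $|a|^r$ in $E^{(2r+2,2r+2)}_{-1,Q}(\tfrac12)$ is $\tfrac{\xi(1)}{2\xi(2r+2)}\,E^{(2r+1,2r+1)}_0(0)$.

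Comparing the $|a|^r$-coefficients on the two sides (Lemma~\ref{L:key_lemma}) gives
\[
  \frac{\xi(r+2)}{\xi(2r+2)}\,E^{(2r+1,r)}_{-1}(\tfrac{r+1}{2})=c_{2r+2,r}\,\frac{\xi(1)}{2\xi(2r+2)}\,E^{(2r+1,2r+1)}_0(0),
\]
whence $E^{(2r+1,r)}_{-1}(\tfrac{r+1}{2})=c_{2r+2,r}\tfrac{\xi(1)}{2\xi(r+2)}E^{(2r+1,2r+1)}_0(0)$, which is the asserted identity with $c_r=c_{2r+2,r}\tfrac{\xi(1)}{2\xi(r+2)}$. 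The closed form $c_r=\tfrac12\prod_{i=1}^{r+1}\xi(i)/\xi(r+i)$ then follows by substituting the explicit value of $c_{2r+2,r}$ furnished by Lemma~\ref{L:first_B-1} and telescoping the resulting finite product of $\xi$-values.

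I expect the only real difficulty to be bookkeeping: correctly locating, at the relevant special value of $s$, which power of $|a|$ each of the (two or three) summands in the constant-term formulas contributes, and verifying that no logarithmic term $|a|^{?}\ln|a|$ leaks into the $|a|^r$-coefficient on either side. The latter holds because on each side that coefficient is produced by a single summand whose $|a|$-exponent is locally constant in $s$, so Lemma~\ref{L:key_lemma} applies without the logarithmic terms; it is for the same reason that the holomorphy of $E^{(2r+1,2r+1)}$ at $s=0$, which prevents an extra simple pole from feeding into the pole of $H^{(2r+2)}$, is genuinely needed and is invoked as a known input.
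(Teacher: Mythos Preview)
Your proof is correct and follows essentially the same approach as the paper: both start from the first-term-range identity $E^{(2r+2,r)}_{-1}(\tfrac{r+1}{2})=c_{2r+2,r}E^{(2r+2,2r+2)}_{-1}(\tfrac12)$ of Lemma~\ref{L:first_B-1}, pass to constant terms along $Q$, expand each side via Proposition~\ref{P:constant_term}, and extract the $|a|^r$-coefficient using Lemma~\ref{L:key_lemma}. The paper treats $r=1$ separately by citing \cite{GI}, whereas you observe that the same argument goes through with the third bullet of Proposition~\ref{P:constant_term}; this is a minor but nice unification.
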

\begin{proof}
First assume $r>1$. The first equation of Proposition
\ref{P:constant_term} gives us
\begin{align}\label{E:E(2r+2,r)}
    E_{-1,Q}^{(2r+2,r)}(\frac{r+1}{2})&=|\,|^{2r+2}E_{-1}^{(2r+1,r-1)}(\frac{r+2}{2})
    +|\,|^rE_{-1}^{(2r+1,r)}(\frac{r+1}{2})F^{(2r+2,r)}(\frac{r+1}{2})\notag\\
    &\qquad\qquad+|\,|^{r+1}E_{-1}^{(2r+1,r-1)}(\frac{r}{2})G^{(2r+2,r)}(\frac{r+1}{2}).
\end{align}
(To compute this, we used Lemma \ref{L:no_pole} (ii).)
Also Lemma \ref{L:first_B-1} gives us
\[
    E_{-1, Q}^{(2r+2,r)}(\frac{r+1}{2})=c_{2r+2,r}E_{-1,
    Q}^{(2r+2,2r+2)}(\frac{1}{2}),
\]
which, combined with the second equation of Proposition
\ref{P:constant_term}, gives
\begin{align}\label{E:(2r+2,r)2}
    &E_{-1, Q}^{(2r+2,r)}(\frac{r+1}{2})\notag\\
    &\qquad\qquad=c_{2r+2,r}\left(|\,|^{r+1}E_{-1}^{(2r+1,2r+1)}(1)
    +|\,|^rE_0^{(2r+1,2r+1)}(0)H_{-1}^{(2r+1,2r+1)}\right).
\end{align}
Therefore by equating (\ref{E:E(2r+2,r)}) and (\ref{E:(2r+2,r)2}), and (again by Lemma \ref{L:key_lemma}) looking at the terms containing $|\,|^r$,
we obtain
\[
    E_{-1}^{(2r+1,r)}(\frac{r+1}{2})\frac{\xi(r+2)}{\xi(2r+2)}=
    c_{2r+2,r}E_0^{(2r+1,2r+1)}(0)\frac{\xi(1)}{2\xi(2r+2)},
\]
namely
\[
    E_{-1}^{(2r+1,r)}(\frac{r+1}{2})=
    c_{2r+2,r}E_0^{(2r+1,2r+1)}(0)\frac{\xi(1)}{2\xi(r+2)},
\]
where
\[
    c_{r}=c_{2r+2,r}\frac{\xi(1)}{2\xi(r+2)},
\]
and Lemma \ref{L:first_B-1} gives the explicit expression for
$c_{r}$ as in the proposition.

If $r=1$, the same identity can be obtained by more direct
computation, and is done in \cite{GI}.
\end{proof}

%%%%%%%%%%%%%%%%%%%%%%%%%%%%%%%%%%%%%%%%%%%%%%%%%%%%%%%%%%%%%

\subsection{Identities in the second term range: Idea of the proof}

%%%%%%%%%%%%%%%%%%%%%%%%%%%%%%%%%%%%%%%%%%%%%%%%%%%%%%%%%%%%%
\vskip 10pt

Next we consider the 2nd term range $r+1\leq m\leq 2r$. In this range, the non-Siegel Eisenstein series
$E^{(m,r)}(g, s; f)$ can have a double pole, and indeed as we will show below, it does have a double pole for a spherical $f$.
In this case, we shall establish not only a spherical first term identity but also a spherical second term identity. Both of these follow from the same idea which we have already exploited in the proof of
the first term identities in the first term range (Lemma \ref{L:first_B-1}) and on the boundary (Proposition \ref{P:first_A0}). We shall presently give a brief description of the idea before plunging into the details of the proof.

\vskip 10pt

Roughly speaking, our proof of the identities for $(\OO_{m,m}, \Sp_{2r})$ is based on induction on the quantity
\[  j= 2r- m. \]
Observe that one has
\begin{align}
 &j < -1 \Longleftrightarrow  \text{first term range;} \notag \\
 &j =-1 \Longleftrightarrow  \text{boundary;} \notag \\
&j > -1 \Longleftrightarrow  \text{second term range.} \notag
\end{align}
We have shown the spherical first term identities when $j \leq -1$ and let us see how we can deduce the spherical first and second term identities for $j = 0$ from the case $j = -1$.
\vskip 10pt

We start with the first term identity on the boundary (\ie when $m = 2r+1$ and $j = -1$), which is given in Proposition \ref{P:first_A0} and Lemma \ref{L:no_pole} (ii):
\begin{align*}
    E_{-2}^{(2r+1,r)}(\frac{r+1}{2})&=0,\\
    E_{-1}^{(2r+1,r)}(\frac{r+1}{2})&=c_rE_{0}^{(2r+1,2r+1)}(0).
\end{align*}
These are identities of automorphic forms on  the group $\OO_{2r+1,2r+1}(\A)$.
 By considering the constant term along $Q$, we have
\begin{align*}
    E_{-2, Q}^{(2r+1,r)}(\frac{r+1}{2})&=0,\\
    E_{-1, Q}^{(2r+1,r)}(\frac{r+1}{2})&=c_rE_{0, Q}^{(2r+1,2r+1)}(0).
\end{align*}
Now using Proposition \ref{P:constant_term}, we can compute both sides of these two equations in terms of Eisenstein series on the lower rank group $\OO_{2r,2r}(\A)$. This results in certain relations between Siegel and non-Siegel Eisenstein series on $\OO_{2r,2r}(\A)$. The relation that follows from the first equation is simply the spherical first term identity for $m=2r$ (i.e. $j = 0$), and the one that follows from the second equation gives the spherical second term identity for $m=2r$ (i.e. $j= 0$).
\vskip 10pt

Using $m=2r$ as the base case, we will then show the spherical second term identity for the remaining cases $r+1\leq m< 2r$ by induction.   For the first term identity, we further prove the case $m=2r-1$ (i.e. $j= 1$) by a similar method, and then show the remaining cases $r+1\leq m< 2r-1$ by induction by using the $m=2r-1$ case as a base step.
\vskip 10pt

While the above idea of deriving the spherical second term identity on $\OO_{m,m}$ from the first term identity of   $\OO_{m+1, m+1}$ is quite simple and elegant, its execution requires some rather lengthy detailed computations. We apologize for not being able to package these computations more elegantly.
\vskip 10pt

%%%%%%%%%%%%%%%%%%%%%%%%%%%%%%%%%%%%%%%%%%%%%%%%%%

\subsection{Spherical first term identities in 2nd term range $r<m\leq 2r$.}

%%%%%%%%%%%%%%%%%%%%%%%%%%%%%%%%%%%%%%%%%%%%%%%%%%

 Let us first consider the first term identity.
\vskip 5pt

\begin{prop}[Spherical First Term Identity for 2nd term range]\label{P:first_B-2}
Assume that we are in the 2nd term range $\frac{m-1}{2}\leq r\leq m-1$, \ie $r+1\leq m\leq 2r$. Then
\[
    E_{-2}^{(m,r)}(\frac{r+1}{2})=d_{m,r}E_{-1}^{(m,m)}(\frac{2r-m+1}{2}),
\]
in which
\begin{align*}
    d_{2r,r}&=c_{2r,r-1}\frac{\xi(1)}{\xi(r)}
    \frac{\xi(2r)}{\xi(r+1)}\\
    d_{m-1,r}&=d_{m,r}\frac{\xi(m)}{\xi(m-r)},
    \text{\quad for\quad } m\leq 2r-1
\end{align*}
(Recall that $\xi(1)=\underset{s=1}{\Res}\xi(s)$.)
\end{prop}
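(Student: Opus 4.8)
The plan is to argue by descending induction on $m$ within the second term range, with the base supplied by the boundary case $m = 2r+1$, where the relevant first term identities are already available: by Lemma \ref{L:no_pole} (ii) and Proposition \ref{P:first_A0} one has $E^{(2r+1,r)}_{-2}(\frac{r+1}{2}) = 0$ and $E^{(2r+1,r)}_{-1}(\frac{r+1}{2}) = c_r E^{(2r+1,2r+1)}_0(0)$. The mechanism throughout is the one already used for Lemma \ref{L:first_B-1} and Proposition \ref{P:first_A0}: take the constant term along $Q = P_1$, rewrite both sides via Proposition \ref{P:constant_term} as combinations of Eisenstein series on $\OO_{m-1,m-1}(\A)$, and isolate the desired identity by extracting an appropriate $|a|$-homogeneous component via Lemma \ref{L:key_lemma}.

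For the base case $m = 2r$ I would take the constant term along $Q$ of $E^{(2r+1,r)}_{-2}(\frac{r+1}{2}) = 0$. In the case $1 < r < m$ of Proposition \ref{P:constant_term} (with $m = 2r+1$), the first summand is holomorphic at $s = \frac{r+1}{2}$ by Lemma \ref{L:no_pole} (i), (iii) applied to the $(2r, r-1)$ Eisenstein series, hence contributes nothing to the coefficient of $(s - \frac{r+1}{2})^{-2}$; moreover at $s = \frac{r+1}{2}$ the $|a|$-exponents of the other two summands both equal $r$, so that coefficient equals
\[
    E^{(2r,r)}_{-2}(\tfrac{r+1}{2})\,F^{(2r+1,r)}(\tfrac{r+1}{2}) + E^{(2r,r-1)}_{-1}(\tfrac{r}{2})\cdot\underset{s=\frac{r+1}{2}}{\Res}\,G^{(2r+1,r)}(s) = 0 ,
\]
where $F^{(2r+1,r)}$ is holomorphic and nonzero at $\frac{r+1}{2}$ while $G^{(2r+1,r)}$ has a simple pole there coming from the factor $\xi(s - \frac{r+1}{2})$. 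Rewriting $E^{(2r,r-1)}_{-1}(\frac{r}{2}) = c_{2r,r-1}E^{(2r,2r)}_{-1}(\frac{1}{2})$ by Lemma \ref{L:first_B-1} (which applies since $(2r,r-1)$ is in the first term range), and tidying up the zeta-factors with $\xi(s) = \xi(1-s)$, then gives the base case with the stated value of $d_{2r,r}$.

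For the inductive step I would assume the identity for $(m+1, r)$ — which lies in the second term range precisely when $m \le 2r-1$ — and take constant terms along $Q$ of both sides of $E^{(m+1,r)}_{-2}(\frac{r+1}{2}) = d_{m+1,r}E^{(m+1,m+1)}_{-1}(\frac{2r-m}{2})$, applying the $1 < r < m+1$ case of Proposition \ref{P:constant_term} on the left and the Siegel case $r = m+1$ on the right. Since $m \neq 2r$, the relevant Eisenstein series on $\OO_{m,m}(\A)$ occurs on the left with $|a|$-exponents $m+1, r, m-r$ and on the right with $r, m-r$, all distinct, so Lemma \ref{L:key_lemma} lets me extract the $|a|^r$-component. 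Again the first summand on the left is holomorphic at $\frac{r+1}{2}$ by Lemma \ref{L:no_pole}, and $F^{(m+1,r)}$ together with the relevant zeta-factor on the Siegel side (for $m \le 2r-2$) is holomorphic and nonzero at the point in question, so what survives is $E^{(m,r)}_{-2}(\frac{r+1}{2})\,F^{(m+1,r)}(\frac{r+1}{2}) = d_{m+1,r}E^{(m,m)}_{-1}(\frac{2r-m+1}{2})$; since $F^{(m+1,r)}(\frac{r+1}{2}) = \xi(m-r+1)/\xi(m+1)$, this is precisely the asserted recursion $d_{m,r} = d_{m+1,r}\,\xi(m+1)/\xi(m-r+1)$. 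The first instance of this, the step $(2r,r) \to (2r-1,r)$, requires slightly more care, because $\xi$ has a pole at $s=1$ and the point $\frac{1}{2}$ on the Siegel side makes $H^{(2r)}$ singular; but that singularity lies in the $|a|^{r-1}$-component and is invisible to the extraction of the $|a|^r$-component. The case $r = 1$ must be treated separately, since Proposition \ref{P:constant_term} records a distinct formula there.

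The conceptual content is light — everything reduces to Proposition \ref{P:constant_term}, Lemma \ref{L:no_pole} and Lemma \ref{L:key_lemma} — so the genuine difficulty, and the step I expect to be the main obstacle to a clean write-up, is the bookkeeping: expanding the several zeta-quotients $F^{(\cdot)}, G^{(\cdot)}, H^{(\cdot)}$ in Laurent series, keeping straight which evaluation points are actual poles or zeros of $\xi$ (in particular the behaviour near $\xi(0)$ and $\xi(1)$), and checking that the contributions collapse to the clean recursion and the closed form for $d_{2r,r}$.
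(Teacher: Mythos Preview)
Your proposal is correct and follows essentially the same route as the paper: establish the $m=2r$ case by taking the constant term along $Q$ of the vanishing $E^{(2r+1,r)}_{-2}(\frac{r+1}{2})=0$, extract the $|a|^r$ component, and replace $E^{(2r,r-1)}_{-1}(\frac{r}{2})$ via Lemma~\ref{L:first_B-1}; then run the descent in $m$ by taking constant terms of the identity for $(m+1,r)$ and again extracting the $|a|^r$ piece, with the step $(2r,r)\to(2r-1,r)$ handled separately because of the pole of $\xi$ at $1$. The paper organizes the descent as induction on $j=2r-m$ and writes out the $m=2r-1$ case as a standalone ``base step of the induction'' before the general step, but this is only a cosmetic difference from your presentation.
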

\begin{proof}
Let us note that the case $r=1$ and $m=2$ is essentially done  in \cite[Appendix. B]{GI}.
So we assume that $r>1$.
\vskip 10pt

\noindent{\bf \underline{The case $m = 2r$}}
\vskip 10pt

As we mentioned above, we first prove the case $m=2r$. Namely from the first formula of Proposition \ref{P:constant_term}, we have
\begin{align*}
    E_{-2, Q}^{(2r+1,r)}(\frac{r+1}{2})&=|\,|^{2r+1}E_{-2}^{(2r,r-1)}(\frac{r+2}{2})
    +|\,|^rE_{-2}^{(2r,r)}(\frac{r+1}{2})F^{(2r+1,r)}(\frac{r+1}{2})\\
    &\qquad\qquad+|\,|^{r}E_{-1}^{(2r,r-1)}(\frac{r}{2})G_{-1}^{(2r+1,r)}(\frac{r+1}{2}).
\end{align*}
But as we mentioned at the beginning of the section,
\[
    E_{-2, Q}^{(2r+1,r)}(\frac{r+1}{2})=0
\]
by Lemma \ref{L:no_pole} (ii). Then by extracting terms having $|\,|^r$ (Lemma \ref{L:key_lemma}) we have
\[
    0=E_{-2}^{(2r,r)}(\frac{r+1}{2})F^{(2r+1,r)}(\frac{r+1}{2})
    +E_{-1}^{(2r,r-1)}(\frac{r}{2})G_{-1}^{(2r+1,r)},
\]
from which, by computing $F^{(2r+1,r)}(\frac{r+1}{2})$ and $G_{-1}^{(2r+1,r)}$ explicitly, we obtain
\begin{align*}
    E_{-2}^{(2r,r)}(\frac{r+1}{2})\frac{\xi(r+1)}{\xi(2r+1)}
    &=-E_{-1}^{(2r,r-1)}(\frac{r}{2})\frac{\xi(r+1)}{\xi(2r)}
    \frac{\xi(r)}{\xi(2r+1)}\frac{\xi(0)}{\xi(r+1)}\\
    &=-c_{2r,r-1}E^{(2r,2r)}_{-1}(\frac{1}{2})\frac{\xi(0)}{\xi(r)}
    \frac{\xi(2r)}{\xi(2r+1)}\quad (\text{by Lemma \ref{L:first_B-1}}),
\end{align*}
which gives
\[
    E_{-2}^{(2r,r)}(\frac{r+1}{2})=-c_{2r,r-1}\frac{\xi(0)}{\xi(r)}
    \frac{\xi(2r)}{\xi(r+1)}E^{(2r,2r)}_{-1}(\frac{1}{2}).
\]
(Here recall our convention that $\xi(0)=\underset{s=0}{\Res}\xi(s)$.) We set
\begin{align*}
    d_{2r,r}&=-c_{2r,r-1}\frac{\xi(0)}{\xi(r)}
    \frac{\xi(2r)}{\xi(r+1)}\\
    &=c_{2r,r-1}\frac{\xi(1)}{\xi(r)}
    \frac{\xi(2r)}{\xi(r+1)}\quad\text{by $\xi(0)=-\xi(1)$}.
\end{align*}
Hence the case $m=2r$ is proven.
\vskip 10pt

\noindent{\bf \underline{Base step of the induction: $m = 2r-1$}}
\vskip 10pt

Next we show the case $m\leq 2r-1$. Recall
\[
	m=2r-j\quad\text{ where }\quad 1 \leq j \leq r-1.
\]
Then we show this by induction on $j$. For the base step, consider
\begin{align}\label{E:E_{-2,Q}^{(2r,r)}}
    E_{-2, Q}^{(2r,r)}(\frac{r+1}{2})&=|\,|^{2r}E_{-2}^{(2r-1,r-1)}(\frac{r+2}{2})
    +|\,|^rE_{-2}^{(2r-1,r)}(\frac{r+1}{2})F^{(2r,r)}(\frac{r+1}{2})\notag\\
    &\qquad\qquad+|\,|^{r-1}E_{-1}^{(2r-1,r-1)}(\frac{r}{2})G_{-1}^{(2r,r)}(\frac{r+1}{2}).
\end{align}
We know that
\begin{align*}
    &E_{-2,Q}^{(2r,r)}(\frac{r+1}{2})=d_{2r,r}E_{-1,Q}^{(2r,2r)}(\frac{1}{2})\\
    &E_{-2}^{(2r-1,r-1)}(\frac{r+2}{2})=0 \quad(\text{Lemma \ref{L:no_pole} (i)})\\
    &E_{-1}^{(2r-1,r-1)}(\frac{r}{2})=c_{r-1}E_{0}^{(2r-1,2r-1)}(0).
\end{align*}
Hence (\ref{E:E_{-2,Q}^{(2r,r)}}) becomes
\begin{align}\label{E:2r_r}
    c_{2r,r-1}E_{-1,Q}^{(2r,2r)}(\frac{1}{2})
    &=|\,|^rE_{-2}^{(2r-1,r)}(\frac{r+1}{2})F^{(2r,r)}(\frac{r+1}{2})\notag\\
    &\qquad\qquad
    +|\,|^{r-1}c_{r-1}E_{0}^{(2r-1,2r-1)}(0)G_{-1}^{(2r,r)}(\frac{r+1}{2}).
\end{align}
Now by the second equation of Proposition \ref{P:constant_term}, we
have
\begin{equation}\label{E:2r_2r}
    E_{-1,Q}^{(2r,2r)}(\frac{1}{2})=|\,|^{r}E_{-1}^{(2r-1,2r-1)}(1)
    +|\,|^{r-1}E_{0}^{(2r-1,2r-1)}(0)H_{-1}^{(2r)}(\frac{1}{2}).
\end{equation}
Then by (\ref{E:2r_r}) and (\ref{E:2r_2r}), we have
\begin{align}\label{E:2r-1_r}
 &d_{2r,r}\left(|\,|^{r}E_{-1}^{(2r-1,2r-1)}(1)
    +|\,|^{r-1}E_{0}^{(2r-1,2r-1)}(0)H_{-1}^{(2r-1)}(\frac{1}{2})\right)\notag\\
 &=|\,|^rE_{-2}^{(2r-1,r)}(\frac{r+1}{2})F^{(2r,r)}(\frac{r+1}{2})
 +|\,|^{r-1}c_{r-1}E_{0}^{(2r-1,2r-1)}(0)G_{-1}^{(2r,r)}(\frac{r+1}{2}).
\end{align}
By extracting the terms containing $|\,|^r$ in (\ref{E:2r-1_r}) (Lemma \ref{L:key_lemma}), we obtain
\[
    d_{2r,r}E_{-1}^{(2r-1,2r-1)}(1)
    =E_{-2}^{(2r-1,r)}(\frac{r+1}{2})F^{(2r,r)}(\frac{r+1}{2}),
\]
\ie
\[
    E_{-2}^{(2r-1,r)}(\frac{r+1}{2})=d_{2r,r}F^{(2r,r)}(\frac{r+1}{2})E_{-1}^{(2r-1,2r-1)}(1),
\]
and
\begin{align*}
    d_{2r-1,r}&=d_{2r,r}\frac{1}{F^{(2r,r)}(\frac{r+1}{2})}\\
    &=d_{2r,r}\frac{\xi(2r)}{\xi(r)}.
\end{align*}
This shows the base step.
\vskip 10pt

\noindent{\bf \underline{The induction step}}
\vskip 10pt

Now consider the induction step. First note that for $j\geq1$,
\begin{align}\label{E:2r-j_r}
    E_{-2, Q}^{(2r-j,r)}(\frac{r+1}{2})&=|\,|^{2r-j}E_{-2}^{(2r-j-1,r-1)}(\frac{r+2}{2})
    +|\,|^rE_{-2}^{(2r-j-1,r)}(\frac{r+1}{2})F^{(2r-j,r)}(\frac{r+1}{2})\notag\\
    &\qquad\qquad+|\,|^{r-j-1}E_{-2}^{(2r-j-1,r-1)}(\frac{r}{2})G^{(2r-j,r)}(\frac{r+1}{2})\notag\\
    &=|\,|^rE_{-2}^{(2r-j-1,r)}(\frac{r+1}{2})F^{(2r-j,r)}(\frac{r+1}{2})\notag\\
    &\qquad\qquad+|\,|^{r-j-1}E_{-2}^{(2r-j-1,r-1)}(\frac{r}{2})G^{(2r-j,r)}(\frac{r+1}{2}),
\end{align}
where we used $E_{-2}^{(2r-j-1,r-1)}(\frac{r+2}{2})=0$ (Lemma
\ref{L:no_pole} (i)). Also we have
\begin{align}\label{E:2r-j_2r-j}
    E^{(2r-j,2r-j)}_{-1,Q}(\frac{j+1}{2})
    &=|\,|^rE_{-1}^{(2r-j-1,2r-j-1)}(\frac{j+2}{2})\notag\\
    &\qquad+|\,|^{r-j-1}E_{-1}^{(2r-j-1,2r-j-1)}(\frac{j}{2})H^{(2r-j)}(\frac{j+1}{2}).
\end{align}
Now by the induction hypothesis
\[
    E_{-2,Q}^{(2r-j,r)}(\frac{r+1}{2})=d_{2r-j,r}E^{(2r-j,2r-j)}_{-1,Q}(\frac{j+1}{2}),
\]
together with (\ref{E:2r-j_r}) and (\ref{E:2r-j_2r-j}) above, we obtain
\begin{align*}
    &|\,|^rE_{-2}^{(2r-j-1,r)}(\frac{r+1}{2})F^{(2r-j,r)}(\frac{r+1}{2})
    +|\,|^{r-j}E_{-2}^{(2r-j,r-1)}(\frac{r}{2})G^{(2r-j,r)}(\frac{r+1}{2})\\
    &=d_{2r-j,r}\left(|\,|^rE_{-1}^{(2r-j-1,2r-j-1)}(\frac{j+2}{2})
    +|\,|^{r-j-1}E_{-1}^{(2r-j-1,2r-j-1)}(\frac{j}{2})H^{(2r-j)}(\frac{j+1}{2})\right).
\end{align*}
By extracting the terms containing $|\,|^r$ from both sides (Lemma \ref{L:key_lemma}), we obtain
\[
    E_{-2}^{(2r-j-1,r)}(\frac{r+1}{2})F^{(2r-j,r)}(\frac{r+1}{2})
    =d_{2r-j,r}E_{-1}^{(2r-j-1,2r-j-1)}(\frac{j+2}{2}),
\]
\ie, by computing $F^{(2r-j,r)}(\frac{r+1}{2})$ explicitly,
\[
    E_{-2}^{(2r-j-1,r)}(\frac{r+1}{2})=\frac{\xi(2r-j)}{\xi(r-j)}
    d_{2r-j,r}E_{-1}^{(2r-j-1,2r-j-1)}(\frac{j+2}{2}),
\]
and
\begin{align*}
    d_{2r-j-1,r}&=\frac{\xi(2r-j)}{\xi(r-j)}d_{2r-j,r},
    \text{\quad for\quad } j\geq 1.
\end{align*}
This competes the proof.
\end{proof}
\vskip 10pt

%%%%%%%%%%%%%%%%%%%%%%%%%%%%%%%%%%%%%%%%%%%%%%%%%
\subsection{Spherical second term identities for 2nd term range $r<m\leq 2r$}
%%%%%%%%%%%%%%%%%%%%%%%%%%%%%%%%%%%%%%%%%%%%%%%%%

Next we prove the spherical second term identity. As we mentioned at the beginning of the section, we prove the case $m=2r$ as a base step and derive the rest by induction. Namely, first we prove
\vskip 10pt

\begin{prop}[Spherical Second Term Identity for $m=2r$]\label{P:Spherical_Second_Term}
The following identity holds for the spherical Eisenstein series:
\begin{align*}
    &E_{-1}^{(2r,r)}(\frac{r+1}{2})F_0^{(2r+1,r)}(\frac{r+1}{2})+E^{(2r,r-1)}_0(\frac{r}{2})G_{-1}^{(2r+1,r)}(\frac{r+1}{2})\\
    &\qquad\qquad\qquad\qquad\qquad\qquad\qquad
    =2c_rE_{0}^{(2r,2r)}(\frac{1}{2})+\gamma_0E_{-1}^{(2r,2r)}(\frac{1}{2}).
\end{align*}
where $\gamma_0$ is some constant which depends only on $r$.
\end{prop}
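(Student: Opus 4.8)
The plan is to mimic exactly the mechanism used in Proposition \ref{P:first_A0}: start from the first term identity on the boundary, namely $E_{-1}^{(2r+1,r)}(\frac{r+1}{2})=c_rE_0^{(2r+1,2r+1)}(0)$, but now extract the \emph{next} coefficient in the appropriate Laurent expansion rather than the leading one. Concretely, I would take the constant term along $Q=P_1$ of both sides of this identity, obtaining $E_{-1,Q}^{(2r+1,r)}(\frac{r+1}{2})=c_rE_{0,Q}^{(2r+1,2r+1)}(0)$, and then expand both sides in terms of lower-rank Eisenstein series on $\OO_{2r,2r}(\A)$ using Proposition \ref{P:constant_term}. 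The left side is governed by the first formula of that proposition with $(m,r)\leadsto(2r+1,r)$, producing terms $|\,|^{2r+1}E^{(2r,r-1)}(\cdots)$, $|\,|^{r}E^{(2r,r)}(\cdots)F^{(2r+1,r)}$, and $|\,|^{r}E^{(2r,r-1)}(\cdots)G^{(2r+1,r)}$; the right side is governed by the second (Siegel) formula with $m=2r+1$, producing $|\,|^{r+1}E^{(2r,2r)}(\cdots)$ and $|\,|^{r}E^{(2r,2r)}(\cdots)H^{(2r+1)}$. Since the distinct exponents of $|\,|$ that appear are $2r+1$, $r+1$, and $r$, and we want the $|\,|^r$-part, Lemma \ref{L:key_lemma} lets us equate the $|\,|^r$-coefficients of the two sides.

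The subtlety — and the key point that produces a \emph{second} term identity rather than merely the first term identity — is that several of the zeta-factors $F^{(2r+1,r)}(s)$, $G^{(2r+1,r)}(s)$, $H^{(2r+1)}(s)$ have zeros or poles at the relevant evaluation point, so the $|\,|^r$-coefficient of the $s$-Laurent expansion involves not only $E_{-1}^{(2r,r)}(\frac{r+1}{2})$ and $E_0^{(2r,2r)}(\frac{1}{2})$ but also $E_0^{(2r,r-1)}(\frac{r}{2})$ and $E_{-1}^{(2r,2r)}(\frac{1}{2})$, with coefficients that are themselves Laurent coefficients $F_0^{(2r+1,r)}$, $G_{-1}^{(2r+1,r)}$, $H_{-1}^{(2r+1)}$, etc. So after equating $|\,|^r$-coefficients I would carefully collect: on the left, the coefficient of $|\,|^r$ picks up $E_{-1}^{(2r,r)}(\frac{r+1}{2})F_0^{(2r+1,r)}(\frac{r+1}{2})$ (from the product of the $(s-\frac{r+1}{2})^{-1}$ term of $E^{(2r,r)}$ with the holomorphic part of $F^{(2r+1,r)}$), plus a $E^{(2r,r)}_{-2}$-times-$F_1$ type contribution which by Proposition \ref{P:first_B-2} is itself proportional to $E_{-1}^{(2r,2r)}(\frac12)$ and hence gets absorbed into the $\gamma_0$-term, plus $E_0^{(2r,r-1)}(\frac{r}{2})G_{-1}^{(2r+1,r)}(\frac{r+1}{2})$ (using that $G^{(2r+1,r)}$ has a simple pole here). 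On the right, the coefficient of $|\,|^r$ is $c_r$ times the $|\,|^r$-coefficient of the Laurent expansion at $0$ of $|\,|^{r-1}E^{(2r,2r)}(\cdots,0\text{-shifted})H^{(2r+1)}$, which unwinds to $2c_rE_0^{(2r,2r)}(\frac12)$ (the factor $2$ coming from $H^{(2r+1)}$ having a simple pole at $\frac12$ with a known residue, exactly as the $\frac12$ appeared in $c_r=c_{2r+2,r}\frac{\xi(1)}{2\xi(r+2)}$) plus a multiple of $E_{-1}^{(2r,2r)}(\frac12)$, again absorbed into $\gamma_0$.

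Rearranging, all the terms proportional to $E_{-1}^{(2r,2r)}(\frac12)$ (from both sides, and from the $E_{-2}^{(2r,r)}$-substitution via Proposition \ref{P:first_B-2}) collect into a single term $\gamma_0E_{-1}^{(2r,2r)}(\frac12)$ with $\gamma_0$ an explicit combination of Laurent coefficients of the zeta quotients — a constant depending only on $r$ — and one is left precisely with the asserted identity. I expect the main obstacle to be purely bookkeeping: correctly computing the orders of vanishing of $F^{(2r+1,r)}$, $G^{(2r+1,r)}$, $H^{(2r+1)}$ at the evaluation points and tracking which products of Laurent coefficients contribute to the $|\,|^r$-coefficient at \emph{second} order, especially making sure the $E_0^{(2r,r-1)}(\frac{r}{2})$ term survives and the spurious $E_{-1}^{(2r,r-1)}(\frac{r}{2})$ and $E_{-2}^{(2r,r-1)}$ contributions vanish or get folded into known quantities via Lemma \ref{L:no_pole} and Proposition \ref{P:first_B-2}. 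There is no conceptual difficulty beyond this, since the extraction-via-Lemma-\ref{L:key_lemma} strategy is identical to the first term cases; one simply goes one order deeper in the $s$-expansion.
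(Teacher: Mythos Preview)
Your approach is exactly the paper's: start from the boundary first term identity $E_{-1}^{(2r+1,r)}(\tfrac{r+1}{2})=c_rE_0^{(2r+1,2r+1)}(0)$, take constant terms along $Q$, expand both sides via Proposition~\ref{P:constant_term}, and extract the $|\,|^r$-part using Lemma~\ref{L:key_lemma}, then absorb terms proportional to $E_{-1}^{(2r,2r)}(\tfrac12)$ via the first term identities. The substitutions you plan on the left are correct.

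There is, however, a real bookkeeping error in your treatment of the right-hand side that would derail the computation of the constant $2$. In the Siegel formula for $E_Q^{(2r+1,2r+1)}(s)$, the two terms carry $|\,|^{s+r}$ and $|\,|^{-s+r}$, so at $s=0$ \emph{both} become $|\,|^r$; your claimed exponents $r+1$ and $r$ (and later $r-1$) are wrong. Hence the $|\,|^r$-coefficient on the right gets contributions from \emph{both} Siegel pieces, not just the $H^{(2r+1)}$-term. The first piece contributes $E_0^{(2r,2r)}(\tfrac12)$ directly; the second involves $E^{(2r,2r)}(s-\tfrac12)$ at $s=0$, i.e.\ Laurent coefficients of $E^{(2r,2r)}$ at $-\tfrac12$, which must be converted to coefficients at $+\tfrac12$ via the functional equation $E^{(2r,2r)}(s)=\beta_{2r}(s)E^{(2r,2r)}(-s)$. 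After this conversion the coefficient of $E_0^{(2r,2r)}(\tfrac12)$ is $c_r\bigl(1+\beta_{2r,1}(-\tfrac12)H_{-1}^{(2r+1)}(0)\bigr)$, and an explicit zeta computation gives $\beta_{2r,1}(-\tfrac12)H_{-1}^{(2r+1)}(0)=1$, whence the $2$. So the $2$ is not a residue of $H^{(2r+1)}$ alone but the sum $1+1$ of two separate contributions; without the functional equation step you will not see it.
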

\begin{proof}
Recall as we mentioned at the beginning, we start with
\begin{align*}
    E_{-1, Q}^{(2r+1,r)}(\frac{r+1}{2})&=c_rE_{0, Q}^{(2r+1,2r+1)}(0),
\end{align*}
which is obtained by taking the constant term along $Q$ of the spherical first term identity on the boundary (Proposition \ref{P:first_A0}). Also by Proposition \ref{P:constant_term}, we have
\begin{align*}
    E_Q^{(2r+1,r)}(s)&=|\,|^{s+\frac{3r+1}{2}}E^{(2r,r-1)}(s+\frac{1}{2})
    +|\,|^rE^{(2r,r)}(s)F^{(2r+1,r)}(s)\\
    &\qquad\qquad+|\,|^{-s+\frac{3r+1}{2}}E^{(2r,r-1)}(s-\frac{1}{2})G^{(2r+1,r)}(s).
\end{align*}
Now we will compute the residue of both sides at $\frac{r+1}{2}$.
Note that
\begin{align*}
    &E^{(2r,r-1)}_{-1}(\frac{r+1}{2}+\frac{1}{2})=0 \quad (\text{Lemma \ref{L:no_pole} (iii)}),\\
    &E^{(2r,r)}_{-2}(\frac{r+1}{2})\neq 0,\\
    &E^{(2r,r-1)}_{-1}((\frac{r+1}{2}-\frac{1}{2}))\neq 0,\\
    &G^{(2r+1,r)}_{-1}(s)\neq 0.
\end{align*}
By taking all these into account, we obtain
\begin{align}\label{E:2r+1_r_second}
    &E_{-1, Q}^{(2r+1,r)}(\frac{r+1}{2})\notag\\
    &=|\,|^rE_{-2}^{(2r,r)}(\frac{r+1}{2})F_1^{(2r+1,r)}(\frac{r+1}{2})
    +|\,|^rE_{-1}^{(2r,r)}(\frac{r+1}{2})F_0^{(2r+1,r)}(\frac{r+1}{2})\notag\\
    &+|\,|^{r}E^{(2r,r-1)}_{-1}(\frac{r}{2})G_{0}^{(2r+1,r)}(\frac{r+1}{2})
    +|\,|^{r}E^{(2r,r-1)}_0(\frac{r}{2})G_{-1}^{(2r+1,r)}(\frac{r+1}{2})\notag\\
    &-|\,|^{r}\ln|\,|
    E^{(2r,r-1)}_{-1}(\frac{r}{2})G_{-1}^{(2r+1,r)}(\frac{r+1}{2}).
\end{align}

Next we will compute $E_{0, Q}^{(2r+1,2r+1)}(0)$ in terms of the Siegel Eisenstein series for the lower rank group. Namely by Proposition
\ref{P:constant_term}, we have
\begin{align*}
    E_{Q}^{(2r+1,2r+1)}(s)&=|\,|^{s+r}E^{(2r,2r)}(s+\frac{1}{2})
    +|\,|^{-s+r}E^{(2r,2r)}(s-\frac{1}{2})H^{(2r+1)}(s).
\end{align*}
Then we have
\begin{align*}
    E_{0,
    Q}^{(2r+1,2r+1)}(0)&=|\,|^rE_{0}^{(2r,2r)}(\frac{1}{2})+|\,|^r\ln|\,|E_{-1}^{(2r,2r)}(\frac{1}{2})\\
    &\qquad+|\,|^rE_0^{(2r,2r)}(-\frac{1}{2})H_{0}^{(2r+1)}(0)+|\,|^rE_{1}^{(2r,2r)}(-\frac{1}{2})H_{-1}^{(2r+1)}(0)\\
    &\qquad-|\,|^r\ln|\,|E_{0}^{(2r,2r)}(-\frac{1}{2})H_{-1}^{(2r+1)}(0).
\end{align*}
Now recall the functional equation of the Siegel-Eisenstein series:
\[
    E^{(2r,2r)}(s)=\beta_{2r}(s)E^{(2r,2r)}(-s),
\]
where, in general, $\beta_m(s)$ is given by
\[
    \beta_m(s)=\prod_{i=0}^{m-2}\frac{\xi(2s-i)}{\xi(2s+m-1-2i)}.
\]

Then at $s=-\frac{1}{2}$, $\beta_{2r}(s)$ has a zero of order one
and $E^{(2r,2r)}(s)$ has a simple pole. Accordingly,
$E^{(2r,2r)}(-s)$ does not have a pole or zero at $s=\frac{1}{2}$.
Hence
\begin{align*}
    E_0^{(2r,2r)}(-\frac{1}{2})&=-\beta_{2r, 1}(-\frac{1}{2})E_{-1}^{(2r,2r)}(\frac{1}{2})\\
    E_1^{(2r,2r)}(-\frac{1}{2})&=\beta_{2r, 1}(-\frac{1}{2})E_{0}^{(2r,2r)}(\frac{1}{2})
            -\beta_{2r, 2}(-\frac{1}{2})E_{-1}^{(2r,2r)}(\frac{1}{2}),
\end{align*}
where, as we have been doing for other meromorphic functions, we write the Laurent expansion of $\beta_{2r}(s)$ at
$s_0$ by
\[
    \beta_{2r}(s)=\sum_{d\gg-\infty}(s-s_0)^d\beta_{2r, d}(s_0).
\]
So we have
\begin{align}\label{E:2r+1_2r+1_second}
    E_{0,Q}^{(2r+1,2r+1)}(0)\notag
    &=|\,|^rE_{0}^{(2r,2r)}(\frac{1}{2})+|\,|^r\ln|\,|E_{-1}^{(2r,2r)}(\frac{1}{2})
    -|\,|^r\beta_{2r,1}(-\frac{1}{2})E_{-1}^{(2r,2r)}(\frac{1}{2})H_{0}^{(2r+1)}(0)\notag\\
    &+|\,|^r\left(\beta_{2r,1}(-\frac{1}{2})E_{0}^{(2r,2r)}(\frac{1}{2})-\beta_{2r,2}(-\frac{1}{2})
    E_{-1}^{(2r,2r)}(\frac{1}{2})\right)H_{-1}^{(2r+1)}(0)\notag\\
    &-|\,|^r\ln|\,|E_{0}^{(2r,2r)}(-\frac{1}{2})H_{-1}^{(2r+1)}(0).
\end{align}

Now by
\begin{align*}
    E_{-1, Q}^{(2r+1,r)}(\frac{r+1}{2})&=c_rE_{0, Q}^{(2r+1,2r+1)}(0)
    \quad(\text{Proposition \ref{P:first_A0}}),
\end{align*}
together with (\ref{E:2r+1_r_second}) and (\ref{E:2r+1_2r+1_second}), we obtain the following by extracting the terms having $|\,|^r$ (Lemma \ref{L:key_lemma}):
\begin{align*}
    &E_{-2}^{(2r,r)}(\frac{r+1}{2})F_1^{(2+1,r)}(\frac{r+1}{2})
    +E_{-1}^{(2r,r)}(\frac{r+1}{2})F_0^{(2r+r,r)}(\frac{r+1}{2})\\
    &\qquad\qquad+E^{(2r,r-1)}_{-1}(\frac{r}{2})G_{0}^{(2r+1,r)}(\frac{r+1}{2})
    +E^{(2r,r-1)}_0(\frac{r}{2})G_{-1}^{(2r+1,r)}(\frac{r+1}{2})\\
    &=c_r\Big[E_{0}^{(2r,2r)}(\frac{1}{2})-\beta_{2r,1}(-\frac{1}{2})E_{-1}^{(2r,2r)}(\frac{1}{2})H_{0}^{(2r+1)}(0)\\
    &\qquad\qquad+\left(\beta_{2r,1}(-\frac{1}{2})E_{0}^{(2r,2r)}(\frac{1}{2})-\beta_{2r,2}(-\frac{1}{2})
    E_{-1}^{(2r,2r)}(\frac{1}{2})\right)H_{-1}^{(2r+1)}(0)\Big]\\
    &=c_r\left(1+\beta_{2r,1}(-\frac{1}{2})H_{-1}^{(2r+1)}(0)\right)E_{0}^{(2r,2r)}(\frac{1}{2})\\
    &\qquad\qquad+c_r\left(-\beta_{2r,1}(-\frac{1}{2})-\beta_{2r,2}(-\frac{1}{2})H_{-1}^{(2r+1)}(0)\right)E_{-1}^{(2r,2r)}(\frac{1}{2}),
\end{align*}
which is the first form of our spherical second term identity.

But the first term identities (Lemma \ref{L:first_B-1} and
Proposition \ref{P:first_B-2}) imply
\begin{align*}
    E^{(2r,r-1)}_{-1}(\frac{r}{2})&=c_{2r,r-1}E^{(2r,2r)}_{-1}(\frac{1}{2})\\
    E_{-2}^{(2r,r)}(\frac{r+1}{2})&=d_{2r,r}E^{(2r,2r)}_{-1}(\frac{1}{2}).
\end{align*}
Thus our second term identity is now written as
\begin{align*}
    &E_{-1}^{(2r,r)}(\frac{r+1}{2})F_0^{(2r+1,r)}(\frac{r+1}{2})+E^{(2r,r-1)}_0(\frac{r}{2})G_{-1}^{(2r+1,r)}(\frac{r+1}{2})\\
    &\quad=c_r\left(1+\beta_{2r,1}(-\frac{1}{2})H_{-1}^{(2r+1,2r+1)}(0)\right)E_{0}^{(2r)}(\frac{1}{2})\\
    &\quad\quad+c_r\bigg[-\beta_{2r,1}(-\frac{1}{2})-\beta_{2r,2}(-\frac{1}{2})H_{-1}^{(2r+1)}(0)\\
    &\quad\quad-d_{2r,r}F_1^{(2r+1,r)}(\frac{r+1}{2})-c_{2r,r-1}G_{0}^{(2r+1,r)}(\frac{r+1}{2})\bigg]E_{-1}^{(2r,2r)}(\frac{1}{2}),
\end{align*}
which we write as
\begin{align*}
    &E_{-1}^{(2r,r)}(\frac{r+1}{2})F_0^{(2r+1,r)}(\frac{r+1}{2})+E^{(2r,r-1)}_0(\frac{r}{2})G_{-1}^{(2r+1,r)}(\frac{r+1}{2})\\
    &\quad\quad\quad=c_r\left(1+\beta_{2r,1}(-\frac{1}{2})H_{-1}^{(2r+1)}(0)\right)E_{0}^{(2r,2r)}(\frac{1}{2})+
    \gamma_0E_{-1}^{(2r,2r)}(\frac{1}{2}),
\end{align*}
where $\gamma_0$ is a constant independent of $f$ and $g$.

Finally, we explicitly compute the coefficient of the first term of the right hand side as
\[
    1+\beta_{2r,1}(-\frac{1}{2})H_{-1}^{(2r+1)}(0)=2.
\]
To see this, first of all, we have
\[
    H_{-1}^{(2r+1)}(0)=\frac{\xi(0)}{2\xi(2r)}.
\]
(Recall our convention $\xi(0)=\underset{s=0}{\Res}\xi(s)$.) Secondly,
notice that
\[
    \beta_{2r}(s)=\frac{1}{\xi(2s+1)}\cdot{\prod_{i=1}^{2r-2}\xi(2s-i)}
    {\prod_{i=0}^{r-2}\xi(2s+2r-1-2i)\prod_{i=r}^{2r-2}\xi(2s+2r-1-2i)},
\]
where the numerator of this fraction does not have a pole or zero at $s=-\frac{1}{2}$. So
\[
    \beta_{2r,1}(-\frac{1}{2})=\frac{2}{\xi(0)}\cdot{\prod_{i=1}^{2r-2}\xi(-1-i)}
    {\prod_{i=0}^{r-2}\xi(-1+2r-1-2i)\prod_{i=r}^{2r-2}\xi(-1+2r-1-2i)}.
\]
Recall that our $\xi(s)$ is normalized so that $\xi(s)=\xi(1-s)$.
Then we see most of the $\xi$'s get canceled out and we obtain
\[
    \beta_{2r,1}(\frac{1}{2})=\frac{2}{\xi(0)}\xi(2r).
\]
So
\[
    1+\beta_{2r, 1}(-\frac{1}{2})H_{-1}^{(2r+1)}(0)
    =1+\frac{2}{\xi(0)}\xi(2r)\cdot\frac{\xi(0)}{2\xi(2r)}=2.
\]

Hence we obtain the spherical second term identity for the case $m=2r$.
\end{proof}
\vskip 10pt

Now using this second term identity as our base step, we will show the following more general second term identity by induction. Recall
\[
		m=2r-j\quad\text{ for }\quad 0\leq j\leq r-1.
\]
Then by induction on $j$, we prove

\begin{thm}[Spherical Second Term Identity]\label{T:Spherical_Second_Term}
For $j=0,\dots, r-1$, the following spherical second term identity holds:
\begin{align*}
    &E_{-1}^{(2r-j,r)}(\frac{r+1}{2})\prod_{i=0}^{j}F^{(2r-i+1,r)}(\frac{r+1}{2})
    +E_0^{(2r-j,r-j-1)}(\frac{r-j}{2})\prod_{i=0}^{j} G^{(2r-i+1,r-i)}(\frac{r-i+1}{2})\\
    &\qquad\qquad\qquad\qquad\qquad\qquad\qquad
    =2c_rE_{0}^{(2r-j,2r-j)}(\frac{1+j}{2})+\gamma_jE_{-1}^{(2r-j,2r-j)}(\frac{1+j}{2}),
\end{align*}
where $\gamma_j$ is some constant which depends only on $j$ (and $r$). Note that explicitly
\[
    F^{(2r-i+1,r)}(\frac{r+1}{2})=\frac{\xi(r-i+1)}{\xi(2r-i+1)}
\]
and
\[
  G^{(2r-i+1,r-i)}(\frac{r-i+1}{2})=\frac{\xi(r-i)\xi(i)}{\xi(2r-2i)\xi(2r-i+1)},
\]
where for $i=0$ and $1$ we agree that by $G^{(2r-i+1,r-i)}(\frac{r-i+1}{2})$ we mean $G_{-1}^{(2r-i+1,r-i)}(\frac{r-i+1}{2})$.
\end{thm}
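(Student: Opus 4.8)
The proof proceeds by induction on $j$, the case $j=0$ being Proposition \ref{P:Spherical_Second_Term}. Assume the displayed identity at level $m=2r-j$ for some $0\le j\le r-2$; the plan is to deduce it at level $m-1=2r-j-1$, i.e.\ for $j+1$. The engine is exactly the one already used inside the proof of Proposition \ref{P:Spherical_Second_Term} to pass from $m=2r+1$ to $m=2r$: apply the constant-term-along-$Q$ functor, where $Q=P_1$ has Levi $\Gm\times\OO_{m-1,m-1}$, to both sides of the inductive hypothesis. Since forming the constant term is linear in the Eisenstein series, it commutes with extracting Laurent coefficients, so the left side produces $E^{(2r-j,r)}_{-1,Q}(\tfrac{r+1}{2})$ and $E^{(2r-j,r-j-1)}_{0,Q}(\tfrac{r-j}{2})$, and the right side produces $E^{(2r-j,2r-j)}_{0,Q}(\tfrac{j+1}{2})$ and $E^{(2r-j,2r-j)}_{-1,Q}(\tfrac{j+1}{2})$. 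Each of these four is then expanded, via Proposition \ref{P:constant_term}, as a sum of $|a|^{(\cdot)}$ times Eisenstein series on $\OO_{2r-j-1,2r-j-1}$, after which one invokes Lemma \ref{L:key_lemma} to read off the coefficient of $|a|^r$.

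The reason this is clean is a numerology of exponents: at the relevant half-integer special points, the outer exponents $\pm s+m-\tfrac{r'+1}{2}$ occurring in Proposition \ref{P:constant_term} evaluate to integers distinct from $r$ (to $2r-j$ and $r-j-1$ for both non-Siegel terms, and to $r-j-1$ for the ``reflected'' Siegel summand), so in each of the four constant terms exactly one summand contributes to the coefficient of $|a|^r$. Picking out that summand introduces precisely the new zeta factors $F_0^{(2r-j,r)}(\tfrac{r+1}{2})=F^{(2r-j,r)}(\tfrac{r+1}{2})$ and $G_0^{(2r-j,r-j-1)}(\tfrac{r-j}{2})=G^{(2r-j,r-j-1)}(\tfrac{r-j}{2})$, which is exactly why the products $\prod_{i=0}^{j}F^{(2r-i+1,r)}$ and $\prod_{i=0}^{j}G^{(2r-i+1,r-i)}$ each gain one factor; and on the Siegel side it reproduces $E_0^{(2r-j-1,2r-j-1)}(\tfrac{j+2}{2})$ and $E_{-1}^{(2r-j-1,2r-j-1)}(\tfrac{j+2}{2})$ with coefficients $2c_r$ and $\gamma_j$ respectively. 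In particular the constant $2c_r$ is carried along \emph{with no extra factor}: unlike in the $m=2r+1\to m=2r$ step, the reflected Siegel summand here has exponent $r-j-1<r$ and never enters, so no use of the functional equation $E^{(m-1,m-1)}(s)=\beta_{m-1}(s)E^{(m-1,m-1)}(-s)$ is required.

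After the $|a|^r$ extraction one is left with an identity on $\OO_{2r-j-1,2r-j-1}$ whose left side, besides the desired terms $E_{-1}^{(2r-j-1,r)}(\tfrac{r+1}{2})\prod_{i=0}^{j+1}F^{(2r-i+1,r)}(\tfrac{r+1}{2})$ and $E_0^{(2r-j-1,r-j-2)}(\tfrac{r-j-1}{2})\prod_{i=0}^{j+1}G^{(2r-i+1,r-i)}(\tfrac{r-i+1}{2})$, also carries ``junk'': $E_{-2}^{(2r-j-1,r)}(\tfrac{r+1}{2})F_1^{(2r-j,r)}(\tfrac{r+1}{2})$ and $E_{-1}^{(2r-j-1,r-j-2)}(\tfrac{r-j-1}{2})G_1^{(2r-j,r-j-1)}(\tfrac{r-j}{2})$ (times the relevant products). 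I would dispose of this using the already-proven first-term identities: Proposition \ref{P:first_B-2} rewrites $E_{-2}^{(2r-j-1,r)}(\tfrac{r+1}{2})$ as $d_{2r-j-1,r}E_{-1}^{(2r-j-1,2r-j-1)}(\tfrac{j+2}{2})$, Lemma \ref{L:first_B-1} rewrites $E_{-1}^{(2r-j-1,r-j-2)}(\tfrac{r-j-1}{2})$ as $c_{2r-j-1,r-j-2}E_{-1}^{(2r-j-1,2r-j-1)}(\tfrac{j+2}{2})$, and Lemma \ref{L:no_pole} kills the terms that cannot carry a double pole. All the junk thereby collapses onto $E_{-1}^{(2r-j-1,2r-j-1)}(\tfrac{j+2}{2})$ and is absorbed, together with the $\gamma_j$-term from the right side, into a new numerical constant $\gamma_{j+1}$, which is exactly the claimed shape. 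Finally one checks that the $|a|^r\ln|a|$ coefficients match as well; this is automatic and merely reproduces (a scalar multiple of) the spherical first-term identity at level $2r-j-1$, so it imposes no new constraint.

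The main obstacle is combinatorial rather than conceptual: one must keep exact track of the zeroth and first Laurent coefficients $F_0,F_1,G_0,G_1,H_0,H_{-1},\beta_{\bullet,1},\dots$ of all the zeta quotients, evaluated at explicit half-integers, together with the $\ln|a|$-twisted terms generated by the $|a|^{\pm(s-s_0)}$-expansions, and one must handle by hand the low-rank and boundary degeneracies — for instance the endpoint $j=r-2$ where $E^{(r+1,0)}$ is really just the (normalized) spherical section rather than a genuine Eisenstein series, and the cases where $F^{(2r-j,r)}$ or $G^{(2r-j,r-j-1)}$ acquires a pole at its evaluation point, which is precisely the reason the statement carries the ``$G_{-1}$'' convention for $i=0,1$. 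None of these is difficult in isolation, but assembling them is lengthy, exactly as the authors warn.
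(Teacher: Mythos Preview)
Your approach is essentially the paper's own: induct on $j$ with Proposition \ref{P:Spherical_Second_Term} as the base case, take the constant term along $Q$, expand via Proposition \ref{P:constant_term}, extract the $|a|^r$ coefficient using Lemma \ref{L:key_lemma}, and absorb the spurious terms into $\gamma_{j+1}$ by means of the first-term identities (Lemma \ref{L:first_B-1} and Proposition \ref{P:first_B-2}). There is one slip in your exponent bookkeeping: for the second non-Siegel constant term $E^{(2r-j,r-j-1)}_{0,Q}(\tfrac{r-j}{2})$ the \emph{reflected} outer exponent $-s+m-\tfrac{r'+1}{2}$ equals $r$, not $r-j-1$ (the middle exponent $r'=r-j-1$ is the one distinct from $r$), so it is the $G$-carrying summand that contributes --- which is in fact exactly what you then use, so the slip is purely expository and does not affect the argument.
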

\begin{proof}
We will show this by induction on $j$. Clearly $j=0$ is the spherical second term identity for $m=2r$, which we have just proved. Assume the identity holds for a given $j$. Then by looking at the constant terms, we have
\begin{align*}
    &E_{-1, Q}^{(2r-j,r)}(\frac{r+1}{2})\prod_{i=0}^{j}F^{(2r-i+1,r)}(\frac{r+1}{2})
    +E_{0, Q}^{(2r-j,r-j-1)}(\frac{r-j}{2})\prod_{i=0}^{j} G^{(2r-i+1,r-i)}(\frac{r-i+1}{2})\\
    &\qquad\qquad\qquad\qquad\qquad\qquad\qquad
    =2c_rE_{0, Q}^{(2r-j,2r-j)}(\frac{1+j}{2})+\gamma_jE_{-1, Q}^{(2r-j,2r-j)}(\frac{1+j}{2}),
\end{align*}
where each of the terms $E_{-1, Q}^{(2r-j,r)}(\frac{r+1}{2})$, $E^{(2r-j,r-j-1)}_{0, Q}(\frac{r-j}{2})$, $E_{0, Q}^{(2r-j,2r-j)}(\frac{1+j}{2})$, and $E_{-1,Q}^{(2r-j,2r-j)}(\frac{1+j}{2})$ can be computed by Proposition \ref{P:constant_term} as
{\allowdisplaybreaks
\begin{align*}
    &E_{-1,Q}^{(2r-j,r)}(\frac{r+1}{2})\\
    &=|\,|^{2r-j}E_{-1}^{(2r-j-1,r-1)}(\frac{r+2}{2})\\
    &+|\,|^rE_{-1}^{(2r-j-1,r)}F^{(2r-j, r)}(\frac{r+1}{2})
    +|\,|^rE_{-2}^{(2r-j-1,r)}(\frac{r+1}{2})F_1^{(2r-j,r)}(\frac{r+1}{2})\\
    &+|\,|^{r-j-1}E_{-1}^{(2r-j-1,r-1)}(\frac{r}{2})G^{(2r-j,r)}(\frac{r+1}{2})
    +|\,|^{r-j-1}E_{-2}^{(2r-j-1,r-1)}(\frac{r}{2})G_{1}^{(2r-j,r)}(\frac{r+1}{2})\\
    &+|\,|^{r-j-1}\ln |\,| E_{-2}^{(2r-j-1,r-1)}(\frac{r}{2})G^{(2r-j,r)}(\frac{r+1}{2}),
\end{align*}
\begin{align*}
    &E^{(2r-j,r-j-1)}_{0, Q}(\frac{r-j}{2})\\
    &=|\,|^{2r}E_0^{(2r-j-1,r-j-2)}(\frac{r-j+1}{2})
    +|\,|^{r-j-1}E_0^{(2r-j-1,r-j-1)}(\frac{r-j}{2})F^{(2r-j,r-j-1)}(\frac{r-j}{2})\\
    &+|\,|^{r-j-1}E_{-1}^{(2r-j-1,r-j-1)}(\frac{r-j}{2})F_1^{(2r-j,r-j-1)}(\frac{r-j}{2})\\
    &+|\,|^rE_0^{(2r-j-1,r-j-2)}(\frac{r-j-2}{2})G^{(2r-j,r-j-1)}(\frac{r-j}{2})\\
    &-|\,|^r\ln|\,| E_{-1}^{(2r-j-1,r-j-2)}(\frac{r-j-2}{2})G^{(2r-j,r-j-1)}(\frac{r-j}{2})\\
    &+|\,|^rE_{-1}^{(2r-j-1,r-j-2)}(\frac{r-j-2}{2})G_1^{(2r-j,r-j-1)}(\frac{r-j}{2}),
\end{align*}
\begin{align*}
    &E_{0, Q}^{(2r-j,2r-j)}(\frac{1+j}{2})\\
    &=|\,|^rE_0^{(2r-j-1,2r-j-1)}(\frac{2+j}{2})+|\,|^r\ln|\,| E_{-1}^{(2r-j-1,2r-j-1)}(\frac{2+j}{2})\\
    &+|\,|^{r-1}E_0^{(2r-j-1,2r-j-1)}(\frac{j}{2})H^{(2r-j)}(\frac{j+1}{2})
    +|\,|^{r-1}E_{-1}^{(2r-j-1,2r-j-1)}(\frac{j}{2})H_{1}^{(2r-j)}(\frac{j+1}{2})\\
    &-|\,|^{r-1}\ln |\,| E_{-1}^{(2r-j,2r-j)}(\frac{j}{2})H^{(2r-j)}(\frac{j+1}{2}),
\end{align*}
and
\begin{align*}
    &E_{-1,Q}^{(2r-j,2r-j)}(\frac{1+j}{2})\\
    &=|\,|^rE_{-1}^{(2r-j-1,2r-j-1)}(\frac{2+j}{2})+|\,|^{r-1}E_{-1}^{(2r-j-1,2r-j-1)}(\frac{j}{2})H^{(2r-j)}(\frac{1+j}{2}).
\end{align*}
}
Then by extracting the terms containing $|\,|^r$ (Lemma \ref{L:key_lemma}), we obtain the identity
\begin{align*}
    &\left( E_{-1}^{(2r-j-1,r)}F^{(2r-j, r)}(\frac{r+1}{2})
    +E_{-2}^{(2r-j-1,r)}(\frac{r+1}{2})F_1^{(2r-j,r)}(\frac{r+1}{2})\right)\prod_{i=0}^{j}F^{(2r-i+1,r)}(\frac{r+1}{2})\\
    &+\bigg(E_0^{(2r-j-1,r-j-2)}(\frac{r-j-2}{2})G^{(2r-j,r-j-1)}(\frac{r-j}{2})\\
    &\qquad+E_{-1}^{(2r-j-1,r-j-2)}(\frac{r-j-2}{2})G_1^{(2r-j,r-j-1)}(\frac{r-j}{2})\bigg)\prod_{i=0}^{j} G^{(2r-i+1,r-i)}(\frac{r-i+1}{2})\\
    &\qquad\qquad\qquad\qquad\qquad
    =2c_rE_0^{(2r-j-1,2r-j-1)}(\frac{2+j}{2})+\gamma_j E_{-1}^{(2r-j-1,2r-j-1)}(\frac{2+j}{2}).
\end{align*}
Now by Lemma \ref{L:first_B-1} and Proposition \ref{P:first_B-2}, we have
\begin{align*}
    E_{-1}^{(2r-j-1,r-j-2)}(\frac{r-j-2}{2})&=c_{2r-j-1,r-j-2}E^{(2r-j-1,2r-j-1)}_{-1}(\frac{2+j}{2})\\
    E_{-2}^{(2r-j-1,r)}(\frac{r+1}{2})&=d_{2r-j-1,r}E^{(2r-j-1,2r-j-1)}_{-1}(\frac{2+j}{2}).
\end{align*}
Using those two, one can see that the above identity simplifies to the desired second term identity for $m=2r-(j+i)$, which completes the induction step and thus the proof of the theorem.
\end{proof}

\begin{rmk}
In the above proof, to derive the formula for $m=2r-1$ from the one for $m=2r$, the computation for the induction is slightly different due to the locations of poles of Eisenstein series and the zeta function. However the essential idea is the same. The verification for this case is left to the reader.
\end{rmk}
\quad\\

%%%%%%%%%%%%%%%%%%%%%%%%%%%%%%%%%%%%%%%%%%%%%%%%%%%%%%%%%%%%%%%%%%%%%%%%%%%%%%%%%%%%%%%%%%%%%%%%%%%%%%%

%%%%%%%%%%%%%%%%%%%%%%%%%%%%%%%%%%%%%%%%%%%%%%%%%%%%%%%%%%%%%

\section{The Weak Second Term Identity}\label{S:weak_identity}

%%%%%%%%%%%%%%%%%%%%%%%%%%%%%%%%%%%%%%%%%%%%%%%%%%%%%%%%%%%%%

In this section, we will show that the spherical second term
identity on the group $G=\OO(\V)$ which we derived in the previous section can be naturally
extended to those sections which are in the $G(\A)$-span of the
spherical section.

%%%%%%%%%%%%%%%%%%%%%%%%%%%%%%%%%%%%%%%%%%%%%%%%%%%%%%%%%%%%%

\subsection{The complementary space and Ikeda's map}

%%%%%%%%%%%%%%%%%%%%%%%%%%%%%%%%%%%%%%%%%%%%%%%%%%%%%%%%%%%%%

For the pair $(\V=\V^+\oplus \V^-, W=W^+\oplus W^-)$ of split symmetric and symplectic spaces with $\dim \V=2m$ and $\dim W=2r$, the complementary space of $W$ with respect to $\V$ is defined to be the symplectic space $W_c$ with
\[
	\dim W_c+\dim W=\dim\V-2,
\]
\ie $\dim W_c=2m-2r-2$. (Of course we have been assuming $m>r$.) Now assume $\dim W_c >\dim W$, namely
\[
	m>2r+1.
\]
Fix an embedding $W\subset W_c$ so that the polarization $W_c=W_c^+\oplus W_c^-$ is of the form
\[
	W_c^+=U^+\oplus W^+\quad\text{and}\quad W_c^-=U^-\oplus W^+.
\]
Let us denote each elements in $\V^+\otimes W$ and $\V^+\otimes W_c$ by the matrices
\[
	\begin{pmatrix} v_1\otimes w^+\\v_2\otimes w^-\end{pmatrix}
	\quad\text{and}\quad
	\begin{pmatrix} x_1\otimes u^+\\ v_1\otimes w^+\\x_2\otimes u^-\\
	v_2\otimes w^-\end{pmatrix},
\]
respectively, where $v_1,v_2,x_1,x_2\in \V^+, w^+\in W^+, w^-\in W^-, u^+\in U^+$ and $u^-\in U^-$. Then there is a nice $G(\A)=\OO(\V,\A)$ intertwining map
\[
	\Ik^{(m,r)}:\Sw((\V^+\otimes W_c)(\A))\rightarrow \Sw((\V^+\otimes W)(\A))
\]
defined by
\[
	\Ik^{(m,r)}(\varphi)\begin{pmatrix} v_1\otimes w^+\\v_2\otimes w^-\end{pmatrix}
	=\int_{(\V^+\otimes U^-)(\A)}\varphi_{K_H}
	\begin{pmatrix} x_1\otimes u^+\\ v_1\otimes w^+\\0\\
	v_2\otimes w^-\end{pmatrix}\, d(x_1\otimes u^+).
\]
It can be checked that if $\varphi^0_c$ is the spherical function of our choice for $\Sw((\V^+\otimes W_c)(\A))$, then $\Ik^{(m,r)}(\varphi^0_c)$ is the spherical function $\varphi^0$ of our choice for $\Sw((\V^+\otimes W)(\A))$ as long as the measure $d(x_1\otimes u^+)$ is chosen to be the Tamagawa measure on $(\V^+\otimes U^-)(\A)$.

It is our understanding that this map is due to Ikeda and hence our notation. (See \cite{Ikeda} for the analogous map for the Siegel-Weil formula for the symplectic group.)

%%%%%%%%%%%%%%%%%%%%%%%%%%%%%%%%%%%%%%%%%%%%%%%%%%%%%%%%%%%%%

\subsection{\bf The (weak) first term identity for 1st term range $m\geq2r+1$}

%%%%%%%%%%%%%%%%%%%%%%%%%%%%%%%%%%%%%%%%%%%%%%%%%%%%%%%%%%%%%

Before taking care of the second term identity, let us take care of
the first term identities for the 1st term range $m\geq2r+1$. For the case
$m>2r+1$, the first term identity is, of course, Proposition
\ref{P:first_term}. But for the sake of our applications, we need to refine it by incorporating Ikeda's map $\Ik^{(m,r)}$ defined above.

First recall in Section \ref{S:notation} we have defined
\[
	\Sw((\V^+\otimes W)(\A))^\circ=\text{ the $\OO(\V)(\A)$-span of $\varphi^0$},
\]
where $\varphi^0\in \Sw((\V^+\otimes W)(\A))$ is the spherical function of our choice. Similarly we define $\Sw((\V^+\otimes W_c)(\A))^\circ$. Then we have

\begin{prop}\label{P:first_term2}
Assume $r<\frac{m-1}{2}$, \ie $m>2r+1$. Then for all
$\varphi\in\Sw((\V^+\otimes W_c)(\A))^\circ$, there exists a constant $a_{m,r}$ independent of $\varphi$ such that
\[
    A_{-1}^{(m,m-r-1)}(\varphi)=a_{m,r}B_{-1}^{(m,r)}(\Ik^{(m,r)}(\varphi)),
\]
namely $A_{-1}^{(m,m-r-1)}$ and $B_{-1}^{(m,r)}\circ\Ik^{(m,r)}$ are proportional as maps from $\Sw((\V^+\otimes W_c)(\A))^\circ$ to $\Aut(G)$.  Moreover the constant $a_{m,r}$ is explicitly given by
\[
	a_{m,r}=|D|^{m\rho_{m,r}}\prod_{i=1}^{m-2r-1}\frac{\xi(2(i+r))}{\xi(i)}.
\]
\end{prop}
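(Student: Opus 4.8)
The plan is to first establish the proportionality and then pin down the constant by a spherical computation. For the proportionality, I would argue as follows. By Proposition \ref{P:first_term}, for a \emph{spherical} $\varphi$ there is a spherical standard section $\Phi(-,s)$ with $B_{-1}^{(m,r)}(\varphi) = A_{-1}^{(m,m-r-1)}(\Phi)$; moreover, via the normalization \eqref{E:E^(m,m)} and Lemma \ref{L:E^(m,r)}, this section can be taken to be a specific multiple of the standard spherical section $\Phi^{(m,m-r-1)}_{\varphi'}$ arising from a Schwartz function $\varphi'$. The point is that $\Ik^{(m,r)}$ sends $\varphi^0_c$ (the spherical function for $\V^+\otimes W_c$) to $\varphi^0$ (the spherical function for $\V^+\otimes W$), so the relevant identity for the spherical vectors reads $A_{-1}^{(m,m-r-1)}(\varphi^0_c) = a_{m,r}\, B_{-1}^{(m,r)}(\Ik^{(m,r)}(\varphi^0_c))$ for some scalar $a_{m,r}$. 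To propagate this from $\varphi^0_c$ to all of $\Sw((\V^+\otimes W_c)(\A))^\circ$, I would use that both sides are $G(\A)$-intertwining maps out of the induced representation: $A_{-1}^{(m,m-r-1)}$ is $G(\A)$-intertwining since it is the leading term of the Laurent expansion (stated in \S\ref{S:regularize}), $B_{-1}^{(m,r)}$ is $G(\A)$-intertwining because $\omega(z)$ commutes with $G(\A)$ (also stated there), and $\Ik^{(m,r)}$ is $G(\A)$-intertwining by construction. Hence both sides, restricted to the cyclic $G(\A)$-module generated by $\varphi^0_c$, are intertwining maps to $\Aut(G)$ agreeing on the generator, so they agree everywhere on $\Sw((\V^+\otimes W_c)(\A))^\circ$.

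For the constant, I would compute both sides at the identity $g=1$ (or rather, compare the leading Laurent coefficients of the associated spherical Eisenstein series). On one side, $A_{-1}^{(m,m-r-1)}(\varphi^0_c)$ is, by \eqref{E:E^(m,m)}, $|D|^{-(m-r-1)m/2}$ times the residue at $s = \rho_{m,m-r-1} = \tfrac{2(m-r-1)-m+1}{2} = \tfrac{m-2r-1}{2}$ of the spherical Siegel Eisenstein series $E^{(m,m)}(g,s;\Phi^0)$. On the other side, $B_{-1}^{(m,r)}(\Ik^{(m,r)}(\varphi^0_c)) = B_{-1}^{(m,r)}(\varphi^0)$, and by Lemma \ref{L:E^(m,r)} together with the definition of $\E^{(m,r)}$ in terms of $E^{(m,r)}(g,s;f^0)$, this residue at $s=\rho_r' = \tfrac{r+1}{2}$ is an explicit product of completed zeta values times a residue of the spherical non-Siegel Eisenstein series. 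Now I invoke Lemma \ref{L:first_B-1}, which gives precisely the proportionality $E_{-1}^{(m,r)}(\tfrac{r+1}{2}) = c_{m,r}\, E_{-1}^{(m,m)}(\tfrac{m-2r-1}{2})$ with the explicit constant
\[
    c_{m,r} = \prod_{i=0}^{r-1}\frac{\xi(m-2r+i)}{\xi(m-i)}\cdot\prod_{i=r+1}^{m-(r+1)}\frac{\xi(2i)}{\xi(i)}.
\]
Assembling the $|D|$-powers from \eqref{E:E^(m,m)} and Lemma \ref{L:E^(m,r)} (the exponent works out to $m\rho_{m,r}$ after accounting for $\rho_{m,m-r-1}$ versus $\rho_r'$) and the zeta-factor from Lemma \ref{L:E^(m,r)} evaluated at $s=\rho_r'$, namely $\prod_{i=1}^{r}\xi(\rho_r'+m-\rho_r'-(i-1))^{-1}\xi(i) = \prod_{i=1}^r \xi(i)/\xi(m-i+1)$, and combining with $c_{m,r}$, one collapses the product via the functional equation $\xi(s)=\xi(1-s)$ to obtain $a_{m,r} = |D|^{m\rho_{m,r}}\prod_{i=1}^{m-2r-1}\xi(2(i+r))/\xi(i)$.

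The main obstacle I anticipate is purely bookkeeping: tracking the shift between the two evaluation points ($\rho_{m,m-r-1}$ for the Siegel side versus $\rho_r'$ for the non-Siegel side, noting $m - r - 1$ enters the Siegel parabolic while $r$ enters $\rho_r'$), keeping the $|D|$-normalizations from \eqref{E:E^(m,m)}, Lemma \ref{L:E^(m,r)}, and the Tamagawa-measure normalization in the definition of $\Ik^{(m,r)}$ consistent, and carefully telescoping the three zeta products (the one from $c_{m,r}$, the one from Lemma \ref{L:E^(m,r)}, and the one implicit in relating $\Phi^0$ for the rank-$(m-r-1)$ parabolic to the spherical data) using $\xi(s)=\xi(1-s)$ so that only the advertised factors $\prod_{i=1}^{m-2r-1}\xi(2(i+r))/\xi(i)$ survive. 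None of these steps is conceptually hard, but the index ranges must be handled with care; I would double-check the final formula against the already-known case $r=2$, $m=4$ treated in \cite{GI} as a sanity check.
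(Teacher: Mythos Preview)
Your proposal is correct and follows essentially the same route as the paper: compute both $A_{-1}^{(m,m-r-1)}(\varphi^0_c)$ and $B_{-1}^{(m,r)}(\Ik^{(m,r)}(\varphi^0_c))=B_{-1}^{(m,r)}(\varphi^0)$ explicitly via \eqref{E:E^(m,m)}, Lemma~\ref{L:E^(m,r)}, and the spherical first term identity Lemma~\ref{L:first_B-1}, then extend to all of $\Sw((\V^+\otimes W_c)(\A))^\circ$ using that $A_{-1}^{(m,m-r-1)}$, $B_{-1}^{(m,r)}$, and $\Ik^{(m,r)}$ are all $G(\A)$-intertwining. One small caution: your proposed sanity check $(r,m)=(2,4)$ does not satisfy the hypothesis $m>2r+1$ of this proposition (that case lies in the second term range), so it cannot be used here.
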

\vskip 5pt

 \begin{proof}
This essentially follows from the spherical first term identity (Lemma \ref{L:first_B-1}) and the property of the map $\Ik^{(m,r)}$. Namely, first we have
\begin{align*}
	A_{-1}^{(m,m-r-1)}(\varphi_c^0)
	&=|D|^{-\frac{(m-r-1)m}{2}}E_{-1}^{(m,m)}(\frac{m-r-1}{2})\quad\text{by (\ref{E:E^(m,m)})}\\
	&=|D|^{-\frac{(m-r-1)m}{2}}c_{m,r}^{-1}E_{-1}^{(m,r)}(\frac{r+1}{2})\quad\text{by Lemma \ref{L:first_B-1}}.
\end{align*}
Also by Lemma \ref{L:E^(m,r)}, we have
\begin{align*}
	B_{-1}^{(m,r)}(\Ik^{(m,r)}(\varphi_c^0))
	&=B_{-1}^{(m,r)}(\varphi^0)\\
	&=|D|^{-\frac{rm}{2}}\prod_{i=1}^{r}\frac{\xi(s+m-\frac{r+1}{2}-(i-1))}{\xi(i)}E^{(m,r)}(\frac{r+1}{2}).
\end{align*}
So by combining those, we have
\begin{align*}
	&A_{-1}^{(m,m-r-1)}(\varphi_c^0)\\
	&=|D|^{-\frac{(m-r-1)m}{2}}c_{m,r}^{-1}\cdot
	|D|^{\frac{rm}{2}}\prod_{i=1}^{r}\frac{\xi(i)}{\xi(s+m-\frac{r+1}{2}-(i-1))}
	B_{-1}^{(m,r)}(\Ik^{(m,r)}(\varphi_c^0)).
\end{align*}
Then one sees that the constant in front of $B_{-1}^{(m,r)}(\Ik^{(m,r)}(\varphi_c^0))$ can be simplified to $a_{m,r}$ as in the proposition.

Now notice that if $m>2r+1$, both $A_{-1}^{(m,m-r-1)}$ and $B_{-1}^{(m,r)}$ define $G(\A)$-intertwining maps
\[
	A_{-1}^{(m,m-r-1)}:\Sw((\V^+\otimes W_c)(\A))
	\xrightarrow{\Phi^{(m,m-r-1)}}\Ind_{P(\A)}^{G(\A)}|\,|^{\rho_{m,m-r-1}}
	\rightarrow\Aut(G),
\]
and
\[
    B_{-1}^{(m,r)}:\Sw((\V^+\otimes
    W)(\A))\xrightarrow{f^{(m,r)}}\Ind_{P_r(\A)}^{G(\A)}|\,|^{\frac{r+1}{2}}\rightarrow
    \Aut(G).
\]
Since the map $\Ik^{(m,r)}$ is a $G(\A)$-intertwining map such that $\Ik^{(m,r)}(\varphi_c^0)=\varphi^0$, we see that the two maps $A_{-1}^{(m,m-r-1)}$ and $B_{-1}^{(m,r)}\circ\Ik^{(m,r)}$ from $\Sw((\V^+\otimes W_c)(\A))$ to $\Aut(G)$ are $G(\A)$-intertwining and
\[
	A_{-1}^{(m,m-r-1)}(\varphi)=a_{m,r}B_{-1}^{(m,r)}\circ\Ik^{(m,r)}(\varphi)
\]
for all $\varphi\in\Sw((\V^+\otimes W_c)(\A))^\circ$.
\end{proof}

\begin{rmk}
Since  the first term identity is shown to hold only for those $\varphi$'s in the $G(\A)$-span of the spherical function, we call it ``the weak first term identity".
For the Siegel-Weil formula of the symplectic group, Ikeda \cite{Ikeda} and Ichino \cite{Ichino} have shown this form of the first term identity, \ie the first term identity with Ikeda's map, for all Schwartz functions for the symplectic group. There is no doubt that it can be shown that the above first term identity holds for all $\varphi\in\Sw((\V^+\otimes W_c)(\A))$ simply by modifying their method. However we show the first term identity only in this weak form, since it is sufficient for our main purposes.
\end{rmk}

\vskip 5pt

Next consider the boundary case $m=2r+1$. For this, $W_c=W$ and so there is no need to introduce Ikeda's map. Indeed, we have
\begin{prop}\label{P:weak_first_term}
Let $\dim \V^+=2r+1$ and $\dim W=2r$, and $\varphi\in\Sw(\V^+\otimes
W)^\circ$. Then we have
\[
    A_0^{(2r+1,r)}(\varphi)=2 B_{-1}^{(2r+1,r)}(\varphi).
\]
\end{prop}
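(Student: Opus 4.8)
The plan is to prove the identity first for the distinguished spherical Schwartz function $\varphi^0$, where it follows at once from the spherical first term identity on the boundary (Proposition \ref{P:first_A0}) together with the explicit formula of Lemma \ref{L:E^(m,r)}, and then to propagate it to all of $\Sw(\V^+\otimes W)^\circ$ by an equivariance argument.

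For the spherical case, note that $\rho_{m,r}=0$ when $m=2r+1$, so $A_0^{(2r+1,r)}(\varphi^0)$ is the constant Laurent coefficient at $s=0$ of the Siegel Eisenstein series $E^{(2r+1,2r+1)}(g,s;\Phi^{(2r+1,r)}_{\varphi^0})$. Since $\Phi^{(2r+1,r)}_{\varphi^0}$ is the flat spherical section with value $\varphi^0(0)=|D|^{-r(2r+1)/2}$ at the identity, identity (\ref{E:E^(m,m)}) gives $A_0^{(2r+1,r)}(\varphi^0)=|D|^{-r(2r+1)/2}E_0^{(2r+1,2r+1)}(0)$. On the other side, $B_{-1}^{(2r+1,r)}(\varphi^0)$ is the residue at $s=\frac{r+1}{2}$ of $\E^{(2r+1,r)}(g,s;\varphi^0)$, and Lemma \ref{L:E^(m,r)} expresses this as $c(\tfrac{r+1}{2})\cdot E_{-1}^{(2r+1,r)}(\tfrac{r+1}{2})$, where $c(s)$ collects the $|D|$- and $\xi$-factors of that lemma (with $m=2r+1$); one has $c(\tfrac{r+1}{2})=|D|^{-r(2r+1)/2}\prod_{i=1}^{r}\xi(2r+2-i)/\xi(i)$, which is holomorphic and nonvanishing since the $\xi$-arguments all lie in $\{r+2,\dots,2r+1\}$, away from the poles of $\xi$. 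Substituting the boundary identity $E_{-1}^{(2r+1,r)}(\tfrac{r+1}{2})=c_r E_0^{(2r+1,2r+1)}(0)$ of Proposition \ref{P:first_A0}, the assertion for $\varphi^0$ reduces to the numerical identity $2\,c(\tfrac{r+1}{2})\,c_r=|D|^{-r(2r+1)/2}$; with $c_r=\tfrac12\prod_{i=1}^{r+1}\xi(i)/\xi(r+i)$ the $|D|$-powers cancel and the product of $\xi$-factors telescopes completely against $\prod_{i=1}^{r}\xi(2r+2-i)/\xi(i)$, leaving exactly $\tfrac12$.

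To pass to a general $\varphi\in\Sw(\V^+\otimes W)^\circ$, I would use that both $B_{-1}^{(2r+1,r)}$ and $A_0^{(2r+1,r)}$ are $G(\A)$-equivariant maps $\Sw((\V^+\otimes W)(\A))\to\Aut(G)$: for $B_{-1}^{(2r+1,r)}$ this holds because $\omega(z)$ commutes with the $G(\A)$-action, as recorded in \S\ref{S:regularize}; for $A_0^{(2r+1,r)}$ this is precisely the boundary instance of the fact that the leading term of the Laurent expansion of the Siegel Eisenstein series is $G(\A)$-intertwining. Since $\Sw(\V^+\otimes W)^\circ$ is by definition the $G(\A)$-span of $\varphi^0$, the identity $A_0^{(2r+1,r)}(\varphi^0)=2B_{-1}^{(2r+1,r)}(\varphi^0)$ then extends by $G(\A)$-equivariance and linearity to all of $\Sw(\V^+\otimes W)^\circ$, which is the assertion.

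The only nontrivial point, hence the main obstacle, is the bookkeeping that pins the proportionality constant down to exactly $2$: one must keep careful track of the $|D|$-powers appearing in (\ref{E:E^(m,m)}), in Lemma \ref{L:E^(m,r)}, and in $c_r$, and check that the $\xi$-factors cancel without remainder. The factor $2$ ultimately traces back to the $\tfrac12$ in $c_r$, which itself came from the simple zero of $\beta_{2r}(s)$ at $s=-\tfrac12$ used in the proof of Proposition \ref{P:first_A0}. It is also worth emphasizing that the clean shape of this identity --- a constant multiple rather than a residue relation --- is special to the boundary $m=2r+1$, precisely because there it is $A_0^{(2r+1,r)}$, and not $A_{-1}^{(2r+1,r)}$, that is the $G(\A)$-intertwining leading term.
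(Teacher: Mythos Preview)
Your proof is correct and follows exactly the approach the paper sketches: establish the identity for $\varphi^0$ via Proposition~\ref{P:first_A0}, Lemma~\ref{L:E^(m,r)}, and (\ref{E:E^(m,m)}), verify the constant telescopes to $2$, then extend by $G(\A)$-equivariance of both $A_0^{(2r+1,r)}$ (as the leading Laurent term on the boundary) and $B_{-1}^{(2r+1,r)}$. One minor inaccuracy in your closing commentary: the $\tfrac12$ in $c_r$ does not come from $\beta_{2r}$ but from the residue of $H^{(2r+2)}(s)=\xi(2s)/\xi(2s+2r+1)$ at $s=\tfrac12$ in the proof of Proposition~\ref{P:first_A0}; this does not affect your argument.
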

\begin{proof}
The proof is essentially the same as Proposition \ref{P:first_term2}. This time, notice that the first term of the Siegel Eisenstein series is $A_0^{(2r+1,r)}$ and we use Proposition \ref{P:first_A0} for the spherical first term identity. Also one sees that the constant of proportionality is simplified to $2$. The detail is left to the reader.
\end{proof}
\vskip 5pt

\begin{rmk}
Just like the case $m>2r+1$, it is highly likely that one can derive this identity for all $\varphi\in\Sw(\V^+\otimes
W)$ by using the method of Ikeda and Ichino.
\end{rmk}
\vskip 5pt

%%%%%%%%%%%%%%%%%%%%%%%%%%%%%%%%%%%%%%%%%%%%%%%%%%%%%%%%%%%%%

\subsection{The (weak) first term identity for 2nd term range $r<m\leq 2r$}

%%%%%%%%%%%%%%%%%%%%%%%%%%%%%%%%%%%%%%%%%%%%%%%%%%%%%%%%%%%%%

For the 2nd term range $r<m\leq 2r$, the non-Siegel Eisenstein series has a double pole, \ie $B_{-2}^{(m,r)}\neq0$. Then the analogous first term identity holds between  $B_{-2}^{(m,r)}$ and $A_{-1}^{(m,r)}$. Namely we have

\begin{prop}\label{P:weak_first_term2}
Let $r<\dim \V^+=m\leq 2r$ and $\dim W=2r$, and $\varphi\in\Sw(\V^+\otimes
W)^\circ$. Then we have
\[
    A_{-1}^{(m,r)}(\varphi)= b_{m,r}B_{-2}^{(m,r)}(\varphi),
\]
where the constant $b_{m,r}$ is given by
\[
    b_{m,r}=d_{m,r}^{-1}\prod_{i=1}^{r}\frac{\xi(i)}{\xi(m-i+1)}.
\]
\end{prop}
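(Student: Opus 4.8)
The plan is to mimic exactly the proof of Proposition \ref{P:first_term2}, but now in the second term range where the relevant Laurent coefficient of the non-Siegel Eisenstein series is $B_{-2}^{(m,r)}$ rather than $B_{-1}^{(m,r)}$, and the relevant coefficient of the Siegel Eisenstein series is $A_{-1}^{(m,r)}$. First I would evaluate both sides on the spherical Schwartz function $\varphi^0\in\Sw((\V^+\otimes W)(\A))^\circ$. On the Siegel side, using \eqref{E:E^(m,m)} one has
\[
    A_{-1}^{(m,r)}(\varphi^0)=|D|^{-rm/2}E_{-1}^{(m,m)}\left(\tfrac{2r-m+1}{2}\right),
\]
and on the non-Siegel side, by Lemma \ref{L:E^(m,r)} one extracts the coefficient of $(s-\tfrac{r+1}{2})^{-2}$ to get $B_{-2}^{(m,r)}(\varphi^0)$ as an explicit product of $\xi$-factors times $E_{-2}^{(m,m)}(\tfrac{r+1}{2};f^0)$... wait, more precisely times $E_{-2}^{(m,r)}(\tfrac{r+1}{2})$. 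Here one must be a little careful: Lemma \ref{L:E^(m,r)} expresses $\E^{(m,r)}(g,s;\varphi^0)$ as a product of a meromorphic scalar function $c(s)=|D|^{-r(s+m-\frac{r+1}{2})/2}\prod_{i=1}^r \xi(s+m-\frac{r+1}{2}-(i-1))/\xi(i)$ with the spherical Eisenstein series $E^{(m,r)}(g,s;f^0)$, and one needs to know whether $c(s)$ is holomorphic and nonzero at $s=\tfrac{r+1}{2}$. In the second term range $r+1\le m\le 2r$ this needs checking — some of the $\xi$-arguments $m-(i-1)$ for $i=1,\dots,r$ could hit $1$ only when $m-i+1=1$, i.e. $i=m$, which is outside the range $i\le r$ since $m>r$; so $c(s)$ is holomorphic and nonzero there, and $B_{-2}^{(m,r)}(\varphi^0)=c(\tfrac{r+1}{2})\,E_{-2}^{(m,r)}(\tfrac{r+1}{2})$ with $c(\tfrac{r+1}{2})=|D|^{-r(m-\frac{r+1}{2}+\frac{r+1}{2})/2}\cdot\ldots$ — let me not grind this; the point is it equals $|D|^{-rm/2}\prod_{i=1}^r \xi(m-i+1)/\xi(i)$.

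Next I would invoke the spherical first term identity for the second term range, Proposition \ref{P:first_B-2}, which gives
\[
    E_{-2}^{(m,r)}\left(\tfrac{r+1}{2}\right)=d_{m,r}\,E_{-1}^{(m,m)}\left(\tfrac{2r-m+1}{2}\right).
\]
Combining the three displayed relations yields a scalar identity $A_{-1}^{(m,r)}(\varphi^0)=b_{m,r}\,B_{-2}^{(m,r)}(\varphi^0)$ with
\[
    b_{m,r}=\frac{|D|^{-rm/2}}{d_{m,r}\cdot|D|^{-rm/2}\prod_{i=1}^r \xi(m-i+1)/\xi(i)}=d_{m,r}^{-1}\prod_{i=1}^r\frac{\xi(i)}{\xi(m-i+1)},
\]
which is exactly the claimed formula (the powers of $|D|$ cancel). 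Then, to promote the identity from $\varphi^0$ to all of $\Sw((\V^+\otimes W)(\A))^\circ$, I would argue as in Proposition \ref{P:first_term2}: in the second term range both $A_{-1}^{(m,r)}$ and $B_{-2}^{(m,r)}$ are the \emph{leading} Laurent coefficients of their respective Eisenstein series (recall the Siegel series has only simple poles, and the non-Siegel series has at most a double pole, which is attained), hence both are $G(\A)$-intertwining maps from $\Sw((\V^+\otimes W)(\A))$ into $\Ind_{P(\A)}^{G(\A)}|\,|^{\rho_{m,r}}$ (via $\Phi^{(m,r)}$, resp. $f^{(m,r)}$) composed with the automorphic realization. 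Two $G(\A)$-intertwining maps that agree on the spherical vector $\varphi^0$ and whose source is the cyclic module $\Sw((\V^+\otimes W)(\A))^\circ=G(\A)\cdot\varphi^0$ must be proportional on that module with the same constant; this forces $A_{-1}^{(m,r)}(\varphi)=b_{m,r}B_{-2}^{(m,r)}(\varphi)$ for all $\varphi\in\Sw((\V^+\otimes W)(\A))^\circ$.

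The main obstacle I anticipate is not conceptual but bookkeeping: verifying that the scalar function $c(s)$ from Lemma \ref{L:E^(m,r)} is genuinely holomorphic and nonvanishing at $s=\tfrac{r+1}{2}$ throughout the range $r+1\le m\le 2r$ (so that the orders of pole on the two sides of Lemma \ref{L:E^(m,r)} match and the Laurent coefficients transfer cleanly), and then carrying out the cancellation of $|D|$-powers and $\xi$-factors to land exactly on the stated $b_{m,r}$. One should double-check that $B_{-2}^{(m,r)}\ne 0$ (so the identity is not vacuous) — but this is guaranteed since Proposition \ref{P:first_B-2} already records $E_{-2}^{(m,r)}(\tfrac{r+1}{2})\ne 0$ in this range, equivalently the non-Siegel Eisenstein series does attain its double pole. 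A minor subtlety worth a sentence: one must make sure the intertwining property of $A_{-1}^{(m,r)}$ is used only at the single point $s=\rho_{m,r}$, where $\Phi^{(m,r)}_\varphi$ becomes a genuine section, which is precisely where it is valid; the same caveat applies to $f^{(m,r)}$ at $s=\tfrac{r+1}{2}$, but since $B_{-2}$ is the leading coefficient the argument goes through as in the cited proposition.
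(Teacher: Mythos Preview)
Your proposal is correct and follows exactly the approach the paper takes: the paper's own proof is a one-line reference back to the argument of Proposition \ref{P:first_term2}, invoking Proposition \ref{P:first_B-2} in place of Lemma \ref{L:first_B-1}. You have in fact supplied the details (the holomorphy check for $c(s)$, the cancellation of $|D|$-powers, and the intertwining argument for the leading Laurent coefficients) that the paper leaves implicit.
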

\begin{proof}
Again the proof is essentially the same as Proposition \ref{P:first_term2}. This time, we use Proposition \ref{P:first_B-2} for the spherical first term identity.
\end{proof}

%%%%%%%%%%%%%%%%%%%%%%%%%%%%%%%%%%%%%%%%%%%%%%%%%%%%%%%%%%%%%

\subsection{The (weak) second term identity for 2nd term range $r<m\leq 2r$}

%%%%%%%%%%%%%%%%%%%%%%%%%%%%%%%%%%%%%%%%%%%%%%%%%%%%%%%%%%%%%

Now let us consider the second term identity for the 2nd term range $r<m\leq 2r$.
\vskip 10pt

First notice that $A_{-1}^{(m,r)}$ defines the map
\begin{align*}
    A_{-1}^{(m,r)}: \Sw((\V^+\otimes W)(\A))&\xrightarrow{\Phi^{(m,r)}}
    \Ind_{P(\A)}^{G(\A)}|\,|^{\rho_{m,r}}\rightarrow\Aut(G)\\
    \varphi&\mapsto A_{-1}^{(m,r)}(\varphi).
\end{align*}
This map is $G(\A)$-intertwining. Let
\[
    \sR=\textnormal{Im}(A_{-1}^{(m,r)})
\]
be the image of $A_{-1}^{(m,r)}$. Now the map
\begin{align*}
    \Sw((\V^+\otimes W)(\A))&\longrightarrow\Aut(G)\\
    \varphi&\mapsto A_{0}^{(m,r)}(\varphi)
\end{align*}
is not $G(\A)$-intertwining. However it is $G(\A)$-intertwining
modulo $\sR$. So we consider the composite
\begin{align*}
    A_0^{(m,r)}: \Sw((\V^+\otimes W)(\A))&\longrightarrow\Aut(G)\longrightarrow
    \Aut(G)\slash\sR\\
    \varphi&\mapsto A_{0}^{(m,r)}(\varphi) \mod \sR.
\end{align*}
This is a $G(\A)$-intertwining map. Note that by (\ref{E:E^(m,m)}) we have
\[
    A_0^{(m,r)}(\varphi^0)(g)=|D|^{-rm/2}E_0^{(m,m)}(g, \rho_{m,r}; \Phi^0).
\]

Next let us consider the non-Siegel Eisenstein series $\E^{(m,r)}(g,s;\varphi)$.
For the non-Siegel Eisenstein series, the point of our interest is
$s=\frac{r+1}{2}$. Then at this point, $\E^{(m,r)}(s,g;\varphi)$ has
at most a double pole. Recall that we write the Laurent expansion of
$\E^{(m, r)}(s,g;\varphi)$ as
\[
    \E^{(m,r)}(s,g;\varphi)=\sum_{d=-2}^{\infty}(s-\frac{r+1}{2})^dB_d^{(m,r)}(\varphi)(g).
\]
We are interested in $B_{-1}^{(m,r)}$. As we mentioned in \S \ref{S:regularize}, for the non-Siegel Eisenstein series not only the first terms but all the terms (and so in particular $B_{-1}^{(m,r)}$) define $G(\A)$-intertwining maps. Then we consider the $G(\A)$-intertwining map
\begin{align*}
    B_{-1}^{(m,r)}:\Sw((\V^+\otimes W)(\A))&\longrightarrow
    \Aut(G)\longrightarrow\Aut(G)\slash\sR\\
    \varphi&\mapsto B_{-1}^{(m,r)}(\varphi) \mod \sR.
\end{align*}
Also note that by Lemma \ref{L:E^(m,r)} we have
\[
    B_{-1}^{(m,r)}(\varphi^0)(g)=|D|^{-rm/2}\prod_{i=1}^{r}\frac{\xi(m-i+1)}
    {\xi(i)}E_{-1}^{(m,r)}(g, \frac{r+1}{2}; f_0).
\]

Finally, we need to take care of the ``complementary term"
$E_0^{(m,m-r-1)}(\frac{m-r}{2})$. If $W$ has a symplectic basis $\{e_1,\dots,
e_r,  f_1,\dots,f_r\}$, let
\[
    W_0=\Span\{e_1,\dots,e_{m-r-1}, f_1,\dots,f_{m-r-1}\},
\]
which has the obvious symplectic
structure. We also denote $W_0^+$ and $W_0^-$ for the spans of
$\{e_1,\dots, e_{m-r-1}\}$ and $\{f_1,\dots,f_{m-r-1}\}$, respectively.
Then notice the complementary space of $W_0$ with respect to $\V$ is actually $W$. Hence we have Ikeda's map
\[
	\Ik^{(m,m-r-1)}:\Sw((\V^+\otimes W)(\A))\rightarrow\Sw((\V^+\otimes W_0)(\A)).
\]
Whenever there is no danger of confusion, we simply write
\[
    \Ik=\Ik^{(m,m-r-1)}.
\]
Now for each $\varphi\in\Sw((\V^+\otimes W)(\A))$ as in \S \ref{S:regularize} we can define the
non-Siegel Eisenstein series
\[
    \E^{(m,m-r-1)}(g,s;\Ik(\varphi))=\sum_{\gamma\in P_{m-r-1}(F)\backslash G(F)} f^{(m,m-r-1)}(\gamma
    g,s;{\Ik(\varphi)_{K_{H'}}}),
\]
where $H'=\Sp(2(m-r-1))$. This has at most a simple pole at
$s=\frac{m-r}{2}$ and we write its Laurent expansion at
$s=\frac{m-r}{2}$ as
\[
    \E^{(m,m-r-1)}(g,s;\Ik(\varphi))
     =\sum_{d=-1}^{\infty}B_d^{(m,m-r-1)}(\Ik(\varphi))(g)(s-\frac{m-r}{2})^d
\]
as usual. Then $B_0^{(m,m-r-1)}\circ\Ik$ defines a $G(\A)$-intertwining map
\[
    B_0^{(m,m-r-1)}:\Sw((\V^+\otimes W)(\A))
    \longrightarrow\Aut(G)\longrightarrow\Aut(G)\slash\sR.
\]
Also note that for the spherical Schwartz function $\varphi^0\in\Sw((\V^+\otimes W)(\A))$, by lemma \ref{L:E^(m,r)} we
have
\begin{align*}
    &B_0^{(m,m-r-1)}(\Ik(\varphi^0))
    =|D|^{-m(m-r-1)/2}\prod_{i=1}^{m-r-1}\frac{\xi(m-i+1)}
    {\xi(i)}E^{(m,m-r-1)}(\frac{m-r}{2}).
\end{align*}

Then the spherical second term identity (Theorem
\ref{T:Spherical_Second_Term}) can be expressed in terms of
$A_0^{(m,r)}, B_{-1}^{(m,r)},$ and $B_0^{(m,m-r-1)}$ as follows.
\begin{align*}
    &\prod_{i=0}^{2r-m}F^{(2r-i+1,r)}(\frac{r+1}{2})
    \cdot|D|^{rm/2}\prod_{i=1}^{r}\frac{\xi(i)}{\xi(m-i+1)}\cdot B_{-1}^{(m,r)}(\varphi^0)\\
    &+\prod_{i=0}^{2r-m} G^{(2r-i+1,r-i)}(\frac{r-i+1}{2})\cdot|D|^{m(m-r-1)/2}\prod_{i=1}^{m-r-1}\frac{\xi(i)}
    {\xi(m-i+1)}B_0^{(m,m-r-1)}(\Ik(\varphi^0))\\
    &\qquad\qquad\qquad
    \equiv |D|^{rm/2}\prod_{i=1}^{r+1}\frac{\xi(i)}{\xi(r+i)}A_0^{(m,r)}(\varphi^0)(g) \mod \sR.
\end{align*}
Here, when one applies Theorem \ref{T:Spherical_Second_Term}, it is convenient to keep in mind that in Theorem \ref{T:Spherical_Second_Term}
\[
		m=2r-j\quad\text{and}\quad\rho_{m,r}=\frac{1+j}{2}.
\]
One sees that many of the $\xi$'s get canceled out, and we can rewrite the spherical second term identity as
\begin{align*}
    &B_{-1}^{(m,r)}(\varphi^0)
    +|D|^{-m\rho_{m,r}}\prod_{i=0}^{2r-m}\frac{\xi(i)}{\xi(2r-2i)}B_0^{(m,m-r-1)}(\Ik(\varphi^0))\\
    &\qquad\qquad\qquad\equiv
    A_0^{(m,r)}(\varphi^0) \mod \sR,
\end{align*}
Hence we have
\begin{thm}[Weak Second Term Identity]\label{T:Second_Term_Identity}
For all $\varphi\in \Sw((\V^{+}\otimes W)(\A))$ that are in the $\OO(\V,\A)$-span of $\varphi^0$, the following identity holds:
\begin{align*}
    &B_{-1}^{(m,r)}(\varphi)
    +|D|^{-m\rho_{m,r}}\prod_{i=0}^{2r-m}\frac{\xi(i)}{\xi(2r-2i)}B_0^{(m,m-r-1)}(\Ik(\varphi))\\
    &\qquad\qquad\qquad\equiv
    A_0^{(m,r)}(\varphi) \mod \sR,
\end{align*}
\end{thm}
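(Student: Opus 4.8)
The plan is to deduce the identity for every $\varphi$ in the span $\Sw((\V^{+}\otimes W)(\A))^{\circ}$ from its validity for the single spherical vector $\varphi^{0}$, by a soft $G(\A)$-equivariance argument. The case $\varphi=\varphi^{0}$ is already in hand: plugging the explicit evaluations of $F^{(2r-i+1,r)}(\frac{r+1}{2})$ and $G^{(2r-i+1,r-i)}(\frac{r-i+1}{2})$ from Theorem~\ref{T:Spherical_Second_Term} into the spherical second term identity, translating the three pieces $E_{-1}^{(m,r)}$, $E_{0}^{(m,m-r-1)}$, $E_{0}^{(m,m)}$ into $B_{-1}^{(m,r)}(\varphi^{0})$, $B_{0}^{(m,m-r-1)}(\Ik(\varphi^{0}))$, $A_{0}^{(m,r)}(\varphi^{0})$ via Lemma~\ref{L:E^(m,r)} and (\ref{E:E^(m,m)}), and cancelling the resulting complete zeta factors exactly as indicated just above, one arrives at
\[
    B_{-1}^{(m,r)}(\varphi^{0})
    +|D|^{-m\rho_{m,r}}\prod_{i=0}^{2r-m}\frac{\xi(i)}{\xi(2r-2i)}\,B_{0}^{(m,m-r-1)}(\Ik(\varphi^{0}))
    \equiv A_{0}^{(m,r)}(\varphi^{0}) \mod \sR .
\]

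Next I would introduce the defect map $T\colon \Sw((\V^{+}\otimes W)(\A))\to \Aut(G)\slash\sR$ sending $\varphi$ to (left-hand side) $-$ (right-hand side) of the claimed congruence, read modulo $\sR=\im A_{-1}^{(m,r)}$, and check that $T$ is $G(\A)$-intertwining, term by term. The map $B_{-1}^{(m,r)}$ is $G(\A)$-intertwining into $\Aut(G)$ itself: as recalled in \S\ref{S:regularize}, every Laurent coefficient of $\E^{(m,r)}(g,s;\varphi)$ equals $\int_{[H]}\theta(g,h;\omega(z)\varphi)\,C_{d}(h)\,dh$, and since $\omega(z)$ commutes with $G(\A)\times H(\A)$ one has $\theta(gg',h;\omega(z)\varphi)=\theta(g,h;\omega(z)\omega(g',1)\varphi)$, which gives the equivariance. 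Likewise $\Ik=\Ik^{(m,m-r-1)}$ is $G(\A)$-intertwining by construction, and $\E^{(m,m-r-1)}(g,s;\Ik(\varphi))$ is again a regularized theta integral, this time for the pair $(\OO_{m,m},\Sp_{2(m-r-1)})$, which lies strictly in the first term range since $2(m-r-1)+1=2m-2r-1<m$ by $m\le 2r$; hence all its Laurent coefficients, in particular $B_{0}^{(m,m-r-1)}$, are $G(\A)$-intertwining, and therefore so is $B_{0}^{(m,m-r-1)}\circ\Ik$. Finally $A_{0}^{(m,r)}$, the constant term of the Laurent expansion of $E^{(m,m)}(\,\cdot\,,s;\Phi^{(m,r)}_{\varphi})$, is not $G(\A)$-intertwining into $\Aut(G)$, but it is $G(\A)$-intertwining as a map into $\Aut(G)\slash\sR$, because the Laurent coefficients of that Siegel Eisenstein series transform under $G(\A)$ only up to lower-order coefficients, which here are spanned by $\im A_{-1}^{(m,r)}=\sR$ (this is exactly the mechanism behind the maps $A_{d}^{(m,r)}$ discussed in \S\ref{S:regularize}). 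Combining the three, $T$ is $G(\A)$-intertwining.

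The conclusion is then immediate. The displayed spherical identity says $T(\varphi^{0})=0$, so $T(\omega(g)\varphi^{0})=g\cdot T(\varphi^{0})=0$ for every $g\in G(\A)$ by equivariance, and hence $T$ vanishes on $\Span_{\C}\{\omega(g)\varphi^{0} : g\in G(\A)\}=\Sw((\V^{+}\otimes W)(\A))^{\circ}$ by linearity; this is precisely the asserted identity. (Note that this argument cannot reach Schwartz functions outside the span — which is exactly why the statement is only the \emph{weak} second term identity.)

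I do not expect a genuine obstacle inside this proof: the substantive content all sits upstream — in the spherical second term identity (Theorem~\ref{T:Spherical_Second_Term}), proved by the long induction on $j=2r-m$ fed by the first term identity on the boundary, and in the standard fact that the non-leading coefficient of a Siegel Eisenstein series is $G(\A)$-equivariant only modulo the image of the leading one. The one place here demanding real care, as opposed to mere formality, is the bookkeeping of the complete zeta functions: reconciling $\prod_{i=0}^{2r-m}F^{(2r-i+1,r)}(\frac{r+1}{2})$ and $\prod_{i=0}^{2r-m}G^{(2r-i+1,r-i)}(\frac{r-i+1}{2})$, together with the normalizing products coming from Lemma~\ref{L:E^(m,r)}, against the compact form $|D|^{-m\rho_{m,r}}\prod_{i=0}^{2r-m}\xi(i)/\xi(2r-2i)$, and tracking exactly the powers of the discriminant $|D|$ introduced by the archimedean normalizations. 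This uses only the functional equation $\xi(s)=\xi(1-s)$ and the convention $\xi(0)=-\xi(1)$, and is otherwise entirely mechanical.
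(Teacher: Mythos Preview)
Your proposal is correct and follows essentially the same approach as the paper: establish the identity for the spherical vector $\varphi^{0}$ (done just before the theorem via Lemma~\ref{L:E^(m,r)} and (\ref{E:E^(m,m)}) applied to Theorem~\ref{T:Spherical_Second_Term}), observe that $B_{-1}^{(m,r)}$, $B_{0}^{(m,m-r-1)}\circ\Ik$, and $A_{0}^{(m,r)}$ are all $G(\A)$-intertwining as maps into $\Aut(G)/\sR$, and conclude by equivariance. Your write-up is in fact more detailed than the paper's one-sentence proof on exactly why each map is intertwining, but the argument is the same.
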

\begin{proof}
This is immediate from the above form of the spherical second term identity together with the fact that $B_{-1}^{(m,r)}, A_0^{(2r,r)}$ and $B_0^{(m,m-r-1)}\circ\Ik$ are $\OO(\V,\A)$-intertwining maps from $\Sw((V^{+}\otimes W)(\A))$ to $\Aut(G)\slash\sR$.
\end{proof}

\begin{rmk}
As we mentioned in the introduction, the above weak second term identity has been shown by Ichino and the first author \cite[Proposition 7.5]{GI} for the case $r=2$ and $m=4$.
\end{rmk}

\begin{rmk}
Just as our first term identity, we are able to derive the second term identity only for those $\varphi$'s in the span of the spherical function. This is why we call it ``the weak second term identity". However, unlike the (weak) first term identity (Proposition \ref{P:first_term2} and \ref{P:weak_first_term2}), there seems to be no known method that possibly allows one to extend the weak second term identity to full generality so that the above second identity holds for all $\varphi\in\Sw((V^{+}\otimes W)(\A))$.
\end{rmk}

\quad

%%%%%%%%%%%%%%%%%%%%%%%%%%%%%%%%%%%%%%%%%%%%%%%%%%%%%%%%%%%%%%%%%%%%%%%%%%%%%%%%%%%%%%%%%%%%%%%%%%%%%%%%%%%%%%%

%%%%%%%%%%%%%%%%%%%%%%%%%%%%%%%%%%%%%%%%%%%%%%%%%%%%%%%%%%%%%

\section{\bf Inner product formulas}\label{S:inner_product}

%%%%%%%%%%%%%%%%%%%%%%%%%%%%%%%%%%%%%%%%%%%%%%%%%%%%%%%%%%%%%

By using our Siegel-Weil formula (both the first term and the second term identities), we will derive the Rallis inner
product formula for the theta lift from $\OO(V,\A)$ to $\Sp_{2r}$ if $\dim V$ is even and to $\Spt_{2r}$ if it is odd. For the rest of the paper, we let
\[
    \Mp_{2r}=\begin{cases}\Sp_{2r}\quad\text{if $\dim V$ is even}\\
    \Spt_{2r}\quad\text{if $\dim V$ is odd}.
    \end{cases}
\]
The reader will be able to tell which one is meant from the context.

Let us recall our setting. Namely, $W$ is the symplectic space of $\dim W=2r$ with a
fixed polarization $W^+\oplus W^-$, and $V$ is a (not necessarily split) quadratic space of $\dim V=m$ defined over $F$ and $\V=V\oplus
(-V)$, where $\V$ is the split quadratic space with the
underlining space $V\oplus V$ and the quadratic form defined by
\[
    \langle(v_1,v_2),(v'_1,v_2')\rangle_{\V}=\langle
    v_1,v_1'\rangle_V-\langle v_2,v_2'\rangle_V.
\]
Let us again note that $V$ is not necessarily split but $\V$ is always split.
Indeed, a maximal isotropic subspace is
\[  \V^+=\Delta V\subset V\oplus(-V). \]
There is a natural embedding
\[
    i:\OO(V)\times\OO(-V)\hookrightarrow\OO(\V),
\]
which should be called the embedding of the doubling method. Also we
have a $\OO(V,\A)\times\OO(-V,\A)$-intertwining map
\[
    \sigma:\Sw((V\otimes W^+)(\A))\hat{\otimes}\Sw((V\otimes
    W^+)(\A))\rightarrow\Sw((\V\otimes W^+)(\A))\simeq\Sw((\V^+\otimes
    W)(\A)),
\]
where we view $\Sw((\V\otimes W^+)(\A))$ and $\Sw((\V^+\otimes W)(\A))$ as representations of $\OO(V,\A)\times\OO(-V,\A)$ via the
embedding $i$.
\vskip 10pt

Now let $\pi$ be a cuspidal automorphic representation of $\OO(V,\A)$. For a cusp form $f\in\pi$ and Schwartz function $\phi\in\Sw((V\otimes W^+)(\A))$, we define the theta lift $\theta_{\psi,2r}(f,\phi)\in\Aut(\Mp_{2r})$ of $f$ to $\Mp_{2r}$ (with respect to $\phi$) by
\[
    \theta_{\psi,2r}(f,\phi)(h)=\int_{\OO(V,F)\backslash\OO(V,\A)}\sum_{v\in(V\otimes W^+)(F)}\omega_{\psi}(h,g)\phi(v)\;dg.
\]
Then we define the theta lift $\theta_{\psi, 2r}(\pi)$ of $\pi$ to $\Mp_{2r}$ by
\[
    \theta _{\psi,2r}(\pi)=\{\theta_{\psi,2r}(f,\phi):f\in\pi,\phi\in\Sw((V\otimes W^+)(\A))\}.
\]
We often omit $\psi$ from the notations and simply write $\theta_{2r}(f,\phi)$ and $\theta_{2r}(\pi)$. It is well-known that if $\theta_{2r_0}(\pi)\neq0$ for some $r_0$, then $\theta_{2r}(\pi)\neq0$ for all $r\geq r_0$ (tower property), and if $r$ is the smallest integer with $\theta_{2r}(\pi)\neq0$, then $\theta_{2r}(\pi)$ is in the space of cusp forms.
\vskip 10pt

Now we have
\vskip 10pt

\begin{prop}\label{P:inner_product}
Let $\pi$ be a cuspidal automorphic representation of $\OO(V,\A)$ with $\dim V=m$. For $f_1, f_2\in\pi$ and $\phi_1, \phi_2\in\Sw((V\otimes W^+)(\A))$, we let $\theta_{2r}(f_i,\phi_i)$ be the theta lift of $f_i$ with $\phi_i$ to $\Mp_{2r}(\A)$. Further assume $\theta_{2r-2}(\pi)=0$ so that both of the $\theta_{2r}(f_i,\phi_i)$'s are cusp forms (possibly zero). Then the Peterson inner product $\langle\theta_{2r}(f_1,\phi_1), \theta_{2r}(f_2,\phi_2)\rangle$ of the lifts is given by
{\allowdisplaybreaks
\begin{align*}
    &\langle\theta_{2r}(f_1,\phi_1),\theta_{2r}(f_2,\phi_2)\rangle\\
    &= \int_{\Sp_{2r}(F)\backslash \Sp_{2r}(\A)}\theta_{2r}(f_1,\phi_1)(h)
    \overline{\theta_{2r}(f_2,\phi_2)(h)}\,dh\\
    &=\frac{1}{\kappa} \cdot \underset{s=\frac{r+1}{2}}{\Res}
    \int_{(G\times G)(F)\backslash(G\times G)(\A)}
    f_1(g_1)\overline{f_2(g_2)}\E^{(m,r)}(i(g_1,g_2),s,\sigma(\phi_1\otimes\overline{\phi_2}))\,dg_1dg_2\\
    &=\frac{1}{\kappa} \cdot \int_{(G\times G)(F)\backslash(G\times
    G)(\A)}f_1(g_1)\overline{f_2(g_2)}B_{-1}^{(m,r)}(\sigma(\phi_1\otimes\overline{\phi_2}))(i(g_1,g_2))\,dg_1dg_2,
\end{align*}
where $G=\OO(V)$ and $\kappa$ is the residue of the auxiliary Eisenstein series $E(h,s)$ on $\Sp_{2r}(\A)$. Note that even if $\dim V$ is odd, the integral for the inner product is over $\Sp_{2r}(F)\backslash \Sp_{2r}(\A)$.}
\end{prop}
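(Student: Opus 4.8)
The plan is to follow the Kudla--Rallis derivation of the Rallis inner product formula, coupling the seesaw identity for the doubled pair with the regularized theta integral $\E^{(m,r)}$ of \S\ref{S:regularize}. Throughout write $\varphi=\sigma(\phi_1\otimes\overline{\phi_2})\in\Sw((\V^+\otimes W)(\A))$ and $G=\OO(V)$. First I would unfold the definitions of the two theta lifts. Since $\sigma$ realizes the restriction of the Weil representation of $\OO(\V,\A)\times\Sp_{2r}(\A)$ along the doubling embedding $i\colon\OO(V)\times\OO(-V)\hookrightarrow\OO(\V)$, the product of the two (scalar-valued) theta kernels on $\OO(V)\times\Sp_{2r}$ is the theta kernel on $\OO(\V)\times\Sp_{2r}$ attached to $\varphi$, so that
\[
    \theta_{2r}(f_1,\phi_1)(h)\,\overline{\theta_{2r}(f_2,\phi_2)(h)}
    =\int_{[G\times G]}f_1(g_1)\overline{f_2(g_2)}\,\theta(i(g_1,g_2),h;\varphi)\,dg_1\,dg_2 .
\]
Because $\theta_{2r-2}(\pi)=0$, both $\theta_{2r}(f_i,\phi_i)$ are cusp forms (possibly zero) and hence rapidly decreasing on $[\Sp_{2r}]$, so the Petersson inner product converges absolutely.

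Next I would bring in the auxiliary spherical Siegel Eisenstein series $E(h,s)$ on $\Sp_{2r}(\A)$ from \S\ref{S:regularize}, which has a simple pole at $s=\rho_r'=\frac{r+1}{2}$ with residue the non-zero constant $\kappa$. Since $\theta_{2r}(f_1,\phi_1)\overline{\theta_{2r}(f_2,\phi_2)}$ is rapidly decreasing, the integral $\int_{[\Sp_{2r}]}\theta_{2r}(f_1,\phi_1)(h)\overline{\theta_{2r}(f_2,\phi_2)(h)}E(h,s)\,dh$ converges and is meromorphic in $s$, and taking its residue at $s=\rho_r'$ extracts the constant term of $E(h,s)$, i.e. $\kappa$ times the inner product. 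Thus
\[
    \langle\theta_{2r}(f_1,\phi_1),\theta_{2r}(f_2,\phi_2)\rangle
    =\frac{1}{\kappa}\,\underset{s=\rho_r'}{\Res}\int_{[\Sp_{2r}]}\theta_{2r}(f_1,\phi_1)(h)\,\overline{\theta_{2r}(f_2,\phi_2)(h)}\,E(h,s)\,dh,
\]
and it remains to identify the latter integral with $\int_{[G\times G]}f_1(g_1)\overline{f_2(g_2)}\,\E^{(m,r)}(i(g_1,g_2),s;\varphi)\,dg_1\,dg_2$.

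For this identification I would use the regularizing element $z$, with $\omega(z)=\omega(z')$ commuting with $G(\A)\times\Sp_{2r}(\A)$ and $z'$ self-adjoint on $\Aut(\Sp_{2r})$ satisfying $z'\ast E(h,s)=P_z(s)E(h,s)$. For $\Re(s)\gg0$ I would evaluate the triple integral
\[
    \int_{[\Sp_{2r}]}\int_{[G\times G]}f_1(g_1)\overline{f_2(g_2)}\,\theta(i(g_1,g_2),h;\omega(z)\varphi)\,E(h,s)\,dg_1\,dg_2\,dh
\]
in two orders. Integrating over $[\Sp_{2r}]$ first, the rapid decay of $\theta(g,-;\omega(z)\varphi)$ on Siegel domains justifies Fubini and turns the inner integral into the convergent $I^{(m,r)}(i(g_1,g_2),s;\omega(z)\varphi)=P_z(s)\,\E^{(m,r)}(i(g_1,g_2),s;\varphi)$, giving $P_z(s)\int_{[G\times G]}f_1\overline{f_2}\,\E^{(m,r)}(i(\cdot),s;\varphi)$. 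Integrating over $[G\times G]$ first, the relation $\omega(z)=\omega(z')$ turns $\omega(z)$ into the operator $z'$ acting in the $\Sp_{2r}$-variable of the kernel, so the inner integral becomes $z'\ast\bigl(\theta_{2r}(f_1,\phi_1)\overline{\theta_{2r}(f_2,\phi_2)}\bigr)(h)$, and moving $z'$ back onto $E(h,s)$ by self-adjointness recollects $P_z(s)\int_{[\Sp_{2r}]}\theta_{2r}(f_1,\phi_1)(h)\overline{\theta_{2r}(f_2,\phi_2)(h)}E(h,s)\,dh$. Dividing by the non-zero polynomial $P_z(s)$ and continuing meromorphically gives the desired identification; taking the residue at $s=\rho_r'$ and interchanging it with the absolutely convergent $[G\times G]$-integral then replaces $\underset{s=\rho_r'}{\Res}\E^{(m,r)}(i(\cdot),s;\varphi)$ by $B_{-1}^{(m,r)}(\varphi)(i(\cdot,\cdot))$, which is the last displayed formula of the proposition. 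As a by-product, comparing the order of the pole at $\rho_r'$ on the two sides forces $\int_{[G\times G]}f_1\overline{f_2}\,B_{-2}^{(m,r)}(\varphi)(i(\cdot,\cdot))=0$, consistently with $E(h,s)$ having only a simple pole there.

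The step I expect to be the main obstacle is the rigorous justification of these interchanges: one must bound the growth of $\theta(g,h;\omega(z)\varphi)$ on $[\Sp_{2r}]$, uniformly enough in the $G\times G$-variable against which the bounded cusp forms $f_1,\overline{f_2}$ are integrated, in order to apply Fubini for $\Re(s)$ large, and one must keep careful track of how the factorization of the kernel on $\OO(\V)\times\Sp_{2r}$ interacts with the $\sigma$-decomposition under the action of $z$ --- which is precisely why one works with $\omega(z)=\omega(z')$ rather than with $z$ directly. These are delicate but by now standard points in the theory of the regularized Siegel--Weil formula, and I would follow \cite{Kudla-Rallis94} for them.
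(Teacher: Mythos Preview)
Your proof is correct and follows essentially the same approach as the paper: both arguments hinge on the Poisson summation identity $\theta(i(g_1,g_2),h;\sigma(\phi_1\otimes\overline{\phi_2}))=\theta(g_1,h;\phi_1)\overline{\theta(g_2,h;\phi_2)}$, the relation $\omega(z)=\omega(z')$ together with the self-adjointness of $z'$ to transfer the regularizing element onto $E(h,s)$ (yielding the factor $P_z(s)$ which cancels), and then taking the residue of $E(h,s)$ at $\rho_r'$. The only difference is cosmetic---you run the computation from the inner product toward the $\E^{(m,r)}$-integral, whereas the paper starts from the $\E^{(m,r)}$-integral and unwinds it to the inner product---and your extra observation that the $B_{-2}^{(m,r)}$ term integrates to zero against $f_1\overline{f_2}$ is a correct by-product not made explicit in the paper.
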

\begin{proof}
First recall that, for an algebraic group $G$, we write $[G]=G(F)\backslash G(\A)$.
Then we have
{\allowdisplaybreaks
\begin{align*}
    &\frac{1}{\kappa}\int_{[G\times G]}
    f_1(g_1)\overline{f_2}(g_2)\E^{(m,r)}(i(g_1,g_2),s;\sigma(\phi_1\otimes\overline{\phi_2}))\,dg_1dg_2\\
    &=\frac{1}{\kappa P_z(s)}\int_{[G\times G]}
    f_1(g_1)\overline{f_2(g_2)}I^{(m,r)}(i(g_1,g_2),s;\omega(z)\sigma(\phi_1\otimes\overline{\phi_2}))\,dg_1dg_2\\
    &=\frac{1}{\kappa P_z(s)}\int_{[G\times G]}
    f_1(g_1)\overline{f_2(g_2)}\int_{[\Sp]}
	\theta(i(g_1,g_2),h;\omega(z)\sigma(\phi_1\otimes\overline{\phi_2}))E(h,s)\,dh\,dg_1dg_2\\
    &=\frac{1}{\kappa P_z(s)}\int_{[\Sp]}\left(\int_{[G\times G]}
    f_1(g_1)\overline{f_2(g_2)}\theta(i(g_1,g_2),h;\omega(z)\sigma(\phi_1\otimes\overline{\phi_2}))\,dg_1dg_2\right)E(h,s) dh\\
    &=\frac{1}{\kappa P_z(s)}\int_{[\Sp]}\left(\int_{[G\times G]}
    f_1(g_1)\overline{f_2(g_2)}\theta(i(g_1,g_2),h;\sigma(\phi_1\otimes\overline{\phi_2}))\,dg_1dg_2\right)z'\ast E(h,s) dh\\
    &=\frac{1}{\kappa}\int_{[\Sp]}\left(\int_{[G\times G]}
    f_1(g_1)\overline{f_2(g_2)}\theta(i(g_1,g_2),h;\sigma(\phi_1\otimes\overline{\phi_2}))\,dg_1dg_2\right)E(h,s) dh\\
    &=\frac{1}{\kappa}\int_{[\Sp]}
    \theta_{2r}(f_1,\phi_1)(h)\overline{\theta_{2r}(f_2,\phi_2)}(h)E(h,s)\,dh.
\end{align*}}
Here we used the Poisson summation formula to show that for $\phi_1, \phi_2\in\Sw((V\otimes W^+)(\A))$
\[
    \theta(i(g_1,g_2),h;\sigma(\phi_1\otimes\overline{\phi_2}))
    =\theta(g_1,h;\phi_1)\theta(g_2,h;\overline{\phi_2}).
\]
Also we used the adjointness of the operator $z'$. Now the auxiliary Eisenstein series $E(h,s)$ has a constant residue
$\kappa$ at $s=\frac{r+1}{2}$. So we have
\begin{align*}
    &\underset{s=\frac{r+1}{2}}{\Res}\frac{1}{\kappa}\int_{\Sp(F)\backslash \Sp(\A)}
    \theta_{2r}(f_1,\phi_1)(h)\overline{\theta_{2r}(f_2,\phi_2)}(h)E(h,s)\,dh\\
    &=\int_{\Sp(F)\backslash \Sp(\A)}
    \theta_{2r}(f_1,\phi_1)(h)\overline{\theta_{2r}(f_2,\phi_2)}(h)\,dh.
\end{align*}
Thus the proposition follows.
\end{proof}

As a corollary, which we will use later, we have
\begin{cor}\label{C:inner_product}
Let
$\underset{i}{\sum}\sigma(\phi_{1,i}\otimes\overline{\phi_{2,i}})\in\Sw((\V^+\otimes
W)(\A))$. Then
\begin{align*}
    &\sum_i\langle\theta_{2r}(f_1,\phi_{1,i}),\theta_{2r}(f_2,\phi_{2,i})\rangle\\
    &=\frac{1}{\kappa}\int_{[G\times G]}f_1(g_1)\overline{f_2(g_2)}B_{-1}^{(m,r)}(\sum_i\sigma(\phi_{1,i}\otimes\overline{\phi_{2,i}}))(i(g_1,g_2))\,dg_1dg_2.\\
\end{align*}
\end{cor}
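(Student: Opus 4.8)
The plan is to deduce this directly from Proposition~\ref{P:inner_product} by a formal linearity argument; the only real content is that a typical element of $\Sw((\V^+\otimes W)(\A))$ lying in the image of $\sigma$ is a \emph{sum} of pure tensors $\sigma(\phi_1\otimes\overline{\phi_2})$ rather than a single one, so one needs the extension of Proposition~\ref{P:inner_product} by bilinearity.

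First I would note that the index set is finite and that we remain under the standing hypothesis $\theta_{2r-2}(\pi)=0$ of Proposition~\ref{P:inner_product}, so that each $\theta_{2r}(f_j,\phi_{j,i})$ is a cusp form and every inner product below converges absolutely. Applying Proposition~\ref{P:inner_product} to the pair $(\phi_{1,i},\phi_{2,i})$ for each $i$ gives
\begin{align*}
    \langle\theta_{2r}(f_1,\phi_{1,i}),\theta_{2r}(f_2,\phi_{2,i})\rangle
    =\frac{1}{\kappa}\int_{[G\times G]}f_1(g_1)\overline{f_2(g_2)}\,
    B_{-1}^{(m,r)}(\sigma(\phi_{1,i}\otimes\overline{\phi_{2,i}}))(i(g_1,g_2))\,dg_1dg_2.
\end{align*}

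Then I would sum over $i$. The left-hand side becomes $\sum_i\langle\theta_{2r}(f_1,\phi_{1,i}),\theta_{2r}(f_2,\phi_{2,i})\rangle$ on the nose. On the right-hand side, since the sum is finite it may be pulled inside the integral over $[G\times G]$; and because $\sigma$ is a linear map and the passage $\varphi\mapsto B_{-1}^{(m,r)}(\varphi)$ is $\C$-linear (indeed $G(\A)$-intertwining, as recalled in~\S\ref{S:regularize}), one has
\begin{align*}
    \sum_i B_{-1}^{(m,r)}(\sigma(\phi_{1,i}\otimes\overline{\phi_{2,i}}))(i(g_1,g_2))
    =B_{-1}^{(m,r)}\!\Bigl(\sum_i\sigma(\phi_{1,i}\otimes\overline{\phi_{2,i}})\Bigr)(i(g_1,g_2)).
\end{align*}
Substituting this into the summed identity yields precisely the asserted formula. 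There is no genuine obstacle here: the statement is a bookkeeping consequence of Proposition~\ref{P:inner_product} together with the linearity of $\sigma$ and of $B_{-1}^{(m,r)}$; the one point worth flagging is simply that the hypotheses of that proposition must be in force, so that each termwise identity, and hence their sum, is meaningful.
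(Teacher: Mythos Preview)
Your proof is correct and follows the same approach as the paper: apply Proposition~\ref{P:inner_product} termwise and sum, using the linearity of $B_{-1}^{(m,r)}$ on $\Sw((\V^+\otimes W)(\A))$. The paper's own proof is a single sentence to this effect.
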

\begin{proof}
This immediately follows from the above proposition because the map
$B_{-1}^{(m,r)}$ is linear on $\Sw((\V^+\otimes W)(\A))$.
\end{proof}

%%%%%%%%%%%%%%%%%%%%%%%%%%%%%%%%%%%%%%%%%%%%%%%%%%%%%%%%%%%%%

\subsection{Inner product formula for the theta lift from $\OO(V)$ to $\Mp_{2r}$}

%%%%%%%%%%%%%%%%%%%%%%%%%%%%%%%%%%%%%%%%%%%%%%%%%%%%%%%%%%%%%
We derive the inner product formula for the theta lift from the orthogonal group
$\OO(V)$ to $\Mp_{2r}$, provided $\sigma(\phi_1\otimes\phi_2)$ is in $\Sw((V^+\otimes W)(\A))^\circ$. For this, we need to consider the following see-saw diagram:
\[
    \xymatrix{
    \OO(V\oplus(-V))\ar@{<-}[d]_{i}\ar@{-}[dr]&\Mp_{2r}\times\Mp_{2r}\ar@{<-}[d]\\
    \OO(V)\times\OO(-V)\ar@{-}[ur]&\Sp_{2r},
    }
\]
where $\Mp_{2r}$ is either $\Sp_{2r}$ or $\Spt_{2r}$ depending on the parity of $\dim V$. Note that even if $\dim V$ is odd, at the lower right corner of the diagram we have $\Sp_{2r}$.

Recall that the 1st term range is $m\geq 2r+1$ and the 2nd term range is $r+1\leq m\leq 2r$. It will be helpful to keep the following diagram in mind regarding the theta lift from $\OO(V)$ to the symplectic tower. Namely, when $\dim V=m$ is even, we consider
\[
    \xymatrix@R=1pt{
	&&\Sp_{2m-2}\ar@{.}[ddd]&\ar@{<->}[dddd]^{\text{2nd term range}}\\
	&&&\\
	&&&\\
	&&\Sp_{m+2}&\\
\OO(V)\ar@{-}[uuuurr]\ar@{-}[urr]\ar@{-}[rr]\ar@{-}[drr]\ar@{-}[ddddrr]
	&&\Sp_{m}&\\
	&&\Sp_{m-2}\ar@{.}[ddd]&\ar@{<->}[ddd]^{\text{1nd term range}}\\
	&&&\\
	&&&\\
	&&\Sp_2,&
    }
\]
and when $\dim V=m$ is odd, we consider
\[
    \xymatrix@R=1pt{
	&&\Spt_{2m-2}\ar@{.}[ddd]&\ar@{<->}[ddd]^{\text{2nd term range}}\\
	&&&\\
	&&&\\
	&&\Spt_{m+1}&\\
\OO(V)\ar@{-}[uuuurr]\ar@{-}[urr]\ar@{-}[rr]\ar@{-}[drr]\ar@{-}[ddddrr]
	&&\Spt_{m-1}&\text{\scriptsize boundary}\ar@{<->}[dddd]^{\text{1nd term range}}\\
	&&\Spt_{m-3}\ar@{.}[dd]&\\
	&&&\\
	&&&\\
	&&\Spt_2.&
    }
\]

First we have
\begin{prop}\label{P:inner_product_2nd_term1}
Keeping all the assumptions and the notations of Proposition \ref{P:inner_product}, let us further assume that $\dim V=m$ where $r+1\leq m\leq 2r$. Then if $\sum_i\sigma(\phi_{1,i}\otimes\phi_{2,i})\in\Sw((\V^+\otimes W)(\A))^\circ$, we have
\begin{align*}
    &\sum_i\langle\theta_{2r}(f_1,\phi_{1,i}),\theta_{2r}(f_2,\phi_{2,i})\rangle\\
    &=\frac{1}{\kappa}\int_{(G\times G)(F)\backslash(G\times
    G)(\A)}f_1(g_1)\overline{f_2(g_2)}A_{0}^{(m,r)}(\sum_i\sigma(\phi_{1,i}\otimes\overline{\phi_{2,i}}))(i(g_1,g_2))\,dg_1dg_2.
\end{align*}
\end{prop}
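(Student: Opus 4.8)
The plan is to begin from Corollary~\ref{C:inner_product}, which already rewrites the left-hand side through the residue term $B_{-1}^{(m,r)}$, and then to replace $B_{-1}^{(m,r)}$ by the value term $A_0^{(m,r)}$ by means of the weak second term identity; the ``error terms'' that this substitution produces will be doubling integrals of $f_1\otimes\overline{f_2}$ against Eisenstein series attached to the complementary pair $(\OO_{m,m},\Sp_{2(m-r-1)})$, and these vanish because $\theta_{2(m-r-1)}(\pi)=0$. Concretely, put $\Sigma=\sum_i\sigma(\phi_{1,i}\otimes\overline{\phi_{2,i}})$, so that $\Sigma\in\Sw((\V^+\otimes W)(\A))^\circ$ by hypothesis, and by Corollary~\ref{C:inner_product}
\[
    \sum_i\langle\theta_{2r}(f_1,\phi_{1,i}),\theta_{2r}(f_2,\phi_{2,i})\rangle
    =\frac{1}{\kappa}\int_{[G\times G]}f_1(g_1)\overline{f_2(g_2)}\,B_{-1}^{(m,r)}(\Sigma)(i(g_1,g_2))\,dg_1dg_2.
\]
Applying the Weak Second Term Identity (Theorem~\ref{T:Second_Term_Identity}) to $\Sigma$ lets us write $B_{-1}^{(m,r)}(\Sigma)=A_0^{(m,r)}(\Sigma)-\lambda\,B_0^{(m,m-r-1)}(\Ik(\Sigma))+R$, where $\lambda=|D|^{-m\rho_{m,r}}\prod_{i=0}^{2r-m}\xi(i)/\xi(2r-2i)$ and $R\in\im A_{-1}^{(m,r)}$. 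So the proposition will follow once we show that the integral over $[G\times G]$ of $f_1(g_1)\overline{f_2(g_2)}$ against each of $B_0^{(m,m-r-1)}(\Ik(\Sigma))(i(g_1,g_2))$ and $R(i(g_1,g_2))$ is zero.

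The vanishing of the complementary term rests on one observation: since $\theta_{2r-2}(\pi)=0$, the tower property gives $\theta_{2k}(\pi)=0$ for all $k\le r-1$, and in particular $\theta_{2(m-r-1)}(\pi)=0$ because $m-r-1\le r-1$ in the second term range $r+1\le m\le 2r$. I would then show that the entire meromorphic function
\[
    s\longmapsto\int_{[G\times G]}f_1(g_1)\overline{f_2(g_2)}\,\E^{(m,m-r-1)}(i(g_1,g_2),s;\Ik(\Sigma))\,dg_1dg_2
\]
vanishes identically: unwinding $\E^{(m,m-r-1)}$ back to a regularized theta integral over $[\Sp_{2(m-r-1)}]$, moving that integration to the outside, writing the relevant Schwartz function on $(\V^+\otimes W_0)(\A)$ as a (possibly infinite) sum of pure tensors under the doubling map for $\Sp_{2(m-r-1)}$, and factoring the theta kernel of $\OO(\V)$ along $\OO(V)\times\OO(-V)$ by Poisson summation exactly as in the proof of Proposition~\ref{P:inner_product}, one is left with an integrand which is a sum of products of theta lifts of $f_1$ and of $f_2$ to $\Sp_{2(m-r-1)}$ (or $\Spt_{2(m-r-1)}$ when $\dim V$ is odd), each of which is zero. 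Hence every Laurent coefficient at $s=\tfrac{m-r}{2}$ vanishes; in particular $\int_{[G\times G]}f_1\overline{f_2}\,B_0^{(m,m-r-1)}(\Ik(\Sigma))=0$, and the same computation gives $\int_{[G\times G]}f_1\overline{f_2}\,B_{-1}^{(m,m-r-1)}(\Ik(\psi))=0$ for every $\psi\in\Sw((\V^+\otimes W)(\A))^\circ$.

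To handle $R$, I would use that in the second term range the complementary pair $(\OO_{m,m},\Sp_{2(m-r-1)})$ lies in the \emph{first} term range, so that the weak first term identity (Proposition~\ref{P:first_term2}, applied with $r$ replaced by $m-r-1$) gives $A_{-1}^{(m,r)}(\psi)=a_{m,m-r-1}\,B_{-1}^{(m,m-r-1)}(\Ik(\psi))$ for all $\psi$ in the $G(\A)$-span of $\varphi^0$; combined with the vanishing just obtained, this annihilates the integral of such an $A_{-1}^{(m,r)}(\psi)$ against $f_1\overline{f_2}$. The step I expect to be the main obstacle is verifying that $R$ really is of this form, \ie that $R=A_{-1}^{(m,r)}(\psi)$ for some $\psi$ in the $G(\A)$-span of $\varphi^0$, and not merely an arbitrary element of $\im A_{-1}^{(m,r)}$. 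This distinction is essential, because for a general Schwartz function the integral of $A_{-1}^{(m,r)}$ against $f_1\overline{f_2}$ is a residue of the standard $L$-function of $\pi$ (up to local zeta integrals) and need not vanish, whereas the weak first term identity is available only on the spherical span.

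To obtain this refinement I would trace through the proof of the spherical second term identity (Theorem~\ref{T:Spherical_Second_Term}): there the only contribution on the residual side is the scalar multiple $\gamma_j E_{-1}^{(m,m)}$ of the residue of the spherical Siegel Eisenstein series at $s=\rho_{m,r}$, which is a constant multiple of $A_{-1}^{(m,r)}(\varphi^0)$; since $B_{-1}^{(m,r)}$, $B_0^{(m,m-r-1)}\circ\Ik$ and $A_{-1}^{(m,r)}$ are all $G(\A)$-intertwining and $A_0^{(m,r)}$ is $G(\A)$-intertwining modulo $\sR$, passing to $G(\A)$-translates and finite linear combinations shows that for every $\Sigma$ in the $G(\A)$-span of $\varphi^0$ the error $R$ equals $A_{-1}^{(m,r)}$ applied to the corresponding element of that span. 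A minor additional point is that the decomposition of the Schwartz function into pure tensors used in the preceding paragraph may be an infinite sum in the Schwartz topology, so the interchanges of summation with the $[G\times G]$- and $[\Sp_{2(m-r-1)}]$-integrations, and with the Laurent expansion in $s$, should be justified by the usual continuity and absolute-convergence estimates, valid for $\Re(s)\gg0$ and then propagated by meromorphic continuation. Granting all of this, substituting the two vanishing statements into the displayed expression for $B_{-1}^{(m,r)}(\Sigma)$ and invoking Corollary~\ref{C:inner_product} once more yields the asserted formula.
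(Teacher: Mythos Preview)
Your proposal is correct and follows essentially the same route as the paper: start from Corollary~\ref{C:inner_product}, apply the weak second term identity, and kill the two extra terms by reducing them to theta lifts to $\Mp_{2(m-r-1)}$, which vanish since $m-r-1\le r-1$ and $\theta_{2r-2}(\pi)=0$. The paper handles $B_0^{(m,m-r-1)}(\Ik(\Sigma))$ exactly as you describe (unfolding back to a theta integral over $[\Sp_{2(m-r-1)}]$), and for the $\sR$-term it writes $R=A_{-1}^{(m,r)}(\varphi')$, invokes Proposition~\ref{P:first_term2} to rewrite this as $a_{m,m-r-1}B_{-1}^{(m,m-r-1)}(\Ik(\varphi'))$, and then applies Corollary~\ref{C:inner_product} once more for the complementary pair.

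You are in fact more careful than the paper on the point you flag as the ``main obstacle'': the paper simply writes ``$\varphi'$ is some Schwartz function in $\Sw((\V^+\otimes W)(\A))$'' and then applies Proposition~\ref{P:first_term2}, which as stated requires $\varphi'\in\Sw((\V^+\otimes W)(\A))^\circ$. Your proposed fix via the spherical identity has the right idea, but a cleaner justification is available: $A_{-1}^{(m,r)}$ factors through $\Ind_{P(\A)}^{G(\A)}|\,|^{\rho_{m,r}}$ (since the standard section $\Phi_\varphi^{(m,r)}$ is determined by its value at $s=\rho_{m,r}$), and by the structure of the local degenerate principal series the images of $\Sw((\V^+\otimes W)(\A))$ and $\Sw((\V^+\otimes W)(\A))^\circ$ in $\Ind_{P(\A)}^{G(\A)}|\,|^{\rho_{m,r}}$ coincide (both equal $\otimes_v'\sigma_v^+$; cf.\ \S\ref{S:non_vanishing}). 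Hence $\sR=A_{-1}^{(m,r)}(\Sw^\circ)$, so $\varphi'$ may indeed be taken in the spherical span, and your concern about the non-intertwining of $A_0^{(m,r)}$ disappears.
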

\begin{proof}
Corollary \ref{C:inner_product} together with our weak second term identity immediately implies
\begin{align*}
    &\kappa\sum_i\langle\theta_{2r}(f_1,\phi_{1,i}),\theta_{2r}(f_2,\phi_{2,i})\rangle\\
    &=\int_{[G\times G]}f_1(g_1)\overline{f_2(g_2)}B_{-1}^{(m,r)}(\sum_i\sigma(\phi_{1,i}\otimes\overline{\phi_{2,i}}))(i(g_1,g_2))\,dg_1dg_2\\
    &=\int_{[G\times G]}f_1(g_1)\overline{f_2(g_2)}A_{0}^{(m,r)}(\sum_i\sigma(\phi_{1,i}\otimes\overline{\phi_{2,i}}))(i(g_1,g_2))\,dg_1dg_2\\
    &-c\int_{[G\times G]}f_1(g_1)\overline{f_2(g_2)}
    B_{0}^{(m,m-r-1)}(\Ik(\sum_i\sigma(\phi_{1,i}\otimes\overline{\phi_{2,i}})))(i(g_1,g_2))\,dg_1dg_2\\
    &+C\int_{[G\times G]}f_1(g_1)\overline{f_2(g_2)}A_{-1}^{(m,r)}(\varphi')(i(g_1,g_2))\,dg_1dg_2,
\end{align*}
where $c$ is the constant in the second term identity, $C$ is some constant depending on $\sum_i\sigma(\phi_{1,i}\otimes\phi_{2,i})$ and $r$, and $\varphi'$ is some Schwartz function in $\Sw((\V^+\oplus W)(\A))$. We have only to show
\begin{equation}\label{E:B_0=0}
    \int_{[G\times G]}f_1(g_1)\overline{f_2(g_2)}
    B_{0}^{(m,m-r-1)}(\Ik(\sum_i\sigma(\phi_{1,i}\otimes\overline{\phi_{2,i}})))(i(g_1,g_2))\,dg_1dg_2=0
\end{equation}
and
\begin{equation}\label{E:A_-1=0}
    \int_{[G\times G]}f_1(g_1)\overline{f_2(g_2)}
    A_{-1}^{(m,r)}(\varphi')(i(g_1,g_2))\,dg_1dg_2=0.
\end{equation}
\vskip 10pt

Both of these are implied by our assumption $\theta_{2r-2}(\pi)=0$. First let us define the natural isomorphism
\begin{align*}
    &\sigma': \Sw((V\otimes W_0^+)(\A))\hat{\otimes}\Sw((V\otimes
    W_0^+)(\A))\\
    &\qquad\qquad\qquad\rightarrow\Sw((\V\otimes W_0^+)(\A))\simeq\Sw((\V^+\otimes
    W_0)(\A)).
\end{align*}

To show (\ref{E:B_0=0}), let us write
\[
    \Ik(\sum_i\sigma(\phi_{1,i}\otimes\overline{\phi_{2,i}}))
	=\sum_{j}\sigma'(\phi'_{1,j}\otimes\overline{\phi'_{2,j}}),
\]
where $\phi'_{1,j}\otimes\overline{\phi'_{2,j}}\in\Sw((V\otimes
W_0^+)(\A))\hat{\otimes}\Sw((V\otimes W_0^+)(\A))$.
\vskip 10pt

Then by the analogous computation as the proof of Proposition \ref{P:inner_product}, we can see
\begin{align*}
    &\int_{[G\times G]}f_1(g_1)\overline{f_2(g_2)}
    B_{0}^{(m,m-r-1)}(\Ik(\sum_i\sigma(\phi_{1,i}\otimes\overline{\phi_{2,i}})))(i(g_1,g_2))\,dg_1dg_2\\
    &=\underset{s=\frac{m-r}{2}}{\Val}\sum_j\int_{[\Sp_{2(m-r-1)}]}
    \theta_{2(m-r-1)}(f_1,\phi'_{1,j})(h)\theta_{2(m-r-1)}(f_2,\overline{\phi'_{2,j}})(h)E'(h,s)\,
    dh,
\end{align*}
where $E'(h,s)$ is the auxiliary Eisenstein series on the group
$\Sp_{2(m-r-1)}(\A)$. But
\[
    \theta_{2(m-r-1)}(f_1,\phi'_{1,j})=\theta_{2(m-r-1)}(f_2,\overline{\phi'_{2,j}})=0
\]
because by our assumption $\theta_{2r-2}(\pi)=0$, and hence (\ref{E:B_0=0}) follows.
\vskip 10pt

To show (\ref{E:A_-1=0}), recall that the complementary space of $W_0$ is actually $W$. Hence by Proposition \ref{P:first_term2},
\begin{align*}
    &\int_{[G\times G]}f_1(g_1)\overline{f_2(g_2)}
    A_{-1}^{(m,r)}(\varphi')(i(g_1,g_2))\,dg_1dg_2\\
    &=a_{m,m-r-1}\int_{[G\times G]}f_1(g_1)\overline{f_2(g_2)}
    B_{-1}^{(m,m-r-1)}(\Ik(\varphi'))(i(g_1,g_2))\,dg_1dg_2.
\end{align*}
Again let us write $\Ik(\varphi')=\sum_{j}\sigma'(\phi''_{1,j}\otimes\overline{\phi''_{2,j}})$, by Corollary \ref{C:inner_product} we have
\begin{align*}
    &\kappa\sum_j\langle\theta_{2(m-r-1)}(f_1,\phi_{1,j}),\theta_{2(m-r-1)}(f_2,\phi_{2,j})\rangle\\
    &=\int_{[G\times G]}f_1(g_1)\overline{f_2(g_2)}
    B_{-1}^{(m,m-r-1)}(\Ik(\varphi'))(i(g_1,g_2))\,dg_1dg_2.
\end{align*}
But this is again zero because
$\theta_{2(m-r-1)}(f_1,\phi''_{1,j})=\theta_{2(m-r-1)}(f_2,\overline{\phi''_{2,j}})=0$ by our assumption that $\theta_{2r-2}(\pi)=0$, and hence (\ref{E:A_-1=0}) follows.
\end{proof}
\vskip 10pt

Now this proposition implies the following inner product formula.
\vskip 10pt

\begin{thm}[Inner Product Formula for 2nd term range]\label{T:inner_product_2nd_term2}
Let $\pi$ be a cuspidal automorphic representation of $\OO(V,\A)$ with $\dim V=m$ and $r+1\leq m\leq 2r$ such that $\theta_{2r-2}(\pi)=0$. Let $\varphi\in\Sw((\V^+\otimes W)(\A))^\circ$ be such that $\Phi^{(m,r)}_{\varphi}$ is factorizable as $\otimes'\Phi_v$. Let us write $\varphi=\sum_i\sigma(\phi_{1,i}\otimes\overline{\phi_{2,i}})$. Then for $f_1,f_2\in\pi$ we have
\begin{align*}
    &\sum_i\langle\theta_{2r}(f_1,\phi_{1,i}),\theta_{2r}(f_2,\phi_{2,i})\rangle\\
    &\qquad\qquad=\frac{1}{d^S_{m}(\rho_{m,r})}\underset{s=\rho_{m,r}}{\Val}\left(L^S(s+\frac{1}{2}, \pi)\cdot \prod_{v\in S}Z_v(s,f_{1,v}, f_{2,v}, \Phi_v, \pi_v)\right),
\end{align*}
where $d_{m}^S(s)$ is the normalizing factor for the doubling method, which is explicitly given by
\[
    d_{m}^S(s)=\begin{cases}\prod_{i=1}^{\frac{m}{2}}\zeta^S(2s+2i-1)\quad\text{if $m$ is even}\\
            \prod_{i=1}^{\frac{m-1}{2}}\zeta^S(2s+2i)\quad\text{if $m$ is odd}.
            \end{cases}
\]
\end{thm}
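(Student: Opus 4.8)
The plan is to combine the inner product formula of Proposition~\ref{P:inner_product_2nd_term1}, which expresses $\sum_i\langle\theta_{2r}(f_1,\phi_{1,i}),\theta_{2r}(f_2,\phi_{2,i})\rangle$ as an integral of $f_1\overline{f_2}$ against $A_0^{(m,r)}(\sum_i\sigma(\phi_{1,i}\otimes\overline{\phi_{2,i}}))$ over $[G\times G]$, with the standard doubling method machinery of Piatetski-Shapiro--Rallis. Concretely, $A_0^{(m,r)}(\varphi)$ is the value at $s=\rho_{m,r}$ of the Siegel Eisenstein series $E^{(m,m)}(g,s;\Phi_\varphi^{(m,r)})$ on $\OO(\V)=\OO_{m,m}$, and $i\co \OO(V)\times\OO(-V)\hookrightarrow\OO(\V)$ is exactly the doubling embedding. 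So the integral in question is, by definition, the doubling zeta integral $Z(s,f_1,\overline{f_2},\Phi_\varphi^{(m,r)})$ evaluated (as a $\Val$) at $s=\rho_{m,r}$.

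First I would recall the basic unfolding identity of the doubling method: for $\Re(s)\gg0$,
\[
    \int_{[G\times G]}f_1(g_1)\overline{f_2(g_2)}\,E^{(m,m)}(i(g_1,g_2),s;\Phi)\,dg_1\,dg_2
    = \int_{G(\A)}\langle \pi(g)f_1,f_2\rangle\,\Phi(i(g,1),s)\,dg,
\]
where the left side is first unwound using the orbit structure of $P(F)\backslash G(F)/i(G\times G)(F)$ (there is a unique open orbit contributing, the others vanishing by cuspidality of $\pi$), and then the right side factorizes over places since $\Phi=\Phi_\varphi^{(m,r)}$ is assumed factorizable and $\langle\pi(g)f_1,f_2\rangle$ is a product of local matrix coefficients. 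This identifies the global integral with $\prod_v Z_v(s,f_{1,v},f_{2,v},\Phi_v,\pi_v)$. Second, I would invoke the local unramified computation of Piatetski-Shapiro--Rallis (for the split orthogonal group $\OO_{m,m}$): at a finite place $v\notin S$ where everything is unramified and $f_{i,v}$, $\Phi_v$ are the normalized spherical vectors, $Z_v(s,\cdots) = \dfrac{L_v(s+\frac12,\pi_v)}{d_{m,v}(s)}$, where $d_{m,v}(s)=\prod_{i=1}^{m/2}\zeta_v(2s+2i-1)$ for $m$ even (and the analogous odd-$m$ product); this is precisely the normalizing factor $d_m^S(s)$ when one takes the product over $v\notin S$. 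Here one must be slightly careful about the shift $s\mapsto s+\tfrac12$, which comes from comparing the normalization of $\Phi^{(m,r)}_\varphi$ (built around the point $\rho_{m,r}=\tfrac{2r-m+1}{2}$) with the standard doubling normalization; the $L$-function appearing is the standard (degree-$m$) $L$-function of $\pi$, and the argument at which we want its value, $s=\rho_{m,r}$, becomes $\rho_{m,r}+\tfrac12 = 1+\tfrac{2r-m}{2}$, matching condition (ii) of the non-vanishing theorem.

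Putting these together: the global integral equals $\dfrac{L^S(s+\tfrac12,\pi)}{d_m^S(s)}\cdot\prod_{v\in S}Z_v(s,f_{1,v},f_{2,v},\Phi_v,\pi_v)$ as meromorphic functions of $s$. Dividing the inner-product identity of Proposition~\ref{P:inner_product_2nd_term1} (which has a $\frac1\kappa$ but actually the $\kappa$ is absorbed into the normalization of $E(h,s)$ — I would track this constant carefully, or note it is harmless since the final statement is stated up to the explicit $d_m^S$) and taking $\Val_{s=\rho_{m,r}}$ of both sides gives exactly the claimed formula. The main obstacle, and the step requiring genuine care, is the bookkeeping around $s=\rho_{m,r}$: since the Eisenstein series may have a pole at $\rho_{m,r}$, I should justify why taking $\Val$ (constant term of the Laurent expansion) of the product is legitimate and compatible with $A_0^{(m,r)}$ — this is where the earlier observation that $A_0^{(m,r)}$ is well-defined modulo $\im A_{-1}^{(m,r)}$, and that the $\im A_{-1}^{(m,r)}$ contribution integrates to zero against cusp forms in the theta range $\theta_{2r-2}(\pi)=0$ (shown inside the proof of Proposition~\ref{P:inner_product_2nd_term1}), is essential. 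A secondary technical point is verifying that the local zeta integrals $Z_v$ for $v\in S$ are holomorphic and nonzero at $\rho_{m,r}$ for suitable choices of data, so that the $\Val$ genuinely reads off $\Val_{s=\rho_{m,r}}L^S(s+\tfrac12,\pi)$ times a nonzero constant; but this is needed only for the subsequent non-vanishing application, not for the identity itself.
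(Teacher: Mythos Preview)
Your proposal is correct and follows essentially the same route as the paper: invoke Proposition~\ref{P:inner_product_2nd_term1} to rewrite the sum of inner products as $\frac{1}{\kappa}\int_{[G\times G]}f_1\overline{f_2}\,A_0^{(m,r)}(\Phi_\varphi^{(m,r)})\circ i$, then recognize this integral as $\Val_{s=\rho_{m,r}}$ of the doubling zeta integral, which by the Piatetski-Shapiro--Rallis unfolding and unramified computation equals $\frac{1}{d_m^S(s)}L^S(s+\tfrac12,\pi)\prod_{v\in S}Z_v$. One small clarification: the compatibility of $\Val$ with $A_0^{(m,r)}$ is purely definitional (integration commutes with taking Laurent coefficients), while the vanishing of the $A_{-1}^{(m,r)}$-contribution (equation~(\ref{E:A_-1=0})) is what shows the whole expression is actually holomorphic at $\rho_{m,r}$, permitting $1/d_m^S(s)$ to be pulled outside the $\Val$ as $1/d_m^S(\rho_{m,r})$.
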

\begin{proof}
By Proposition \ref{P:inner_product_2nd_term1} we have
\begin{align*}
    &\sum_i\langle\theta_{2r}(f_1,\phi_{1,i}),\theta_{2r}(f_2,\phi_{2,i})\rangle\\
    &=\frac{1}{\kappa}\int_{[G\times G]}f_1(g_1)\overline{f_2(g_2)}
    A_{0}^{(m,r)}(\sum_i\sigma(\phi_{1,i}\otimes\overline{\phi_{2,i}}))(i(g_1,g_2))\,dg_1dg_2\\
    &=\frac{1}{\kappa}\int_{[G\times G]}f_1(g_1)\overline{f_2(g_2)}
    A_{0}^{(m,r)}(\Phi_\varphi^{(m,r)})(i(g_1,g_2))\,dg_1dg_2,
\end{align*}
which is, by the doubling method, written as
\[
    \frac{1}{\kappa}\underset{s=\rho_{m,r}}{\Val}\frac{1}{d^S_{m}(s)}
    \left(L^S(s+\frac{1}{2}, \pi)\prod_{v\in S}Z_v(s,f_{1,v}, f_{2,v},
    \Phi_v,\pi_v)\right).
\]
\end{proof}

\begin{rmk}
The assumption that $\sum_i\sigma(\phi_{1,i}\otimes\overline{\phi_{2,i}})$ is in the $\OO(\V,\A)$-span of the spherical Schwartz function $\varphi^0$ is necessary because our second term identity works only those $\varphi$'s in $\Sw((\V^+\otimes W)(\A))^\circ$. Of course, if the second term identity can be extended to all the Schwartz functions in $\Sw((\V^+\otimes W)(\A))$, the inner product formula can also be completed in full generality.
\end{rmk}
\vskip 10pt

Next we also consider the inner product formula for the 1st term range, \ie $m\geq 2r+1$. First of all, for the boundary case, \ie $m=2r+1$, it is exactly the same as the inner product formula for the 2nd term range. Namely we have

\begin{thm}[Inner Product Formula on the boundary]
Let $\pi$ be a cuspidal automorphic representation of $\OO(V,\A)$ with $\dim V=m=2r+1$ such that $\theta_{2r-2}(\pi)=0$. Also let $\varphi\in\Sw((\V^+\otimes W)(\A))^\circ$ be such that $\Phi^{(m,r)}_{\varphi}$ is factorizable as $\otimes'\Phi_v$. Let us write $\varphi=\sum_i\sigma(\phi_{1,i}\otimes\overline{\phi_{2,i}})$. Then for $f_1,f_2\in\pi$ we have
\begin{align*}
    &\sum_i\langle\theta_{2r}(f_1,\phi_{1,i}),\theta_{2r}(f_2,\phi_{2,i})\rangle\\
    &=\frac{2}{\kappa}\int_{(G\times G)(F)\backslash(G\times G)(\A)}f_1(g_1)\overline{f_2(g_2)}
    A_{0}^{(2r+1,r)}(\sum_i\sigma(\phi_{1,i}\otimes\overline{\phi_{2,i}}))(i(g_1,g_2))\,dg_1dg_2.
\end{align*}
Further if $\varphi=\sum_i\sigma(\phi_{1,i}\otimes\overline{\phi_{2,i}})$ is such that $\Phi^{(m,r)}_\varphi=\otimes'\Phi_v$, then the latter is equal to
\[
    \frac{2}{\kappa d^S_{2r+1}(0)}\underset{s=0}{\Val}
    \left(L^S(s+\frac{1}{2}, \pi)\prod_{v\in S}Z_v(s,f_{1,v}, f_{2,v},
    \Phi_v,\pi_v)\right).
\]
\end{thm}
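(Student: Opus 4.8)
The plan is to run the argument of Theorem~\ref{T:inner_product_2nd_term2} essentially verbatim, with the weak first term identity on the boundary (Proposition~\ref{P:weak_first_term}) playing the role that the weak second term identity (Theorem~\ref{T:Second_Term_Identity}) played there. In fact the boundary case should be strictly easier: since $m=2r+1$ is odd, the Siegel Eisenstein series $E^{(2r+1,2r+1)}(g,s;\Phi)$ is holomorphic at $s=\rho_{m,r}=0$ by the Kudla--Rallis description of its poles (\S\ref{S:regularize}), so $A_{-1}^{(2r+1,r)}=0$, the residual subspace $\sR=\im A_{-1}^{(2r+1,r)}$ is trivial, $A_0^{(2r+1,r)}$ is already the leading and hence $G(\A)$-intertwining term, and there is no complementary term $B_0^{(m,m-r-1)}\circ\Ik$ to dispose of. Consequently none of the extra vanishing arguments used in the proof of Proposition~\ref{P:inner_product_2nd_term1} are needed here.

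Concretely, I would first set $\varphi=\sum_i\sigma(\phi_{1,i}\otimes\overline{\phi_{2,i}})$, which by hypothesis lies in $\Sw((\V^+\otimes W)(\A))^\circ$. Since $\theta_{2r-2}(\pi)=0$, the tower property makes each $\theta_{2r}(f_i,\phi_i)$ a cusp form, so Proposition~\ref{P:inner_product} (equivalently Corollary~\ref{C:inner_product}) applies and expresses $\sum_i\langle\theta_{2r}(f_1,\phi_{1,i}),\theta_{2r}(f_2,\phi_{2,i})\rangle$ as $\kappa^{-1}$ times the integral over $[G\times G]$, $G=\OO(V)$, of $f_1(g_1)\overline{f_2(g_2)}$ against $B_{-1}^{(2r+1,r)}(\varphi)(i(g_1,g_2))$. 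Next I would invoke Proposition~\ref{P:weak_first_term}: because $\varphi$ lies in the $\OO(\V,\A)$-span of $\varphi^0$ one has $A_0^{(2r+1,r)}(\varphi)=2\,B_{-1}^{(2r+1,r)}(\varphi)$, and substituting this proportionality into the previous display produces the first asserted identity, the numerical constant out front being exactly the combination of the proportionality constant of Proposition~\ref{P:weak_first_term} with the residue $\kappa$ of the auxiliary Eisenstein series.

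For the second, unfolded formula I would assume in addition that $\Phi^{(2r+1,r)}_\varphi=\otimes'\Phi_v$ and feed $A_0^{(2r+1,r)}(\Phi^{(2r+1,r)}_\varphi)$ into the doubling method, exactly as in the proof of Theorem~\ref{T:inner_product_2nd_term2}: the global doubling integral $\int_{[G\times G]}f_1(g_1)\overline{f_2(g_2)}\,E^{(2r+1,2r+1)}(i(g_1,g_2),s;\Phi^{(2r+1,r)}_\varphi)\,dg_1\,dg_2$ equals $d^S_{2r+1}(s)^{-1}L^S(s+\frac{1}{2},\pi)\prod_{v\in S}Z_v(s,f_{1,v},f_{2,v},\Phi_v,\pi_v)$ for a suitable finite set $S$ of places (containing all ramified and archimedean ones), and since this global integral is holomorphic at $s=0$, its constant term there equals on one side $\int_{[G\times G]}f_1\overline{f_2}\,A_0^{(2r+1,r)}(\Phi^{(2r+1,r)}_\varphi)\circ i$ and on the other $\underset{s=0}{\Val}$ of the right-hand side (note $d^S_{2r+1}$ is holomorphic and nonvanishing at $s=0$, which is why the two forms of the constant agree). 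Combining this with the first formula gives the statement. I do not expect a genuine obstacle; the only point demanding care is to remember that it is $A_0^{(2r+1,r)}$, not $A_{-1}^{(2r+1,r)}$, that is the $G(\A)$-intertwining leading term at $s=0$ — so that the doubling identity must be read off from the value (constant term) at $s=0$ rather than from a residue there — and to keep track of the constant of Proposition~\ref{P:weak_first_term} so that it assembles correctly with the factor $1/\kappa$.
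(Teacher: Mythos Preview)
Your proposal is correct and follows exactly the approach the paper intends: the paper's own proof reads ``This follows from a similar computation as the inner product formula for the 2nd term range by using Proposition~\ref{P:weak_first_term}. Actually this time is much easier and left to the reader,'' and you have supplied precisely that computation, including the observation that the boundary case is simpler because $A_{-1}^{(2r+1,r)}=0$ (so $\sR=0$) and there is no complementary $B_0^{(m,m-r-1)}\circ\Ik$ term to kill.
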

\begin{proof}
This follows from a similar computation as the inner product formula for the 2nd term range by using Proposition \ref{P:weak_first_term}. Actually this time is much easier and left to the reader.
\end{proof}

Now for the 1st term range other than the boundary case, \ie $m>2r+1$, by using the first term identity Proposition \ref{P:first_term2}, we can derive the inner product formula as follows.

\begin{thm}[Inner Product Formula for 1st term range]\label{T:inner_product3}
Let $\pi$ be a cuspidal automorphic representation of $\OO(V,\A)$ with $\dim V=m>2r+1$ such that $\theta_{2r-2}(\pi)=0$. Also let $\varphi'\in\Sw((\V^+\otimes W_c)(\A))^\circ$ be such that $\Phi^{(m,m-r-1)}_{\varphi'}$ is factorizable as $\otimes'\Phi_v$. Let us write $\Ik(\varphi')=\sum_i\sigma(\phi_{1,i}\otimes\overline{\phi_{2,i}})$. Then for $f_1,f_2\in\pi$ we have
\begin{align*}
    &\sum_i\langle\theta_{2r}(f_1,\phi_{1,i}),\theta_{2r}(f_2,\phi_{2,i})\rangle\\
    &\qquad\qquad=\frac{a_{m,r}}{d^S_{m}(-\rho_{m,r})}\underset{s=-\rho_{m,r}}{\Res}\left(L^S(s+\frac{1}{2}, \pi)\prod_{v\in S}Z_v(s,f_{1,v}, f_{2,v}, \Phi_v, \pi_v)\right).
\end{align*}
\end{thm}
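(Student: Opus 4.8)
The plan is to derive the first-term-range inner product formula by the same strategy used in Theorem~\ref{T:inner_product_2nd_term2} for the second-term range, but replacing the r\^{o}le of the weak second term identity with the weak first term identity of Proposition~\ref{P:first_term2}. First I would start from Corollary~\ref{C:inner_product}, which expresses $\sum_i\langle\theta_{2r}(f_1,\phi_{1,i}),\theta_{2r}(f_2,\phi_{2,i})\rangle$ as
\[
\frac{1}{\kappa}\int_{[G\times G]}f_1(g_1)\overline{f_2(g_2)}\,B_{-1}^{(m,r)}\Big(\sum_i\sigma(\phi_{1,i}\otimes\overline{\phi_{2,i}})\Big)(i(g_1,g_2))\,dg_1dg_2.
\]
Writing $\sum_i\sigma(\phi_{1,i}\otimes\overline{\phi_{2,i}})=\Ik^{(m,r)}(\varphi')$ for the fixed $\varphi'\in\Sw((\V^+\otimes W_c)(\A))^\circ$, Proposition~\ref{P:first_term2} gives the pointwise identity
\[
A_{-1}^{(m,m-r-1)}(\varphi')=a_{m,r}\,B_{-1}^{(m,r)}(\Ik^{(m,r)}(\varphi')),
\]
so the integral above equals $\dfrac{a_{m,r}^{-1}}{\kappa}\int_{[G\times G]}f_1(g_1)\overline{f_2(g_2)}\,A_{-1}^{(m,m-r-1)}(\varphi')(i(g_1,g_2))\,dg_1dg_2$. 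Wait --- here one must be slightly careful about which direction Proposition~\ref{P:first_term2} is stated in; it yields $A_{-1}=a_{m,r}B_{-1}\circ\Ik$, hence $B_{-1}\circ\Ik = a_{m,r}^{-1}A_{-1}$, and the constant $a_{m,r}$ appearing in the final formula is exactly the one from that proposition. I would double-check the bookkeeping of this constant against the statement of Theorem~\ref{T:inner_product3} (which carries $a_{m,r}$ in the numerator), adjusting the placement of $a_{m,r}$ versus $a_{m,r}^{-1}$ so that the stated formula comes out correctly --- this is a purely clerical point but worth getting right.

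Next I would invoke the doubling method of Piatetski-Shapiro--Rallis, exactly as in the proof of Theorem~\ref{T:inner_product_2nd_term2}. Since $A_{-1}^{(m,m-r-1)}$ is the \emph{leading} term of the Siegel Eisenstein series $E^{(m,m)}(g,s;\Phi^{(m,m-r-1)}_{\varphi'})$ at $s=\rho_{m,m-r-1}=-\rho_{m,r}$ (note $\rho_{m,m-r-1}=\tfrac{2(m-r-1)-m+1}{2}=\tfrac{m-2r-1}{2}=-\rho_{m,r}$), and since $\Phi^{(m,m-r-1)}_{\varphi'}$ is assumed factorizable as $\otimes'\Phi_v$, the doubling integral
\[
\int_{[G\times G]}f_1(g_1)\overline{f_2(g_2)}\,E^{(m,m)}(i(g_1,g_2),s;\Phi^{(m,m-r-1)}_{\varphi'})\,dg_1dg_2
\]
factors as $\dfrac{1}{d_m^S(s)}\Big(L^S(s+\tfrac12,\pi)\prod_{v\in S}Z_v(s,f_{1,v},f_{2,v},\Phi_v,\pi_v)\Big)$, with $d_m^S(s)$ the standard normalizing factor recorded in Theorem~\ref{T:inner_product_2nd_term2}. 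Taking the residue at $s=-\rho_{m,r}$ of both sides and dividing by $a_{m,r}\kappa$ (absorbing $\kappa$ into the choice of $a_{m,r}$ in the final statement, as is done implicitly in Theorem~\ref{T:inner_product_2nd_term2} as well) yields the asserted identity; the residue of $E(h,s)$ being the constant $\kappa$ is used exactly as before, and the constant in front simplifies to $a_{m,r}/d_m^S(-\rho_{m,r})$.

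There is, however, a genuine step to justify beyond the second-term-range argument: one must verify that the right-hand side is well defined, i.e.\ that taking the residue commutes with the adelic integration over $[G\times G]$ and that the $A_{-1}$ term is the only surviving contribution. In the second-term-range proof, the analogue required Proposition~\ref{P:inner_product_2nd_term1}, whose proof used the hypothesis $\theta_{2r-2}(\pi)=0$ to kill the auxiliary $B_0^{(m,m-r-1)}$ and $A_{-1}^{(m,r)}$ contributions via a tower/see-saw argument. Here the situation is cleaner because in the first term range $B_{-1}^{(m,r)}$ is already the leading (and only divergent) term of the non-Siegel Eisenstein series, so there is no ``extra'' second-term junk to discard; nonetheless the hypothesis $\theta_{2r-2}(\pi)=0$ is still needed to guarantee that the theta lifts $\theta_{2r}(f_i,\phi_i)$ are cusp forms, so that the unfolding in Proposition~\ref{P:inner_product} (Rallis' basic identity, Poisson summation, self-adjointness of the regularizing element $z'$) is legitimate and the Petersson inner product converges. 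I would therefore state explicitly that the chain Proposition~\ref{P:inner_product} $\to$ Corollary~\ref{C:inner_product} $\to$ Proposition~\ref{P:first_term2} $\to$ doubling applies verbatim. I expect the main obstacle to be none of the conceptual steps but rather the careful matching of the normalizing constants: reconciling $a_{m,r}$ from Proposition~\ref{P:first_term2}, the factor $|D|$ powers hidden in Lemma~\ref{L:E^(m,r)} and \eqref{E:E^(m,m)}, and the $\kappa$ from the auxiliary Eisenstein series, so that the final formula reads precisely as $\dfrac{a_{m,r}}{d^S_{m}(-\rho_{m,r})}\,\Res_{s=-\rho_{m,r}}\big(L^S(s+\tfrac12,\pi)\prod_{v\in S}Z_v(\cdots)\big)$. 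The rest is routine given the machinery already assembled.
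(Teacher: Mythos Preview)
Your approach is correct and is precisely the argument the paper intends; the paper's own proof consists of the single line ``Left to the reader,'' and the implied chain is exactly Proposition~\ref{P:inner_product}/Corollary~\ref{C:inner_product} $\to$ Proposition~\ref{P:first_term2} $\to$ doubling, as you outline. Your observation about the constants is well-placed: the factor of $\kappa$ is silently dropped between the proof and statement of Theorem~\ref{T:inner_product_2nd_term2} in the paper, and the same convention (or inconsistency) carries over here, so the $a_{m,r}$ versus $a_{m,r}^{-1}$ discrepancy you flag is a genuine artifact of the paper's bookkeeping rather than an error on your part.
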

\begin{proof}
Left to the reader.
\end{proof}

As the last thing in this section, let us mention the following. By looking at the equation (\ref{E:A_-1=0}) in the proof of Proposition \ref{P:inner_product}, one notices that this immediately implies the following fact, which we will use later.

\begin{lemma}\label{L:no_residue}
If $\pi$ is a cuspidal representation of $\OO(V,\A)$ with $\dim V=m$ such that $\theta_{2r-2}(\pi)=0$ and $r+1\leq m\leq 2r$, \ie the 2nd term range, then for $\varphi\in\Sw((\V^+\otimes W)(\A))$ we have
\[
    \underset{s=\rho_{m,r}}{\Res}\int_{[G\times G]}f_1(g_1)\overline{f_2(g_2)}
    E^{(m,m)}(i(g_1,g_2),s;\Phi^{(m,r)}_{\varphi})dg_1\,dg_2=0.
\]
Hence if $f_1,f_2$ and $\Phi^{(m,r)}_{\varphi}$ are factorizable as $\otimes'f_{1,v}, \otimes'f_{2,v}$ and $\otimes'\Phi_v$, respectively, then by the doubling method, we have
\[
    \underset{s=\rho_{m,r}}{\Res} \frac{1}{d_{m}^S(s)}L^S(s+\frac{1}{2}, \pi)\prod_{v\in S}Z_v(s,f_{1,v},f_{2,v},\Phi_v, \pi_v)=0,
\]
and so
\[
    L^S(s+\frac{1}{2}, \pi)\prod_{v\in S}Z_v(s,f_{1,v},f_{2,v},\Phi_v, \pi_v)
\]
is holomorphic at $s=\rho_{m,r}$. (Note that in this lemma, $\varphi$ is not just in $\Sw((\V^+\otimes W)(\A))^\circ$ but in $\Sw((\V^+\otimes W)(\A))$.)
\end{lemma}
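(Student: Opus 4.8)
The plan is to recognize the asserted vanishing as precisely equation~(\ref{E:A_-1=0}) in the proof of Proposition~\ref{P:inner_product_2nd_term1}, now stated for an arbitrary Schwartz function, and to run that same argument. First, since $\underset{s=\rho_{m,r}}{\Res}E^{(m,m)}(i(g_1,g_2),s;\Phi^{(m,r)}_\varphi)=A^{(m,r)}_{-1}(\varphi)(i(g_1,g_2))$ by definition of the Laurent coefficients, the first claim is equivalent to the vanishing of $\int_{[G\times G]}f_1(g_1)\overline{f_2(g_2)}\,A^{(m,r)}_{-1}(\varphi)(i(g_1,g_2))\,dg_1\,dg_2$ for every $\varphi\in\Sw((\V^+\otimes W)(\A))$, where $G=\OO(V)$.

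Next I would bring in the first term identity in the range where it is a genuine first term identity. Because $r+1\le m\le 2r$, the dual pair $(\OO_{m,m},\Sp_{2(m-r-1)})$ lies in the strict first term range $m>2(m-r-1)+1$, and the complementary space of $W_0=\Sp_{2(m-r-1)}$ with respect to $\V$ is exactly $W$, so Ikeda's map $\Ik=\Ik^{(m,m-r-1)}\colon\Sw((\V^+\otimes W)(\A))\to\Sw((\V^+\otimes W_0)(\A))$ is available. By Proposition~\ref{P:first_term} (valid for all Schwartz functions), together with its spherical refinement Proposition~\ref{P:first_term2} to fix the constant, the maps $A^{(m,r)}_{-1}$ and $B^{(m,m-r-1)}_{-1}\circ\Ik$ from $\Sw((\V^+\otimes W)(\A))$ to $\Aut(G)$ are proportional with a constant independent of $\varphi$. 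Hence it suffices to show $\int_{[G\times G]}f_1(g_1)\overline{f_2(g_2)}\,B^{(m,m-r-1)}_{-1}(\Ik(\varphi))(i(g_1,g_2))\,dg_1\,dg_2=0$.

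This last step is the see-saw computation of Proposition~\ref{P:inner_product} and Corollary~\ref{C:inner_product}. Writing $\Ik(\varphi)=\sum_j\sigma'(\phi'_{1,j}\otimes\overline{\phi'_{2,j}})$ (possible since the natural map $\sigma'$ attached to $W_0$ is onto) and unfolding exactly as in the proof of Proposition~\ref{P:inner_product} --- inserting the regularizing element $\omega(z)$, applying Poisson summation along the see-saw $\OO(V)\times\OO(-V)\subset\OO(\V)$ versus $\Sp_{2(m-r-1)}\subset\Sp_{2(m-r-1)}\times\Sp_{2(m-r-1)}$, and using the self-adjointness of $z'$ --- the integral becomes, up to the nonzero residue of the auxiliary Eisenstein series on $\Sp_{2(m-r-1)}(\A)$, a sum $\sum_j\langle\theta_{2(m-r-1)}(f_1,\phi'_{1,j}),\theta_{2(m-r-1)}(f_2,\phi'_{2,j})\rangle$. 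Since $m-r-1\le r-1$, the hypothesis $\theta_{2r-2}(\pi)=0$ and the tower property give $\theta_{2(m-r-1)}(\pi)=0$, so each lift $\theta_{2(m-r-1)}(f_i,\cdot)$ is zero and the whole expression vanishes (the hypotheses of Proposition~\ref{P:inner_product} and Corollary~\ref{C:inner_product} at this lower level, namely $\theta_{2(m-r-1)-2}(\pi)=0$, likewise hold by the tower property). This proves the residue vanishes.

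Finally, for the $L$-function statement I would use the basic identity of the doubling method: for factorizable data one has $\int_{[G\times G]}f_1(g_1)\overline{f_2(g_2)}E^{(m,m)}(i(g_1,g_2),s;\Phi^{(m,r)}_\varphi)\,dg_1\,dg_2=\frac{1}{d_m^S(s)}L^S(s+\frac12,\pi)\prod_{v\in S}Z_v(s,f_{1,v},f_{2,v},\Phi_v,\pi_v)$. The left side has at most a simple pole at $s=\rho_{m,r}$ because $\Phi^{(m,r)}_\varphi$ is a standard section, and we have just shown its residue there is zero, so it is holomorphic at $\rho_{m,r}$; meanwhile $d_m^S(\rho_{m,r})$ is a finite product of values $\zeta^S(k)$ at integers $k\ge 2$ (the smallest argument being $2r-m+2\ge2$ since $m\le2r$), hence finite and nonzero, so $1/d_m^S$ is holomorphic and nonvanishing there. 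Multiplying, $L^S(s+\frac12,\pi)\prod_{v\in S}Z_v$ is holomorphic at $\rho_{m,r}$. The step I expect to be the real point is the passage to an arbitrary $\varphi$ in the second paragraph: Proposition~\ref{P:first_term2} only identifies $A^{(m,r)}_{-1}$ with $B^{(m,m-r-1)}_{-1}\circ\Ik$ on the $\OO(\V,\A)$-span of the spherical function, so reaching all of $\Sw((\V^+\otimes W)(\A))$ forces one to appeal to the general first term identity (Proposition~\ref{P:first_term}); equivalently, one observes that the argument behind equation~(\ref{E:A_-1=0}) uses only that $\Ik$, $A^{(m,r)}_{-1}$, $B^{(m,m-r-1)}_{-1}$ are defined on all Schwartz functions and that the input $\theta_{2(m-r-1)}(\pi)=0$ does not see sphericity.
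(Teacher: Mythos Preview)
Your overall strategy is exactly the paper's: the residue in question is $\int f_1\overline{f_2}\,A_{-1}^{(m,r)}(\varphi)$, and one converts this to a $B_{-1}^{(m,m-r-1)}$ integral via the first term identity, which by Corollary~\ref{C:inner_product} becomes a sum of inner products of theta lifts to $\Sp_{2(m-r-1)}$ that vanish because $\theta_{2r-2}(\pi)=0$ and $m-r-1\le r-1$. The $L$-function paragraph is also fine.

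The gap is in your second paragraph. You assert that Proposition~\ref{P:first_term} together with Proposition~\ref{P:first_term2} gives proportionality of $A_{-1}^{(m,r)}$ and $B_{-1}^{(m,m-r-1)}\circ\Ik$ on all of $\Sw((\V^+\otimes W)(\A))$. But Proposition~\ref{P:first_term} (applied with $r$ replaced by $m-r-1$) goes in the wrong direction: it says every $B_{-1}^{(m,m-r-1)}(\varphi_0)$ equals $A_{-1}^{(m,r)}(\Phi)$ for \emph{some} standard section $\Phi$, not for $\Phi=\Phi^{(m,r)}_\varphi$ with $\Ik(\varphi)=\varphi_0$. So it does not upgrade Proposition~\ref{P:first_term2} from $\Sw((\V^+\otimes W)(\A))^\circ$ to all Schwartz functions. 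Your fallback remark that ``the argument behind~(\ref{E:A_-1=0}) uses only that the maps are defined on all Schwartz functions'' misses that the step actually invoking the first term identity $A_{-1}^{(m,r)}(\varphi)=a_{m,m-r-1}B_{-1}^{(m,m-r-1)}(\Ik(\varphi))$ is precisely Proposition~\ref{P:first_term2}, which is only stated for $\varphi\in\Sw((\V^+\otimes W)(\A))^\circ$.

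The correct bridge is simpler and uses no stronger first term identity. The map $A_{-1}^{(m,r)}$ factors through $\Phi^{(m,r)}:\Sw((\V^+\otimes W)(\A))\to\Ind_{P(\A)}^{G(\A)}|\,|^{\rho_{m,r}}$, and as noted in \S7.1 the local image of $\Sw((\V_v^+\otimes W_v)(F_v))$ and of $\Sw((\V_v^+\otimes W_v)(F_v))^\circ$ under $\Phi^{(m,r)}$ are both $\sigma_v^+$. Hence globally the images of $\Sw((\V^+\otimes W)(\A))$ and of $\Sw((\V^+\otimes W)(\A))^\circ$ in the degenerate principal series coincide, so for any $\varphi$ there exists $\varphi''\in\Sw((\V^+\otimes W)(\A))^\circ$ with $A_{-1}^{(m,r)}(\varphi)=A_{-1}^{(m,r)}(\varphi'')$. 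Now Proposition~\ref{P:first_term2} applies to $\varphi''$, and the rest of your argument (your third paragraph) goes through verbatim. This is the content the paper is implicitly using when it says the lemma ``is immediate from~(\ref{E:A_-1=0})'' yet asserts it for all $\varphi$.
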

\begin{proof}
This is immediate from (\ref{E:A_-1=0}). The last statement follows from the fact that the Siegel Eisenstein series has at most a simple pole. Also note that the normalizing factor $d_{m}^S(s)$ is non-zero holomorphic at $s=\rho_{m,r}$.
\end{proof}

\quad\\

%%%%%%%%%%%%%%%%%%%%%%%%%%%%%%%%%%%%%%%%%%%%%%%%%%%%%%%%%%%%%%%%%%%%%%%%%%%%%%%%%%%%%%%%%%%%%%%%%%%%%%%

%%%%%%%%%%%%%%%%%%%%%%%%%%%%%%%%%%%%%%%%%%%%%%%%%%%%%%%%%%%%%

\section{\bf Non-vanishing of theta lifts}\label{S:non_vanishing}

%%%%%%%%%%%%%%%%%%%%%%%%%%%%%%%%%%%%%%%%%%%%%%%%%%%%%%%%%%%%%

In this section, as an application of our inner product formula, we show a certain non-vanishing result for the global theta lift from $\OO(V)$ to $\Mp_{2r}$. In this section, $G=\OO_{m,m}$ and $P$ is the Siegel parabolic of $G$. We often write $G_v=G(F_v)$ and $P_v=P(F_v)$.

%%%%%%%%%%%%%%%%%%%%%%%%%%%%%%%%%%%%%%%%%%%%%%%%%%%%%%%%%%%%%

\subsection{The structure of degenerate principal series}

%%%%%%%%%%%%%%%%%%%%%%%%%%%%%%%%%%%%%%%%%%%%%%%%%%%%%%%%%%%%%

First, we need to recall the structure of the local degenerate
principal series $\Ind_{P_v}^{G_v}|\,|^s$ for any $v$.  The detailed structure of this degenerate principal
series is well-known. (See \cite{Ban_Jantzen} for the non-archimedean
case, \cite{Loke} for the real case, and \cite{Lee-Zhu} for the
complex case). For our purposes, we only need the following.

\begin{prop}
Let $s_0=\rho_{m,r}=\frac{2r-m+1}{2}$, where $0\leq s_0\leq\frac{m-1}{2}$ \ie $\frac{m-1}{2}\leq r\leq m-1$. Then the degenerate principal series
$\Ind_{P_v}^{G_v}|\,|^{s_0}$ has two maximum subrepresentations
$\sigma_v^+$ and $\sigma_v^-$ generating $\Ind_{P_v}^{G_v}|\,|^{s_0}$, one of which, say $\sigma_v^+$, is
generated by a spherical vector. Moreover, for each $\epsilon=+$ or $-$,
\[
	\tau_v^\epsilon:=\sigma_v^{\epsilon}/(\sigma_v^+\cap\sigma_v^-)
\]
is irreducible. Also when $s_0=0$, $\sigma_v^+\cap\sigma_v^-=0$ and $\Ind_{P_v}^{G_v}|\,|^{s_0}=\sigma_v^+\oplus\sigma_v^-$.
\end{prop}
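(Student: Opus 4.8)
The plan is to deduce all three assertions from the detailed description of the local degenerate principal series $\Ind_{P_v}^{G_v}|\,|^s$ already available in the literature: the non-archimedean case is due to Ban--Jantzen \cite{Ban_Jantzen}, the real case to Loke \cite{Loke}, and the complex case to Lee--Zhu \cite{Lee-Zhu}. The only input needed from our side is the location of the point: $s_0 = \rho_{m,r} = \frac{2r-m+1}{2}$ lies in the closed interval $[0,\rho_m]$ precisely when $\frac{m-1}{2}\le r\le m-1$, and these are exactly the ``Siegel--Weil points'' attached to the dual pair $(\OO_{m,m},\Sp_{2r})$ (together with its complementary space). At such a point the submodule lattice of $\Ind_{P_v}^{G_v}|\,|^{s_0}$ has precisely the shape asserted, and it remains only to extract the pieces we need.

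First I would let $\sigma_v^+$ be the submodule of $\Ind_{P_v}^{G_v}|\,|^{s_0}$ generated by its (one-dimensional) space of spherical vectors. The classification identifies $\sigma_v^+$ as one of the two maximal proper submodules, and in fact as the image of the $G_v$-intertwining map
\[
	\Phi^{(m,r)}\colon \Sw((\V^+\otimes W)(F_v))\longrightarrow \Ind_{P_v}^{G_v}|\,|^{s_0},\qquad \varphi\longmapsto \Phi^{(m,r)}_\varphi(\,\cdot\,,s_0),
\]
obtained by specializing the map of \S\ref{S:regularize} to $s=s_0$, where it is genuinely $G_v$-equivariant; note that $\Phi^{(m,r)}_{\varphi^0_v}$ is a nonzero spherical section since $\varphi^0_v(0)\neq 0$, which explains why the spherical vector lies in this submodule. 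The classification then furnishes the second maximal submodule $\sigma_v^-$, the equality $\sigma_v^+ + \sigma_v^- = \Ind_{P_v}^{G_v}|\,|^{s_0}$, the statement that $\sigma_v^+\cap\sigma_v^-$ is the unique irreducible submodule, and the irreducibility and inequivalence of the two quotients $\tau_v^\pm = \sigma_v^\pm/(\sigma_v^+\cap\sigma_v^-)$; in particular $\Ind_{P_v}^{G_v}|\,|^{s_0}/(\sigma_v^+\cap\sigma_v^-)\cong\tau_v^+\oplus\tau_v^-$, which recovers the last assertion away from $s_0=0$.

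For $s_0 = 0$ --- which, since $s_0 = \rho_{m,r}$, occurs exactly in the boundary case $m=2r+1$ --- I would argue separately that $\Ind_{P_v}^{G_v}|\,|^0$ is unitary, being the normalized induction of the trivial character of the Levi $\GL_m(F_v)$ of the Siegel parabolic, hence of finite length and completely reducible; since the classification says it has exactly two irreducible constituents, one gets $\Ind_{P_v}^{G_v}|\,|^0 = \sigma_v^+\oplus\sigma_v^-$ and $\sigma_v^+\cap\sigma_v^- = 0$. (Equivalently, the normalized standard intertwining operator at $s=0$ is a scalar multiple of an involution, and $\sigma_v^\pm$ are its $\pm1$-eigenspaces.) The step I expect to require the most care is matching the parameters $(m,r)$ with $\frac{m-1}{2}\le r\le m-1$ against the reducibility diagrams of \cite{Ban_Jantzen}, \cite{Loke}, \cite{Lee-Zhu}, confirming that $s_0$ is always a point of exactly this ``two maximal submodules with simple socle'' type --- keeping track of the parity of $m$ and of the archimedean versus non-archimedean dichotomy --- rather than an irreducibility point or a point carrying a longer composition series. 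The description of $\sigma_v^+$ and $\sigma_v^-$ as the two oscillator-representation submodules attached to $\Sp_{2r}$ and to the complementary space of dimension $2(m-r-1)$ (in the sense of Kudla--Rallis) is what makes this matching routine.
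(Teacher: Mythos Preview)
Your proposal is correct and takes essentially the same approach as the paper: the paper's proof consists solely of the citations ``See \cite[Proposition 3.3]{Ban_Jantzen} for the non-archimedean case, \cite[Theorem A.2.1]{Loke} for the real case and \cite[Theorem 1.3.2]{Lee-Zhu} for the complex case,'' and you invoke precisely the same three references. Your write-up is more expansive about what is to be extracted from them, but a couple of the extra claims you make along the way---that $\sigma_v^+\cap\sigma_v^-$ is the \emph{unique irreducible} submodule, and that $\sigma_v^-$ is the oscillator-type submodule attached to the complementary symplectic space---are neither asserted in the proposition nor needed for it (the paper later describes $\sigma_v^-$ simply as $\{\det\cdot f:f\in\sigma_v^+\}$), so you could safely trim those.
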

\begin{proof}
See \cite[Proposition 3.3]{Ban_Jantzen} for the non-archimedean case, \cite[Theorem A.2.1]{Loke} for the real case and \cite[Theorem 1.3.2]{Lee-Zhu} for the complex case.
\end{proof}

The important fact we use about the structure of the degenerate
principal series, which easily follows from the above proposition, is
the following. First of all, for $s_0=\rho_{m,r}$,
\[
    \sigma_v^++\sigma_v^-=\Ind_{P_v}^{G_v}|\,|^{s_0}.
\]
Here the sum is not necessarily the direct sum. Now for each
$f\in\Ind_{P_v}^{G_v}|\,|^s$, let $\det\cdot f\in\Ind_{P_v}^{G_v}|\,|^s$ be
the function defined by
 \[ (\det\cdot f)(g)=\det(g)f(g). \]
  Then
\[
    \sigma_v^-=\{\det\cdot f: f\in\sigma_v^+\}.
\]
\noindent  Also notice that if $W_v$ is the symplectic space with $\dim W_v=2r$, then the image of the map
\[
    \Sw((\V_v^+\otimes W_v)(F_v))^\circ\xrightarrow{\Phi^{(m,r)}}\Ind_{P_v}^{G_v}|\,|^{s_0}
\]
is $\sigma_v^+$, because $\sigma_v^+$ is generated by the spherical vector. In fact, not only the image of $\Sw((\V_v^+\otimes W_v)(F_v))^\circ$ but of $\Sw((\V_v^+\otimes W_v)(F_v))$ is $\sigma_v^+$. This is because of the Howe duality for the trivial representation, which is known to be true for any residual characteristic. But we only need this weaker version for our purposes.
\vskip 10pt

Next, we consider the global degenerate principal series $\Ind_{P(\A)}^{G(\A)}|\,|^s$. For this, let
\[
	\epsilon:\{\text{places $v$ of $F$}\}\rightarrow \{+, -\}
\]
be a map such that $\epsilon(v)=+$ for almost all $v$. Then we say that
\[
    \text{$\epsilon$ is }\begin{cases}\text{{\bf coherent} if $\prod_v\epsilon(v)=+$, \ie $\epsilon(v)=-$ for an even number of $v$'s}\\
    \text{{\bf incoherent} if $\prod_v\epsilon(v)=-$, \ie $\epsilon(v)=-$ for an odd number of $v$'s}.
    \end{cases}
\]
Notice that by the above proposition, we know that for each $\epsilon$ the induced space $\Ind_{P(\A)}^{G(\A)}|\,|^{s_0}$ at $s_0=\rho_{m,r}$ has a submodule and an irreducible quotient which are respectively isomorphic to
\[
	\sigma(\epsilon):={\bigotimes_v}'\sigma_v^{\epsilon(v)}\quad\text{and}\quad \tau(\epsilon):={\bigotimes_v}'\tau_v^{\epsilon(v)}.
\]
Thus one has an equivariant projection
\[  \Ind_{P(\A)}^{G(\A)}|\,|^{s_0} \longrightarrow \bigoplus_{\epsilon} \tau(\epsilon).  \]
We say that
\[
    \text{$\Phi\in\Ind_{P(\A)}^{G(\A)}|\,|^s$ is }
    \begin{cases}
    \text{{\bf coherent} at $s=\rho_{m,r}$ if the image of $\Phi(-,\rho_{m,r})$ lies in
    $\bigoplus_{\text{$\epsilon$ coherent}} \tau(\epsilon)$}\\
    \text{{\bf incoherent} at $s=\rho_{m,r}$ if the image of $\Phi(-,\rho_{m,r})$  lies in
    $\bigoplus_{\text{$\epsilon$ incoherent}} \tau(\epsilon)$}.
    \end{cases}
\]
\vskip 10pt

%%%%%%%%%%%%%%%%%%%%%%%%%%%%%%%%%%%%%%%%%%%%%%%%%%%%%%%%%%%%%%%%%%%%%%%%%%%
\subsection{The residues of Siegel Eisenstein series.}
%%%%%%%%%%%%%%%%%%%%%%%%%%%%%%%%%%%%%%%%%%%%%%%%%%%%%%%%%%%%%%%%%%%%%%%%%%%

Recall that the Siegel Eisenstein series $E^{(m,m)}(g,s;\Phi)$ has a pole of order at most 1 at $s\in\{\hat{0},\dots,\rho_m-1,\rho_m\}$ where $\rho_m=\frac{m-1}{2}$, or equivalently it has a pole at $s=\rho_{m,r}=\frac{2r-m+1}{2}=\rho_{m,r}$ for $\frac{m}{2}\leq r\leq m-1$. Also recall that at each $s=\rho_{m,r}$ we write Laurent expansion of  $E^{(m,m)}(g,s;\Phi)$ as
\[  E^{(m,m)}(g,s;\Phi)=\sum_{d=-1}^\infty A^{(m,r)}_d(\Phi)(g)(s-\rho_{m,r})^d. \]
Then we have the following important proposition, which determines the image of the leading term
$A^{(m,r)}_{-1}$.
\vskip 10pt

\begin{prop}\label{P:incoherent}
Let $\Phi\in\Ind_{P(\A)}^{G(\A)}|\,|^s$. Then
\begin{enumerate}[(a)]
\item $A^{(m,r)}_{-1}(\Phi)=0$ if $\Phi$ is incoherent at $s=\rho_{m,r}$ where $\frac{m-1}{2}<r\leq m-1$.
In particular,
\[  \text{Image of $A^{(m,r)}_{-1}$} \cong \bigoplus_{\text{$\epsilon$ coherent}} \tau(\epsilon). \]
\vskip 5pt

\item $A^{(2r+1,r)}_{0}(\Phi)=0$ if $\Phi$ is incoherent at $s=\rho_{2r+1,r}=0$. In particular,
\[   \text{Image of $A^{(2r+1,r)}_{0}$} \cong  \bigoplus_{\text{$\epsilon$ coherent}} \tau(\epsilon). \]
\end{enumerate}
\end{prop}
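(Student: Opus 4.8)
The plan is to prove both parts simultaneously by exploiting the functional equation of the Siegel Eisenstein series together with the local structure of the degenerate principal series. The key point is that for an incoherent section, the leading term of the Eisenstein series must vanish because it is forced to be simultaneously in the image of one intertwining operator and in the kernel of another, and these are "opposite" submodules at the place(s) where $\epsilon(v) = -$.

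First I would recall the functional equation $E^{(m,m)}(g,s;\Phi) = E^{(m,m)}(g,-s;M_m(s)\Phi)$ and analyze it at the point $s = \rho_{m,r}$ (respectively $s = 0$ in part (b)). The normalized intertwining operator $M_m(s) \colon \Ind_{P(\A)}^{G(\A)} |\,|^s \to \Ind_{P(\A)}^{G(\A)}|\,|^{-s}$ factors as a product of local operators $M_{m,v}(s)$, each of which, at the relevant value $s_0$, has image equal to exactly one of $\sigma_v^+$ or $\sigma_v^-$ — the spherical summand $\sigma_v^+$ when one normalizes correctly, and the behavior is governed by the local root number / the parity datum $\epsilon(v)$. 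The standard analysis (as in Kudla--Rallis \cite{Kudla-Rallis90}) shows that the leading Laurent coefficient $A_{-1}^{(m,r)}(\Phi)$ (resp. $A_0^{(2r+1,r)}(\Phi)$) is, up to a nonzero constant, the value at $s = \rho_{m,r}$ of the composite of $\Phi$ with a suitable (partially normalized) intertwining operator, and hence factors through the quotient $\tau(\epsilon) = \bigotimes_v' \tau_v^{\epsilon(v)}$ for the unique $\epsilon$ matching the "side" picked out by the intertwining operator at each place. The crucial combinatorial fact is that the surviving $\epsilon$ is always coherent: the global intertwining operator $M_m(s_0)$, being built from the global functional equation, picks out $\tau(\epsilon)$ with $\prod_v \epsilon(v) = +$. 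Equivalently, if $\Phi$ is incoherent, its image under this composite projection is zero, so $A_{-1}^{(m,r)}(\Phi) = 0$ (resp. $A_0^{(2r+1,r)}(\Phi) = 0$).

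Then, for the "in particular" statements, I would argue: since $A_{-1}^{(m,r)}$ (resp. $A_0^{(2r+1,r)}$) is $G(\A)$-intertwining and kills every incoherent section, it factors through $\bigoplus_{\epsilon \text{ coherent}} \tau(\epsilon)$, giving a surjection onto its image from that direct sum. Conversely, one must check that each coherent summand $\tau(\epsilon)$ genuinely contributes — i.e. the map is injective on $\bigoplus_{\epsilon \text{ coherent}} \tau(\epsilon)$. For the spherical component ($\epsilon \equiv +$) this follows from the explicit nonvanishing of the residue of the spherical Siegel Eisenstein series (the constant $\lambda_m$, resp. $c_r$, computed via Proposition \ref{P:constant_term} and appearing in Lemma \ref{L:first_B-1} and Proposition \ref{P:first_A0}, is nonzero). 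For a general coherent $\epsilon$, one reduces to the spherical case by conjugating by elements of $G(F_v)$ at the bad places — since $\sigma_v^-$ is the twist $\det \cdot \sigma_v^+$, and $\det$ corresponds to an automorphism identifying the two sides, a coherent collection of sign flips can be realized globally (the product being trivial in $F^\times \backslash \A^\times$ on the relevant $\mu_2$), so the residue for $\Phi$ of type $\epsilon$ is a nonzero multiple of the residue for a spherical-type section. This shows the image is exactly $\bigoplus_{\epsilon \text{ coherent}} \tau(\epsilon)$.

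I expect the main obstacle to be pinning down precisely why the intertwining operator selects a \emph{coherent} $\epsilon$ rather than just \emph{some} $\epsilon$ — this is the heart of the matter and requires a careful bookkeeping of the local normalizing factors (ratios of $\xi$-functions) against the global functional equation $\xi(1-s) = \xi(s)$, so that the "number of minus signs" is constrained to be even. A secondary technical point is the surjectivity/injectivity argument for the non-spherical coherent summands, where one has to make sure the global realization of the sign flips via $\det$ does not introduce spurious obstructions; here one uses that $\prod_v \epsilon(v) = +$ is exactly the condition for the corresponding quadratic character to be trivial, which is what makes the reduction to the spherical case legitimate.
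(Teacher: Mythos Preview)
Your proposal has a genuine gap in part (a), and the issue is precisely the step you flag as ``the heart of the matter.''

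The central claim---that the leading Laurent coefficient $A_{-1}^{(m,r)}(\Phi)$ is, up to a nonzero constant, the value of an intertwining operator applied to $\Phi$---is not correct as stated. The residue $A_{-1}^{(m,r)}(\Phi)$ is an automorphic form on $G(\A)$, not a section of an induced representation, and there is no formula of the type you describe expressing it as $M(\rho_{m,r})\Phi$. The functional equation $E(s;\Phi) = E(-s; M_m(s)\Phi)$ relates the Laurent expansion at $\rho_{m,r}$ to that at $-\rho_{m,r}$, but both points are poles, so comparing leading terms gives only $A_{-1}^{(m,r)}(\Phi)$ in terms of $A_{-1}^{(m,m-r-1)}(M_m(\rho_{m,r})\Phi)$, which is circular. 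Your hoped-for ``bookkeeping of local normalizing factors'' to force $\prod_v \epsilon(v) = +$ does not materialize: for $s_0 > 0$ the local operators $M_{m,v}(s_0)$ are generically isomorphisms between $I_{m,v}(s_0)$ and $I_{m,v}(-s_0)$, and do not themselves project onto a single $\tau_v^{\pm}$.

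The paper proceeds quite differently for part (a): it argues by induction on $m$. One shows that the constant term of $A_{-1}^{(m,r)}(\Phi)$ along the maximal parabolic $Q$ with Levi $\GL_1 \times \OO_{m-1,m-1}$ is a sum of two terms $A_{-1}^{(m-1,r)}(\Phi|_{G_{m-1}})$ and $A_{*}^{(m-1,r-1)}((M_{w_1}\Phi)|_{G_{m-1}})$, and then checks (this is the real content) that both restriction and the partial intertwining operator $M_{w_1}$ send incoherent sections to incoherent sections on $G_{m-1}$. The induction bottoms out at $m=1$, where the Eisenstein series is just $\Phi(g)+\Phi(\epsilon g)$ and incoherence gives cancellation by inspection.

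Your functional-equation idea \emph{does} essentially work for part (b), and the paper uses it there too: at $s=0$ one shows $M_m(0)^2 = 1$ and that $M_m(0)$ acts as $-1$ on incoherent sections (by computing $M_{m,v}(0)(f_{0,v}\cdot\det_v) = -f_{0,v}\cdot\det_v$, using $\det(w_2)=-1$ for $m$ odd). But even here the paper needs the inductive machinery from part (a) to handle a derivative term $A_{-1}^{(m-1,r)}((M_m'(0)\Phi)|_{G_{m-1}})$ that appears in the constant term. Your treatment of the ``in particular'' clauses, reducing to the spherical case by twisting with automorphic $\det$-characters, is correct and matches the paper.
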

\vskip 5pt

The rest of this subsection is devoted to the proof of this proposition. The proof is by induction on $m$. For this proof, since we work with $\OO(m,m)$ with various $m$, let us write
\[
    G_m = \OO(m,m)  \quad \text{and} \quad P_m =M_ m \cdot N_m =  \text{the Siegel parabolic subgroup of $G_m$}.
\]
When $m \geq 2$, recall that $Q= Q_m$ is the parabolic subgroup of $G_m$ whose Levi factor is
$L  = \GL_1 \times G_{m-1}$. We  shall also set
\[
    I_m(s) := \Ind_{P_m(\A)}^{G_m(\A)} |\det|^s  \quad \text{(normalized induction)}.
\]
We are interested in the behavior of the Siegel Eisenstein series $E^{(m,m)}(-,s; \Phi)$ associated to standard sections $\Phi_s$ of $I_m(s)$  for $s$ in the set
\[  \Sigma_m =\begin{cases}
 \{ \frac{1}{2}, \frac{3}{2},\dots, \rho_m \}  \text{  if $m$ is even;} \\
 \{  0,1,\dots, \rho_m \}  \text{  if $m$ is odd,} \end{cases}
\]
where $\rho_m = \frac{m-1}{2}$. A point in $\Sigma_m$ is of the form
\[
    \rho_{m,r} = \frac{2r - m+1}{2} \quad \text{for} \quad \frac{m}{2} \leq r \leq m-1.
\]
Also in this proof,  the only Eisenstein series we shall work with is the Siegel Eisenstein series, and so we simply write
\[
    E^m(s;\Phi):=E^{(m,m)}(-,s; \Phi),
\]
when there is no danger of confusion. The leading term of the Laurent expansion of $E^m(s; \Phi)$ at $s = \rho_{m,r}$ is denoted by
\[
    A_*^{(m,r)}(\Phi)= \begin{cases}
        A_{-1}^{(m,r)}(\Phi)  \text{  if $\rho_{m,r} > 0$;} \\
        A_0^{(m,r)}(\Phi) \text{  if $\rho_{m,r} = 0$.} \end{cases}
\]

Consider the base step of the induction $m =1$. In this case, $G_1 = \OO(1,1)$, $P_1 = \SO(1,1)$ and
$\Sigma_1 = \{ 0 \}$, so that we are in situation (b) of the proposition with $r = 0$ and $s= 0$. In this case,
\[
    I_1(0) =   \bigoplus_S  {\det}_S
\]
where $S$ ranges over finite subsets of the set of places of $F$ and
\[
    {\det}_S  = \left( \bigotimes_{v \in S} {\det}_v \right)  \otimes \left( \bigotimes_{v \notin S} 1_v\right).
\]
The incoherent submodules of $I_1(0)$ are spanned by those $\det_S$'s with $\# S$ odd. The Eisenstein series attached to $\Phi \in I_1(0)$ is given at $s = 0$ by
\[
    E^{(1,1)}(g, 0;\Phi) = \Phi(g) + \Phi(\epsilon g),
\]
where $\epsilon \in \OO(1,1)(F) \smallsetminus \SO(1,1)(F)$. It is thus clear that if $\Phi$ is incoherent, then $E(g,0,\Phi) =0$ because $\Phi(\epsilon g)=-\Phi(g)$. This proves the proposition when $m=1$, which is the base step of the induction.\\

Now consider the case $m > 1$.  We want to show that
$A_*^{(m,r)}(\Phi)$ is zero when $\Phi$ is incoherent. For this, it suffices to show that
\[
    A_*^{(m,r)}(\Phi)_Q = 0,
\]
where the LHS denotes the constant term of $A_*^{(m,r)}(\Phi)$ along $Q$. To compute $A_*^{(m,r)}(\Phi)$, we calculate the constant term of $E^m(s;\Phi)$ along $Q$ and take the relevant term in its Laurent expansion at $s = \rho_{m,r}$.
As an automorphic form of $G_{m-1}$, we have (cf. \cite[Appendix B]{GI}):
\[
    E^{m}_Q(s; \Phi)
    = E^{m-1}(s+\frac{1}{2}; \Phi|_{G_{m-1}}) + E^{m-1}(s-\frac{1}{2}; (M_{w_1}(s)\Phi )|_{G_{m-1}}),
\]
where $w_1$ is the Weyl group element given by
\[
    w_1 = \left( \begin{array}{ccc}
        0 & 0 & 1 \\
        0 & I_{2m-2} & 0 \\
        1 & 0 & 0 \end{array} \right).
\]
Moreover, $M_{w_1}(s)$ is the standard intertwining operator defined by
\[
     M_{w_1}(s)(\Phi)(g)  =  \int_{(U  \cap w_1 P w_1)\backslash U} \Phi_s(w_1 u g) \, du
\]
when $\Re(s)$ is sufficiently large. Thus, when $s =\rho_{m,r}$, we will be interested in the Laurent expansion of
\[
    E^{m-1}(s; \Phi|_{G_{m-1}})  \quad \text{at  $s = \rho_{m-1, r}$}
\]
and
\[
    E^{m-1}(s; (M_{w_1}(s +1/2)\Phi )|_{G_{m-1}}) \quad \text{at $s = \rho_{m-1, r-1}$}.
\]

We first note the following lemma:

\begin{lemma}
\quad
\begin{enumerate}[(i)]
\item Suppose that $\rho_{m-1,r} \in \Sigma_{m-1}$, i.e. $\rho_{m-1,r} \leq \rho_{m-1}$, then
\[   \text{$\Phi \in I_m(\rho_{m,r})$ incoherent $\Longrightarrow$ $\Phi|_{G_{m-1}} \in I_m(\rho_{m-1,r})$  incoherent.} \]

\item The intertwining operator $M_{w_1}(s)$ is holomorphic at all $s \in \Sigma_m$, except when $s = 0$ or $1/2$, in which case it has a pole of order $1$.

\item Denoting the leading term of the Laurent expansion of $M_{w_1}(s)$ at $s = \rho_{m,r} \in \Sigma_m$ by $M^{(m,r)}_{w_1}$, we have:
\[  \text{$\Phi \in I_m(\rho_{m,r})$ incoherent $\Longrightarrow$ $M^{(m,r)}_{w_1}\Phi|_{G_{m-1}} \in I_m(\rho_{m-1,r-1})$  incoherent,}\]
when $\rho_{m-1, r-1}\in \Sigma_m$, i.e. when $\rho_{m-1,r-1} \geq 0$.
\end{enumerate}
\end{lemma}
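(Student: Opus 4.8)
The plan is to reduce all three assertions to purely local statements, exploiting the fact that (in)coherence is read off from the cosocle. By the preceding structure proposition, the cosocle of the local degenerate principal series $\Ind_{P_m(F_v)}^{G_m(F_v)}|\,|^{s_0}$ (for $s_0\in\Sigma_m$) is $\tau_v^+\oplus\tau_v^-$, so the cosocle of $I_m(\rho_{m,r})$ is $\bigoplus_\epsilon\tau(\epsilon)$, and ``$\Phi$ incoherent'' means exactly that the image of $\Phi(-,\rho_{m,r})$ under the cosocle projection is supported on the incoherent $\epsilon$. The local dictionary to be used repeatedly is: $\sigma_v^+$ (with cosocle image $\tau_v^+$) is the constituent generated by the spherical vector, equivalently the image of the Weil-representation map $\Phi^{(m,r)}$ (\S\ref{S:regularize}); $\sigma_v^+\cap\sigma_v^-$ is the radical; and $\det\cdot$ interchanges $\sigma_v^\pm$ and $\tau_v^\pm$.

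For \textbf{(ii)} I would factor $M_{w_1}(s)=\bigotimes_v M_{w_1,v}(s)$ and read off from the $r=m$ case of Proposition~\ref{P:constant_term} that $M_{w_1,v}(s)$ acts on the normalized spherical section by the local Euler factor of $H^{(m)}(s)=\xi(2s)/\xi(2s+m-1)$. I would then invoke the fact, which is part of Kudla and Rallis's analysis of the degenerate intertwining operators for $\OO(m,m)$ (\cite{Kudla-Rallis90, Kudla-Rallis94}), that the \emph{normalized} local operators $\tfrac{\zeta_v(2s+m-1)}{\zeta_v(2s)}M_{w_1,v}(s)$ are holomorphic and non-vanishing in a neighbourhood of $\Sigma_m$, so that the analytic behaviour of $M_{w_1}(s)$ on $\Sigma_m$ is exactly that of $H^{(m)}(s)$. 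It then remains to check directly, using that $\xi$ has simple poles only at $0$ and $1$ and is finite and non-zero on the real axis to the right of $1$, that on $\Sigma_m$ the numerator $\xi(2s)$ contributes a simple pole precisely at whichever of $s=0,\tfrac12$ lies in $\Sigma_m$, while $\xi(2s+m-1)$ is finite and non-zero throughout.

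For \textbf{(i)} and \textbf{(iii)} I would observe that $\Phi\mapsto\Phi|_{G_{m-1}}$ and $\Phi\mapsto M_{w_1}(s)\Phi|_{G_{m-1}}$ — and hence, after passing to the leading Laurent coefficient, $M^{(m,r)}_{w_1}$ followed by restriction — are $G_{m-1}(\A)$-equivariant maps $I_m(\rho_{m,r})\to I_{m-1}(\rho_{m-1,r})$, resp.\ $I_m(\rho_{m,r})\to I_{m-1}(\rho_{m-1,r-1})$, which factor as tensor products over $v$ of local maps $R_v$, resp.\ $R_v\circ M_{w_1,v}(s)$. Each such local map commutes with $\det_v\cdot$ up to a non-zero scalar: for $R_v$ because $\det$ restricted to the Levi block $G_{m-1}$ of $Q$ is again $\det$; for $M_{w_1,v}$ because $M_{w_1,v}(s)$ sends $\det\cdot\Phi$ to $\det(w_1)$ times $\det\cdot(M_{w_1,v}(s)\Phi)$, with $\det(w_1)=-1$. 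Moreover it carries the spherical vector to a non-zero multiple of the spherical vector, or—at $s=0,\tfrac12$, where one takes the residue—to a non-zero multiple of it by the same Gindikin--Karpelevich computation. Consequently it maps $\sigma_{m,v}^+$ into $\sigma_{m-1,v}^+$: for $R_v$ this follows from the compatibility $R_v\circ\Phi^{(m,r)}=c\,\Phi^{(m-1,r)}\circ(\text{restriction of Schwartz functions})$ together with the realization of $\sigma^+$ as a Weil-representation image, and for $M_{w_1,v}$ from its $G_m(F_v)$-equivariance (so the spherical-generated submodule goes to the spherical-generated submodule) followed by the same restriction argument. By the scalar statement it then maps $\sigma_{m,v}^-=\det_v\cdot\,\sigma_{m,v}^+$ into $\sigma_{m-1,v}^-$, preserves the radical, and descends to a sign-preserving map $\tau^\pm_{m,v}\to\tau^\pm_{m-1,v}$. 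Tensoring over $v$, the global operation carries each $\sigma_m(\epsilon)$ into $\sigma_{m-1}(\epsilon)$, so it intertwines the cosocle projections through the component maps $\tau_m(\epsilon)\to\tau_{m-1}(\epsilon)$; in particular it preserves the property of being supported on incoherent $\epsilon$, which is (i) and (iii). The hypotheses $\rho_{m-1,r}\in\Sigma_{m-1}$, resp.\ $\rho_{m-1,r-1}\ge 0$, are used precisely so that the target degenerate principal series sits at a reducibility point and the $\tau^\pm_{m-1,v}$ are defined.

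The step I expect to be the main obstacle is \textbf{(ii)}: the exact pole order of $M_{w_1}(s)$ on $\Sigma_m$ rests on the holomorphy and non-vanishing of the normalized local intertwining operators at these reducibility points, which is a non-trivial input about degenerate principal series of $\OO(m,m)$ that must be borrowed from (or re-established along the lines of) Kudla--Rallis. A secondary point needing care in (i) and (iii) is checking that restriction and $M_{w_1,v}$ really do respect the local filtration $\sigma_v^+\subset\Ind_{P_m(F_v)}^{G_m(F_v)}|\,|^{s_0}$ — most cleanly via the interpretation of $\sigma_v^+$ as the image of $\Phi^{(m,r)}$ and the compatibility of these maps along $\OO(m-1,m-1)\hookrightarrow\OO(m,m)$ — and keeping track of the $\pm\tfrac12$ shifts in the spectral parameter so that the leading terms land in the correct induced representations.
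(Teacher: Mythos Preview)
Your approach to (i) and (iii) is essentially the paper's, though you phrase things via the Weil-representation realization of $\sigma_v^+$ where the paper simply uses ``submodule generated by the spherical vector''; both work. The substantive difference is (ii).

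For (ii) you plan to import the holomorphy and non-vanishing of the normalized local operators $\frac{\zeta_v(2s+m-1)}{\zeta_v(2s)}M_{w_1,v}(s)$ at every point of $\Sigma_m$ from Kudla--Rallis, and you correctly identify this as the main obstacle. The paper sidesteps this black box entirely with an elementary observation you are missing: for $s_0\in\Sigma_m$, each local module $I_{m,v}(s_0)$ is generated as a $G_{m,v}$-module by the spherical vector $f_{0,v}$ together with $f_{0,v}\cdot\det_v$ (this is immediate from the structure proposition). Hence the global module is generated by the sections $f_S=(\otimes_{v\in S}f_{0,v}\cdot\det_v)\otimes(\otimes_{v\notin S}f_{0,v})$, and the analytic behaviour of $M_{w_1}(s)$ on all of $I_m(s)$ is determined by its behaviour on these generators. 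The computation $M_{w_1,v}(s)(f_{0,v}\cdot\det_v)=-\det_v\cdot M_{w_1,v}(s)(f_{0,v})$ (which you essentially have in your (iii) argument) then gives $M_{w_1}(s)(f_S)=(-1)^{\#S}\det_S\cdot M_{w_1}(s)(f_0)$, so every generator has exactly the same poles as the spherical one, namely those of $\xi(2s)/\xi(2s+m-1)$. This is simpler and self-contained: no local normalization statements are needed, only the global Gindikin--Karpelevich value and the $\det$-equivariance you already use elsewhere.
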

\begin{proof}
\quad
\begin{enumerate}[(i)]
\item This is obvious.
\vskip 5pt

\item
Suppose that $s_0 \in \Sigma_m$. For each place $v$ of $F$, the local representation $I_{m,v}(s_0)$ is generated as a $G_{m,v}$-module by the spherical vector $f_{0,v}$ and  its twist $f_{0,v} \cdot \det_v$. Hence, the global representation is generated as a $G_m(\A)$-module by
\[
    f_S =  \left( \otimes_{v \in S} f_{0,v} \cdot {\det}_v \right)  \otimes \left( \otimes_{v \notin S} f_{0,v} \right)
\]
as $S$ ranges over all finite sets of places of $F$. Hence the analytic behavior of $M_{w_1}(s)$ at $s = s_0$ is determined by the analytic behavior of the collection of $M_{w_1}(s)(f_S)$ at $s = s_0$. But for $\Re(s) \gg 0$,
\begin{align*}
    M_{w_1,v}(s)(f_{0,v} \cdot {\det}_v)(g)
    &=  \int_{(U \cap w_1Pw_1) \backslash U} f_{0,v}(w_1ug) \cdot{\det}_v (w_1ug) \, du\\
    &= -{\det}_v(g) \cdot M_{w_1,v}(s)(f_{0,v})(g).
\end{align*}
By meromorphic continuation, we thus have
\[
    M_{w_1,v}(s)(f_{0,v} \cdot {\det}_v) = -{\det}_v \cdot  M_{w_1,v}(s)(f_{0,v}).
\]
Globally, we deduce that
\[
    M_{w_1}(s)(f_S) = (-1)^{\#S} \cdot {\det}_S \cdot M_{w_1}(s)(f_0).
\]
Hence, for any $S$, the analytic behavior of $M_{w_1}(s)(f_S) $ at $s = s_0$ is the same as that of $M_{w_1}(s)(f_0)$ at $s = s_0$. But by Proposition \ref{P:constant_term}, we see that
\[
    M_{w_1}(s)(f_0)(1) = \frac{\zeta(2s)}{\zeta(2s+m-1)},
\]
which is holomorphic at all $s \in \Sigma_m$ except for $s = 0$ or $1/2$ where it has a pole of order
$1$.
\vskip 5pt

\item
For each place $v$ of $F$,  $M^{(m,r)}_{w_1,v}(f_{0,v})|_{G_{m-1}}$ is clearly, up to scaling, the spherical vector in $I_{m-1}(\rho_{m-1,r-1})$. Moreover, as we showed in (ii), $M^{(m,r)}_{w_1,v}(f_{0,v} \cdot {\det}_v)|_{G_{m-1}}$ is the twist by $\det_v$ of the spherical vector in $I_{m-1}(\rho_{m-1, r-1})$ (again up to scaling). Since $\sigma_{m,r,v}^+$ is generated by $f_{0,v}$ and $\sigma_{m,r,v}^-$ by $f_{0,v} \cdot \det_v$ as $G_{m,v}$-modules, we see that $M^{(m,r)}_{w_1,v}$ carries $\sigma_{m,r,v}^{+}$ (resp. $\sigma_{m,r,v}^{-}$) to $\sigma_{m-1,r-1,v}^{+}$ (resp. $\sigma_{m,r,v}^{-}$), and thus carries $\sigma_{m,r,v}^+ \cap \sigma_{m,r,v}^-$ to $\sigma_{m-1,r-1,v}^+ \cap \sigma_{m-1,r-1,v}^-$. This implies that, globally, one has a commutative diagram
\[  \begin{CD}
I_m(\rho_{m,r})  @>M^{(m,r)}_{w_1}>>  I_{m-1}(\rho_{m-1,r-1}) \\
@VVV   @VVV \\
\bigoplus_{\text{$\epsilon$ coherent}} \tau_m(\epsilon) @>>>
\bigoplus_{\text{$\epsilon$ coherent}} \tau_{m-1}(\epsilon),
\end{CD}
\]
where to be more precise the upper vertical arrow is given by $f_v\mapsto M_{w_1}^{(m,r)}(f_v)|_{G_{m-1}}$. This shows the desired result.
\end{enumerate}
\end{proof}

Going back to the proof of the proposition for $m > 1$, we first consider situation (a) so that $\rho_{m,r} > 0$. In this case, we have
\[  A_{-1}^{(m,r)}(\Phi)_Q = A_{-1}^{(m-1,r)}(\Phi|_{G_{m-1}}) +
A_*^{(m-1, r-1)}((M^{(m,r)}_{w_1}\Phi)|_{G_{m-1}}). \]
By the lemma and induction hypothesis, we know that the RHS of the above equation is zero. This establishes the induction step when $\rho_{m,r} >0$.\\

It remains to treat the case when $\rho_{m,r} = 0$, so that we are in situation (b) of the proposition.
In this case, $m = 2r+1$ is odd and we have
\[
    A^{(m,r)} _0(\Phi)_Q = A^{(m-1,r)}_{0}(\Phi|_{G_{m-1}})+  [E^{m-1}(s-\frac{1}{2}; (M_{w_1}(s)\Phi)|_{G_{m-1}})]_{s=0}.
\]
Examining the second term on the RHS above, we deduce by the functional equation for Eisenstein series that
\[
    E^{m-1}(s-\frac{1}{2}; (M_{w_1}(s)\Phi)|_{G_{m-1}}) = E^{m-1}(\frac{1}{2}-s; M_{m-1}(s-\frac{1}{2}) ((M_{w_1}(s)\Phi)|_{G_{m-1}})),
\]
where
\[
    M_{m-1}(s) : I_{P_{m-1}}(s) \longrightarrow I_{P_{m-1}}(-s)
\]
is the standard intertwining operator defined for $\Re(s) \gg 0$ by
\[  M_{m-1}(s)\Phi(g)= \int_{N_{m-1}} \Phi(w_2 ng) \, dn, \]
with
\[   w_2 = \begin{pmatrix}
 &&&&1\\
 &\mbox{\LARGE 0}&&1&\\
 &&\ddots&&\\
 &1&&\mbox{\LARGE 0}&\\
 1&&&&
 \end{pmatrix}.
\]
Now it is not difficult to see that
\[  M_{m-1}(s-\frac{1}{2}) ((M_{w_1}(s)\Phi)|_{G_{m-1}}) = (M_m(s) \Phi)|_{G_{m-1}}. \]
Hence,
\[  E^{m-1}(s-\frac{1}{2}; (M_{w_1}(s)\Phi)|_{G_{m-1}}) = E^{m-1}(\frac{1}{2} -s;   (M_m(s) \Phi)|_{G_{m-1}}).\]
We are interested in
\begin{equation} \label{E:bound}
  E^{m-1}(\frac{1}{2}+s; \Phi|_{G_{m-1}}) + E^{m-1}(\frac{1}{2} -s;   (M_m(s) \Phi)|_{G_{m-1}})
\end{equation}
at $s = 0$.

\begin{lemma}
\quad
\begin{enumerate}[(i)]
\item The intertwining operator $M_m(s): I_{P_m}(s) \longrightarrow I_{P_m}(-s)$ is holomorphic at $s=0$.
It acts as $+1$, \ie the identity, on the coherent submodules of $I_{P_m}(0)$ and as $-1$ on the incoherent submodules.

\item If $M_m'(0)$ denotes the derivative of $M_m(s)$ at $s=0$, then $M_m'(0)$ commutes with $M_m(0)$. In particular, $M_m'(0)$ preserves the incoherent submodule of $I_{P_m}(0)$.
\end{enumerate}
\end{lemma}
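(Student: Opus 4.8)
The plan is to deduce both parts from the single relation $M_m(-s)\circ M_m(s) = \eta(s)\cdot\mathrm{id}$ on $I_{P_m}(s)$, combined with an analysis of $M_m(s)$ on module generators entirely parallel to the treatment of $M_{w_1}$ in the previous lemma.

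First I would record that $I_{P_m}(s)$ is generated as a $G_m(\A)$-module by the vectors $f_S^{(s)} = \bigl(\bigotimes_{v\in S} f_{0,v}\cdot\det_v\bigr)\otimes\bigl(\bigotimes_{v\notin S} f_{0,v}\bigr)$, $S$ finite, and that $\det(w_2 n g) = (-1)^m\det(g)$ for $n\in N_m$ gives $M_m(s)(f_S^{(s)}) = (-1)^{m\#S}\det_S\cdot M_m(s)(f_0^{(s)})$, exactly as before. Since $M_m(s)(f_0^{(s)}) = \beta_m(s)\,f_0^{(-s)}$ with $\beta_m(s) = \prod_{i=0}^{m-2}\xi(2s-i)/\xi(2s+m-1-2i)$ (this $\beta_m$ being the one in the functional equation of the Siegel Eisenstein series), the analytic behaviour of $M_m(s)$ near $s=0$ is governed by that of $\beta_m(s)$. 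Because $m=2r+1$ is odd, the only pole of the numerator (the factor $\xi(2s)$ at $i=0$) is cancelled by the only pole of the denominator (the factor $\xi(2s)$ coming from $i=(m-1)/2$), so $\beta_m(s)$ is holomorphic at $s=0$; an elementary cancellation using $\xi(-k)=\xi(1+k)$ then gives $\beta_m(0)=1$. Hence $M_m(s)$ is holomorphic at $s=0$, and $M_m(0)$ acts on the constituent generated by $\det_S\cdot f_0$ by the scalar $(-1)^{m\#S}\beta_m(0) = (-1)^{\#S}$, i.e. by $+1$ when the associated $\epsilon_S$ is coherent and $-1$ when it is incoherent. As every constituent of $I_{P_m}(0)=\bigoplus_\epsilon\tau(\epsilon)$ arises this way, part (i) follows; in particular $M_m(0)^2=\mathrm{id}$ and the incoherent submodule is precisely the $(-1)$-eigenspace of $M_m(0)$.

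For part (ii) I would set $\eta(s):=\beta_m(s)\beta_m(-s)$ and argue that $M_m(-s)\circ M_m(s)=\eta(s)\cdot\mathrm{id}$ as meromorphic families: the composition is a $G_m(\A)$-endomorphism of $I_{P_m}(s)$, hence a scalar for $s$ outside the finite set of reducibility points (where the degenerate principal series is irreducible), and that scalar equals $\eta(s)$ by evaluation on the spherical line $f_0^{(s)}$; the identity then holds everywhere by meromorphic continuation. The crucial observation is that $\eta$ is an even function of $s$, so $\eta'(0)=0$, while $\eta(0)=\beta_m(0)^2=1$. Expanding $M_m(\pm s)=M_m(0)\pm s\,M_m'(0)+O(s^2)$ and comparing the first two coefficients in $M_m(-s)M_m(s)=\eta(s)\,\mathrm{id}$ yields $M_m(0)^2=\mathrm{id}$ and $M_m(0)M_m'(0)-M_m'(0)M_m(0)=\eta'(0)\,\mathrm{id}=0$, i.e. $M_m'(0)$ commutes with $M_m(0)$. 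Since $I_{P_m}(0)$ is the direct sum of the two eigenspaces of the involution $M_m(0)$ and, by part (i), the incoherent submodule is its $(-1)$-eigenspace, any operator commuting with $M_m(0)$ preserves it; in particular $M_m'(0)$ does.

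I expect the only genuinely delicate point to be the zeta-factor bookkeeping behind the holomorphy of $\beta_m(s)$ at $s=0$ and the evaluation $\beta_m(0)=1$ — this is exactly where the oddness of $m=2r+1$ enters — together with the routine-but-necessary justification that $M_m(-s)M_m(s)$ equals the scalar $\eta(s)$. Once $\eta$ is identified, its evenness immediately forces the commutation asserted in part (ii).
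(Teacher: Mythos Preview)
Your argument is correct and runs parallel to the paper's, but with two differences in execution worth noting. For part~(i), the paper establishes holomorphy of $M_m(s)$ at $s=0$ by invoking the exact functional equation $M_m(-s)\circ M_m(s)=1$ directly, and then shows $M_m(0)f_0=f_0$ \emph{indirectly}: from the Eisenstein functional equation $E^m(-s;M_m(s)f_0)=E^m(s;f_0)$ one gets $A_0^{(m,r)}(M_m(0)f_0)=A_0^{(m,r)}(f_0)$, and since $A_0^{(m,r)}(f_0)\ne 0$ (this was established earlier, in the proof of the spherical first term identity on the boundary), the eigenvalue must be $+1$. You instead compute $\beta_m(0)=1$ by an explicit zeta cancellation, which is more self-contained and avoids appealing to that earlier nonvanishing result. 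For part~(ii), the paper simply differentiates the exact relation $M_m(-s)\circ M_m(s)=1$, whereas you allow the a~priori weaker form $M_m(-s)\circ M_m(s)=\eta(s)\cdot\mathrm{id}$ and observe that the evenness of $\eta$ forces $\eta'(0)=0$, which is all that is needed. Your version is slightly more robust against normalization issues in the Haar measure on $N_m$; the paper's cleaner identity holds because of the specific (self-dual) measure choices made earlier. Both routes are valid and arrive at the same conclusion by essentially the same mechanism.
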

\begin{proof}
\quad
\begin{enumerate}[(i)]
\item We have the functional equation
\[  M_m(-s) \circ M_m(s)  = 1, \]
which implies that $M_m(s)$ is holomorphic at $s = 0$ and is nonzero as an operator.
Moreover, $M_m(0)^2 = 1$, so that $M_m(0)$ acts as $\pm 1$ on any irreducible submodule of
$I_{P_m}(0)$.

Now it is not difficult to see that if $f_0$ is the spherical vector in $I_m(0)$, then
$M_m(0) f_0 = f_0$. One way of seeing this is to observe that the functional equation for Eisenstein series
\[  E^m(-s; M_m(s)f_0)   = E^m(s; f_0) \]
implies that
\[  A^{(m,r)}_0(M_m(0)f_0)  = A^{(m,r)}_0(f_0). \]
As we observed in the proof of the spherical first term identity in \S \ref{S:identity} (see Proposition \ref{P:first_A0}), $A^{(m,r)}_0(f_0) \ne 0$, so that we must have  $M_m(0) f_0 = f_0$.
\vskip 10pt

On the other hand, for a place $v$ of $F$
\begin{align*}
  M_{m,v}(s)(f_{0,v}\cdot {\det}_v)(g) &= \int_{N_m} f_{0,v}(w_2 ng) \cdot {\det}_v(w_2ng) \, dn\\
    &= - M_{m,v}(s)(f_{0,v})(g) \cdot {\det}_v(g)\\
    &=-f_{0,v}(g) \cdot {\det}_v(g),
\end{align*}
since $\det(w_2) = -1$ as $m$ is odd. Thus, globally, we have
\[  M_m(0) (f_0 \cdot {\det}_S)  = (-1)^{\# S} \cdot f_0 \cdot {\det}_S \]
 for any finite subset of places of $F$. This shows that $M_m(0)$ acts as $+1$ on the coherent submodule of $I_m(0)$ which is generated by the $f_0 \cdot \det_S$'s for $\# S$ even and it acts as $-1$
 on the incoherent submodule which is generated by the $f_0 \cdot \det_S$'s for $\# S$ odd.
\vskip 5pt

\item Differentiating
\[  M_m(-s) \circ M_m(s)  = 1\]
with respect to $s$ and setting $s=0$ gives
\[  M_m(0) \circ M'_m(0) - M_m'(0) \circ M_m(0) = 0, \]
as desired.
\end{enumerate}
\end{proof}
\vskip 5pt

From the lemma, observe that both terms in (\ref{E:bound}) could have a pole of order $1$ at $s = 0$ but their residues there cancel (as they should). More relevantly, when $\Phi$ is incoherent, the constant term in the Laurent expansion at $s = 0$ of (\ref{E:bound}) is:
\[  A_0^{(m-1,r)}(\Phi|_{G_{m-1}})  - A_0^{(m-1,r)}(\Phi|_{G_{m-1}}) + A_{-1}^{(m-1,r)}((M_m'(0) \Phi)|_{G_{m-1}}). \]
The first two terms cancel whereas the third term vanishes by induction hypothesis since
$(M_m'(0) \Phi)|_{G_{m-1}}$ is incoherent.
Thus we conclude that $A^{(m,r)} _0(\Phi)_Q = 0$ when $\rho_{m,r} = 0$ as well.\\

We have thus shown that the leading term map $A_*^{m,r}$ vanishes on all incoherent $\Phi$'s. Hence
\[  \text{Image of $A_*^{m,r}$} \subset  \bigoplus_{\text{$\epsilon$ coherent}} \tau(\epsilon). \]
On the other hand, we know that  $A_*^{m,r}$ is nonzero on the spherical section, and thus by twisting by automorphic determinant characters, we see that equality must hold above.
This completes the proof of Proposition \ref{P:incoherent}\\

%%%%%%%%%%%%%%%%%%%%%%%%%%%%%%%%%%%%%%%%%%%%%%%%%%%%%%%%%%%%%

\subsection{Non-vanishing results}

%%%%%%%%%%%%%%%%%%%%%%%%%%%%%%%%%%%%%%%%%%%%%%%%%%%%%%%%%%%%%
In what follows, we will show our non-vanishing results for the theta lift. First let us prove a couple of lemmas which will be quite crucial for our proof.
\vskip 5pt

\begin{lemma}\label{L:twist}
Let $\pi_v$ be an irreducible admissible representation of
$\OO(V,F_v)$. (Here $v$ can be either archimedean or
non-archimedean.) Also let $\Phi_v(-,s)\in\Ind_{P_v}^{G_v}|\,|^s$ be
a ($K$-finite) standard section, and $\langle\pi_v(g_v)f_1,f_2\rangle$
a ($K$-finite) matrix coefficient of $\pi_v$.
\begin{enumerate}[(a)]
\item Assume $\pi_v\cong\pi_v\otimes\det$. Then there exist a ($K$-finite) matrix coefficient
$\langle\pi_v(g_v)f'_1,f'_2\rangle$ of $\pi_v$ so that
\[
    Z_v(s, f_1, f_2, \Phi_v, \pi_v)=Z_v(s, f'_1, f'_2, \det\cdot\Phi_v, \pi_v).
\]

\item Assume $\pi_v\ncong\pi_v\otimes\det$. Then
\[
    Z_v(s, f_1, f_2, \Phi_v, \pi_v)=Z_v(s, f_1, f_2,\det\cdot\Phi_v, \pi_v\otimes\det).
\]
\end{enumerate}
\end{lemma}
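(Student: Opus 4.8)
The statement concerns the local doubling zeta integral
\[
    Z_v(s, f_1, f_2, \Phi_v, \pi_v) = \int_{\OO(V,F_v)} \langle \pi_v(g_v) f_1, f_2 \rangle \, \Phi_v(i(g_v, 1), s) \, dg_v,
\]
and the claim compares it under the two ``twisting'' operations: replacing $\Phi_v$ by $\det \cdot \Phi_v$, and replacing $\pi_v$ by $\pi_v \otimes \det$. The plan is to exploit the fact that the embedding $i \colon \OO(V) \times \OO(-V) \hookrightarrow \OO(\V)$ of the doubling method satisfies $\det(i(g_1, g_2)) = \det(g_1) \det(g_2)$, and that for the first variable inserted as $i(g_v, 1)$ one simply gets $\det(i(g_v,1)) = \det(g_v)$ (up to the convention for $\det$ on $\OO(-V)$, which one checks is trivial on the identity). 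Thus
\[
    (\det \cdot \Phi_v)(i(g_v, 1), s) = \det(g_v) \cdot \Phi_v(i(g_v, 1), s).
\]

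For part (b): assuming $\pi_v \ncong \pi_v \otimes \det$, I would simply observe that a matrix coefficient of $\pi_v \otimes \det$ is exactly $\det(g_v) \cdot \langle \pi_v(g_v) f_1, f_2 \rangle$, since twisting by the (order-two, hence self-inverse) character $\det$ multiplies every matrix coefficient by $\det(g_v)$ and the underlying vector space is unchanged. Inserting this into the defining integral for $Z_v(s, f_1, f_2, \det \cdot \Phi_v, \pi_v \otimes \det)$ produces $\det(g_v)^2 = 1$ times the original integrand, giving equality with $Z_v(s, f_1, f_2, \Phi_v, \pi_v)$. This part is essentially bookkeeping; the only thing to be careful about is that the matrix coefficients are taken with the same vectors $f_1, f_2$ on both sides, which is legitimate precisely because $\pi_v$ and $\pi_v \otimes \det$ have the same space.

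For part (a): now $\pi_v \cong \pi_v \otimes \det$, so fix an isomorphism $T \colon \pi_v \to \pi_v \otimes \det$, i.e.\ $T \circ \pi_v(g_v) = \det(g_v) \cdot \pi_v(g_v) \circ T$ as operators on the common space. Then for any $f_1, f_2$ one has
\[
    \det(g_v) \cdot \langle \pi_v(g_v) f_1, f_2 \rangle = \langle \pi_v(g_v) (Tf_1), (T^{*})^{-1} f_2 \rangle
\]
for a suitable adjoint-type operator, so that setting $f_1' = T f_1$ and $f_2' = (T^{*})^{-1} f_2$ (both still $K$-finite, since $T$ is $G_v$-equivariant up to the smooth character $\det$ and hence preserves $K$-finiteness) one gets $\det(g_v) \cdot \langle \pi_v(g_v) f_1, f_2 \rangle = \langle \pi_v(g_v) f_1', f_2' \rangle$. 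Substituting this together with the identity $(\det \cdot \Phi_v)(i(g_v,1),s) = \det(g_v) \Phi_v(i(g_v,1),s)$ into the zeta integral for $Z_v(s, f_1', f_2', \det \cdot \Phi_v, \pi_v)$ yields $\det(g_v)^2 \langle \pi_v(g_v) f_1, f_2\rangle \Phi_v(i(g_v,1),s)$ in the integrand, i.e.\ exactly the integrand of $Z_v(s, f_1, f_2, \Phi_v, \pi_v)$. The main (minor) obstacle is getting the bilinear-form/pairing conventions right: the zeta integral uses a fixed invariant pairing between $\pi_v$ and its contragredient (or between $\pi_v$ and $\overline{\pi_v}$ in the doubling setup), and one must check that the self-intertwiner $T$ for the $\det$-twist can be transported to the second vector without introducing extra factors; a standard way around this is to note that $\pi_v \cong \pi_v \otimes \det$ forces the same for the contragredient, and then package everything so that $f_1'$ and $f_2'$ are defined using compatible intertwiners on $\pi_v$ and its dual. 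Beyond this, the argument is purely formal manipulation of the integral, with no analytic subtlety since the identities hold termwise in the integrand for $\Re(s) \gg 0$ and then propagate by meromorphic continuation.
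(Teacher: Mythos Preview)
Your proof is correct and follows essentially the same route as the paper: insert $\det(g_v)^2=1$ into the integrand, use $\det(i(g_v,1))=\det(g_v)$, and then recognize $\det(g_v)\langle\pi_v(g_v)f_1,f_2\rangle$ either as a matrix coefficient of $\pi_v\otimes\det$ (part~(b)) or, when $\pi_v\cong\pi_v\otimes\det$, as a matrix coefficient of $\pi_v$ itself (part~(a)). The paper dispatches part~(a) in one line by simply asserting that this function is a matrix coefficient of $\pi_v$, whereas you spell out the intertwiner $T$ explicitly; this extra detail is fine but not needed.
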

\begin{proof}
Note that
\begin{align*}
    Z_v(s, f_1, f_2,\Phi_v, \pi_v)
    &=\int_{\OO(V,F_v)}\Phi_v(i(g_v,1))\langle\pi_v(g_v)f_1,f_2\rangle\,dg_v\\
    &=\int_{\OO(V,F_v)}\det(i(g_v,1))\Phi_v(i(g_v,1))\langle\det(g_v)\pi_v(g_v)f_1,f_2\rangle\,dg_v.
\end{align*}
So this proves the lemma if $\pi_v\ncong\pi_v\otimes\det$. Now if
$\pi_v\cong\pi_v\otimes\det$, then
$\langle\det(g_v)\pi_v(g_v)f_1,f_2\rangle$ is a matrix coefficient of
$\pi_v$ and so written as
$\langle\det(g_v)\pi_v(g_v)f_1,f_2\rangle=\langle\pi_v(g_v)f'_1,f'_2\rangle$
for some $f'_1$ and $f'_2$. Hence the lemma follows.
\end{proof}

\begin{lemma}\label{L:spherical_function}
Let $\Phi(-,s)=\otimes'\Phi_v(-,s)\in\Ind_{P}^{G}|\,|^s$ be a
standard section such that for every place $v$,
$\Phi_v(-,\rho_{m,r})\in\sigma_v^+$. Then there is a
Schwartz function $\varphi\in\Sw((\V^+\otimes W)(\A))^\circ$ of the
form $\varphi=\sum_i\sigma(\phi_{1,i}\otimes\overline{\phi_{2,i}})$
such that $\Phi=\Phi^{(m,r)}_{\varphi}$.
\end{lemma}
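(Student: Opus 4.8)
The plan is to invert the ``Siegel--Weil section map'' $\varphi\mapsto\Phi^{(m,r)}_{\varphi}$ at the single point $s=\rho_{m,r}$, using the structure of the local degenerate principal series to see that $\Phi(\,\cdot\,,\rho_{m,r})$ actually lies in the image of $\Sw((\V^{+}\otimes W)(\A))^{\circ}$, and then to promote the resulting equality at $s=\rho_{m,r}$ to an equality of standard sections for all $s$.

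First I would note that evaluation at $s=\rho_{m,r}$ turns $\varphi\mapsto\Phi^{(m,r)}_{\varphi}(\,\cdot\,,\rho_{m,r})$ into a $G(\A)$-intertwining map $\Sw((\V^{+}\otimes W)(\A))\to\Ind_{P(\A)}^{G(\A)}|\,|^{\rho_{m,r}}$ which, by (\ref{E:E^(m,m)}), carries $\varphi^{0}$ to $|D|^{-rm/2}\Phi^{0}(\,\cdot\,,\rho_{m,r})$. Hence the image of $\Sw((\V^{+}\otimes W)(\A))^{\circ}$, the $G(\A)$-span of $\varphi^{0}$, is exactly the $G(\A)$-span of the spherical section $\Phi^{0}(\,\cdot\,,\rho_{m,r})$. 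Since $\Ind_{P(\A)}^{G(\A)}|\,|^{\rho_{m,r}}={\bigotimes_v}'\Ind_{P_v}^{G_v}|\,|^{\rho_{m,r}}$ and, by the structure proposition, each $\sigma_v^{+}$ is precisely the $G_v$-submodule generated by the local spherical vector, that $G(\A)$-span equals ${\bigotimes_v}'\sigma_v^{+}$. The hypothesis $\Phi_v(\,\cdot\,,\rho_{m,r})\in\sigma_v^{+}$ for every $v$ therefore gives $\Phi(\,\cdot\,,\rho_{m,r})\in{\bigotimes_v}'\sigma_v^{+}$, so there is $\varphi\in\Sw((\V^{+}\otimes W)(\A))^{\circ}$ with $\Phi^{(m,r)}_{\varphi}(\,\cdot\,,\rho_{m,r})=\Phi(\,\cdot\,,\rho_{m,r})$. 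Both $\Phi^{(m,r)}_{\varphi}$ and $\Phi$ are standard sections, hence each is determined by its restriction to $K_G$, equivalently by its value at the point $s=\rho_{m,r}$; so this forces $\Phi^{(m,r)}_{\varphi}=\Phi$ identically in $s$.

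It remains to arrange that $\varphi$ has the shape $\sum_i\sigma(\phi_{1,i}\otimes\overline{\phi_{2,i}})$. Since $\V=V\oplus(-V)$ with $\V^{+}=\Delta V$, we have $\V\otimes W^{+}=(V\otimes W^{+})\oplus((-V)\otimes W^{+})$, and (after the partial Fourier transform identifying $\Sw((\V\otimes W^{+})(\A))$ with $\Sw((\V^{+}\otimes W)(\A))$) the map $\sigma$ is the canonical isomorphism $\Sw((V\otimes W^{+})(\A))\,\widehat{\otimes}\,\Sw((V\otimes W^{+})(\A))\xrightarrow{\ \sim\ }\Sw((\V\otimes W^{+})(\A))$; in particular $\sigma$ is surjective. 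At the non-archimedean places one even has the algebraic tensor product, so finite sums of pure tensors suffice there; the same works at the archimedean places once $\varphi$ is chosen factorizable, $\varphi=\otimes_v'\varphi_v$ with each $\varphi_v$ in the $G_v$-span of $\varphi^{0}_v$ and a scalar multiple of $\varphi^{0}_v$ for almost all $v$ (possible, rerunning the argument above place by place, since $\sigma_v^{+}$ is generated over $G_v$ by the image of $\varphi^{0}_v$), the archimedean $\varphi^{0}_v$ being a pure tensor of Gaussians. Expanding each local $\varphi_v$ as a finite sum of $\sigma_v$-pure tensors, collecting terms, and replacing each second factor by its complex conjugate (a bijection of the Schwartz space) then yields the asserted form.

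The only step that is not soft bookkeeping is this last one: verifying that the preimage $\varphi$ with $\Phi^{(m,r)}_{\varphi}=\Phi$ can be chosen so that its archimedean components lie in the algebraic (not merely completed) image of $\sigma$ — equivalently, controlling the Weil translates of the archimedean Gaussian as finite, rather than convergent, sums of pure tensors. Everything else is the $G(\A)$-module argument of the first two steps together with the rigidity of standard sections.
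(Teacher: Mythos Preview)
Your argument is essentially the same as the paper's, only organized globally-to-locally rather than locally-to-globally: the paper simply picks, for each $v$, a $\varphi_v$ in the $G_v$-span of $\varphi^0_v$ with $\Phi^{(m,r)}_{\varphi_v}=\Phi_v$ (using that $\sigma_v^{+}$ is generated by the spherical vector), takes $\varphi=\otimes'_v\varphi_v$, and then asserts without further comment that this $\varphi$ has the shape $\sum_i\sigma(\phi_{1,i}\otimes\overline{\phi_{2,i}})$. Your explicit use of the rigidity of standard sections to pass from $s=\rho_{m,r}$ to all $s$ is correct and is left implicit in the paper.

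You are right to isolate the archimedean finite-sum issue as the one genuinely nontrivial point: an $\OO(\V,F_v)$-translate of the Gaussian is a Gaussian whose exponent generally mixes the two copies of $V\otimes W^{+}$, and such a function is not a finite sum of pure tensors. The paper does not address this either; it simply asserts the desired form. In practice this does not matter for the applications (Corollary~\ref{C:inner_product} and the inner product formulas extend by continuity to convergent, not merely finite, sums), so the sloppiness is harmless---but your instinct to flag it is sound, and your honesty here is an improvement over the paper's bare assertion.
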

\begin{proof}
Since $\Phi_v (-,\rho_{m,r})\in\sigma_v^+$, for each $v$ we have
$\Phi_v(-,\rho_{m,r})=\Phi^{(2r+j,r)}_{\varphi_v}(-,\rho_{m,r})$ for
some $\varphi_v$ which is in the $\OO(\V,F_v)$-span of the spherical
Schwartz function $\varphi_{0,v}$. Clearly, we can
choose $\varphi_v=\varphi_{0,v}$ for almost all $v$. Thus if we let
$\varphi=\otimes'\varphi_v$, we have $\Phi=\Phi^{(m,r)}_{\varphi}$,
which is of the form
$\varphi=\sum_i\sigma(\phi_{1,i}\otimes\overline{\phi_{2,i}})$.
\end{proof}

Now we are ready to prove our non-vanishing theorems. For this purpose, we need to introduce the following notation. Let $\mu_2$ be the group of order two and so $\mu_2(\A)=\prod_v\{\pm 1\}$. For a finite set of places $T$, we define the character $\sign(T)$ on $\mu_2(\A)$ by
\[
	\sign(T):=\otimes'\sign(T)_v
\]
where
\[
	\sign(T)_v(-1)=\begin{cases}1\quad\text{if $v\notin T$}\\
				-1\quad\text{if $v\in T$}.
			\end{cases}
\]
Notice that $\sign(T)$ is an automorphic character on $\mu_2(F)\backslash\mu_2(\A)$ if and only if the cardinality $|T|$ of $T$ is even. And any automorphic character $\mu$ on $\mu_2(F)\backslash\mu_2(\A)$ is of this form. From now on, by a character $\mu$ we always mean an automorphic character on $\mu_2(F)\backslash\mu_2(\A)$. For each character $\mu$ and an automorphic representation $\pi$ of $\OO(V,\A)$, we write
\[
    \pi\otimes\mu:=\pi\otimes(\mu\circ\det),
\]
which is again automorphic.
\vskip 10pt

We first prove the following non-vanishing result for the 1st term range. We should mention that the same theorem has been proven by Ginzburg-Jiang-Soudry in their recent paper ([GJS, Thm. 1.1]) by following Moeglin (\cite{Moeglin}) and using her method of ``generalized doubling method". However, we will show that the same theorem quite simply follows from our inner product formula together with Proposition \ref{P:incoherent}.
\vskip 5pt

\begin{thm}[Non-vanishing theorem for 1st term range]\label{T:non_vanishing_1st_term}
Let $\pi$ be a cuspidal automorphic representation of $\OO(V,\A)$
for $\dim V=m$ where $m\geq 2r+1$.
\begin{enumerate}[(a)]
\item Assume $m> 2r+1$. Further assume that the (incomplete) standard $L$-function $L^S(s,\pi)$ has a pole at $s=\frac{m-2r}{2}$. Then there is a character $\mu$ on $\mu_2(F)\backslash\mu_2(\A)$ such
that $\theta_{2r}(\pi\otimes\mu)\neq0$.
\item Assume $m=2r+1$. Further assume that the (incomplete) standard $L$-function $L^S(s,\pi)$ does not vanish at $s=\frac{1}{2}$. Then there is a character $\mu$ on $\mu_2(F)\backslash\mu_2(\A)$ such
that $\theta_{2r}(\pi\otimes\mu)\neq0$.
\end{enumerate}
\end{thm}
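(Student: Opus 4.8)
We will produce a finite set of places $T$ such that $\mu:=\sign(T)$ is automorphic and $\theta_{2r}(\pi\otimes\mu)\neq0$. By the tower property it suffices to treat the case $\theta_{2r-2}(\pi\otimes\mu)=0$ (the complementary case being immediate), so that the Rallis inner product formula of the previous section applies to $\pi\otimes\mu$: Theorem~\ref{T:inner_product3} when $m>2r+1$, and the boundary inner product formula when $m=2r+1$. Since $-\rho_{m,r}+\tfrac12=\tfrac{m-2r}{2}$ (and $0+\tfrac12=\tfrac{m-2r}{2}$ in the boundary case), the $L$-value appearing in these formulas is $L^S(\tfrac{m-2r}{2},\pi\otimes\mu)=L^S(\tfrac{m-2r}{2},\pi)$ (the twist by $\mu\circ\det$ does not affect the incomplete $L$-function once $T\subseteq S$), which by hypothesis has a pole (case (a)) or is nonzero (case (b)). Thus it suffices to choose factorizable data for which $\prod_{v\in S}Z_v(s_0,\cdot,\cdot,\Phi_v,(\pi\otimes\mu)_v)$ is holomorphic and nonzero at $s_0=-\rho_{m,r}$ (resp. $s_0=0$), \emph{subject to} the constraint, imposed by the weak first term identity used in the inner product formula, that the global section be $\Phi^{(m,m-r-1)}_{\varphi'}$ (resp. $\Phi^{(2r+1,r)}_{\varphi}$) for a Schwartz function in the $\OO(\V,\A)$-span of the spherical one --- equivalently, by Lemma~\ref{L:spherical_function}, that $\Phi_v(-,s_0)\in\sigma_v^+$ for every $v$.

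For each $v$, since $\sigma_v^++\sigma_v^-=\Ind_{P_v}^{G_v}|\,|^{s_0}$ and the local doubling zeta integral is not identically zero (a standard non-vanishing property, valid in particular at $s_0$), there is a sign $\epsilon(v)\in\{+,-\}$, a $K$-finite section with $\Phi_v(-,s_0)\in\sigma_v^{\epsilon(v)}$, and $K$-finite matrix coefficients of $\pi_v$, for which $Z_v(s,\cdot,\cdot,\Phi_v,\pi_v)$ is holomorphic and nonzero at $s=s_0$; moreover $\epsilon(v)=+$ may be taken at all unramified places and, by Lemma~\ref{L:twist}(a), at all $v$ with $\pi_v\cong\pi_v\otimes\det$. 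Let $T=\{v:\epsilon(v)=-\}$, a finite set consisting of places with $\pi_v\ncong\pi_v\otimes\det$ at which $\sigma_v^+$ does not support a nonzero $Z_v(s_0,\cdot,\cdot,\cdot,\pi_v)$ (so $\sigma_v^-$ does). By Lemma~\ref{L:twist}(b), applied to $\pi_v\otimes\det$, at each $v\in T$ the submodule $\sigma_v^+$ supports a nonzero $Z_v(s_0,\cdot,\cdot,\cdot,\pi_v\otimes\det)$. Enlarging $S$ so that $T\subseteq S$ and $S$ contains all ramified places, and setting $\mu=\sign(T)$, we obtain a factorizable standard section $\Phi$ with $\Phi_v(-,s_0)\in\sigma_v^+$ for all $v$ and $\prod_{v\in S}Z_v(s_0,\cdot,\cdot,\Phi_v,(\pi\otimes\mu)_v)\neq0$.

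It remains to check that $|T|$ is even, so that $\mu$ is automorphic --- this is where Proposition~\ref{P:incoherent} enters. Suppose $|T|$ were odd. Choose a factorizable section $\Phi'$ with $\Phi'_v(-,s_0)\in\sigma_v^-$ for $v\in T$ and $\in\sigma_v^+$ for $v\notin T$, and with each local factor $Z_v(s_0,\cdot,\cdot,\Phi'_v,\pi_v)$ nonzero (possible by the previous paragraph). Since an odd number of the $\Phi'_v$ lie in $\sigma_v^-$, the section $\Phi'$ is incoherent at $s_0$, so Proposition~\ref{P:incoherent} gives $A_{-1}^{(m,m-r-1)}(\Phi')=0$ (resp. $A_0^{(2r+1,r)}(\Phi')=0$); as $E^{(m,m)}(-,s;\Phi')$ has at most a simple pole, it is therefore regular at $s_0$ (resp. vanishes at $s_0=0$). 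Hence the global doubling integral of $f_1\overline{f_2}$ against $E^{(m,m)}(-,s;\Phi')$ --- which by the basic identity of the doubling method equals $\frac{1}{d_m^S(s)}L^S(s+\tfrac12,\pi)\prod_{v\in S}Z_v(s,\cdot,\cdot,\Phi'_v,\pi_v)$ for factorizable cusp forms $f_1,f_2\in\pi$ --- is regular at $s_0$ (resp. vanishes at $s_0=0$). But $d_m^S$ is nonzero and holomorphic there, while $L^S(s+\tfrac12,\pi)$ has a pole at $s=s_0$ (case (a)) or $L^S(\tfrac12,\pi)$ is a nonzero finite number (case (b)), which forces $\prod_{v\in S}Z_v(s_0,\cdot,\cdot,\Phi'_v,\pi_v)=0$, a contradiction. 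Thus $|T|$ is even, $\mu=\sign(T)$ is an automorphic character of $\mu_2(F)\backslash\mu_2(\A)$, and inserting the data of the second paragraph into the inner product formula for $\pi\otimes\mu$ yields a nonzero inner product $\sum_i\langle\theta_{2r}(f_1,\phi_{1,i}),\theta_{2r}(f_2,\phi_{2,i})\rangle$, whence $\theta_{2r}(\pi\otimes\mu)\neq0$.

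The main obstacle is the tension highlighted in the second paragraph: the weak first term identity --- hence the inner product formula --- is available only for Schwartz functions in the spherical span, which pins the local sections into $\sigma_v^+$, yet one must keep every local zeta integral nonzero at the single special point $s_0$. Lemma~\ref{L:twist} resolves this place by place at the price of twisting $\pi$ by $\det$, and Proposition~\ref{P:incoherent} is precisely what guarantees that the resulting twist is by a bona fide automorphic quadratic character. The remaining points --- that the local zeta integrals can be arranged holomorphic and nonzero at $s_0$, and that the pole of $L^S$ in case (a) is at most simple so that the residue is visibly nonzero --- are routine and standard.
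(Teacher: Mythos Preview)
Your proof is correct and follows essentially the same approach as the paper's; the only difference is organizational---you argue the evenness of $|T|$ by contradiction (incoherent $\Phi' \Rightarrow A_*^{(m,m-r-1)}(\Phi')=0 \Rightarrow$ doubling integral regular at $s_0$, contradicting the pole/nonvanishing of $L^S$), whereas the paper first shows directly that $A_{-1}^{(m,m-r-1)}(\Phi)\neq 0$ and then invokes Proposition~\ref{P:incoherent} to deduce coherence, which is just the contrapositive of your step. One minor point: at archimedean places the Kudla--Rallis result you appeal to only guarantees that $Z_v$ has a pole or is nonzero holomorphic at $s_0$ (not necessarily the latter), but your contradiction argument in the third paragraph is unaffected by this, since a pole in $\prod_{v\in S}Z_v$ would only make the pole of the right-hand side worse.
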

\begin{proof}
First let us assume $m> 2r+1$ \ie situation (a). Suppose that there is a character $\mu$ such that $\theta_{2r-2}(\pi\otimes\mu)\neq0$. Then by the tower property of theta lifting, the theorem follows. Hence we assume that $\theta_{2r-2}(\pi\otimes\mu)=0$ for all $\mu$, and hence $\theta_{2r}(\pi\otimes\mu)$ is in the space of cusp forms (possibly zero) for all $\mu$.
\vskip 5pt

Now it is known that if $v$ is non-archimedean, one can always find a standard section $\Phi_v$ and vectors $f_{1,v}$ and $f_{2,v}$ so that
\[
    Z_v(-\rho_{m,r}, f_{1,v}, f_{2,v}, \Phi_v, \pi_v)=1,
\]
and if $v$ is archimedean, one can always find a $K$-finite section $\Phi_v$ and  vectors $f_{1,v}$ and $f_{2,v}$ so that
\[
    Z_v(s, f_{1,v}, f_{2,v}, \Phi_v,\pi_v)
\]
either has a pole or is non-zero holomorphic at $s=-\rho_{m,r}$. (See \cite[Theorem 2.0.2 and 2.0.3]{Kudla-Rallis90}.) Moreover by Lemma \ref{L:twist}, if $\pi_v\cong\pi_v\otimes\det$, then we can find a suitable $f'_{i,v}$ so that
\[
    Z_v(s, f_{1,v}, f_{2,v}, \Phi_v,\pi_v)=Z_v(s, f'_{1,v}, f'_{2,v}, \det\cdot\Phi_v,\pi_v).
\]
Since if $\Phi_v(-,-\rho_{m,r})\in\sigma_v^-$ then $\det\cdot\Phi_v(-,-\rho_{m,r})\in\sigma_v^+$, we may assume that at those $v$'s where $\pi_v\cong\pi_v\otimes\det$ the above choice of $\Phi_v$ is such that $\Phi_v(-,-\rho_{m,r})\in\sigma_v^+$. Therefore one can find a global standard factorizable section $\Phi=\otimes'\Phi_v$, which can be chosen to be either coherent or incoherent at $s=-\rho_{m,r}$, and vectors $f_1=\otimes'f_{1,v}$ and $f_2=\otimes'f_{2,v}$
so that the product
\[
    \prod_{v\in S}Z_v(s, f_{1,v}, f_{2,v},\Phi_v,\pi_v)
\]
either has a pole or is non-zero holomorphic at $s=-\rho_{m,r}$ for sufficiently large $S$ containing all the archimedean $v$'s, and all the non-archimedean $v$'s where $\pi_v$ is ramified, and moreover $\Phi_v(-,-\rho_{m,r})\in\sigma_v^+$ if $\pi_v\cong\pi_v\otimes\det$.
\vskip 5pt

Now by the doubling method, for this choice of $\Phi$, $f_1$ and $f_2$, we have
\begin{align*}
    &\int_{(G\times G)(F)\backslash(G\times
    G)(\A)}f_1(g_1)\overline{f_2(g_2)}E^{(m,m)}(i(g_1,g_2),s;\Phi)\,dg_1dg_2\\
    &=\frac{1}{d^S_{m}(s)}L^S(s+\frac{1}{2}, \pi)\prod_{v\in S}Z_v(s,f_{1,v}, f_{2,v},
    \Phi_v,\pi_v).
\end{align*}
Now at $s=\rho_{m,m-r-1}=-\rho_{m,r}$, the left hand side has at most a simple pole. Hence by our assumption that $L^S(s,\pi)$ has a pole at $s=\frac{m-2r}{2}$, we know that the product of local zeta integrals on the right hand side cannot have a pole and so must be non-zero holomorphic. Then we have
\begin{align*}
    &\int_{(G\times G)(F)\backslash(G\times
    G)(\A)}f_1(g_1)\overline{f_2(g_2)}A_{-1}^{(m,m-r-1)}(\Phi)(i(g_1,g_2))\,dg_1dg_2\\
    &=\frac{1}{d^S_{m}(-\rho_{m,r})}\underset{s=-\rho_{m,r}}{\Res}\left(L^S(s+\frac{1}{2}, \pi)\prod_{v\in S}Z_v(s,f_{1,v}, f_{2,v},\Phi_v,\pi_v)\right),
\end{align*}
where the right hand side and hence the left hand side are non-zero, and in particular
\[
    A_{-1}^{(m,m-r-1)}(\Phi)\neq 0.
\]
Therefore by Proposition \ref{P:incoherent}, we know that $\Phi$ is a coherent section. Hence if we let
\[
    T=\{v\in S: \Phi_v(-,-\rho_{m,r})\notin\sigma^+\},
\]
then the size $|T|$ of $T$ is even by the coherence of $\Phi$, and so if we let $\mu=\sign(T)$, then $\mu$ is automorphic. Since by our choice of $\Phi$, we know that $\pi_v\ncong\pi_v\otimes\det$ for all $v\in T$, and so by Lemma \ref{L:twist} we have
\[
	\prod_{v\in T}Z_v(s,f_{1,v}, f_{2,v}, \Phi_v,\pi_v)=\prod_{v\in T}Z_v(s,f_{1,v}, f_{2,v}, \det\cdot\Phi_v,\pi_v\otimes\det),
\]
which, together with $\mu=\sign(T)$, gives
\[
	\prod_{v\in S}Z_v(s,f_{1,v}, f_{2,v}, \Phi_v,\pi_v)=\prod_{v\in S}Z_v(s,f_{1,v}, f_{2,v}, \mu_v\cdot\Phi_v,\pi_v\otimes\mu_v).
\]
Therefore since $L^S(s+\frac{1}{2}, \pi)=L^S(s+\frac{1}{2}, \pi\otimes\mu)$, we see that
\[
	L^S(s+\frac{1}{2}, \pi\otimes\mu)\prod_{v\in S}Z_v(s,f_{1,v}, f_{2,v},\mu_v\cdot\Phi_v,\pi_v\otimes\mu_v)
\]
has a simple pole at $s=-\rho_{m,r}$.
\vskip 5pt

Now by the very definition of $\mu$, we have
\[
	\mu_v\cdot\Phi_v\in\sigma_v^+\quad\text{for all $v$},
\]
and so there is a $\phi'\in\Sw((\V^+\otimes W_c)(\A))^\circ$ such that $\Phi^{(m,m-r-1)}_{\varphi'}=\mu\cdot\Phi$ by Lemma \ref{L:spherical_function}. Hence by invoking Theorem \ref{T:inner_product3} (Inner Product Formula for 1st term range), we obtain
\begin{align*}
    &\sum_i\langle\theta_{2r}(f'_1,\phi_{1,i}),\theta_{2r}(f'_2,\phi_{2,i})\rangle\\
    &\qquad\qquad=\frac{a_{m,r}}{d^S_{m}(-\rho_{m,r})}\underset{s=-\rho_{m,r}}{\Res}\left(L^S(s+\frac{1}{2}, \pi\otimes\mu)\prod_{v\in S}Z_v(s,f_{1,v}, f_{2,v}, \mu_v\cdot\Phi_v, \pi_v\otimes\mu_v)\right).
\end{align*}
for some $\sum_i\sigma(\phi_{1,i}\otimes\overline{\phi_{2,i}})\in\Sw((\V^+\otimes W)(\A))^\circ$ and for some $f'_{i}\in\pi\otimes\mu$. Since  the right hand side is non-zero, we see that
\[
    \langle\theta_{2r}(f'_1,\phi_{1,i}),\theta_{2r}(f'_2,\phi_{2,i})\rangle\neq 0
\]
for some $i$. Thus part (a) of the theorem has been proven.
\vskip 10pt

The part (b) of the theorem can be similarly shown by using the inner product formula on the boundary, and the detail of the proof is left to the reader.
\end{proof}
\vskip 10pt

Next we prove our non-vanishing result for the 2nd term range. Unfortunately, however, for this range we need to assume that $\dim V=m$ is even. This is because we need to impose on the cuspidal representation $\pi$ the assumption that $\pi_v\cong\pi_v\otimes\det$ for at least one place $v$, and if $\dim V$ is odd, then $\pi_v\ncong\pi_v\otimes\det$ for all $v$. Also the reason we can get away with this assumption for the 1st term range is Proposition \ref{P:incoherent}. For the 2nd term range, we have not been able to prove the analogous proposition and we do not even know if such a statement even exists at all. In any case, the following is our main non-vanishing theorem for the 2nd term range.
\vskip 10pt

\begin{thm}[Non-vanishing theorem for 2nd term range]\label{T:non_vanishing_2nd_term}
Let $\pi$ be a cuspidal automorphic representation of $\OO(V,\A)$
for $\dim V=m$ where $m$ is even with $r+1\leq m\leq 2r$. Assume that

\begin{enumerate}
\item[(i)] there is a place $v$ such that
$\pi_v\cong\pi_v\otimes\det$;
\vskip 5pt

\item[(ii)] $L^S(1+\frac{2r-m}{2},\pi)\neq 0$. (A pole is allowed.)
\end{enumerate}
Then there is a character $\mu$ on $\mu_2(F)\backslash\mu_2(\A)$ such
that
\[
    \theta_{2r}(\pi\otimes\mu)\neq0.
\]
\end{thm}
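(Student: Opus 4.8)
The plan is to adapt the proof of the first term range non-vanishing result, Theorem~\ref{T:non_vanishing_1st_term}, the crucial new ingredient being hypothesis (i), which will play the role that Proposition~\ref{P:incoherent} plays there (no analogue of that proposition being available in the second term range). First I would dispose of the case in which $L^S(s,\pi)$ has a pole at $s=1+\frac{2r-m}{2}$. Writing $r'=m-r-1$, one checks that $r+1\le m\le 2r$ forces $m>2r'+1$ and that $1+\frac{2r-m}{2}=\frac{m-2r'}{2}$, so Theorem~\ref{T:non_vanishing_1st_term}(a), applied to the pair $(\OO(V),\Sp_{2r'})$, already produces a character $\mu$ with $\theta_{2r'}(\pi\otimes\mu)\ne0$; since $r'<r$, the tower property then yields $\theta_{2r}(\pi\otimes\mu)\ne0$. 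Hence we may assume $L^S(1+\frac{2r-m}{2},\pi)$ is finite, and so non-zero by (ii). Similarly, if $\theta_{2r-2}(\pi\otimes\mu)\ne0$ for some $\mu$ we are done by the tower property, so we may assume $\theta_{2r-2}(\pi\otimes\mu)=0$ for every automorphic character $\mu$ of $\mu_2(F)\backslash\mu_2(\A)$; then each $\theta_{2r}(\pi\otimes\mu)$ is cuspidal, and the hypotheses of Lemma~\ref{L:no_residue} and of the inner product formula Theorem~\ref{T:inner_product_2nd_term2} hold for every $\pi\otimes\mu$.

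Next I would choose the local data. Using the local theory of doubling zeta integrals (\cite[Theorem 2.0.2 and 2.0.3]{Kudla-Rallis90}), pick a finite set $S$ of places containing all archimedean places, all finite places where $\pi$ is ramified, and one place $v_0$ with $\pi_{v_0}\cong\pi_{v_0}\otimes\det$ (which exists by (i)); a factorizable $K$-finite standard section $\Phi=\otimes'\Phi_v(-,s)$; and factorizable vectors $f_1=\otimes'f_{1,v}$, $f_2=\otimes'f_{2,v}$ in $\pi$; so that $Z_v(\rho_{m,r},f_{1,v},f_{2,v},\Phi_v,\pi_v)\ne0$ for every $v\in S$. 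By Lemma~\ref{L:twist}(a) we may in addition arrange $\Phi_v(-,\rho_{m,r})\in\sigma_v^+$ at every place $v$ with $\pi_v\cong\pi_v\otimes\det$; set $T_0=\{v\in S:\Phi_v(-,\rho_{m,r})\notin\sigma_v^+\}$, so that necessarily $\pi_v\not\cong\pi_v\otimes\det$ for each $v\in T_0$. Now comes the key point: if $|T_0|$ is even we put $T=T_0$, while if $|T_0|$ is odd we put $T=T_0\cup\{v_0\}$, using Lemma~\ref{L:twist}(a) once more to replace $\Phi_{v_0}$ by $\det\cdot\Phi_{v_0}\in\sigma_{v_0}^-$ while keeping $Z_{v_0}(\rho_{m,r},\cdots)\ne0$. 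Either way $|T|$ is even, so $\mu:=\sign(T)$ is an automorphic character of $\mu_2(F)\backslash\mu_2(\A)$, and by construction $\mu_v\cdot\Phi_v(-,\rho_{m,r})\in\sigma_v^+$ for every place $v$.

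To conclude, Lemma~\ref{L:spherical_function} provides $\varphi'\in\Sw((\V^+\otimes W)(\A))^\circ$ of the form $\varphi'=\sum_i\sigma(\phi_{1,i}\otimes\overline{\phi_{2,i}})$ with $\Phi^{(m,r)}_{\varphi'}=\mu\cdot\Phi$. Applying Theorem~\ref{T:inner_product_2nd_term2} to $\pi\otimes\mu$ and $\varphi'$, and using $L^S(s,\pi\otimes\mu)=L^S(s,\pi)$ together with Lemma~\ref{L:twist}(a),(b) to identify $\prod_{v\in S}Z_v(s,f'_{1,v},f'_{2,v},\mu_v\cdot\Phi_v,\pi_v\otimes\mu_v)$ with $\prod_{v\in S}Z_v(s,f_{1,v},f_{2,v},\Phi_v,\pi_v)$, one obtains, for suitable $f'_1,f'_2\in\pi\otimes\mu$,
\[
  \sum_i\langle\theta_{2r}(f'_1,\phi_{1,i}),\theta_{2r}(f'_2,\phi_{2,i})\rangle
  =\frac{1}{d^S_m(\rho_{m,r})}\;\underset{s=\rho_{m,r}}{\Val}\Bigl(L^S(s+\tfrac12,\pi)\prod_{v\in S}Z_v(s,f_{1,v},f_{2,v},\Phi_v,\pi_v)\Bigr).
\]
By Lemma~\ref{L:no_residue} the expression inside $\Val$ is holomorphic at $s=\rho_{m,r}$; since $L^S(\rho_{m,r}+\tfrac12,\pi)=L^S(1+\tfrac{2r-m}{2},\pi)$ is finite and non-zero and each $Z_v(\rho_{m,r},\cdots)\ne0$, the right-hand side equals $\frac{1}{d^S_m(\rho_{m,r})}L^S(1+\tfrac{2r-m}{2},\pi)\prod_{v\in S}Z_v(\rho_{m,r},\cdots)\ne0$. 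Hence $\langle\theta_{2r}(f'_1,\phi_{1,i}),\theta_{2r}(f'_2,\phi_{2,i})\rangle\ne0$ for some $i$, and therefore $\theta_{2r}(\pi\otimes\mu)\ne0$.

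The main obstacle, and the reason the theorem requires $\dim V$ to be even, is the parity step in the second paragraph. In the first term range, Proposition~\ref{P:incoherent} forces the section entering the inner product formula to be coherent, so that the ``bad set'' $T_0$ automatically has even cardinality; in the second term range no such statement is available, and the only way we know to secure even cardinality is to flip the section at a place $v_0$ with $\pi_{v_0}\cong\pi_{v_0}\otimes\det$ without disturbing the local zeta integral there, which is exactly what Lemma~\ref{L:twist}(a) allows and what hypothesis (i) provides. The secondary technical point, that a pole of $L^S$ at $1+\frac{2r-m}{2}$ would be incompatible with the holomorphy supplied by Lemma~\ref{L:no_residue}, is handled by the opening reduction to the first term range rather than inside the inner product formula.
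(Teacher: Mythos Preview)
Your proposal is correct and follows essentially the same approach as the paper's own proof: reduce the pole case to Theorem~\ref{T:non_vanishing_1st_term}, assume $\theta_{2r-2}(\pi\otimes\mu)=0$ for all $\mu$, choose good local data with $\Phi_v\in\sigma_v^+$ wherever $\pi_v\cong\pi_v\otimes\det$, use the place $v_0$ from hypothesis (i) to fix the parity of the ``bad set'' so that the resulting $\mu$ is automorphic, and then apply the inner product formula together with Lemma~\ref{L:no_residue}. Your handling of the pole reduction (identifying the complementary $r'=m-r-1$) and of the parity step (explicitly replacing $\Phi_{v_0}$ by $\det\cdot\Phi_{v_0}$ so that $\mu_v\cdot\Phi_v\in\sigma_v^+$ everywhere) is in fact slightly more explicit than the paper's, but the argument is the same. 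One small point: you assert each archimedean $Z_v(\rho_{m,r},\cdots)$ is a nonzero \emph{value}, whereas the Kudla--Rallis input only guarantees ``nonzero or a pole''; the possibility of a pole is excluded a posteriori because Lemma~\ref{L:no_residue} forces $L^S\cdot\prod_{v\in S}Z_v$ to be holomorphic while $L^S$ is nonzero holomorphic---the paper makes the same informal move.
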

\vskip 10pt

\begin{proof}
One can prove this theorem analogously to Theorem \ref{T:non_vanishing_1st_term} by using the inner product formula of the 2nd term range. However, this time the set $T$ as defined in the proof of Theorem \ref{T:non_vanishing_1st_term} cannot be shown to have an even cardinality due to the absence of the analogue of Proposition \ref{P:incoherent}. To get around this, we use a place $v$ where $\pi_v\cong\pi_v\otimes\det$. The detailed proof is as follows.
\vskip 5pt

First if $\theta_{2r-2}(\pi\otimes\mu)\neq 0$ for some character $\mu$, then the theorem holds by the tower property of theta lifting. Also if the (incomplete) $L$-function has a pole at $s=\rho_{m,r}$, then by Theorem \ref{T:non_vanishing_1st_term} we already know that $\theta_{2r-2\rho_{m,r}}(\pi\otimes\mu)\neq0$, and so again by the tower property $\theta_{2r-2k}(\pi\otimes\mu)\neq0$ for $k\leq 2\rho_{m,r}$ and so the theorem follows. Hence for the rest of the proof, we assume that $\theta_{2r-2}(\pi\otimes\mu)=0$ for all $\mu$ and the (incomplete) $L$-function is non-zero holomorphic at $s=\rho_{m,r}$.
\vskip 10pt

Now by the same reasoning as the proof of Theorem \ref{T:non_vanishing_1st_term}, there is
a global $K$-finite factorizable standard section $\Phi=\otimes'\Phi_v$ and vectors
$f_{1}=\otimes'f_{1,v}$ and $f_2=\otimes'f_{1,v}$ so that
\[
    \prod_{v\in S}Z_v(s, f_{1,v}, f_{2,v},\Phi_v,\pi_v)
\]
is either non-holomorphic or non-zero holomorphic at
$s=\rho_{m,r}$ for $S$ as in the proof of Theorem \ref{T:non_vanishing_1st_term}. Moreover we may assume that $\Phi_v(-,\rho_{m,r})\in\sigma_v^+$ for all $v$'s at which $\pi_v\cong\pi_v\otimes\det$. Then let
\[
    T=\{v\in S: \Phi_v(-,\rho_{m,r})\notin\sigma^+\},
\]
and define
\[
	\mu=\begin{cases}\sign(T)\quad\text{if $|T|$ is even}\\
			\sign(T\cup\{v_0\})\quad\text{if $|T|$ is odd},
		\end{cases}
\]
where $v_0$ is a place at which $\pi_{v_0}\cong\pi_{v_0}\otimes\det$. Then $\mu$ is an automorphic character.

Then by invoking the inner product formula for the 2nd term range, for suitable choice of $f'_i\in\pi\otimes\mu$ we have
\begin{align*}
    &\sum_i\langle\theta_{2r}(f'_1,\phi_{1,i}),\theta_{2r}(f'_2,\phi_{2,i})\rangle\\
    &=\frac{1}{d^S_{m}(\rho_{m,r})}\underset{s=\rho_{m,r}}{\Val}\left(L^S(s+\frac{1}{2}, \pi\otimes\mu)
       \prod_{v\in S}Z_v(s, f_{1,v}, f_{2,v}, \mu_v\cdot\Phi_v,\pi_v\otimes\mu_v)\right)
\end{align*}
where $\varphi=\sum_i\sigma(\phi_{1,i}\otimes\overline{\phi_{2,i}})\in\Sw((\V^+\otimes W)(\A))^\circ$ is such that $\Phi^{(m,r)}_{\varphi}=\Phi$ by Lemma \ref{L:spherical_function}. Notice that in the right hand side, the product
\[
	L^S(s+\frac{1}{2}, \pi\otimes\mu)
       \prod_{v\in S}Z_v(s, f_{1,v}, f_{2,v}, \mu_v\cdot\Phi_v,\pi_v\otimes\mu_v)
\]
is holomorphic by Lemma \ref{L:no_residue} at $s=\rho_{m,r}$, and thus non-zero holomorphic at $s=\rho_{m,r}$ because the product of the local factors is non-zero and $L^S(s+\frac{1}{2}, \pi\otimes\mu)=L^S(s+\frac{1}{2}, \pi)$. Therefore the theorem has been proven.
\end{proof}

The above two theorems are viewed as non-vanishing results ``up to disconnectedness". Indeed, if we use the ``dual pair" $(\SO(V), \Sp(2r))$ instead of the usual $(\OO(V), \Sp(2r))$, the above theorems are restated as follows:

\begin{cor}
Let $\tau$ be a cuspidal automorphic representation of $\SO(V,\A)$
with $\dim V=m$.
\begin{enumerate}[(a)]
\item Assume $m\geq 2r+2$. Further assume that the (incomplete) standard $L$-function $L^S(s,\tau)$ has a pole at $s=\frac{m-2r}{2}$. Then $\theta_{2r}(\tau)\neq0$.
\item Assume $m=2r+1$. Further assume that the (incomplete) standard $L$-function $L^S(s,\pi)$ does not vanish at $s=\frac{1}{2}$. Then $\theta_{2r}(\tau)\neq0$.
\item Assume $m$ is even with $r+1\leq m\leq 2r$ such that $\tau^c \ncong\tau$. Assume that $L^S(1+\frac{2r-m}{2},\tau)\neq 0$. (A pole is allowed.) Then $\theta_{2r}(\tau)\neq0$. (Here $\tau^c$ is the representation obtained by conjugating $\tau$ via the outer automorphism on $\SO(V)$.)
\end{enumerate}
\end{cor}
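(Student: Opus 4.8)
The plan is to deduce the corollary from Theorems~\ref{T:non_vanishing_1st_term} and~\ref{T:non_vanishing_2nd_term} via the standard dictionary relating the theta correspondence for the pair $\OO(V)\times\Sp_{2r}$ to the one for $\SO(V)\times\Sp_{2r}$. First I would attach to the given cuspidal $\tau$ of $\SO(V,\A)$ a cuspidal automorphic representation $\pi$ of $\OO(V,\A)$ in whose restriction to $\SO(V,\A)$ the representation $\tau$ occurs, taking $\pi$ to be the $\OO(V,\A)$-subrepresentation of the cuspidal spectrum of $\OO(V,\A)$ generated by a realization of $\tau$ (this uses that a cusp form on $[\SO(V)]$ gives rise to a cusp form on $[\OO(V)]$). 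One can arrange that $\pi$ is irreducible with $\pi|_{\SO(V,\A)}=\tau$ when $\tau^c\cong\tau$, and that $\pi|_{\SO(V,\A)}=\tau\oplus\tau^c$ with $\pi\cong\pi\otimes\det$ in case (c) where $\tau^c\ncong\tau$; when $\dim V=m$ is odd (case (b)) things are simplest, since $\OO(V)=\SO(V)\times\mu_2$ and one takes $\pi=\tau\boxtimes\mathbf{1}$.

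Next I would observe that the incomplete standard $L$-functions agree, $L^S(s,\pi)=L^S(s,\tau)$ for a suitable finite set $S$ of places, since both are Euler products over the Satake parameters at the unramified places and the standard $L$-factor of $\pi_v$ coincides with that of $\tau_v$ at every unramified $v$. Consequently the hypotheses imposed on $L^S(s,\tau)$ in (a), (b), (c) are literally the hypotheses on $L^S(s,\pi)$ required by Theorems~\ref{T:non_vanishing_1st_term}(a), \ref{T:non_vanishing_1st_term}(b), and~\ref{T:non_vanishing_2nd_term}, respectively. The one extra condition to check is hypothesis (i) of Theorem~\ref{T:non_vanishing_2nd_term} in case (c): since $\tau^c\ncong\tau$ globally, uniqueness of the restricted tensor product factorization gives a place $v_0$ with $\tau_{v_0}^c\ncong\tau_{v_0}$, and there $\pi_{v_0}=\Ind_{\SO(V,F_{v_0})}^{\OO(V,F_{v_0})}\tau_{v_0}$ is irreducible and satisfies $\pi_{v_0}\cong\pi_{v_0}\otimes\det$, as needed.

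Applying the relevant $\OO(V)$-theorem then produces a character $\mu$ on $\mu_2(F)\backslash\mu_2(\A)$ with $\theta_{2r}(\pi\otimes\mu)\neq 0$. To transfer this back to $\SO(V)$, I would use that $\mu\circ\det$ is trivial on $\SO(V,\A)$, so $(\pi\otimes\mu)|_{\SO(V,\A)}=\pi|_{\SO(V,\A)}$, together with the basic relation that the $\OO(V)$-theta lift of $f\in\pi\otimes\mu$, obtained by integrating the theta kernel over $[\OO(V)]$, restricts up to a nonzero constant to the $\SO(V)$-theta lift of $f|_{\SO(V,\A)}$. Hence $\theta_{2r}^{\OO(V)}(\pi\otimes\mu)\neq 0$ forces $\theta_{2r}^{\SO(V)}(\pi|_{\SO(V,\A)})\neq 0$, i.e.\ $\theta_{2r}^{\SO(V)}(\tau)\neq 0$ or $\theta_{2r}^{\SO(V)}(\tau^c)\neq 0$; and in the latter case, conjugation by a reflection in $\OO(V,F)$ — which commutes with $\Sp_{2r}$ inside $\OO(V)\times\Sp_{2r}$ — identifies the two lifts up to a twist, so they vanish simultaneously and we get $\theta_{2r}^{\SO(V)}(\tau)\neq 0$ in every case.

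The main obstacle is bookkeeping rather than a new idea: because $\OO(V,\A)/\SO(V,\A)\cong\mu_2(\A)$ is not of order two, the passage between cuspidal representations of $\SO(V)$ and of $\OO(V)$, the precise relation between their theta lifts, and the interaction with the determinant twists $\mu$ that appear in the $\OO(V)$-theorems all require care, though all of this is classical (going back to Rallis). One should also keep in mind, as in the body of the paper, that the theta lift lands in automorphic forms on the linear group $\Sp_{2r}$ regardless of the parity of $\dim V$, so that comparing $\OO(V)$- and $\SO(V)$-lifts on the symplectic side introduces no metaplectic subtleties.
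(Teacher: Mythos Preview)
Your proposal is correct and follows the same approach the paper has in mind: the paper presents this corollary simply as a ``restatement'' of Theorems~\ref{T:non_vanishing_1st_term} and~\ref{T:non_vanishing_2nd_term} for the dual pair $(\SO(V),\Sp_{2r})$, with no written proof, so your sketch of the standard dictionary (lifting $\tau$ to a cuspidal $\pi$ on $\OO(V)$, matching $L$-functions, checking that $\tau^c\ncong\tau$ supplies the place with $\pi_v\cong\pi_v\otimes\det$, then descending the nonvanishing back to $\SO(V)$) is exactly what is needed. One small inaccuracy: your final remark that the lift ``lands in the linear group $\Sp_{2r}$ regardless of the parity of $\dim V$'' is not what the paper says --- when $m$ is odd the target is the metaplectic $\wtSp_{2r}$ (the paper's $\Mp_{2r}$); only the \emph{inner product integral} in Proposition~\ref{P:inner_product} is taken over $[\Sp_{2r}]$ --- but this does not affect your argument, since the $\OO(V)$/$\SO(V)$ comparison is entirely on the orthogonal side.
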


\begin{rmk}
If we assume $\tau$ is generic, then automatically $L^S(1+\frac{2r-m}{2},\tau)\neq 0$ for $m=2r$, and so part (c) of this corollary implies $\theta_{m}(\tau)\neq0$ (if  $\tau^c \ncong\tau$). This is a part of the main theorem of \cite{GRS}, though their theorem applies even when $\tau^c\cong\tau$.
\end{rmk}

\quad\\

%%%%%%%%%%%%%%%%%%%%%%%%%%%%%%%%%%%%%%%%%%%%%%%%%%%%%%%%%%%%%%%%%%%%%%%%%%%%%%%%%%%%%%%%%%%%%%%%%%%%%%

%%%%%%%%%%%%%%%%%%%%%%%%%%%%%%%%%%%%%%%%%%%%%%%%%%%%%%%%%%%%%%%%

\section{\bf On the lowest occurrence conjecture}

%%%%%%%%%%%%%%%%%%%%%%%%%%%%%%%%%%%%%%%%%%%%%%%%%%%%%%%%%%%%%%%%

Once the non-vanishing theorems in the previous section have been proven, the natural question to ask is whether the $L$-function conditions are necessary. In this last section, we will examine this issue. First we show that there is at least an example in which the $L$-function condition is necessary. Namely, we construct a cuspidal automorphic representation $\tau$ on $\SO(4)$ so that $L^S(s,\tau)$ vanishes at $s=1$ and $\theta_4(\tau)=0$. After that, following \cite{GJS}, we will consider the non-vanishing of theta lifts by using the notation of the lowest occurrence as in \cite{GJS}.

%%%%%%%%%%%%%%%%%%%%%%%%%%%%%%%%%%%%%%%%%%%%%%%%%%%%%%%%%%%%%

\subsection{An example}

%%%%%%%%%%%%%%%%%%%%%%%%%%%%%%%%%%%%%%%%%%%%%%%%%%%%%%%%%%%%%

We will construct an example as mentioned above. Let $V=D$ be a non-split quaternion division algebra over $F$. Then first consider the similitude group $\GSO(D)$. It is well-known that $\GSO(D)\cong (D^\times\times D^\times)/{\Gm}$ where $\Gm$ is embedded as $\{ (t,t^{-1}); t\in \Gm \}$. Hence, a cuspidal automorphic representation $\taut$ of $\GSO(D)(\A)$ is identified with a cuspidal automorphic representation $\tau_1\boxtimes\tau_2$ of $D^\times(\A)\times D^\times(\A)$ so that $\tau_1$ and $\tau_2$ have the same central character. Now assume
\[
    \tau_1\boxtimes\tau_2=\tau_1\boxtimes\mathbf{1},
\]
\ie $\tau_1$ has the trivial central character and $\tau_2$ is the trivial character $\mathbf{1}$ on $D^\times(\A)$. This is certainly a cuspidal automorphic representation since $\GSO(D)$ is anisotropic. Then let $\tau$ be an irreducible constituent of $\tau_1\boxtimes\tau_2|_{\SO(D,\A)}$. (Here the restriction refers to the restriction of functions.) Then one sees
\[
    L^S(s, \tau)=L^S(s+\frac{1}{2},\tau_1)\cdot L^S(s-\frac{1}{2},\tau_1).
\]
(See the second paragraph of the proof of Theorem 5.4 in \cite{Schmidt}.) Hence if we choose $\tau_1$ to be such that $L^S(\frac{1}{2},\tau_1)=0$, then we have $L^S(1,\tau)=0$.
\vskip 5pt

Now we have

\begin{prop}
Let $\tau$ be the cuspidal automorphic representation on $\SO(D,\A)$ constructed above. Then $\theta_4(\tau)=0$. Equivalently, if $\pi$ is a cuspidal automorphic representation on $\OO(D)$ such that $\pi|_{\SO(D,\A)}$ (restriction of functions) contains $\tau$, then for any automorphic character $\mu$ on $\mu_2(F)\backslash\mu_2(\A)$, we have $\theta_4(\pi\otimes\mu)=0$.
\end{prop}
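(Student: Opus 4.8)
The plan is to deduce the vanishing from the Rallis inner product formulas of \S\ref{S:inner_product} together with the vanishing of the standard $L$-value. It suffices to prove the (equivalent) statement for $\OO(D)$: fix an irreducible $\pi$ on $\OO(D,\A)$ with $\pi|_{\SO(D,\A)}$ containing $\tau$, fix an arbitrary automorphic character $\mu$ of $\mu_2(F)\backslash\mu_2(\A)$, and show $\theta_4(\pi\otimes\mu)=0$. Here $\dim D=m=4$ and the target is $\Sp_4$, so $r=2$, $m=2r$, and we sit at the top of the second term range with $\rho_{m,r}=\tfrac12$. Since the standard $L$-function is unchanged under a $\mu\circ\det$-twist and $L^S(s,\pi)=L^S(s,\tau)$ (the Satake parameters of $\pi_v$ and $\tau_v$ agree at unramified $v$), the $L$-value that will enter all the formulas below is $L^S(1,\pi\otimes\mu)=L^S(1,\tau)=L^S(\tfrac32,\tau_1)\,L^S(\tfrac12,\tau_1)$, which vanishes by the construction: $L^S(\tfrac12,\tau_1)=0$, while $L^S(\tfrac32,\tau_1)$ is finite and nonzero because $L(\cdot,\tau_1)$ is entire and non-vanishing for $\Re>1$.

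Since $\theta_4(\pi\otimes\mu)\neq0$ would force either $\theta_2(\pi\otimes\mu)\neq0$ (by the tower property) or $\theta_4(\pi\otimes\mu)$ to be a nonzero cusp form, the first step is to show $\theta_2(\pi\otimes\mu)=0$. The lift to $\Sp_2$ is in the first term range ($m=4>2r+1=3$), and $\theta_0(\pi\otimes\mu)=0$ because $\pi\otimes\mu$ is an infinite-dimensional cuspidal representation (its restriction to $\SO(D)$ contains the infinite-dimensional $\tau$). Hence Theorem \ref{T:inner_product3} applies and expresses the relevant Petersson inner products of $\theta_2$-lifts as a fixed nonzero multiple of ${\Res}_{s=\tfrac12}\bigl(L^S(s+\tfrac12,\pi\otimes\mu)\prod_{v\in S}Z_v(s,\dots)\bigr)$. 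Now $L^S(s+\tfrac12,\pi\otimes\mu)=L^S(s+\tfrac12,\tau)$ is holomorphic at $s=\tfrac12$ (it even vanishes there), and the local doubling zeta integrals $Z_v$ are holomorphic at $s=\tfrac12=\rho_{m,r}$ as well: this point lies strictly between $0$ and $\rho_m=\tfrac32$, and by \cite{Kudla-Rallis90} the poles of $Z_v(s)$ are carried by $L(s+\tfrac12,\pi_v)/d_{m,v}(s)$, whose denominator $d_{4,v}(s)=\zeta_v(2s+1)\zeta_v(2s+3)$ is finite and nonzero at $s=\tfrac12$, and whose numerator $L(1,\pi_v)$ is finite because the local exponents of $\pi_v$ are $<1$ (the factor $\mathbf{1}$ contributes at most $\tfrac12$ and the Jacquet--Shalika bound gives $<\tfrac12$ for the $\tau_1$-part). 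Therefore this residue vanishes, so all the relevant Petersson inner products of $\theta_2$-lifts vanish; as $\theta_2(\pi\otimes\mu)$ is cuspidal, $\theta_2(\pi\otimes\mu)=0$.

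With $\theta_{2r-2}(\pi\otimes\mu)=\theta_2(\pi\otimes\mu)=0$ now available, Theorem \ref{T:inner_product_2nd_term2} (the inner product formula in the second term range, $m=2r=4$) applies and gives $\sum_i\langle\theta_4(f_1,\phi_{1,i}),\theta_4(f_2,\phi_{2,i})\rangle$ as a nonzero multiple of ${\Val}_{s=\tfrac12}\bigl(L^S(s+\tfrac12,\pi\otimes\mu)\prod_{v\in S}Z_v(s,\dots)\bigr)$. By Lemma \ref{L:no_residue} the product $L^S(s+\tfrac12,\pi\otimes\mu)\prod_{v\in S}Z_v(s,\dots)$ is holomorphic at $s=\tfrac12$; since each $Z_v$ is already holomorphic there and $L^S(1,\pi\otimes\mu)=0$, its value equals $L^S(1,\pi\otimes\mu)\cdot\prod_{v\in S}Z_v(\tfrac12,\dots)=0$. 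These Schwartz data are not the fully general ones, but they are all that is needed: the image of any $\varphi\in\Sw((\V^+\otimes W)(\A))$ under $\Phi^{(m,r)}$ lies at every place $v$ in $\sigma_v^+$ (Howe duality for the trivial representation), so by Lemma \ref{L:spherical_function} together with the vanishing (\ref{E:A_-1=0}), every norm $\langle\theta_4(f,\phi),\theta_4(f,\phi)\rangle$ is computed by the above formula, hence is zero. As $\theta_2(\pi\otimes\mu)=0$, the lift $\theta_4(\pi\otimes\mu)$ is cuspidal, and therefore $\theta_4(\pi\otimes\mu)=0$. Since $\mu$ was arbitrary, $\theta_4(\tau)=0$.

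The step I expect to be the main obstacle is the analytic control of the local doubling zeta integrals at the edge point $s=\rho_{m,r}=\tfrac12$ --- ruling out a pole of $\prod_{v\in S}Z_v$ there that could cancel the zero of the partial $L$-function. This is exactly where the presence of the trivial factor $\mathbf{1}$ in $\tau_1\boxtimes\mathbf{1}$ turns out to be harmless: it shifts the local parameters of $\pi_v$ by only $\tfrac12$, which is not enough to move a pole of $L(\cdot,\pi_v)$ onto the relevant point, so the Kudla--Rallis description of $Z_v$ makes the holomorphy transparent. A secondary, routine, point is the reduction from inner products with restricted Schwartz data to the vanishing of the whole theta lift, which rests on Lemma \ref{L:spherical_function} and Howe duality for the trivial representation, together with the identity $L^S(s,\pi\otimes\mu)=L^S(s,\tau)$ used at the outset.
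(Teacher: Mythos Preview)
Your approach via the Rallis inner product formula is natural but has a genuine gap in the $\theta_4$ step. Theorem \ref{T:inner_product_2nd_term2} (the inner product formula in the second term range) only applies when the Schwartz function $\varphi=\sigma(\phi_1\otimes\overline{\phi_2})$ lies in $\Sw((\V^+\otimes W)(\A))^\circ$, the $\OO(\V,\A)$-span of the spherical function. Your attempt to extend this via Lemma \ref{L:spherical_function} does not work: that lemma produces, for a given section $\Phi$ with $\Phi_v\in\sigma_v^+$, some $\varphi'\in\Sw^\circ$ with $\Phi^{(m,r)}_{\varphi'}=\Phi$; but the inner product $\langle\theta_4(f,\phi),\theta_4(f,\phi)\rangle$ is computed (Corollary \ref{C:inner_product}) through $B_{-1}^{(m,r)}(\varphi)$, which depends on $\varphi$ itself and \emph{not} merely on its image $\Phi^{(m,r)}_\varphi$. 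So knowing the formula for $\varphi'$ tells you nothing about the inner product for the original $\phi$. This is precisely why the paper formulates the extension of the second term identity to all of $\Sw((\V^+\otimes W)(\A))$ as Conjecture \ref{C:SW}, and why Proposition \ref{P:lowest_occurrence}(b) --- which is essentially the statement you are trying to prove in this special case --- is stated only \emph{conditionally} on Conjectures \ref{C:GCD} and \ref{C:SW}. Your claim about the holomorphy of $\prod_{v\in S}Z_v$ at $s=\tfrac12$ is likewise the content of Conjecture \ref{C:GCD}, which the paper does not prove.

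The paper therefore takes an entirely different and unconditional route: it passes to the similitude theta lift of $\tau_1\boxtimes\mathbf{1}$ from $\GSO(D)$ to $\GSp(4)$, observes that a nonzero lift would admit a nonzero Bessel model for some pair $(E,\chi)$, and uses the result of Prasad--Takloo-Bighash \cite{PT} to reduce this to the simultaneous nonvanishing of $E^\times$-periods on $\tau_1$ and on $\mathbf{1}$. The latter forces $\chi$ trivial, and then Waldspurger's theorem \cite{W} converts the period condition into $L(\tfrac12,\tau_1)\cdot L(\tfrac12,\tau_1\otimes\chi_E)\neq 0$, which fails by hypothesis. This argument avoids the inner product machinery altogether, and in particular sidesteps both the restriction to $\Sw^\circ$ and the question of local zeta integral poles.
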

\begin{proof}
To prove the proposition, it suffices to show that the global theta lift of  $\tau \boxtimes \tau_2= \tau_1 \boxtimes {\bf 1}$ from $\GSO(D)$ to $\GSp(4)$ is zero. For this, we note that if $\theta(\tau_1 \boxtimes \tau_2)$ is nonzero, it must have a nonzero Bessel model associated to a pair $(E, \chi)$ where $E$ is a quadratic field extension of $F$ and $\chi$ is an automorphic character of $E^{\times}$ (cf. \cite{PT} for the definition of Bessel models). By \cite[Theorem 3]{PT}, the Bessel model of $\theta(\tau_1 \boxtimes \tau_2)$ with respect to the pair $(E, \chi)$ is nonzero if and only if both $\tau_1$ and $\tau_2$ have nonzero period integrals against the character $\chi^{-1}$ of the maximal torus $E^{\times} \hookrightarrow D^{\times}$. Since $\tau_2$ is the trivial representation of $D^{\times}$, the $(E,\chi)$-Bessel model of $\theta(\tau_1 \boxtimes \tau_2)$ is nonzero if and only if $\chi$ is trivial and $\tau_1$ has nonzero period integral over $E^{\times}$. By a well-known result of Waldspurger \cite{W}, this holds if and only if
\[
    L(\frac{1}{2}, \tau_1) \cdot L(\frac{1}{2}, \tau_1 \otimes \chi_E) \ne 0
\]
where $\chi_E$ is the quadratic character associated to $E/F$ by class field theory.
To summarize, we have shown that a necessary condition for the nonvanishing of $\theta(\tau_1 \boxtimes {\bf 1})$ is the nonvanishing of $L(\frac{1}{2}, \tau_1)$. Since we are assuming that
$L(\frac{1}{2}, \tau_1) =0$, we conclude that $\theta(\tau_1 \boxtimes {\bf 1}) = 0$.
\end{proof}

%%%%%%%%%%%%%%%%%%%%%%%%%%%%%%%%%%%%%%%%%%%%%%%%%%%%%%%%%%%%%

\subsection{On the lowest occurrence conjecture}

%%%%%%%%%%%%%%%%%%%%%%%%%%%%%%%%%%%%%%%%%%%%%%%%%%%%%%%%%%%%%

Now we will consider the non-vanishing problem following \cite{GJS}. First as in \cite{GJS}, we define the notion of lowest occurrence. Let $\pi$ be a cuspidal automorphic representation of $\OO(V,\A)$, where $\dim V=m$. Then define
\[
	\FO(\pi):=\min\{r:\theta_{\psi,2r}(\pi)\neq 0\},
\]
namely $\FO(\pi)$ is the rank of first occurrence of the theta lift to the symplectic tower. Then we also define
\[
	\LO(\pi):=\underset{\mu}{\min}\{\FO(\pi\otimes\mu)\},
\]
where $\mu$ runs through all the automorphic characters on $\mu_2(F)\backslash\mu_2(\A)$, namely $\LO(\pi)$ is the rank of first occurrence ``up to disconnectedness", and following \cite{GJS} we call it the rank of ``lowest occurrence". Note that until this point, we have been fixing the additive character $\psi$, but in this section we consider $\FO(\pi)$ and $\LO(\pi)$ varying as $\psi$ varies.

Next recall that for each irreducible admissible representation $\pi_v$ of $\OO(V,F_v)$, Lapid and Rallis (\cite{Lapid_Rallis}) have defined the local $L$-factor $L_v(s,\pi_v)$ in such a way that it satisfies a number of expected properties which determine it uniquely. Hence it makes sense to consider the complete (standard) $L$-function $L(s,\pi)$ of $\pi$. Then define
\[
	\Pole(\pi):=
	\begin{cases}\max\{s_0\in\{1,2,\dots,\frac{m}{2}-1\}: L(s,\pi) \text{ has a pole at $s=s_0$}\}
				\quad\text{if $m$ is even}\\
		      \max\{s_0\in\{\frac{3}{2},\frac{5}{2},\dots,\frac{m}{2}-1\}: L(s,\pi)
				\text{ has a pole at $s=s_0$}\}\quad\text{if $m$ is odd},
	\end{cases}
\]
whenever it exists. Now if $\Pole(\pi)$ does not exist, \ie the complete $L$-function $L(s,\pi)$ does not have a pole in the above set, then define
\[
	\Zero(\pi):=
	\begin{cases}\min\{s_0\in\{1,2,\dots,\frac{m}{2}\}: L(s,\pi) \text{ does not vanish at $s=s_0$}\}
				\quad\text{if $m$ is even}\\
		      \min\{s_0\in\{\frac{1}{2}, \frac{3}{2},\dots,\frac{m}{2}-1\}: L(s,\pi)
				\text{ does not vanish at $s=s_0$}\}\quad\text{if $m$ is odd}.
	\end{cases}
\]
If $\Zero(\pi)$ is not defined in this way, namely the complete $L$-function $L(s,\pi)$ vanishes at all the $s_0$ in the above set, then we define
\[
	\Zero(\pi):=
	\begin{cases}\frac{m}{2}+1 \quad\text{if $m$ is even}\\
		      \frac{m}{2} \quad\text{if $m$ is odd},
	\end{cases}
\]
so that whenever $\Pole(\pi)$ does not exist, $\Zero(\pi)$ exists.
\vskip 10pt

However, as we have see in the previous section, our non-vanishing theorems are stated in terms of the incomplete $L$-function $L^S(s,\pi)$. So similarly to $\Pole(\pi)$ and $\Zero(\pi)$, let us define its incomplete analogue. Namely for a finite set $S$ of places,
\[
	\Pole^S(\pi):=
	\begin{cases}\max\{s_0\in\{1,2,\dots,\frac{m}{2}-1\}: L^S(s,\pi) \text{ has a pole at $s=s_0$}\}
				\quad\text{if $m$ is even}\\
		      \max\{s_0\in\{\frac{3}{2},\frac{5}{2},\dots,\frac{m}{2}-1\}: L^S(s,\pi)
				\text{ has a pole at $s=s_0$}\}\quad\text{if $m$ is odd},
	\end{cases}
\]
whenever it exists, and when it does not exist we define $\Zero^S(\pi)$ in the analogous way as $\Zero(\pi)$. Those definitions make sense for any finite $S$, and so $\Pole^S(\pi)=\Pole(\pi)$ and $\Zero^S(\pi)=\Zero(\pi)$ if $S$ is empty, though in practice we only consider the case where $S$ contains all the ``bad" places. Clearly,
\[
    \Pole(\pi)\geq\Pole^S(\pi)\quad\text{and}\quad\Zero(\pi)\leq\Zero^S(\pi),
\]
whenever both sides exist, because each local factor $L(s,\pi_v)$ never has a zero.
\vskip 5pt

In any case, using this language, let us state the lowest occurrence conjecture by Ginzburg-Jiang-Soudry [GJS, Conjecture 1.2] as follows.
\begin{conj}[Ginzburg-Jiang-Soudry]\label{C:main}
Let $\pi$ be a cuspidal automorphic representation of $\OO(V,\A)$ with $\dim V=m$ and
assume that $\Pole^S(\pi)$ exists. Then
\[
	\LO(\pi)=\frac{m}{2}-\Pole^S(\pi).
\]
\end{conj}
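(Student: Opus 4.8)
The plan is to split the conjecture into an upper and a lower bound for $\LO(\pi)$. Since $L^{S}(s,\pi\otimes\mu)=L^{S}(s,\pi)$ for every automorphic character $\mu$ of $\mu_{2}(F)\backslash\mu_{2}(\A)$, the invariant $\Pole^{S}$ is unchanged under twisting by $\mu$; hence it suffices to prove (i) the upper bound $\LO(\pi)\le\frac{m}{2}-\Pole^{S}(\pi)$, and (ii) the lower bound $\FO(\tau)\ge\frac{m}{2}-\Pole^{S}(\tau)$ for \emph{every} cuspidal automorphic representation $\tau$ of $\OO(V,\A)$ for which $\Pole^{S}(\tau)$ is defined, since applying (ii) to all twists $\pi\otimes\mu$ then gives $\LO(\pi)\ge\frac{m}{2}-\Pole^{S}(\pi)$. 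For (i), set $r_{0}=\frac{m}{2}-\Pole^{S}(\pi)$, so that $\frac{m-2r_{0}}{2}=\Pole^{S}(\pi)$. Because $\Pole^{S}(\pi)\ge 1$ when $m$ is even and $\ge\frac{3}{2}$ when $m$ is odd, we have $m-2r_{0}=2\Pole^{S}(\pi)>1$, i.e. $m>2r_{0}+1$, and by construction $L^{S}(s,\pi)$ has a pole at $s=\frac{m-2r_{0}}{2}$. Theorem~\ref{T:non_vanishing_1st_term}(a) then produces a character $\mu$ with $\theta_{2r_{0}}(\pi\otimes\mu)\ne 0$, whence $\FO(\pi\otimes\mu)\le r_{0}$ and $\LO(\pi)\le r_{0}$.

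\textbf{The lower bound, via the inner product formula run backwards.} For (ii) I would argue by contradiction. Suppose $\tau$ is cuspidal with $r:=\FO(\tau)<\frac{m}{2}-\Pole^{S}(\tau)$; the degenerate case $r=0$ is checked by hand, so assume $r\ge 1$. Then $\theta_{2r-2}(\tau)=0$, $\theta_{2r}(\tau)$ is a nonzero cuspidal representation of $\Mp_{2r}(\A)$, and since $2r<m-2\Pole^{S}(\tau)$ we lie in the proper first-term range $m>2r+2$. Running the Rallis inner product formula of Theorem~\ref{T:inner_product3} backwards: nonvanishing of $\theta_{2r}(\tau)$ means that, for some data realizable (via Lemma~\ref{L:spherical_function} and Howe duality for the trivial representation) inside the class allowed by that theorem, the Petersson norm of a theta lift is nonzero, and hence
\[
\underset{s=-\rho_{m,r}}{\Res}\Bigl(L^{S}(s+\tfrac{1}{2},\tau)\prod_{v\in S}Z_{v}(s,f_{1,v},f_{2,v},\Phi_{v},\tau_{v})\Bigr)\ne 0 .
\]
On the other hand the corresponding doubling integral is built from a Siegel Eisenstein series $E^{(m,m)}(-,s;\Phi)$, which has at most a simple pole by Kudla--Rallis (see \S\ref{S:regularize}), while the normalizing factor $d^{S}_{m}(s)$ is holomorphic and nonzero at $s=-\rho_{m,r}$; therefore $L^{S}(s+\tfrac{1}{2},\tau)\prod_{v\in S}Z_{v}(s,\dots)$ has a \emph{simple} pole at $s+\tfrac{1}{2}=\frac{m-2r}{2}$. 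If the finite product $\prod_{v\in S}Z_{v}(s,\dots)$ is holomorphic at this point, the pole must be carried by $L^{S}(\,\cdot\,,\tau)$; but $\frac{m-2r}{2}$ lies in $\{1,\dots,\frac{m}{2}-1\}$ (resp. $\{\frac{3}{2},\dots,\frac{m}{2}-1\}$ when $m$ is odd) and exceeds $\Pole^{S}(\tau)$, contradicting the maximality in the definition of $\Pole^{S}(\tau)$. This would finish the proof.

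\textbf{The main obstacle.} The entire argument hinges on the holomorphy of $\prod_{v\in S}Z_{v}(s,\dots)$ at $s=-\rho_{m,r}$, and this is precisely the point at which the statement becomes conjectural. By the local theory of the doubling method (Lapid--Rallis \cite{Lapid_Rallis}) each $Z_{v}(s,\dots)$ is, up to an entire normalized factor and abelian $\zeta_{v}$-factors, the local $L$-factor $L_{v}(s+\tfrac{1}{2},\tau_{v})$, so the pole of the global integral could a priori be contributed by the bad local $L$-factors rather than by the incomplete $L$-function. What the method above really delivers, once one knows (as one does outside $S$, and conjecturally inside $S$) that the relevant normalized local zeta integrals are entire, is the weaker bound $\FO(\tau)\ge\frac{m}{2}-\Pole(\tau)$ in terms of the \emph{complete} standard $L$-function, i.e. $\LO(\pi)\ge\frac{m}{2}-\Pole(\pi)$. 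Combined with (i) this traps $\LO(\pi)$ between $\frac{m}{2}-\Pole(\pi)$ and $\frac{m}{2}-\Pole^{S}(\pi)$, and reduces the Ginzburg--Jiang--Soudry conjecture to the purely local assertion $\Pole(\pi)=\Pole^{S}(\pi)$, equivalently that $L_{v}(s,\tau_{v})$ has no pole in the range $\{1,\dots,\frac{m}{2}-1\}$ for $v\in S$. I expect this to be within reach when $\tau$ is everywhere tempered or generic (there the local factors are nonvanishing for $\mathrm{Re}(s)>0$), and to be the genuine difficulty in general; this, together with the still-incomplete state of the local theory of doubling zeta integrals, is why the statement is recorded only as a conjecture. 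For the analogous problem in the second-term range one expects $\LO$ to be governed instead by $\Zero^{S}(\pi)$, and the same template applies with Theorem~\ref{T:inner_product_2nd_term2} and the weak second term identity (Theorem~\ref{T:Second_Term_Identity}) replacing Theorem~\ref{T:inner_product3}.
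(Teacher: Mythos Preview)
The statement is recorded in the paper as a conjecture and is not proved there; what the paper does prove is exactly the sandwich you arrive at in your final paragraph, namely Proposition~\ref{P:lowest_occurrence}(a): assuming the holomorphy of the normalized local zeta integrals (Conjecture~\ref{C:GCD}(a)), one has
\[
\frac{m}{2}-\Pole(\pi)\le\LO(\pi)\le\frac{m}{2}-\Pole^{S}(\pi).
\]
Your overall analysis---upper bound from Theorem~\ref{T:non_vanishing_1st_term}(a), lower bound by running the doubling/inner product formula backwards, with the obstruction being possible poles of the bad local zeta integrals---is precisely the paper's approach, and your identification of the remaining difficulty as the purely local assertion $\Pole(\pi)=\Pole^{S}(\pi)$ is on target.

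There is one technical point where your argument and the paper's diverge. For the lower bound you invoke Theorem~\ref{T:inner_product3}, which as stated requires $\varphi'\in\Sw((\V^{+}\otimes W_{c})(\A))^{\circ}$, and you assert that the data witnessing $\theta_{2r}(\tau)\ne 0$ can be massaged into this class ``via Lemma~\ref{L:spherical_function} and Howe duality for the trivial representation''. This is not justified: Lemma~\ref{L:spherical_function} only produces such $\varphi$ when the associated section lies in $\sigma_{v}^{+}$ at \emph{every} place, and there is no reason the data coming from an arbitrary nonzero theta lift should satisfy this. The paper circumvents this by observing (in the proof of Proposition~\ref{P:lowest_occurrence}) that in the first term range one does not need the weak first term identity of Proposition~\ref{P:first_term2} at all: the full first term identity of Moeglin and Jiang--Soudry (Proposition~\ref{P:first_term}) already holds for \emph{all} $\varphi\in\Sw((\V^{+}\otimes W)(\A))$, so the inner product computation goes through without any restriction on the Schwartz function. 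With that correction your argument is the paper's.
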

Note that in particular, this conjecture implies that for $\pi$ satisfying the assumption of the conjecture, $\LO(\pi)$ is independent of the choice of $\psi$.
\vskip 10pt

Now the non-vanishing theorems we proved in the previous section can be stated as follows, recalling that part (a) is a theorem of Ginzburg-Jiang-Soudry [GJS, Thm. 1.1].
\vskip 5pt

\begin{thm}
Let $\pi$ be a cuspidal automorphic representation of $\OO(V,\A)$, $S$ a finite set of places containing all the archimedean places and the ramified places.
\begin{enumerate}[(a)]
\item Assume $\Pole^S(\pi)$ exists. Then
\[
	\LO(\pi)\leq\frac{m}{2}-\Pole^S(\pi).
\]
\item Assume $\Pole^S(\pi)$ does not exist. Further assume $\dim V$ is even, and there is a place $v$ such that $\pi_v\cong\pi_v\otimes\det$. Then
\[
	\LO(\pi)\leq\frac{m}{2}-1+\Zero^S(\pi).
\]
\end{enumerate}
\end{thm}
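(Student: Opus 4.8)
The plan is to derive both inequalities directly from the non-vanishing theorems of \S\ref{S:non_vanishing}, namely Theorem \ref{T:non_vanishing_1st_term} in the first term range and Theorem \ref{T:non_vanishing_2nd_term} in the second term range. In each case the task reduces to choosing the rank $r$ for which the pole (respectively zero) data recorded by $\Pole^S(\pi)$ (respectively $\Zero^S(\pi)$) is exactly the $L$-function hypothesis of the relevant theorem, and then to verifying that this $r$ falls in the range required by that theorem. Once such an $r$ and a character $\mu$ with $\theta_{2r}(\pi\otimes\mu)\neq 0$ are produced, we obtain $\FO(\pi\otimes\mu)\leq r$ and hence $\LO(\pi)\leq r$, since $\LO(\pi)=\min_\mu\FO(\pi\otimes\mu)$.

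For part (a), I would set $r=\frac{m}{2}-\Pole^S(\pi)$, where $s_0:=\Pole^S(\pi)$ satisfies $s_0\geq 1$ if $m$ is even and $s_0\geq\frac{3}{2}$ if $m$ is odd, and $s_0\leq\frac{m}{2}-1$ in either case by the definition of the pole set. The lower bound gives $m-2r=2s_0>1$, i.e.\ $m>2r+1$ (the non-boundary first term range), and the upper bound gives $r\geq 1$. Since $\frac{m-2r}{2}=s_0$ is by assumption a pole of $L^S(s,\pi)$, Theorem \ref{T:non_vanishing_1st_term}(a) produces a character $\mu$ with $\theta_{2r}(\pi\otimes\mu)\neq 0$, whence $\LO(\pi)\leq r=\frac{m}{2}-\Pole^S(\pi)$. (This also yields a second proof of the cited result of Ginzburg--Jiang--Soudry.)

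For part (b), assume $m=\dim V$ is even, $\Pole^S(\pi)$ does not exist, and $\pi_v\cong\pi_v\otimes\det$ at some place $v$; write $z_0:=\Zero^S(\pi)$, so that either $z_0\in\{1,\dots,\frac{m}{2}\}$ or $z_0=\frac{m}{2}+1$. In the first case I would set $r=\frac{m}{2}+z_0-1$, so that $1+\frac{2r-m}{2}=z_0$ and the inequalities $1\leq z_0\leq\frac{m}{2}$ become precisely $r+1\leq m\leq 2r$, the second term range. Hypothesis (i) of Theorem \ref{T:non_vanishing_2nd_term} is our assumption on $v$, and hypothesis (ii), namely $L^S(1+\frac{2r-m}{2},\pi)=L^S(z_0,\pi)\neq 0$, holds by the definition of $\Zero^S$ (a pole being allowed there); that theorem then yields $\mu$ with $\theta_{2r}(\pi\otimes\mu)\neq 0$ and hence $\LO(\pi)\leq r=\frac{m}{2}-1+\Zero^S(\pi)$. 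The remaining case $z_0=\frac{m}{2}+1$, in which $L^S(s,\pi)$ vanishes at every $s\in\{1,\dots,\frac{m}{2}\}$, must be treated separately: here one invokes the stable-range non-vanishing of theta lifts to conclude $\theta_{2m}(\pi)\neq 0$, so that $\LO(\pi)\leq\FO(\pi)\leq m=\frac{m}{2}-1+(\frac{m}{2}+1)$.

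There is essentially no analytic obstacle beyond the two cited theorems; the only points that require care are the $\frac{1}{2}$-shift in the argument of $L^S$ coming from the Rallis inner product formula, so that $\Pole^S$ and $\Zero^S$ are matched to the correct rank $r$ and the correct range ($m>2r+1$ in (a), $r+1\leq m\leq 2r$ in (b)), and the degenerate value $\Zero^S(\pi)=\frac{m}{2}+1$, which lies outside the reach of the second term identity and must instead be covered by stable-range non-vanishing.
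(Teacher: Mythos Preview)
Your proposal is correct and follows the same approach as the paper, which presents this theorem simply as a restatement of Theorems \ref{T:non_vanishing_1st_term} and \ref{T:non_vanishing_2nd_term} without giving a separate argument. Your version is in fact more explicit: you verify the range conditions for $r$ carefully, and you handle the degenerate case $\Zero^S(\pi)=\frac{m}{2}+1$ via stable-range non-vanishing of the global theta lift, a point the paper does not spell out.
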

\vskip 5pt

If one could show that the $\leq$ is actually $=$ for the first case, then the conjecture by \cite{GJS} would be proven. Also analogously to their conjecture, one might wonder if the same is true for the second case. In what follows, we will investigate this issue.
\vskip 10pt

First of all, to make $\leq$ into $=$ in the above theorem is almost synonymous to the converse of our non-vanishing theorem. However one difficulty is in how to control the analytic behavior of the ``bad factors" of the local zeta integrals present in the inner product formula. One possible way to get around this difficulty is to consider the complete $L$-function and assume the following expected property of the local $L$-factor.

\begin{conj}
Let $\pi_v$ be an irreducible admissible representation of $\OO(V,F_v)$, where $v$ can be archimedean or non-archimedean. Then the normalized zeta integral
\[
	Z_v^*(s,f_{1,v},f_{2,v},\Phi_v,\pi_v):=\frac{Z_v(s,f_{1,v},f_{2,v},\Phi_v,\pi_v)}{L_v(s+\frac{1}{2},\pi_v)}
\]
is holomorphic for all $s\in\C$ and for any choice of $f_{i,v}$ and standard $\Phi_v$.
\end{conj}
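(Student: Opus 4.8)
\medskip
\noindent\emph{Sketch of a possible approach.} The plan is to treat the non-archimedean and the archimedean places by rather different arguments, in each case bounding the poles of the unnormalized doubling zeta integral $Z_v(s,f_{1,v},f_{2,v},\Phi_v,\pi_v)$ by those of the Lapid--Rallis factor $L_v(s+\tfrac{1}{2},\pi_v)$; since $L_v$ never vanishes, what has to be shown is that the pole set of $Z_v$, with multiplicities, is contained in that of $L_v(s+\tfrac{1}{2},\pi_v)$.

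First I would dispose of a non-archimedean $v$. By Bernstein's principle of meromorphic continuation, as used in the doubling method of Piatetski-Shapiro--Rallis and of Lapid--Rallis (\cite{Lapid_Rallis}), the integral $Z_v(s,\cdots)$ is a rational function of $q_v^{-s}$, and as $f_{1,v}$, $f_{2,v}$ and the standard section $\Phi_v$ vary these functions span a fractional ideal $\mathcal{I}(\pi_v)$ of $\C[q_v^{s},q_v^{-s}]$; the claim is precisely that $\mathcal{I}(\pi_v)\subseteq L_v(s+\tfrac{1}{2},\pi_v)\cdot\C[q_v^{s},q_v^{-s}]$. I would prove this by induction on the cuspidal support of $\pi_v$. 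In the base case, $\pi_v$ supercuspidal, the matrix coefficient $\langle\pi_v(g)f_{1,v},f_{2,v}\rangle$ is compactly supported on $\OO(V,F_v)$, while on the corresponding compact subset of $\OO(\V,F_v)$ a standard section is entire in $s$; hence $Z_v$ is a Laurent polynomial in $q_v^{-s}$ and $Z_v^{*}$ is automatically holomorphic. For the inductive step I would realize $\pi_v$ as a subquotient of a representation parabolically induced from a supercuspidal $\rho$ of a proper Levi, unfold the doubling integral along the Siegel parabolic of $\OO(\V,F_v)$ so as to express $Z_v$ in terms of zeta integrals attached to $\rho$ and of elementary $\GL$-integrals, and then invoke the multiplicativity of the Lapid--Rallis $\gamma$-factor together with the local functional equation $Z_v(s,\Phi_v)=\gamma_v(s,\pi_v,\psi_v)^{-1}\,Z_v(-s,M_v(s)\Phi_v)$ to transfer holomorphy of the normalized integrals from $\rho$ to $\pi_v$; one also uses that passing from an induced representation to a subquotient can only remove poles.

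At an archimedean $v$ there is no analogous cuspidal-support induction, and $L_v(s+\tfrac{1}{2},\pi_v)$ is the Artin-type $L$-factor attached by Lapid--Rallis to the Langlands parameter of $\pi_v$. Here I would first show, for $K$-finite data, that $Z_v$ extends meromorphically with only finitely many poles of bounded order in any vertical strip, using the convergence estimates for the doubling integral together with the Casselman--Wallach theory of asymptotic expansions of matrix coefficients and the parallel expansions of sections of the degenerate principal series. Then, via the Langlands classification and smooth globalization, I would reduce to the case in which $\pi_v$ is a Langlands quotient of a principal series, compute $Z_v$ for such $\pi_v$ by iterating rank-one and $\GL_1$ integrals (the spherical case being the classical Gindikin--Karpelevich computation), and match the resulting $\Gamma$-factors against $L_v(s+\tfrac{1}{2},\pi_v)$, again checking that the passage to the Langlands quotient only removes poles.

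The main obstacle will be the archimedean analysis: controlling the location and order of the poles of $Z_v$ for an \emph{arbitrary} $K$-finite section $\Phi_v$, not merely the spherical one, requires a delicate study of the interaction between the asymptotics of $\Phi_v$ and those of the matrix coefficient, carried out uniformly enough to recognize exactly the Artin $L$-factor. A further, more structural difficulty, present at all places, is that $L_v(s+\tfrac{1}{2},\pi_v)$ is characterized in \cite{Lapid_Rallis} only axiomatically --- by multiplicativity, the unramified formula, and compatibility with the $\gamma$-factor --- so one must verify that the generator of $\mathcal{I}(\pi_v)$ produced above really coincides with $L_v(s+\tfrac{1}{2},\pi_v)$ and is not a proper divisor of it; this amounts to proving the complementary non-vanishing statement $Z_v^{*}(s_0,\cdots)\neq 0$ for suitable data at the relevant points $s_0$.
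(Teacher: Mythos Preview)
The statement you are attempting to prove is presented in the paper as a \emph{conjecture}, not a theorem: the paper gives no proof whatsoever, and in fact immediately after stating it the authors write ``We will take up this issue in our later work.'' So there is no paper proof to compare against; you are attacking an open problem of the paper.

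As for the content of your sketch: the non-archimedean outline (fractional ideal in $\C[q_v^{\pm s}]$, supercuspidal base case via compact support of matrix coefficients, inductive step via parabolic induction and multiplicativity of the Lapid--Rallis $\gamma$-factor) is the natural strategy and is essentially how such ``GCD'' statements are approached in the doubling literature. The archimedean outline is much vaguer, and you yourself flag the two real obstacles: controlling poles for \emph{arbitrary} $K$-finite sections uniformly enough to recover exactly the Artin $\Gamma$-factors, and the structural issue that the Lapid--Rallis $L$-factor is defined axiomatically rather than as the GCD of the zeta integrals, so one must separately show the generator of the fractional ideal is not a \emph{proper} divisor of $L_v(s+\tfrac{1}{2},\pi_v)$. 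These are precisely the points where the argument is incomplete; what you have written is a reasonable research plan, not a proof, and the paper does not claim otherwise. Note also that for the applications in the paper only the much weaker Conjecture~\ref{C:GCD} (holomorphy at a finite list of half-integer points) is actually needed.
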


We will take up this issue in our later work. However for the purpose of this paper, we do not need the full strength of this conjecture but the following weaker version suffices.

\begin{conj}\label{C:GCD}
Let $\pi_v$ be an irreducible admissible representation of $\OO(V,F_v)$, where $v$ can be archimedean or non-archimedean. The we have the following.
\begin{enumerate}[(a)]
\item The normalized zeta integral $Z_v^*(s,f_{1,v},f_{2,v},\Phi_v,\pi_v)$ is holomorphic at all $s\in\C$ such that
\[
    s+\frac{1}{2}\in \begin{cases}\{1,2,\dots,\frac{m}{2}-1\}&\text{if $m$ is even}\\
                                  \{\frac{3}{2},\frac{5}{2},\dots,\frac{m}{2}-1\}&\text{if $m$ is odd}.
                     \end{cases}
\]
\item The normalized zeta integral $Z_v^*(s,f_{1,v},f_{2,v},\Phi_v,\pi_v)$ is holomorphic at all $s\in\C$ such that
\[
    s+\frac{1}{2}\in \begin{cases}\{1,2,\dots,\frac{m}{2}+1\}&\text{if $m$ is even}\\
                                  \{\frac{1}{2},\frac{3}{2},\dots,\frac{m}{2}\}&\text{if $m$ is odd}.
                     \end{cases}
\]
\end{enumerate}
\end{conj}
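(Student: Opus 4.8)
\emph{Proof strategy.} The statement is purely local, so fix a place $v$ of $F$ and write $G_v=\OO(\V,F_v)$ with Siegel parabolic $P_v$. Since the Lapid--Rallis factor $L_v(s+\tfrac12,\pi_v)$ is entire and nowhere vanishing, $Z_v^*(s,f_{1,v},f_{2,v},\Phi_v,\pi_v)$ is automatically holomorphic at any $s$ where the doubling zeta integral $Z_v(s,f_{1,v},f_{2,v},\Phi_v,\pi_v)$ is holomorphic; thus the content of the conjecture is the bound: at each of the finitely many points $s_0$ in the two lists, the order of the pole of $Z_v(s,\cdots)$ at $s=s_0$ is at most the order of the pole of $L_v(s+\tfrac12,\pi_v)$ there. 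This is the local ``greatest common divisor'' property of the doubling method for $\OO(\V)$, restricted to exactly those arguments of the standard $L$-function entering the definitions of $\Pole^S(\pi)$ and $\Zero^S(\pi)$. The plan is to establish it at these points by combining the basic identity of the doubling method with the structure of the degenerate principal series $\Ind_{P_v}^{G_v}|\,|^{s_0}$ recorded in \S\ref{S:non_vanishing} (the maximal submodules $\sigma_v^{\pm}$ and irreducible quotients $\tau_v^{\pm}$).

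For $v$ non-archimedean this ought to be routine. For $s$ in the range of absolute convergence the trilinear form $(\Phi_v,f_{1,v},f_{2,v})\mapsto Z_v(s,\cdots)$ lies in $\Hom_{\Delta G_v}\bigl(\Ind_{P_v}^{G_v\times G_v}|\,|^{s}\otimes\pi_v\otimes\pi_v^{\vee},\C\bigr)$, which is at most one-dimensional (Piatetski-Shapiro--Rallis, Kudla--Rallis), and by Bernstein's theorem of meromorphic continuation $Z_v(s,\cdots)$ is, for fixed data, a rational function of $q_v^{-s}$. The unramified computation writes the normalized spherical integral as $L_v(s+\tfrac12,\pi_v)$ divided by a product of local zeta factors, and one checks directly that these zeta values are finite and nonzero at each $s_0$ on the two lists. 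Passage from spherical to general $K$-finite data is then governed by the one-dimensionality together with the structure of $\Ind_{P_v}^{G_v}|\,|^{s_0}$ from \cite{Ban_Jantzen}: one pulls a putative pole of $Z_v$ at $s_0$ out of a residual intertwining operator, identifies its image with one of the subquotients $\tau_v^{\pm}$ occurring in the doubled induced representation, and matches it against the Lapid--Rallis normalizing factor. For the finitely many half-integral (respectively integral) points $s_0$ in part (a), and the single extra edge point in part (b), one may alternatively invoke the cases of the $p$-adic GCD property already established for orthogonal doubling.

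The archimedean places are where I expect the real difficulty to lie, and they are presumably why the authors leave this as a conjecture. Over $\R$ and $\C$ the space $\Ind_{P_v}^{G_v}|\,|^{s_0}$ is reducible at the points of interest (\cite{Loke}, \cite{Lee-Zhu}), and one lacks a complete analysis of $Z_v(s,\cdots)$ for general $K$-finite sections --- only for the Gaussian and its $G_v$-translates, as exploited in the spherical theory of \S\ref{S:identity}. The plan is to analyze the radial part of the integral defining $Z_v(s,\cdots)$, so as to locate its possible poles at $s_0$ among the poles of the standard intertwining operator $M_v(s)$, and then to use the explicit $\Gamma$-factor normalization of $M_v(s)$ together with the classification of the Langlands quotients $\tau_v^{\pm}$ and the $(\mathfrak g_v,K_v)$-module structure of $\pi_v$ to show that the order of any such pole is absorbed by that of $L_v(s_0+\tfrac12,\pi_v)$; for part (b) in the even case, whose list includes the edge value $s_0+\tfrac12=\tfrac m2+1$ lying beyond $\rho_m$, one further needs the vanishing of the pertinent residue of $M_v(s)$, which again reduces to a property of the Lapid--Rallis factor. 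Carrying this through rigorously appears to require either a case-by-case argument over the unitary dual of $\OO(V,F_v)$ at these parameters, or an extension of the Kudla--Rallis archimedean doubling computations to non-spherical data; this is the step I expect to dominate the work, and it is the one the authors defer to a future paper.
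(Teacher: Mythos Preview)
The statement you are addressing is a \emph{conjecture} in the paper, not a proved result: the authors explicitly say ``We will take up this issue in our later work'' and use Conjecture~\ref{C:GCD} only as a hypothesis in Proposition~\ref{P:lowest_occurrence}. There is therefore no proof in the paper to compare your proposal against. You yourself recognize this --- your text is labeled a ``proof strategy'' and closes by noting that the archimedean case is ``the one the authors defer to a future paper'' --- so in that sense your reading of the situation is accurate.

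That said, one point in your write-up needs correction. You assert that ``the Lapid--Rallis factor $L_v(s+\tfrac12,\pi_v)$ is entire and nowhere vanishing.'' It is nowhere vanishing, but it is certainly \emph{not} entire: local $L$-factors are inverses of polynomials in $q_v^{-s}$ (non-archimedean) or products of $\Gamma$-factors (archimedean), and they have poles. Indeed, your very next sentence relies on this, since you speak of ``the order of the pole of $L_v(s+\tfrac12,\pi_v)$.'' The implication you want --- that $Z_v^*$ is holomorphic wherever $Z_v$ is --- follows from non-vanishing alone (so that $1/L_v$ is holomorphic), and you should drop the word ``entire.'' Beyond this slip, your outline is a reasonable sketch of what a proof of the GCD property would have to confront, but it remains a plan rather than an argument: the multiplicity-one statement, the matching of residues of intertwining operators with Lapid--Rallis normalizing factors, and especially the archimedean analysis are each substantial, and none is carried out here.
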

\vskip 10pt

The reason we have two slightly different versions is that, as we will see below, when both $\Pole^S(\pi)$ and $\Pole(\pi)$ exist, the first version suffices, and when both $\Zero^S(\pi)$ and $\Zero(\pi)$ exist, the second one does.
\vskip 10pt

Another difficulty lies in the fact that our second term identity is ``weak" in the sense that it works only for those $\varphi$'s in $\Sw((\V^{+}\otimes W)(\A))^\circ$. Of course, it is also expected that the second term identity is true in the strong form. Namely, we have

\begin{conj}\label{C:SW}
The weak second term identity (Theorem \ref{T:Second_Term_Identity}) as well as the weak first term identity on the boundary (Proposition \ref{P:weak_first_term}) can be extended to all $\varphi\in\Sw((\V^{+}\otimes W)(\A))$.
\end{conj}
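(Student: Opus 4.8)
Conjecture \ref{C:SW} is, as the discussion preceding it indicates, open, so what follows is a plan rather than a proof. The natural first move is to reformulate the weak second term identity (Theorem \ref{T:Second_Term_Identity}): it says precisely that the $G(\A)$-intertwining map
\begin{align*}
    \mathcal{D}\colon \Sw((\V^+\otimes W)(\A)) &\longrightarrow \Aut(G)\slash\sR,\\
    \varphi &\longmapsto B_{-1}^{(m,r)}(\varphi) + |D|^{-m\rho_{m,r}}\prod_{i=0}^{2r-m}\frac{\xi(i)}{\xi(2r-2i)}\,B_0^{(m,m-r-1)}(\Ik(\varphi)) - A_0^{(m,r)}(\varphi)
\end{align*}
vanishes on the $G(\A)$-span $\Sw((\V^+\otimes W)(\A))^\circ$ of the spherical function $\varphi^0$ (all three summands are intertwining, the last two only modulo $\sR$, as recorded in \S\ref{S:weak_identity}). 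The full conjecture is the assertion $\mathcal{D}\equiv 0$ on all of $\Sw((\V^+\otimes W)(\A))$; the weak first term identity on the boundary (Proposition \ref{P:weak_first_term}) is the $m=2r+1$ specialization and should be subsumed by any argument that handles $\mathcal{D}$.

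The plan would be to imitate Ikeda \cite{Ikeda} and Ichino \cite{Ichino}, who removed the spherical/$K$-finiteness restriction from the \emph{first} term identity by producing that identity directly for arbitrary Schwartz data, using the spherical computation only to pin down constants. Concretely: enlarge $W$ to $W\oplus W'$ with $\dim W'$ chosen so that the pair $(\OO(\V),\Sp(W\oplus W'))$ lies in a range where a Siegel--Weil identity is already available for all Schwartz functions; realize $\E^{(m,r)}(-,s;\varphi)$, $E^{(m,m)}(-,s;\Phi^{(m,r)}_\varphi)$ and the complementary series $\E^{(m,m-r-1)}(-,s;\Ik(\varphi))$ as specializations of Eisenstein series attached to this larger pair; and extract the desired relation by taking the appropriate term of a Laurent expansion in an auxiliary parameter, using Lemma \ref{L:no_residue}-type vanishing to discard residual contributions. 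The constants would then be read off by comparison with the spherical second term identity (Theorem \ref{T:Spherical_Second_Term}), exactly as in \S\ref{S:weak_identity}.

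The step I expect to be the real obstacle is the extraction itself, and the reason is structural. The \emph{leading} Laurent coefficient (here $B_{-2}^{(m,r)}$, or $B_{-1}^{(m,r)}$ in the first term range) is $H(\A)$-invariant, and the entire Kudla--Rallis regularization is engineered so as to identify such an $H(\A)$-invariant automorphic form with a definite Eisenstein series; this is exactly what makes the Ikeda--Ichino extension of the first term identity go through. The \emph{subleading} coefficient $B_{-1}^{(m,r)}$ that appears in the second term identity has no such invariance, so there is no direct handle on it for non-spherical $\varphi$. In this paper, the double pole of $\E^{(m,r)}(-,s;\varphi)$ at $s=\tfrac{r+1}{2}$ and, crucially, the precise coefficient of the complementary term $B_0^{(m,m-r-1)}(\Ik(\varphi))$ are controlled only through the explicit constant-term formulas of Proposition \ref{P:constant_term} together with the linear-independence Lemma \ref{L:key_lemma}, both of which are tied to the spherical section; and since $\Sw((\V^+\otimes W)(\A))$ is not generated over $G(\A)$ (acting by the Weil representation in the orthogonal variable) by $\varphi^0$, the vanishing of $\mathcal{D}$ on $\Sw((\V^+\otimes W)(\A))^\circ$ plus $G(\A)$-equivariance does not by itself force $\mathcal{D}\equiv0$: a priori $\mathcal{D}$ could be supported on a composition factor of $\Ind_{P_r(\A)}^{G(\A)}|\,|^{(r+1)/2}$ not reached from the spherical orbit. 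Making the large-group argument actually output the correct coefficient of $B_0^{(m,m-r-1)}$ for arbitrary $\varphi$ — rather than an identity valid only up to an uncontrolled error coming from $\Ik^{(m,m-r-1)}$ — is the crux; a successful resolution would presumably also push the full (non-weak) second term identity beyond the low-rank cases currently known.
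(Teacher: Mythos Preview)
You have correctly identified that Conjecture \ref{C:SW} is stated in the paper as an open conjecture, with no proof offered; the paper's own Remark following Theorem \ref{T:Second_Term_Identity} says explicitly that ``there seems to be no known method that possibly allows one to extend the weak second term identity to full generality.'' So there is nothing to compare your proposal against: the paper does not prove this statement.

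Your diagnosis of the obstruction is accurate and matches the paper's viewpoint. The Ikeda--Ichino machinery for the first term identity relies on the $H(\A)$-invariance of the leading Laurent coefficient, which allows one to characterize it intrinsically and thus to identify it for arbitrary Schwartz data; the subleading coefficient $B_{-1}^{(m,r)}$ in the second term range has no such invariance, and the spherical constant-term calculus (Proposition \ref{P:constant_term}, Lemma \ref{L:key_lemma}) that produces the correct complementary coefficient does not propagate beyond the $G(\A)$-orbit of $\varphi^0$. Your observation that $\mathcal{D}$ could a priori be supported on a composition factor not reached from the spherical vector is exactly the gap. The ``enlarge $W$ and specialize'' strategy you sketch is plausible in outline but, as you note, the extraction of the subleading term with the right constant is where it would need a genuinely new idea; the paper does not supply one, and neither does your proposal, which is the honest state of affairs.
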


Then we can prove the following
\begin{prop}\label{P:lowest_occurrence}
Let $\pi$ be a cuspidal automorphic representation of $\OO(V,\A)$ with $\dim V=m$.
\begin{enumerate}[(a)]
\item Assume Conjecture \ref{C:GCD} (a), and both $\Pole(\pi)$ and $\Pole^S(\pi)$ exist. Then
\[
	\frac{m}{2}-\Pole(\pi)\leq\LO(\pi)\leq\frac{m}{2}-\Pole^S(\pi).
\]
\item Assume Conjecture \ref{C:GCD} (b) and \ref{C:SW} and both $\Zero(\pi)$ and $\Zero^S(\pi)$ exist. Further assume that $m$ is even, and there is a place $v$ such that $\pi_v\cong\pi_v\otimes\det$.Then
\[
	\frac{m}{2}-1+\Zero(\pi)\leq\LO(\pi)\leq\frac{m}{2}-1+\Zero^S(\pi).
\]
\end{enumerate}
\end{prop}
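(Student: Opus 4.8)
The plan is to split each part into its two inequalities. The upper bounds $\LO(\pi)\le\frac m2-\Pole^S(\pi)$ in (a) and $\LO(\pi)\le\frac m2-1+\Zero^S(\pi)$ in (b) are nothing but the preceding non-vanishing theorems (Theorems \ref{T:non_vanishing_1st_term} and \ref{T:non_vanishing_2nd_term}) read off at the relevant rank $r$, and these require none of the conjectures. The substance is therefore the two \emph{lower} bounds, which are converses to the non-vanishing theorems, and it is precisely for these that Conjectures \ref{C:GCD} and \ref{C:SW} are invoked. In both cases I would prove the pointwise statement for $\FO$ in place of $\LO$ — namely $\FO(\pi)\ge\frac m2-\Pole(\pi)$ in (a) and $\FO(\pi)\ge\frac m2-1+\Zero(\pi)$ in (b) — and then take the minimum over twists by $\mu\circ\det$, using that the standard $L$-function, and hence $\Pole$ and $\Zero$, is invariant under such twists (as already used in the proof of Theorem \ref{T:non_vanishing_1st_term}).

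For the lower bound in (a), set $r_0=\FO(\pi)$, so that $\theta_{2r_0}(\pi)$ is a nonzero cusp form and $\theta_{2r_0-2}(\pi)=0$, and suppose for contradiction that $r_0<\frac m2-\Pole(\pi)$. Since $\Pole(\pi)\ge 1$ this forces $2r_0<m-2$, i.e.\ we are strictly in the first term range $m>2r_0+1$. Choosing $f,\phi$ with $\theta_{2r_0}(f,\phi)\ne0$ and, after the usual reduction, factorizable data, Proposition \ref{P:inner_product} together with the first term identity (Proposition \ref{P:first_term2}) and the doubling integral give
\[
0\ne\langle\theta_{2r_0}(f,\phi),\theta_{2r_0}(f,\phi)\rangle=\mathrm{const}\cdot\underset{s=-\rho_{m,r_0}}{\Res}\Big(\frac{1}{d^S_m(s)}\,L^S(s+\tfrac12,\pi)\prod_{v\in S}Z_v(s,f_{1,v},f_{2,v},\Phi_v,\pi_v)\Big).
\]
Writing $Z_v=Z_v^*\cdot L_v(s+\tfrac12,\pi_v)$, the right-hand side becomes a nonzero constant times $\Res_{s=-\rho_{m,r_0}}\big(d^S_m(s)^{-1}L(s+\tfrac12,\pi)\prod_{v\in S}Z_v^*\big)$; since $d^S_m$ is holomorphic and nonzero at $s=-\rho_{m,r_0}$ and, by Conjecture \ref{C:GCD}(a), each $Z_v^*$ is holomorphic there (the relevant point is $s+\tfrac12=\tfrac{m-2r_0}{2}$, which lies in the set defining $\Pole(\pi)$), we conclude that $L(s,\pi)$ must have a pole at $s_0=\tfrac{m-2r_0}{2}$. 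But $s_0>\Pole(\pi)$ and $s_0$ lies in that set, contradicting the maximality in the definition of $\Pole(\pi)$.

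For the lower bound in (b), the same first-term-range computation applied to a $\pi$ for which $\Pole(\pi)$ does \emph{not} exist shows that $\FO(\pi)\ge\frac m2$, so $r_0=\FO(\pi)$ lies in the second term range $\frac m2\le r_0\le m-1$ (the stable-range value $r_0=m$ is consistent with the claimed bound automatically, since $\Zero(\pi)\le\frac m2+1$). Now $\theta_{2r_0}(\pi)\ne0$, $\theta_{2r_0-2}(\pi)=0$, and Proposition \ref{P:inner_product_2nd_term1} with the doubling integral (Theorem \ref{T:inner_product_2nd_term2}) express a suitable nonzero inner product as a constant times $\Val_{s=\rho_{m,r_0}}\big(d^S_m(s)^{-1}L^S(s+\tfrac12,\pi)\prod_{v\in S}Z_v\big)$ — but this step requires Conjecture \ref{C:SW}, which upgrades the weak second term identity (hence Proposition \ref{P:inner_product_2nd_term1} and Theorem \ref{T:inner_product_2nd_term2}) to an arbitrary Schwartz datum rather than one in the $\OO(\V,\A)$-span of $\varphi^0$. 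Normalizing the $Z_v$ as above and applying Conjecture \ref{C:GCD}(b) to get holomorphy of $\prod_{v\in S}Z_v^*$ at $s+\tfrac12=1+\tfrac{2r_0-m}{2}\in\{1,\dots,\tfrac m2\}$, together with the holomorphy of $L(s,\pi)$ there (guaranteed by $\Pole(\pi)$ not existing when $r_0\le m-2$, and by the known holomorphy of the standard $L$-function on $\Re s\ge\frac m2$ at the endpoint $r_0=m-1$), we obtain $L(1+\tfrac{2r_0-m}{2},\pi)\ne0$. By the definition of $\Zero(\pi)$ this forces $1+\tfrac{2r_0-m}{2}\ge\Zero(\pi)$, i.e.\ $r_0\ge\frac m2-1+\Zero(\pi)$, and taking the minimum over twists gives the assertion.

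The hard part is the control of the ``bad'' local zeta integrals $Z_v$, $v\in S$: the whole argument works only because, after normalizing by the local $L$-factors, these remaining factors are holomorphic and do not conspire to cancel the pole (resp.\ to create a spurious nonzero value) of the global $L$-function at the point of interest — this is exactly the content of Conjecture \ref{C:GCD}. In the second term range there is the further obstruction that the Rallis inner product formula is presently available only for Schwartz data in the span of the spherical function, so one must first extend it to all data, which is Conjecture \ref{C:SW}. A last, purely technical, point is the endpoint $r_0=m-1$ in (b), where the relevant $L$-value sits at $s=\frac m2$, just outside the range controlled by the hypothesis ``$\Pole(\pi)$ does not exist'', so one separately invokes the known holomorphy of the standard $L$-function there.
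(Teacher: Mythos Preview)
Your approach is essentially the one taken in the paper: the upper bounds are the non-vanishing theorems, and the lower bounds come from reading the inner product formula in reverse, using Conjecture \ref{C:GCD} to control the normalized local zetas and (in the second term range) Conjecture \ref{C:SW} to remove the spherical-span restriction. The paper works directly with $r=\LO(\pi)$ and the twist $\mu$ realizing it, rather than proving the bound for $\FO$ and then minimizing over twists, but this is cosmetic.

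One point to fix in part (a): you invoke Proposition \ref{P:first_term2} for the first term identity, but that result is the \emph{weak} version, valid only for $\varphi'$ in the $\OO(\V,\A)$-span of the spherical function. For the lower bound you start from an arbitrary $\phi$ making the inner product nonzero, so $\sigma(\phi\otimes\bar\phi)$ is an arbitrary element of $\Sw((\V^+\otimes W)(\A))$ and there is no reason it lies in the image under $\Ik$ of the spherical span. The paper makes exactly this observation: for the pole range one should instead use Proposition \ref{P:first_term} (the Moeglin--Jiang--Soudry identity), which already holds for \emph{all} Schwartz functions, so that no analogue of Conjecture \ref{C:SW} is needed here. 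With that citation corrected, the displayed residue formula you wrote down is what one obtains, and your contradiction argument goes through.

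Your treatment of the endpoint $r_0=m-1$ in (b) and the trivial case $r_0=m$ is more explicit than the paper's, which simply asserts holomorphy ``by our assumption''; this extra care is appropriate.
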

\begin{proof}
First let us consider (b). We already know that $\LO(\pi)\leq\frac{m}{2}-1+\Zero^S(\pi)$. Let $r=\LO(\pi)$ and so there exists $\mu$ such that $\theta_{\psi,2r}(\pi\otimes\mu)\neq 0$, and moreover since $r=\LO(\pi)$, it is in the space of cusp forms. So there exists $f_1, f_2\in\pi\otimes\mu$ and $\phi_1,\phi_2\in\Sw((V\otimes W^+)(\A))$ such that
\[
	\langle\theta_{2r}(f_1,\phi_1),\theta_{2r}(f_2,\phi_2)\rangle\neq 0,
\]
where $\dim W=2r$. Notice that we may assume that $f_i$'s are factorizable. But we do not know if $\sigma(\phi_1\otimes\phi_2)$ is in $\Sw((\V^+\otimes W^+)(\A))^\circ$. (Here $\sigma$ is as in Proposition \ref{P:inner_product_2nd_term1}.) However if we assume Conjecture \ref{C:SW}, by using essentially the same proof as the one for Theorem \ref{T:inner_product_2nd_term2}, we can obtain
\begin{align*}
    &\langle\theta_{2r}(f_1,\phi_1),\theta_{2r}(f_2,\phi_2)\rangle\\
    &=\frac{1}{\kappa d^S_{m}(\rho_{m,r})}\sum_i\underset{s=\rho_{m,r}}{\Val}
    \left(L^S(s+\frac{1}{2}, \pi\otimes\mu)\prod_{v\in S}Z_v(s,f_{1,v}, f_{2,v},
    \Phi_{i,v},\pi_v\otimes\mu_v)\right)\\
    &=\frac{1}{\kappa d^S_{m}(\rho_{m,r})}\sum_i\underset{s=\rho_{m,r}}{\Val}
    \left(L(s+\frac{1}{2}, \pi\otimes\mu)\prod_{v\in S}Z^*_v(s,f_{1,v}, f_{2,v},
    \Phi_{i,v},\pi_v\otimes\mu_v)\right),
\end{align*}
for some standard section $\sum_i\Phi_i$ so that each $\Phi_i$ is factorizable. Then if the left hand side is non-zero, then for some $i$,
\[
    \underset{s=\rho_{m,r}}{\Val}
    \left(L(s+\frac{1}{2}, \pi\otimes\mu)\prod_{v\in S}Z^*_v(s,f_{1,v}, f_{2,v},
    \Phi_{i,v},\pi_v\otimes\mu_v)\right)
\]
must be non-zero. By Conjecture \ref{C:GCD} (b), we know that each product
\[
    \prod_{v\in S}Z^*_v(s,f_{1,v}, f_{2,v}, \Phi_i,\pi_v\otimes\mu_v)
\]
is holomorphic at $s=\rho_{m,r}$. Also by our assumption, $L(s+\frac{1}{2}, \pi)$, which is the same as $L(s+\frac{1}{2}, \pi\otimes\mu)$, does not have a pole at this point. So all those imply $L(s+\frac{1}{2}, \pi)$ must be non-zero at $s=\rho_{m,r}$. Then we have
\[
	\Zero(\pi)\leq r-\frac{m}{2}+1.
\]
Hence the proposition follows.
\vskip 10pt

Next we consider (a). For this ``pole range", to invoke the inner product formula to the extent we need, we do not have to extend the first term identity as in Proposition \ref{P:first_term2}, but we only need the version of Proposition \ref{P:first_term}, which already works for any $\varphi\in\Sw((\V^{+}\otimes W)(\A))$. Then the proposition follows in a similar way as (b). The detail is left to the reader.
\end{proof}

\quad\\

%%%%%%%%%%%%%%%%%%%%%%%%%%%%%%%%%%%%%%%%%%%%%%%%%%%%%%%%%%%%%%%%%%%%%%%%%%%%%%%%%%%%%%%%%%%%%%%%%%%%%%%%%

\end{document}